\documentclass[10pt, oneside]{amsart}   	
\usepackage{geometry}                		
\geometry{letterpaper}                   		
\usepackage{graphicx}				
					\usepackage{color}			
\usepackage{amssymb,mathrsfs}


\theoremstyle{plain} 
\newtheorem{theorem}{Theorem}[section]

\newtheorem{lemma}[theorem]{Lemma}
\newtheorem{proposition}[theorem]{Proposition}
\newtheorem{corollary}[theorem]{Corollary}
\newtheorem{remark}[theorem]{Remark}

\newtheorem{definition}[theorem]{Definition}

\newcommand{\flip}{\mathcal{T}}
\newcommand{\Mat}{{\rm Mat}}
\newcommand{\Vee}{\mathcal{V}}

\title{Weyl Algebras  for Quantum Homogeneous Spaces}
\author{Gail Letzter}
\address{Gail Letzter, Mathematics Research Group, National Security Agency}
\email{ gletzter@verizon.net}
\author{Siddhartha Sahi}
\address{Siddhartha Sahi, Department of Mathematics, Rutgers University}
\email{sahi@math.rutgers.edu}
\author{Hadi Salmasian}
\address{Hadi Salmasian, Department of Mathematics and Statistics, University of Ottawa}
\email{ hadi.salmasian@uottawa.ca}

\begin{document}
\maketitle

\begin{abstract}
We present a new family of quantum Weyl algebras where the polynomial part is the quantum analog of functions on homogeneous spaces corresponding to symmetric matrices, skew symmetric matrices, and the entire space of matrices of a given size.  The construction  uses twisted tensor products and their deformations combined with invariance properties derived from  quantum symmetric pairs. These quantum Weyl algebras admit $U_q(\mathfrak{gl}_N)$-module algebra structures compatible with  standard ones on the polynomial part, have relations that are expressed nicely via matrices,  and are closely related to an algebra arising in the theory of quantum bounded symmetric domains.  \end{abstract}
\section{Introduction}

Let $\Vee$ denote a complex vector space. Write $\mathscr{P}(\Vee)$ for the ring of polynomial functions on $\Vee$ and $\mathscr{D}(\Vee)$ for the algebra of constant coefficient differential operators.  Together, 
$\mathscr{P}(\Vee)$ and $\mathscr{D}(\Vee)$ generate the algebra of polynomial-coefficient differential operators on $\Vee$, denoted by $\mathscr{P}\mathscr{D}(\Vee)$. Note that 
$\mathscr{P}\mathscr{D}(\Vee)$  is just the Weyl algebra with polynomial part equal to $\mathscr{P}(\Vee)$.  Now suppose that $\Vee$ is a $\mathfrak{k}$-module where $\mathfrak{k}$ is a complex reductive Lie algebra.  The study of invariants inside $\mathscr{P}\mathscr{D}(\Vee)$ with respect to the action of $\mathfrak{k}$   yields beautiful results at the intersection of representation theory, invariant theory and special functions.  

In \cite{B}, Bershtein studies this setup in the quantum case where $\Vee=\Mat_N$, the vector space  of $N\times N$ matrices, and $\mathfrak{k}$ is the Lie subalgebra  of $\mathfrak{sl}_{2N}$ generated by two copies of $\mathfrak{sl}_N$ and the Cartan element  $h_N$ connecting the two. 
 The quantum Weyl algebra $\mathscr{P}\mathscr{D}_q(\Mat_N)$  in \cite{B} is a normalized version of the algebra ${\rm Pol}(\Mat_N)_q$ introduced and studied by Shklyarov, Sinel'schchikov, and Vaksman (\cite{SV}, \cite{VSS}) as part of the theory of  quantum bounded symmetric domains. In this context, 
 ${\rm Pol}(\Mat_N)_q$ is a $*$-algebra.  Moreover, the $U_q(\mathfrak{k})$-module action for ${\rm Pol}(\Mat_N)_q$ is inherited from its structure as a $U_q(\mathfrak{su}_{N,N})$-module.  Here,  $U_q(\mathfrak{su}_{N,N})$ is  the Hopf $*$-algebra with underlying Hopf algebra equal to $U_q(\mathfrak{sl}_{2N})$.  


 In this paper, we construct a new family of quantum Weyl algebras associated to  symmetric pairs $(\mathfrak{g},\mathfrak{k})$  of Type AI, Type AII, and the Type A diagonal case.  The polynomial part  is  a quantum version of $\mathscr{P}(\Vee)$, where $\Vee$ is an affine space which contains the standard realization of the homogeneous space $G/K$ as an orbit.  Here $G$, $K$ are the Lie groups corresponding to $\mathfrak{g}, \mathfrak{k}$.  This vector space $\Vee$  consists of  symmetric matrices in Type AI, skew symmetric matrices in Type AII, and all matrices in the Type A diagonal case.    The construction 
takes advantage of the theory of quantum symmetric pairs as developed by Noumi in terms of reflection equations (\cite{N}) and by the first author in terms of generators and relations (\cite{L1999}, \cite{L2002}).  The relations for the resulting quantum Weyl algebras resemble those of $\mathscr{P}\mathscr{D}_q(\Mat_N)$.
 
 
 The motivation for looking at these three types of symmetric pairs is their connection to Jordan algebras that leads to a rich invariant theory in the classical setting.  In a follow-up paper (\cite{LSS}), we study the quantum Capelli operators, which are  invariants inside the quantum Weyl algebras with respect to a quantum analog of $U(\mathfrak{k})$, and determine their eigenvalues. These operators satisfy vanishing and Weyl  group invariance properties as in \cite{S1996} and \cite{Knop}.  In particular, the quantum Weyl algebras presented here provide a natural setting for quantum versions of results due to the second author (see for example \cite{S1994}, \cite{KS1991} and \cite{KS1993}). 
 Similar results on Capelli operators have been obtained in the Lie superalgebra setting (see \cite{ASS}, \cite{SS2016}, \cite{SSS2020},  and \cite{SSS2021}).
  
 In order to construct these quantum Weyl algebras, we first provide a new method for creating   the algebra $\mathscr{P}\mathscr{D}_q(\Mat_N)$ as a $U_q(\mathfrak{gl}_N)$-bimodule  instead of the $U_q(\mathfrak{su}_{N,N})$-module structure of \cite{B} and \cite{VSS}.  
Using this version of $\mathscr{P}\mathscr{D}_q(\Mat_N)$, it is straightforward to produce a quantum Weyl algebra for non-square matrices $\Mat_{n\times m}$ that is a $U_q(\mathfrak{gl}_n)\otimes U_q(\mathfrak{gl}_{m})$-module algebra.  The resulting $U_q(\mathfrak{gl}_n)\otimes U_q(\mathfrak{gl}_{m})$-module algebra, 
 $\mathscr{P}\mathscr{D}_q(\Mat_{n\times m})$, is studied in \cite{LSS0} where a quantum analog of the double commutant property and the  first fundamental theorem of invariant theory are proved.
 
 Our approach for defining $\mathscr{P}\mathscr{D}_q(\Mat_N)$ has some of the same ingredients as in \cite{VSS} and \cite{VSS2}.  Indeed,  $R$-matrices, the algebra of quantized functions $\mathcal{O}_q(\Mat_N)$ on $N\times N$ matrices,  and an invariant bilinear form play a role in these references as well as in this paper. However, we take here a more intensive algebraic approach based on the theory of twisted tensor products and their PBW deformations  (see \cite{CSV}, \cite{WW2018}, and \cite{WW2014}).  One of the nice aspects of our approach is that 
 the  $U_q(\mathfrak{gl}_N)$-bimodule structure for  $\mathscr{P}\mathscr{D}_q(\Mat_N)$ is inherited from the bimodule structures of $\mathcal{O}_q(\Mat_N)$ and $\mathcal{O}_q(\Mat_N)^{op}$ in a natural way via the twisted tensor product formulation.

Our methods yield four twisted tensor products of $\mathcal{O}_q(\Mat_N)$ and $\mathcal{O}_q(\Mat_N)^{op}$.  The definition of the twisting map is based on the $R$-matrix defined by the standard vector representation for $U_q(\mathfrak{gl}_N)$ and resembles -- though is not the same as --  a quantum double (see Section \ref{section:comp}).  
These four twisted tensor products can each be viewed as graded algebras since all defining relations are homogeneous. 
 Using a $U_q(\mathfrak{gl}_N)$ bi-invariant bilinear form and a refined version of  criteria due to Walton and Witherspoon (see  \cite{WW2018}, and Section \ref{section:PBWdef}), we show that two of these twisted tensor products can be deformed into quantum analogs of the Weyl algebra that respect the $U_q(\mathfrak{gl}_N)$-bimodule structure.  Moreover one of the deformations is isomorphic to $\mathscr{P}\mathscr{D}_q(\Mat_N)$ as an algebra. 
 
 Denote the generators for $\mathcal{O}_q(\Mat_N)$ by $t_{ij}, 1\leq i,j\leq N$ and the generators for $\mathcal{O}_q(\Mat_N)^{op}$ by $\partial_{ij}, 1\leq i,j\leq N$.   Let $R_0$ denote the $R$-matrix defined by the standard vector representation for $U_q(\mathfrak{gl}_N)$ (see (\ref{Rmatrix2})) and set  $R_1 = (R_0)_{21}^{-1}$. 
 These results can be summed up as follows.

\medskip
\noindent
{\bf Theorem A.} {\it There is a twisted tensor product $\mathcal{A}_{\upsilon,\sigma} = \mathcal{O}_q(\Mat_N)\otimes_{\tau_{\upsilon,\sigma}}\mathcal{O}_q(\Mat_N)^{op}$ for each $\upsilon,\sigma\in \{0,1\}$, with the  following relations derived from the twisting map $\tau_{\upsilon,\sigma}$:}
\begin{align*}
\partial_{ea} t_{fb} = \sum_{j,k,d,l}(R_{\sigma}^{t_2})^{dl}_{fe} (R_{\upsilon}^{t_2})^{jk}_{ba}t_{dj}\partial_{lk}
\end{align*}
\emph{for all $e,a,f,b,$ where  $t_2$ denotes the transpose in the second component. 
Each $\mathcal{A}_{\upsilon,\sigma}$ inherits a $U_q(\mathfrak{gl}_N)$-bimodule structure from $\mathcal{O}_q(\Mat_N)$ and $\mathcal{O}_q(\Mat_N)^{op}$. 
Moreover, when $\upsilon=\sigma$, $\mathcal{A}_{\upsilon,\upsilon}$ admits a PBW deformation $\mathcal{W}_{\upsilon,\upsilon}$  that preserves the bimodule structure and $\mathcal{W}_{00}\cong \mathscr{P}\mathscr{D}_q(\Mat_N)$ as algebras.}

\medskip

Using Theorem A and the methods developed for its proof, we obtain  analogous results for the three sets of symmetric pairs under consideration.  In particular, let $\mathfrak{g} = \mathfrak{gl}_n$ in Type AI, $\mathfrak{gl}_{2n}$ in Type AII, and $\mathfrak{gl}_n\oplus \mathfrak{gl}_n$ in the Type A diagonal case. Write $\theta$ for the involution defining the quantum symmetric pair for each of these three families and so $\mathfrak{k} = \mathfrak{g}^{\theta}$ (see Section \ref{section:three-families-defn}).  Let $\mathcal{B}_{\theta}$ be the corresponding  coideal subalgebra of  $U_q(\mathfrak{g})$ which is a quantum analog of $U(\mathfrak{k})$ as defined in \cite{L2002} (see also \cite{K}).   Let $\mathscr{P}_{\theta}$ be the  algebra of quantized functions  on the affine space $\mathcal{V}$ associated to the homogeneous space  defined by  $\mathfrak{g}, \mathfrak{k}$.  This algebra $\mathscr{P}_{\theta}$ is a large subalgebra of the right $\mathcal{B}_{\theta}$ invariants inside of $\mathcal{O}_q(\Mat_n)$  in Type AI, $\mathcal{O}_q(\Mat_{2n})$ in Type AII, and $\mathcal{O}_q(\Mat_n\oplus \Mat_n)$ in the Type A diagonal case.  Similarly, the quantum algebra of constant coefficient differential operators $\mathscr{D}_{\theta}$ consists of  almost all of the right $\mathcal{B}_{\theta}$ invariants inside the opposite algebra of these quantized function algebras.

 Write $x_{ij}$ for the generators of $\mathscr{P}_{\theta}$ and $d_{ij}$ denote the generators of $\mathscr{D}_{\theta}$. We construct four twisted tensor products of  $\mathscr{P}_{\theta}$ and $\mathscr{D}_{\theta}$ by embedding slightly bigger versions of these algebras inside  large graded algebras formed using combinations of the $\mathcal{A}_{\upsilon,\sigma}$, $\upsilon, \sigma\in \{0,1\}$.  In analogy to the $\Mat_N$ setting, we identify a left $U_q(\mathfrak{g})$ and right $\mathcal{B}_{\theta}$ invariant bilinear form and again use the fine-tuned version of the criteria of (\cite{WW2018}) to create quantum analogs of Weyl algebras. This yields the following result similar to Theorem A in the symmetric pair setting.  
 
\medskip
\noindent
{\bf Theorem B.} {\it 
For each $\upsilon,\sigma\in \{0,1\}$, there is a twisted tensor product $\mathcal{A}^{\theta}_{\upsilon,\sigma} =\mathscr{P}_{\theta}\otimes_{\tau^{\theta}_{\upsilon,\sigma}}\mathscr{D}_{\theta}$ with the  following relations derived from the twisting map $\tau^{\theta}_{\upsilon,\sigma}$:}
\begin{align*} 
d_{ab}x_{ef} = \sum_{r,w,p,q,z,y,m,l}(R_{\sigma}^{t_2})_{zq}^{wr}(R_{\sigma}^{t_2})_{ma}^{pq}(R_{\upsilon}^{t_2})^{zy}_{fl}(R_{\upsilon}^{t_2})^{ml}_{eb}x_{pw}d_{ry}
\end{align*}
\emph{for all $e,a,f,b$.
Each $\mathcal{A}^{\theta}_{\upsilon,\sigma}$ inherits a left $U_q(\mathfrak{g})$ and trivial right $\mathcal{B}_{\theta}$ module structure from $\mathscr{P}_{\theta}$ and $\mathscr{D}_{\theta}$.
Moreover, when $\upsilon=\sigma$, $\mathcal{A}^{\theta}_{\upsilon,\upsilon}$ admits a PBW deformation $\mathcal{W}^{\theta}_{\upsilon,\upsilon}$  that preserves the  module structures and is a quantum analog of the Weyl algebra associated to the homogeneous space defined by $(\mathfrak{g},\mathfrak{k})$.}
 
Recall that the relations for $\mathcal{O}_q(\Mat_N)$ can be expressed in a compact matrix format using the 
Faddev-Reshetikhin-Takhtajan construction of quantized function algebras.  We express the relations for the algebras of Theorem A using similar matrix equations (see Sections \ref{section:as} and \ref{section:four}).  For the three families of symmetric pairs, the relations for $\mathscr{P}_{\theta}$ can also be expressed in matrix format.  In this setting, the relations closely resemble the reflection equations and $\mathscr{P}_{\theta}$ is a quotient of a reflection equation algebra (see Proposition \ref{prop:Palgebra}).  For Types AI and AII, this result traces back to \cite{N}.  We show the generators of  $\mathscr{P}_{\theta}$ in the diagonal case also satisfy reflection equation type relations.   Passing to the quantum Weyl algebras, we see that the relations, including the ones in Theorem B, can all be expressed in matrix format in such a way that they resemble reflection equations. 

In the diagonal case, it turns out that there also are isomorphisms  $\mathscr{P}_{\theta}\cong \mathcal{O}_q(\Mat_n)$ and $\mathcal{W}^{\theta}_{00}\cong \mathscr{P}\mathscr{D}_q(\Mat_n)$ as algebras.  Moreover, this becomes a $U_q(\mathfrak{gl}_n)$-bimodule isomorphism by converting the right action of $U_q(\mathfrak{gl}_n)$ on $\mathscr{P}\mathscr{D}_q(\Mat_n)$ to a left action on $\mathcal{W}^{\theta}_{00}$. Similar isomorphisms in the diagonal case hold for the other graded and non-graded algebras of Theorems A and B (see Corollary \ref{diagiso}).

Note that the three families of symmetric pairs in this paper are each closely connected with a Hermitian symmetric pair using the correspondence of \cite{KS1991}, Section 1. 
On the other hand,  ${\rm Pol}(\Mat_N)_q$ is part of a larger family of quantum algebras  associated to Hermitian symmetric pairs 
(see  \cite{VSS}). 
 Explicit generators and relations for ${\rm Pol}(\Mat^{sym}_2)_q$ where $\Mat^{sym}_2$ is the space of $2\times 2$ symmetric matrices  can be found in \cite{B3}.   A straightforward analysis shows that ${\rm Pol}(\Mat^{sym}_2)_q$ does not agree with the corresponding quantum Weyl algebra studied here  (see Remark \ref{remark:comparisons}). One of the problems is that the constant terms showing up in ${\rm Pol}(\Mat^{sym}_2)_q$ are not all equal.  Hence, it is 
 not even clear how to convert ${\rm Pol}(\Mat^{sym}_2)_q$ into a quantum Weyl algebra by  normalizing the generators as is done for ${\rm Pol}(\Mat_N)_q$ in \cite{B}.

 Many of the  results of this paper rely on general properties of $R$-matrices not specific to Type A.  Thus it is likely that these methods yield quantum Weyl algebras for other symmetric pairs, especially the classical ones that have definitions via reflection equations. It would be expected that such Weyl algebras would be deformations of twisted tensor products of two algebras $\mathscr{P}_{\theta}$ and $\mathscr{D}_{\theta}$ just as for the three types studied here.  Moreover, these two algebras, $\mathscr{P}_{\theta}$ and $\mathscr{D}_{\theta}$, are likely to be quotients of reflection equation algebras.  There are other quantum Weyl algebras in the literature built from two algebras, one corresponding to the polynomials and the other to the differentials,  with reflection equation type relations.  At the end of the paper, we discuss two such instances; neither look like they are related to  the Weyl algebras presented here (see Remark \ref{remark:comp2}).

This paper is organized as follows. Section \ref{section:bandn} introduces basic notation concerning Hopf algebras, vector spaces and root systems in Type A.  We turn our attention to the quantized enveloping algebra of $\mathfrak{gl}_N$ in Section \ref{section:QEA}.  After reviewing the definition of $U_q(\mathfrak{gl}_N)$ and its associated vector representation $\rho$, we describe properties of the universal $R$-matrix $\mathcal{R}$ and determine the image of $\mathcal{R}$ under $\rho$ and related maps.  These images and their relation to $\mathcal{R}$ are crucial in our construction of quantum Weyl algebras. 

Section \ref{section:FRT} is devoted to the quantized function algebra on matrices using the standard FRT construction.  Our presentation here takes a module perspective as compared to the usual coalgebra point of view.  This way, we can track the $U_q(\mathfrak{gl}_N)$-bimodule actions from underlying vector space to 
$\mathcal{O}_q(\Mat_N)$  as well as to its opposite algebra $\mathcal{O}_q(\Mat_N)^{op}$.  The three families of symmetric pairs are introduced in Section \ref{section:three-families} along with a specification of the involution $\theta$ for each type and the quantum analog $\mathcal{B}_{\theta}$ of $U(\mathfrak{k})$ in terms of generators.   The quantum function algebra $\mathscr{P}_{\theta}$ and differential algebra $\mathscr{D}_{\theta}$ are described and analyzed both from an algebraic and representation theoretic point of view.  It is here that we see for all three types, $\mathscr{P}_{\theta}$ and $\mathscr{D}_{\theta}$ are just quotients of reflection equation algebras.

In Section \ref{section:graded-weyl}, we define twisted tensor products based on dual pairings  and use them to form four graded versions $\mathcal{A}_{\upsilon,\sigma}$ of quantum Weyl algebras.  The twisted tensor product formulation takes advantage of the bialgebra structure of $\mathcal{O}_q(\Mat_N)$ and its opposite. However, the resulting twisted tensor products do not admit an obvious bialgebra structure (see Section \ref{section:comp}). Although only two of the twisted tensor products $\mathcal{A}_{\upsilon,\sigma}$ can be extended to  non-graded quantum Weyl algebras, all four play a role in constructing quantum Weyl algebras for the three families of quantum symmetric pairs under consideration. 

Section \ref{section:graded-wafhs} creates four twisted tensor products that glue together $\mathscr{P}_{\theta}$ and $\mathscr{D}_{\theta}$. 
Note that the algebra generated by $\mathscr{P}_{\theta}$ and $\mathscr{D}_{\theta}$ inside of $\mathcal{A}_{\upsilon,\sigma}$ is not  isomorphic to a twisted tensor product and in particular contains additional terms such as sums of terms of the form $t_{ij}\partial_{kl}$.  (This happens in the classical case as well.) The desired twisted tensor products are formed by embedding (possibly bigger versions) of  $\mathscr{P}_{\theta}$ and $\mathscr{D}_{\theta}$ inside  algebras created using combinations of  twisting maps from Section \ref{section:graded-weyl}.

The final section, Section \ref{section:QWA}, specifies bilinear forms relating the differential and polynomial parts for both the space of matrices $\Mat_N$ and the homogeneous spaces arising from the three families of symmetric pairs. These bilinear forms are $U_q(\mathfrak{gl}_N)$ bi-invariant  in the first setting and left $U_q(\mathfrak{g})$ 
invariant in the second setting. The key method of this section is a refined version of  a criteria introduced by Walton and Witherspoon in \cite{WW2018}  for producing PBW deformations.  This criteria allows us to check whether or not the deformations produced by these bilinear forms yield quantum analogs  $\mathcal{W}$ and $\mathcal{W}_{\theta}$ of the Weyl algebra satisfying the crucial property that multiplication defines a vector space isomorphism from $\mathcal{O}_q(\Mat_N)\otimes \mathcal{O}_q(\Mat_N)$ (resp. $\mathscr{P}_{\theta}\otimes \mathscr{D}_{\theta}$) to $\mathcal{W}$ (resp. $\mathcal{W}_{\theta}$). As mentioned above and explained further in this final section, this happens for precisely two of the graded quantum Weyl algebras both in the matrix  and the homogeneous space settings.

\medskip\noindent
{\bf Acknowledgements.} The first author presented some of these results in a seminar at Northeastern University in 2021.  She  would like to thank Milen Yakimov for the invitation to speak and  for his insightful suggestions and helpful pointers to the literature after the talk.  The second author was partially supported by NSF grants DMS-1939600, DMS-2001537, and Simons foundation grant 509766.  The third author  was partially supported by an NSERC Discovery Grant (RGPIN-2018-04004).

\section{Background and Notation}\label{section:bandn}
\subsection{Hopf algebras and bialgebras} Many of the algebras we consider in this paper are either bialgebras or Hopf algebras.  So they come equipped with a coproduct $\Delta$ and a couint $\epsilon$; Hopf algebras also have an antipode map $S$.   We use Sweedler notation writing $\Delta(a) = \sum a_{(1)}\otimes a_{(2)}$ for the coproduct of an element $a$.

Let $H$ be a Hopf algebra. A bialgebra $A$ is a left $H$-module algebra  if 
\begin{align*} h\cdot ac = \sum (h_{(1)}\cdot a)(h_{(2)}\cdot c)
\end{align*} for all $h\in H$ and  $a,c\in A$.  The analogous definition works for right $H$-module algebras.  
Given  left $H$-modules $M$ and $M'$, a scalar-valued bilinear form $\langle \cdot, \cdot \rangle$ on $M\times M'$ is called a (left) $H$-invariant bilinear form provided 
\begin{align*}\sum \langle a_{(1)}\cdot m,\ a_{(2)}\cdot  r\rangle =\epsilon(a)\langle m,r\rangle\end{align*} for all $a\in H$, $m\in M$ and $r\in M'$. Right $H$-invariant bilinear forms are defined in a similar fashion.  If $M$ and $M'$ are $H$-bimodules then we say that the bilinear form  is $H$-bi-invariant if it is $H$-invariant with respect to both the right and left actions. 

\subsection{Tensor Product Basics}\label{section:Tensor}
 Unless otherwise specified,  tensor products are  over the base field  which  is $\mathbb{C}(q)$ where 
$q$ is an indeterminate.  Given a tensor product of two vector spaces,  let  $\flip$ denote the flip map  which interchanges the tensor components (i.e. $\flip(a\otimes b) = b\otimes a$)).  

Let  $N$ be a positive integer and write ${\rm Mat}_N$ for the set of $N\times N$ matrices  over $\mathbb{C}$.  Every element in  ${\rm Mat}_{N}\otimes {\rm Mat}_{N}$ can be expressed as a linear combination of  tensors of matrix units $e_{ij}\otimes e_{kl}$.   
Write $t_1$ for the transpose in the first component and $t_2$ for the transpose in the second component so that  $(e_{ij}\otimes e_{kl})^{t_1} =e_{ji}\otimes e_{kl} $ and $(e_{ij}\otimes e_{kl})^{t_2} =e_{ji}\otimes e_{lk}$.  

Consider  $C=\sum C_{(1)}\otimes C_{(2)} \in {\rm Mat}_N\otimes {\rm Mat}_N$.  Write $C_{ij}$ for the operator   acting on a tensor product of $r$ vector spaces all of dimension $N$ in such a way that the first component of $C$ acts on the $i^{th}$ vector space and the second component of $C$ acts on the $j^{th}$ vector space.  In particular, $C_{ij}$  is the sum of  $r$-long tensor products of matrices with $C_{(1)}$ in position $i$ and $C_{(2)}$ in position $j$ and the identity in all other positions.  Note that $C=C_{12}$ and $\flip(C) =C_{21}$.

\subsection{Vector Space Notation} \label{subsection:vector_space}
Let $V$ be a vector space with a distinguished basis $v_1, \dots, v_N$.  Define the action of the  matrix unit $e_{ij}$ on $V$ by   $e_{ij}v_k = \delta_{jk}v_i$ for each $k=1, \dots, N$. Consider the element  $C = \sum_{i,j,k,l}c_{kl}^{ij}e_{ik}\otimes e_{jl}$ in ${\rm Mat}_N\otimes {\rm Mat}_N$ and note that
\begin{align*}
C\cdot (v_k\otimes v_l) = \sum_{s,t}
c_{kl}^{st}e_{sk}\otimes e_{tl} (v_k\otimes v_l) =\sum_{s,t}c_{kl}^{st} v_s\otimes v_t
\end{align*}
The action of the linear transformation on $V\otimes V$ defined by $C$  can be expressed in compact  form as
\begin{align}\label{vector-form2}
V\otimes V\ \rightarrow\  C\cdot (V\otimes V).
\end{align}

We have analogous notions for matrices acting on the right.  For instance, let  $W$ be another  $N$-dimensional vector space with basis $w_1,\dots, w_N$ and set $w_ke_{ij} = \delta_{ik}w_j$ for all $i,j,k$. The map 
\begin{align}\label{right-R-action}
W\otimes W \rightarrow (W\otimes W)\cdot C
\end{align} 
 sends  $(w_a\otimes w_b)$ to $(w_a\otimes w_b)\cdot  C$ where
\begin{align*}
(w_a\otimes w_b)\cdot  C=\sum_{j,s}(w_a\otimes w_b) m^{ab}_{js}e_{aj}\otimes e_{bs}=\sum_{j,s}(w_j\otimes w_s) m^{ab}_{js}.\end{align*}

We often encounter maps similar to (\ref{vector-form2})  and (\ref{right-R-action})  with the extra involvement of  a reordering of vector spaces. This reordering is expressed using  subscripts to denote the position of a particular vector space.  For example, $V\otimes V$ can be written as $V_{(1)}\otimes V_{(2)}$ while 
$\flip(V\otimes V)$ is written as $V_{(2)}\otimes V_{(1)}$.  
This subscript notation enables us to express maps arising from a combination of matrix actions and tensor component permutations in a compact form along the lines of (\ref{vector-form2})
and (\ref{right-R-action}).   For example, the map 
\begin{align*}
V_{(2)}\otimes W_{(2)} \otimes V_{(1)} \otimes W_{(1)} \rightarrow C_{13}\cdot (V_{(1)} \otimes W_{(1)} \otimes V_{(2)}\otimes W_{(2)})\cdot C_{24}
\end{align*}
sends  $v_k\otimes w_l\otimes v_i\otimes w_j$ to 
$\sum_{s,t,a,b}m^{st}_{ik} (v_s\otimes w_a \otimes v_t \otimes w_b) m^{jl}_{ab}$ for all $k,l,i,j$.

\subsection{Roots and Weights}
Let $\epsilon_1, \dots, \epsilon_N$ denote a fixed  orthonormal basis for $\mathbb{R}^N$ with respect to the inner product $(\cdot,\cdot)$.    The set of positive simple roots for the root system of Type A$_{N-1}$ consists of  
$\alpha_i = \epsilon_i-\epsilon_{i+1}$ for $i=1, \dots, N-1$.  The dominant integral weights for the root system of $\mathfrak{gl}_N$ is the set 
 $\Lambda_N$ consisting of those  $\lambda = \lambda_1\epsilon_1 + \cdots +  \lambda_N\epsilon_N$ where each $\lambda_i \in \mathbb{Z}$ and $\lambda_1\geq \lambda_2\geq \cdots \geq \lambda_N$. 

\section{Quantized enveloping algebra for $\mathfrak{gl}_N$}\label{section:QEA}
\subsection{Basic definitions}
%
 Let $q$ be an indeterminate.   The Drinfeld-Jimbo quantized enveloping algebra $U_q(\mathfrak{gl}_N)$   is an algebra over $\mathbb{C}(q)$ generated by $K_{\epsilon_1}^{\pm 1}, \dots, K_{\epsilon_N}^{\pm 1}$,   $E_1, \dots, E_{N-1},$ $ F_1, \dots, F_{N-1}$ subject to the algebra relations as stated in \cite{N} (see also \cite{KS}, Section 10).   Given an integer linear combination $\beta = \sum_{i=1}^N\beta_j\epsilon_j$, write $K_{\beta}$ for the product $K_{\epsilon_1}^{\beta_1}\cdots K_{\epsilon_N}^{\beta_N}$.  Set  $K_i = K_{\alpha_i}=K_{\epsilon_i}K_{\epsilon_{i+1}}^{-1}$ for $i=1, \dots, N-1$.   The subalgebra of $U_q(\mathfrak{gl}_N)$ generated by \begin{align*}K_1^{\pm 1}, \dots, K_{N-1}^{\pm 1}, E_1, \dots, E_{N-1}, F_1, \dots, F_{N-1}\end{align*} is the quantized enveloping algebra $U_q(\mathfrak{sl}_N)$. 

Both $U_q(\mathfrak{gl}_N)$ and $U_q(\mathfrak{sl}_N)$ are Hopf algebras with coproduct $\Delta$, counit $\epsilon$, and antipode $S$ defined on generators by 
\begin{itemize}
\item $\Delta(E_i) = E_i\otimes 1 + K_i\otimes E_i$, $\epsilon(E_i) = 0$ and $S(E_i) = -K_i^{-1}E_i$
\item $\Delta(F_i) = F_i\otimes K_i^{-1} + 1\otimes F_i$, $\epsilon(F_i) = 0$ and $S(F_i) = -F_iK_i$
\item $\Delta(K) = K\otimes K$, $\epsilon(K) = 1$ and $S(K) = K^{-1}$ for all $K=K_{\beta}, \beta\in \sum_j{\mathbb{Z}}\epsilon_j$.
\end{itemize}
Here, we follow the notation in \cite{K} (and \cite{N}) so that our definition of the symmetric pair coideal subalgebras is consistent with this and many of the other papers in the subject.  However, we will also be quoting basic results on quantum groups
 from \cite{KS}.  Although the algebra structure  is the same for these references, the Hopf structure in \cite{KS} is the opposite  one from \cite{K} and thus in this paper. We will automatically adjust, interchanging  tensor components when necessary,  formulas taken from \cite{KS}.

The adjoint action of $U_q(\mathfrak{gl}_N)$ on itself is defined by  $({\rm ad}\ g)\cdot a = \sum g_{(1)} a S(g_{(2)})$ for all $a,g\in U_q(\mathfrak{gl}_N)$.  For generators, the adjoint action takes the following form.
\begin{align*} 
({\rm ad}\ E_i)\cdot a =E_ia-K_iaK_i^{-1} \quad ({\rm ad}\ F_i)\cdot a = F_iaK_i-F_iK_i \quad ({\rm ad} K_{\epsilon_j})\cdot a = K_{\epsilon_j}aK_{\epsilon_j}^{-1}
\end{align*}
for all $i=1,\dots, N-1$ and $j=1,\dots, N$.

The Hopf algebra $U_q(\mathfrak{gl}_N)$ admits an algebra antiautomorphism that preserves the coalgebra structure
defined by  
 \begin{align}\label{starstructure}
K_{\epsilon_i}^{\natural} = K_{\epsilon_i} \quad E_j^{\natural} = q^{-1} F_jK_j \quad F_j^{\natural} = qK_j^{-1}E_j
\end{align}
for $j=1, \dots, N-1$ and $i = 1, \dots, N$.  It should be noted that this algebra antiautomorphism is closely related to the one used to define the Hopf $*$ structure on $U_q(\mathfrak{gl}_N)$.  The difference is that the map $a\mapsto a^{\natural}$ is an algebra map over $\mathbb{C}(q)$ while $a\mapsto a^{*}$ is  conjugate linear. 
The fact that these two maps  agree on the real part of $U_q(\mathfrak{gl}_N)$ allows us to quote properties based on $a\mapsto a^*$ and use them instead in the context of the map  $a\mapsto a^{\natural}$.

The map  $\natural\circ S$ is the algebra isomorphism that satisfies 
\begin{align*}
(S(E_j))^{\natural} = -q^{-1}F_j \quad (S(F_j))^{\natural} = -qE_j \quad (S(K_{\epsilon_i}^{\pm 1}))^{\natural}= K_{\epsilon_i}^{\mp 1}
\end{align*}
and ${\natural}\circ S^{-1}$ is the algebra isomorphism such that 
\begin{align}\label{starinversestructure}
(S^{-1}(E_j))^{\natural} = -qF_j \quad (S^{-1}(F_j))^{\natural} = -q^{-1}E_j \quad (S^{-1}(K_{\epsilon_i}^{\pm 1}))^{\natural}= K_{\epsilon_i}^{\mp 1}
\end{align}
for $j=1, \dots, N-1$ and $i = 1, \dots, N$.  
Since ${\natural}$ preserves the coalgebra structure while $S$ sends the coalgebra structure to its opposite,   $\natural\circ S$ sends $U_q(\mathfrak{gl}_N)$ to $U_q(\mathfrak{gl}_N)^{cop}$.  By \cite{KS}, Section 1.2.7, we have $\natural\circ S\circ \natural= S^{-1}$.

We will also find it helpful to use the $h$-adic version of the Drinfeld-Jimbo algebra, $U_h(\mathfrak{gl}_N)$,  associated to $\mathfrak{gl}_N$.   Recall that  $U_h(\mathfrak{gl}_N)$ is an algebra over $\mathbb{C}[[h]]$ generated by $E_i,F_i,H_{\epsilon_j}$, $i=1, \dots, N-1 ,j=1,\dots, N$ such that 
\begin{align*}
[H_{\epsilon_i},H_{\epsilon_j}] =0, \quad  [H_{\epsilon_i}, E_{\epsilon_j}] = \delta_{ij}  E_i, \quad [H_{\epsilon_i}, F_j] = -\delta_{ij}  F_i,
 \end{align*}
 and the remaining relations, as well as the Hopf algebra structure, agrees with that of  $U_q(\mathfrak{gl}_N)$   with  $K_{\epsilon_i}$ replaced by $e^{hH_{\epsilon_i}}$ and $q$ replaced by $e^h$.  The  map $\natural$ extends to $U_h(\mathfrak{gl}_N)$ with $H_{\epsilon_i}^{\natural} = H_{\epsilon_i}$ all $i$. 

Just as in the classical case, the  weights in $\Lambda_N$ parametrize the finite-dimensional simple $U_q(\mathfrak{gl}_N)$-modules.  We write $L(\lambda)$ for the simple module of highest weight  $\lambda$. All modules  in this paper are  of type 1 (see for example \cite{KS}, Section 6.2).

\subsection{Vector Representations}\label{section:vector}
As in \cite{N} (see also  \cite{KS} Section 8.4.1),  the vector 
representation $\rho$ for $U_q(\mathfrak{gl}_N)$ is defined by  \begin{align*} \rho(K_{\epsilon_i}) = qe_{ii} + \sum_{j\neq i}e_{jj}, i = 1, 2, \dots, n\cr
\rho(E_i)=e_{i,i+1}, \quad \rho(F_i) = e_{i+1,i}, \quad i = 1, 2, \dots, N-1.
\end{align*}  
Let $V$ be the vector space with basis   $\{v_1, \dots, v_N\}$ and set $e_{ik}v_j = \delta_{kj}v_i$ for all $i,j,k$.  The space $V$ becomes the (left) module for the vector representation $\rho$ via the action 
defined by $
a v_j = \rho(a)v_j$
for all $j=1, \dots, n$ and all $a\in U_q(\mathfrak{gl}_N)$.   As a $U_q(\mathfrak{gl}_N)$-module, $V$ is isomorphic to the irreducible module $L(\epsilon_1)$ with highest weight generating vector $v_1$.

Let $W$ be another $N$-dimensional vector space with basis $\{w_1, \dots, w_N\}$ where $w_je_{ki} = \delta_{jk}w_i$ for all $i,j,k$. Give $W$ a right   $U_q(\mathfrak{gl}_N)$-module structure defined by $ w_k\cdot a = w_k({\rho(a)})$
for all $k$ and $a\in U_q(\mathfrak{gl}_N)$. 
The  $U_q(\mathfrak{gl}_N)$-module $W$ is a right dual  to the left $U_q(\mathfrak{gl}_N)$-module $V$ with pairing $\langle w_j, v_k\rangle = \delta_{jk}$
such that \begin{align*}
\langle w_{j}, av_k\rangle = \langle w_{j}, \rho(a)v_k\rangle = \langle w_{j}{ \rho(a)},v_k\rangle
\end{align*}
for all $j,k$ and $a\in U_q(\mathfrak{gl}_N)$.  

Note that we can move the right action of $U_q(\mathfrak{gl}_N)$ on $W$ to a left action.  Indeed, since $\rho(a)$ is a matrix, 
we have $
w_{j} {\rho({a})} = {{\rho({a})}}^t w_j$
where here $t$ is just the standard transpose on $N\times N$ matrices.   A straightforward check using the definition of $\rho$ and $\natural$, we see that 
\begin{align*}
{\rho({a})}^t = \rho(a^{\natural})
\end{align*}
for all $a\in U_q(\mathfrak{gl}_N)$.  Thus acting on the right via the matrix representation ${\rho}$ is the same as acting on the left using the matrix representation $\rho\circ \natural$. 

We can also define a left dual for the $U_q(\mathfrak{gl}_N)$-module $V$ and a right dual  for the $U_q(\mathfrak{gl}_N)$-module $W$  as follows.
Let $V^*$ be the vector space with basis  $\{v_1^*,\dots, v_N^*\}$  and define a linear map from $V$ to $V^*$ that sends $v_i$ to $v_i^*$ all $i=1, \dots, N$.  This allows us to define a (left) $U_q(\mathfrak{gl}_N)$-module action on $V^*$ via
\begin{align}\label{firstaction}
(a\cdot v)^* = (S(a))^{\natural}\cdot v^*
\end{align}
for all $v\in V$ and $a\in U_q(\mathfrak{gl}_N)$.  Similarly, let $W^{*}$ be the right $U_q(\mathfrak{gl}_N)$-module  dual to the right module $W$ with basis $\{w_1^*,\dots, w_N^*\}$  such that  \begin{align}\label{secondaction}
(w\cdot  a)^*  = w^*\cdot (S^{-1}(a))^{\natural}
\end{align}
for all $w\in W$ and $a\in U_q(\mathfrak{gl}_N)$.
It follows that  elements $a$ of $U_q(\mathfrak{gl}_N)$ act on elements of $V^*$ using the matrix $\rho( (S(a))^{\natural}) = (\rho\circ \natural\circ S)(a)$.
A similar analysis shows that elements $a$ of $U_q(\mathfrak{gl}_N)$ act on elements of $W^*$ on the right via the matrix $(\rho( (S^{-1}(a))^{\natural}) )= (\rho\circ {\natural}\circ S^{-1})(a)$.

We can extend
$\rho$
to  $U_h(\mathfrak{gl}_N)$ in a way that is compatible with its definition on  $U_q(\mathfrak{gl}_N)$ by insisting that the image of $E_i$ and $F_i$ is the same under $\rho$ as defined above and 
$
\rho(H_{\epsilon_i}) = e_{ii }
$
for $i=1, \dots, N$. 
Thus $V$ can also be viewed as a (left) $U_h(\mathfrak{gl}_N)$-module with $H_{\epsilon_k}v_i = \delta_{ki}v_i$ for all $i,k$.  Analogous assertions holds for $W$, $V^*$, and $W^*$.

Using the definition of $\rho$ and the formulas for $\natural\circ S$, we see that $(\rho\circ \natural\circ S)$ defines the representation specified by 
\begin{align*}
&(\rho\circ\natural\circ S) (K_{\epsilon_k}) = q^{-1} e_{kk} + \sum_{j\neq k} e_{jj}, \quad (\rho\circ \natural\circ S)(H_{\epsilon_k}) =-e_{kk}
\cr& (\rho\circ\natural\circ S) (E_i) = -q^{-1}e_{i+1,i}, \quad {\rm and }\quad  (\rho\circ\natural\circ S) (F_i) = -qe_{i,i+1}
\end{align*}
for $k=1, \dots, N$ and $i=1, \dots, N-1$. Similar formulas hold for $(\rho\circ \natural\circ S^{-1})$. 


\subsection{The universal $R$-matrix}\label{section:universal}
Recall that $U_h(\mathfrak{gl}_N)$  comes equipped with a universal $R$-matrix $\mathcal{R}$.  The matrix $\mathcal{R}$   is an invertible element in a completion of $U_h(\mathfrak{gl}_N){\otimes} U_h(\mathfrak{gl}_N)$ that satisfies the Quantum Yang-Baxter Equation $\mathcal{R}_{12}\mathcal{R}_{13}\mathcal{R}_{23}=\mathcal{R}_{23}\mathcal{R}_{13}\mathcal{R}_{12}$ along with compatibility relations with the coproduct (see \cite{KS}, Chapter 8 for more details).  

Let $M$ and $M'$ be left $U_q(\mathfrak{gl}_N)$-modules.
As explained in \cite{KS}, 8.1.2, the map  $\flip\cdot (T_M\otimes T_{M'})(\mathcal{R})$ defines a left  $U_q(\mathfrak{gl}_N)$-module isomorphism from $M\otimes {M'}$ to ${M'}\otimes M$ given by 
\begin{align}\label{firstiso}
m\otimes m'\rightarrow \flip\cdot \left((T_M\otimes T_{M'})(\mathcal{R})(m\otimes m') \right)= \sum_iT_{M'}(y_i)m'\otimes T_M(x_i)m
\end{align}
where $\mathcal{R} = \sum_ix_i\otimes y_i$ and $T_M, T_{M'}$ define the respective $U_q(\mathfrak{gl}_N)$ left representations. 
Similarly,  if $M$ and $M'$ are  right $U_q(\mathfrak{gl}_N)$-modules, then there is a right module isomorphism taking $M\otimes M'$ to $M'\otimes M$ given by 
\begin{align}\label{secondiso}
(m\otimes m') \rightarrow 
&\flip\cdot \left((m\otimes m') \cdot (T'_{M}\otimes T'_{M'})(\mathcal{R}^{-1})\right)  \end{align}
where $T'_{M}$, $T'_{M'}$ define the right representations on ${M}$ and ${M'}$ respectively. 
Since $\mathcal{R}_{21}^{-1} $ is also a universal $\mathcal{R}$-matrix for $U_q(\mathfrak{gl}_N)$ (\cite{KS}, Proposition 1),  $\mathcal{R}$ can be replaced with $\mathcal{R}_{21}^{-1}$ in each of the above isomorphisms to get other isomorphisms of $U_q(\mathfrak{gl}_N)$-modules. 

An explicit formula for the  universal $R$-matrix for $U_h(\mathfrak{gl}_N)$  is   (see  \cite{KS} Section 8.3.2, Theorem 17 and Remarks 6 and 7 for more details),\begin{align}\label{universal}
\mathcal{R} =\exp\left( h\sum_{i=1}^NH_{\epsilon_i}\otimes H_{\epsilon_i}\right)\sum_{r_1, \dots, r_m=0}^{\infty}\prod_{j=1}^mq^{{{1}\over{2}}r_j(r_j+1)}{{(1-q^{-2})^{r_j}}\over{[r_j]_{q}!}}F_{\beta_j}^{r_j}\otimes E_{\beta_j}^{r_j}
\end{align}
Here,  $\exp$ is the power series version of the exponential function, $[r]_q!$ is the $q$-factorial at $r$ as defined in \cite{KS}, Section 2.1.1, and $E_{\beta_j}, F_{\beta_j}$ are  the root vectors associated to the positive root $\beta_j$ defined using Lusztig's braid group automorphisms.  

 One can give explicit formulas for the root vectors 
using $q$ commutators as in \cite{KS}, Section 7.3.1.  In particular, define the $q$ commutator by $[a,b]_q = ab-qba$  for all $a,b\in U_q(\mathfrak{gl}_N)$. The analogous definition holds for $q$ replaced 
by $q^{-1}$.  Set $E_{i,i+1} = E_i {\rm \ and \ }F_{i+1,i} = F_i.$
Inductively define 
\begin{align*}
E_{i, j+1} = [E_{i,j}, E_{j,j+1}]_q  {\rm \ and \ } F_{j+1,i} = [F_{j+1,j}, F_{j, i}]_{q^{-1}}.  \end{align*}
Then up to nonzero scalar multiples, $E_{i,j+1}$ is the root vector $ E_{\beta_{ij}}$   and $F_{j+1,i} $ is the root vector $ F_{\beta_{ij}}$ where $\beta_{ij} = \alpha_i+\cdots + \alpha_j$.  
Moreover these scalars are inverses to each other and so $F_{j+1,i}\otimes E_{i,j+1} =F_{\beta_{ij}}\otimes E_{\beta_{ij}}$.

\begin{lemma}\label{lemma:rho-image}
 For all $i,j$ with $1\leq i<j\leq N$, we have $\rho(E_{ij}) = e_{ij}$ and $\rho(F_{ji}) = e_{ji}$.
 \end{lemma}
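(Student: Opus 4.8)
The plan is to prove both identities simultaneously by induction on the gap $j-i$, using only that $\rho$ is an algebra homomorphism (being a representation), the values $\rho(E_i)=e_{i,i+1}$ and $\rho(F_i)=e_{i+1,i}$ from Section \ref{section:vector}, and the inductive definitions $E_{i,j+1}=[E_{i,j},E_{j,j+1}]_q$ and $F_{j+1,i}=[F_{j+1,j},F_{j,i}]_{q^{-1}}$ recorded just before the statement. The base case is immediate: when $j=i+1$ the defining conventions give $E_{i,i+1}=E_i$ and $F_{i+1,i}=F_i$, so $\rho(E_{i,i+1})=e_{i,i+1}$ and $\rho(F_{i+1,i})=e_{i+1,i}$.

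For the inductive step, fix $i<j$ with $j-i\geq 2$ and assume the claim for all pairs with smaller gap. Expanding $E_{i,j}=E_{i,j-1}E_{j-1,j}-q\,E_{j-1,j}E_{i,j-1}$, applying $\rho$ termwise, and using the inductive hypothesis on $E_{i,j-1}$ (gap $j-1-i<j-i$) together with $E_{j-1,j}=E_{j-1}$ yields $\rho(E_{i,j})=e_{i,j-1}e_{j-1,j}-q\,e_{j-1,j}e_{i,j-1}$. The one elementary computation needed is the matrix-unit product rule $e_{ab}e_{cd}=\delta_{bc}e_{ad}$, which gives $e_{i,j-1}e_{j-1,j}=e_{i,j}$ and $e_{j-1,j}e_{i,j-1}=\delta_{j,i}e_{j-1,j-1}=0$ since $i<j$; hence $\rho(E_{i,j})=e_{i,j}$. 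The argument for $F$ is entirely parallel: from $F_{j,i}=F_{j,j-1}F_{j-1,i}-q^{-1}F_{j-1,i}F_{j,j-1}$ with $F_{j,j-1}=F_{j-1}$ and the inductive hypothesis on $F_{j-1,i}$, one gets $\rho(F_{j,i})=e_{j,j-1}e_{j-1,i}-q^{-1}e_{j-1,i}e_{j,j-1}=e_{j,i}-q^{-1}\delta_{i,j}e_{j-1,j-1}=e_{j,i}$, again because $i<j$.

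I do not anticipate any real obstacle. The only points requiring a little care are bookkeeping: expanding each root vector through the recursion with the correct strictly smaller constituent (so the inductive hypothesis genuinely applies) and checking that the cross term always vanishes, which is exactly the inequality $i<j$ preventing an index collision. One should also note explicitly that $\rho$, being a homomorphism, carries $q$-commutators to $q$-commutators of the images, which is what makes the termwise evaluation legitimate.
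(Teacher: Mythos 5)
Your proof is correct and takes essentially the same route as the paper: induction on the gap $j-i$, applying $\rho$ to the $q$-commutator recursion and evaluating with the matrix-unit rule $e_{ab}e_{cd}=\delta_{bc}e_{ad}$, the cross term vanishing precisely because $i<j$. The only cosmetic difference is that you split off the last generator, faithfully following the stated recursion $E_{i,j}=[E_{i,j-1},E_{j-1,j}]_q$, whereas the paper's proof splits off the first, writing $\rho(E_{ij})=\rho([E_{i,i+1},E_{i+1,j}]_q)$; the computation is the same either way.
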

 \begin{proof} Recall that 
$\rho(E_{i,i+1}) = \rho(E_i) = e_{i,i+1}$ and $\rho(F_{i+1,i})=\rho(F_i) = e_{i+1,i}.$ 
Hence the first two equalities of the lemma hold for $j=i+1$.  Now assume they hold 
for $j-1$ with $j-i>1$.  Then 
\begin{align*} \rho(E_{{ij}}) = \rho( [E_{i,i+1}, E_{i+1,j}]_{q}) =[e_{i,i+1}, e_{i+1,j}]_q = e_{ij}
\end{align*}
since $e_{i+1,j}e_{i,i+1}=0$ and the second equality follows by induction.  A similar induction argument establishes the second equality.  
 \end{proof}


\subsection{Images of the universal $R$-matrix}\label{section:images}

Define the matrix $R$ by \begin{align}\label{Rmatrix2}
R
&= \sum_{1\leq i\leq N} 
qe_{ii}\otimes e_{ii} +\sum_{1\leq i<j\leq N}(e_{ii}\otimes e_{jj} + e_{jj}\otimes e_{ii})
+ (q-q^{-1})\sum_{1\leq j<i \leq N} 
e_{ij}\otimes e_{ji}
\end{align}
This matrix can be written as  \begin{align*}
R = \sum_{i,j,k,l} r^{ij}_{kl} e_{ik}\otimes e_{jl}
\end{align*}
where 
\begin{itemize}
\item $r^{ii}_{ii} = q,$ $r_{ij}^{ij} = 1$
for all $i,j, $ with $i\neq j$.
\item 
$r_{ji}^{ij} = (q-q^{-1})$
for all $j<i$.
\item $r^{ts}_{ji} = 0$ for all other choices of $s,t,i,j$.
\end{itemize}

The next lemma relates the image of the universal $R$-matrix to $R$ under maps  involving $\rho$. 
The argument follows closely a similar computation for $\mathfrak{sl}_N$ in \cite{KS}, Section 8.4.2.

\begin{lemma} \label{Rimage} The image of $\mathcal{R}$ as given in (\ref{universal}) under $\rho\otimes \rho$ is 
the matrix $R$  (\ref{Rmatrix2}) and the image of $\mathcal{R}$ under  both $(\rho\otimes \natural\otimes S)\otimes (\rho\otimes \natural\otimes S)$
and $(\rho\otimes \natural\otimes S^{-1})\otimes (\rho\otimes \natural\otimes S^{-1})$  is $R_{21}.$
\end{lemma}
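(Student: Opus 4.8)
The plan is to compute the images directly from the explicit formula~(\ref{universal}) for $\mathcal{R}$, following closely the $\mathfrak{sl}_N$ computation in \cite{KS}, Section~8.4.2. (Throughout I read $\rho\otimes\natural\otimes S$ as the composite algebra homomorphism $\rho\circ\natural\circ S$ of Section~\ref{section:vector}, and similarly with $S^{-1}$.) Write $\mathcal{R}=\Xi\cdot\Theta$, where $\Xi=\exp\!\left(h\sum_{i=1}^N H_{\epsilon_i}\otimes H_{\epsilon_i}\right)$ is the Cartan part and $\Theta=\sum_{r_1,\dots,r_m=0}^{\infty}\prod_{j=1}^m q^{\frac12 r_j(r_j+1)}\frac{(1-q^{-2})^{r_j}}{[r_j]_q!}\,F_{\beta_j}^{r_j}\otimes E_{\beta_j}^{r_j}$. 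I would treat $\rho\otimes\rho$ first. Since $\rho(H_{\epsilon_i})=e_{ii}$, the element $P:=\sum_i\rho(H_{\epsilon_i})\otimes\rho(H_{\epsilon_i})=\sum_i e_{ii}\otimes e_{ii}$ is idempotent, so $(\rho\otimes\rho)(\Xi)=\exp(hP)=(1\otimes 1-P)+e^hP=\sum_{i\neq j}e_{ii}\otimes e_{jj}+q\sum_i e_{ii}\otimes e_{ii}$, using $q=e^h$. For $\Theta$, recall from the discussion preceding Lemma~\ref{lemma:rho-image} that $F_{\beta_{ab}}\otimes E_{\beta_{ab}}=F_{b+1,a}\otimes E_{a,b+1}$, so by Lemma~\ref{lemma:rho-image} the degree-one term indexed by $\beta_j=\beta_{ab}$ has image $e_{b+1,a}\otimes e_{a,b+1}$. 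Because $a\le b$, one has $e_{b+1,a}^{\,2}=e_{a,b+1}^{\,2}=0$, so every summand with some $r_j\ge 2$ vanishes; and a summand with two distinct $r_j$ equal to $1$ would force $e_{b+1,a}e_{b'+1,a'}\neq 0$ and $e_{a,b+1}e_{a',b'+1}\neq 0$ at once, hence $a=b'+1$ and $a'=b+1$, incompatible with $a\le b$, $a'\le b'$. Thus only $\vec r=0$ and $\vec r=e_j$ ($1\le j\le m$) survive, and since $q(1-q^{-2})/[1]_q!=q-q^{-1}$ we get $(\rho\otimes\rho)(\Theta)=1\otimes 1+(q-q^{-1})\sum_{a<l}e_{l,a}\otimes e_{a,l}$. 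Multiplying the two factors---multiplication by $(\rho\otimes\rho)(\Xi)$ fixes the correction term, since its relevant diagonal entry is $1$, and carries $1\otimes 1$ to $(\rho\otimes\rho)(\Xi)$---yields exactly $R$ as in~(\ref{Rmatrix2}).

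For the remaining two maps I would repeat this argument with $\rho$ replaced by $\sigma:=\rho\circ\natural\circ S$ and by $\sigma':=\rho\circ\natural\circ S^{-1}$; both are algebra homomorphisms on $U_h(\mathfrak{gl}_N)$ whose values on Chevalley generators were computed in Section~\ref{section:vector}. The one new input is the images of the root vectors: an induction on the length of $\beta$ using $E_{a,b+1}=[E_{a,b},E_{b,b+1}]_q$, $F_{b+1,a}=[F_{b+1,b},F_{b,a}]_{q^{-1}}$ and the vanishing of the resulting matrix-unit products gives $\sigma(E_{a,b+1})=-q^{-1}e_{b+1,a}$, $\sigma(F_{b+1,a})=-q\,e_{a,b+1}$, and $\sigma'(E_{a,b+1})=-q^{\,2(b-a)+1}e_{b+1,a}$, $\sigma'(F_{b+1,a})=-q^{-2(b-a)-1}e_{a,b+1}$. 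In each case the two scalars occurring in $F_{\beta_j}\otimes E_{\beta_j}$ multiply to $1$, so the degree-one part of $\Theta$ maps to $1\otimes 1+(q-q^{-1})\sum_{a<l}e_{a,l}\otimes e_{l,a}$; and since $\sigma(H_{\epsilon_i})=\sigma'(H_{\epsilon_i})=-e_{ii}$ while $(-e_{ii})\otimes(-e_{ii})=e_{ii}\otimes e_{ii}$, the Cartan part is unchanged. Multiplying out produces $\sum_{i\neq j}e_{ii}\otimes e_{jj}+q\sum_i e_{ii}\otimes e_{ii}+(q-q^{-1})\sum_{a<l}e_{a,l}\otimes e_{l,a}$, which is precisely $R_{21}=\flip(R)$.

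Alternatively---and this is the route I would actually record for the last two claims---one can avoid recomputing the root-vector images. Since $\natural$ implements the transpose in the vector representation, $\rho(a^\natural)=\rho(a)^t$ (Section~\ref{section:vector}), we have $\sigma(a)=\rho(S(a))^t$, so $(\sigma\otimes\sigma)(\mathcal{R})=\big((\rho\otimes\rho)\bigl((S\otimes S)(\mathcal{R})\bigr)\big)^{t_1 t_2}$, where $t_1 t_2$ denotes transposition in both tensor slots. The standard quasitriangular identity $(S\otimes S)(\mathcal{R})=\mathcal{R}$ (which applied again gives $(S^{-1}\otimes S^{-1})(\mathcal{R})=\mathcal{R}$ as well) reduces this to $R^{t_1 t_2}$, and a one-line check with~(\ref{Rmatrix2}) shows $R^{t_1 t_2}=R_{21}$; the same argument with $S$ replaced by $S^{-1}$ handles $\sigma'$.

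The one genuine subtlety I anticipate is the reduction of the a priori infinite expression $(\rho\otimes\rho)(\Theta)$ (and its $\sigma,\sigma'$ analogues) to its finitely many surviving summands: one must argue that in the vector representation every monomial $\rho(F_{\beta_{j_1}})\cdots\rho(F_{\beta_{j_k}})\otimes\rho(E_{\beta_{j_1}})\cdots\rho(E_{\beta_{j_k}})$ with $k\ge 2$ vanishes---which follows from the strictly triangular shape of the matrix units $e_{b+1,a}$ with $a\le b$---and that the convex ordering of the $\beta_j$ in the product is irrelevant because the surviving terms have at most one nontrivial factor. The other mildly delicate point is the $q$-power bookkeeping in the $\sigma'$ computation, but it is routine once the shape $\sigma'(E_{a,b+1})\propto e_{b+1,a}$ is anticipated.
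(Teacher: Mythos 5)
Your proposal is correct and follows essentially the same route as the paper: the same direct evaluation of the Cartan exponential and the $q$-exponential factors of $\mathcal{R}$ under $\rho\otimes\rho$ (using $\rho(E_{ij})=e_{ij}$, $\rho(F_{ji})=e_{ji}$ and the vanishing of higher and mixed matrix-unit products), and for the remaining two maps the same reduction via $(S\otimes S)(\mathcal{R})=\mathcal{R}$, $(S^{-1}\otimes S^{-1})(\mathcal{R})=\mathcal{R}$ and $\rho\circ\natural=(\,\cdot\,)^t\circ\rho$ to $R^{t_1t_2}=R_{21}$. Your supplementary direct computation of the root-vector images under $\rho\circ\natural\circ S$ and $\rho\circ\natural\circ S^{-1}$ is a correct but unnecessary alternative that the paper avoids by exactly the shortcut you ultimately record.
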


\begin{proof} 
Since $\rho(H_{\epsilon_i}) = e_{ii}$  we have 
\begin{align*}
(\rho\otimes \rho)(\sum_{i}H_{\epsilon_i}\otimes H_{\epsilon_i})  = \sum_{i}e_{ii}\otimes e_{ii}
\end{align*}
and hence 
\begin{align*}
(\rho\otimes \rho)&(\exp\left( h\sum_{i=1}^nH_{\epsilon_i}\otimes H_{\epsilon_i}\right))=\exp(h\sum_{i=1}^Ne_{ii}\otimes e_{ii}) 
\cr&= \exp(h e_{ii}\otimes e_{ii}) + \sum_{1\leq j<i\leq n}(e_{ii}\otimes e_{jj} + e_{jj}\otimes e_{ii}) 
\cr&= qe_{ii}\otimes e_{ii} + 
\sum_{1\leq j<i\leq N}(e_{ii}\otimes e_{jj} + e_{jj}\otimes e_{ii})
\end{align*}
Using Lemma \ref{lemma:rho-image}, we see that for $i>j$
\begin{align}\label{expansion}
(\rho\otimes \rho)&\left({\rm exp}_q((1-q^{-2})(F_{{ij}}\otimes E_{ji}))\right)
\cr&={\rm exp}_q[(1-q^{-2})(e_{ij}\otimes e_{ji})]
=1 + (q-q^{-1})(e_{ij}\otimes e_{ji})
\end{align}
where 
$
{\rm exp}_qx = \sum_{r=0}^{\infty}q^{r(r+1)/2}x^r/[r]_q!$.
Observe that terms of degree $2$ or higher in the final term of (\ref{expansion}) vanish because $e_{ij}e_{ij}=e_{ji}e_{ji}=0$ for $i\neq j$.
Therefore
 \begin{align*}
 (\rho\otimes \rho)&(\prod_{1\leq j<i\leq N} \left({\rm exp}_q((1-q^{-2})(F_{{ij}}\otimes E_{ji}))\right) = \prod_{1\leq j<i\leq N} \left(1 + (q-q^{-1})(e_{ij}\otimes e_{ji})\right)
 \cr & = 1 + \sum_{1\leq j<i \leq N}(q-q^{-1})(e_{ij}\otimes e_{ji})
 \end{align*}
 because for $j< i$ and $k< l$ we cannot have both 
$e_{ji}e_{kl}$ and $e_{lk}e_{ij}$ nonzero. 
Hence, 
\begin{align*}
(\rho\otimes \rho)(\mathcal{R}) = (qe_{ii}\otimes e_{ii} + 
\sum_{1\leq j<i\leq N}(e_{ii}\otimes e_{jj} + e_{jj}\otimes e_{ii}))\prod_{1\leq i<j\leq N}(1 + (q-q^{-1})(e_{ij}\otimes e_{ji}))\end{align*}
which equals $R$ of (\ref{Rmatrix2}) as claimed.  

By  \cite{KS}, Propostion 2, $({S}\otimes {S})(\mathcal{R}) = \mathcal{R}$.  Clearly, we also have $({S}^{-1}\otimes {S}^{-1})(\mathcal{R}) = \mathcal{R}$. Hence
\begin{align*}
((\rho\otimes \natural\otimes S)\otimes (\rho\otimes \natural\otimes S))(\mathcal{R}) &= ((\rho\otimes \natural)\otimes (\rho\otimes \natural))(\mathcal{R}) \cr &= 
((\rho\otimes \natural\otimes S^{-1})\otimes (\rho\otimes \natural\otimes S^{-1}))(\mathcal{R}). 
\end{align*}
Thus, to complete the lemma,  it is sufficient to show that  $((\rho\otimes \natural)\otimes (\rho\otimes \natural))(\mathcal{R}) = R_{21}$.
Recall that $\rho\circ \natural(a) = {\rho({a})}^t$.  Therefore 
\begin{align*}
((\rho\otimes \natural)\otimes (\rho\otimes \natural))(\mathcal{R}) = \left((\rho\otimes \rho)(\mathcal{R})\right)^{t_1t_2} = R^{t_1t_2}
\end{align*}  It follows from direct inspection of the 
formula for $R$ in (\ref{Rmatrix2}) that $R^{t_1t_2} = R_{21}$.
\end{proof}

The next lemma evaluates $\mathcal{R}$ with respect to  other combinations of $\rho$, $\rho\circ \natural\circ S$ and $\rho\circ \natural\circ S^{-1}$. 
 \begin{lemma}\label{moreRimage} Let $R$ be the matrix defined in (\ref{Rmatrix2}).  
We have 
\begin{itemize}
\item[(i)] $( (\rho\circ \natural\circ S)\otimes \rho)(\mathcal{R}) = (R_{21}^{-1})^{t_2}$ 
\item[(ii)]  $((\rho\circ \natural\circ S)\otimes \rho)(\mathcal{R}_{21}^{-1}) = R^{t_2}$
\item[(iii)] $(\rho\otimes (\rho\circ \natural\circ S^{-1}))(\mathcal{R}) = (R_{21}^{-1})^{t_1}$
\item[(iv)] $(\rho\otimes (\rho\circ \natural\circ S^{-1}))(\mathcal{R}_{21}^{-1})= R^{t_1} $
\end{itemize}
\end{lemma}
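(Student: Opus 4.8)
The plan is to derive all four identities from Lemma \ref{Rimage} together with the antipode/$\natural$ behavior of $\mathcal{R}$, using only the fact that $\mathcal{R}$ satisfies $(\mathrm{id}\otimes S)(\mathcal{R}) = \mathcal{R}^{-1}$ and $(S\otimes \mathrm{id})(\mathcal{R}) = \mathcal{R}^{-1}$ (see \cite{KS}, Chapter 8), and that $\rho\circ\natural(a) = \rho(a)^t$. First I would record the elementary transpose-compatibility facts: for a map of the form $(\rho\circ\natural\circ S)$ applied in the first slot, one has $((\rho\circ\natural\circ S)\otimes\rho)(X) = \big((\rho\otimes\rho)((S\otimes\mathrm{id})(X))\big)^{t_1}$, since $\natural$ contributes the transpose $t_1$ and $S$ is pushed inside. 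Similarly $(\rho\otimes(\rho\circ\natural\circ S^{-1}))(X) = \big((\rho\otimes\rho)((\mathrm{id}\otimes S^{-1})(X))\big)^{t_2}$.

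For part (i): apply the above with $X = \mathcal{R}$ to get $((\rho\circ\natural\circ S)\otimes\rho)(\mathcal{R}) = \big((\rho\otimes\rho)((S\otimes\mathrm{id})(\mathcal{R}))\big)^{t_1} = \big((\rho\otimes\rho)(\mathcal{R}^{-1})\big)^{t_1}$. By Lemma \ref{Rimage}, $(\rho\otimes\rho)(\mathcal{R}) = R$, so $(\rho\otimes\rho)(\mathcal{R}^{-1}) = R^{-1}$; hence the expression is $(R^{-1})^{t_1}$. It then remains to identify $(R^{-1})^{t_1}$ with $(R_{21}^{-1})^{t_2}$, which is a purely combinatorial matrix identity: transposing $R^{-1}$ in the first component and in the second component both flip $R$ to $R_{21}$, and since $R_{21}^{-1}$ and $(R^{-1})_{21}$ coincide, one checks $(R^{-1})^{t_1} = ((R^{-1})_{21})^{t_1 t_2 t_2} = \ldots$; concretely I would just compare the explicit entry formulas in (\ref{Rmatrix2}), noting that $t_1 t_2$ applied to $R$ gives $R_{21}$ (already proved in Lemma \ref{Rimage}), so $(R^{-1})^{t_1} = ((R^{-1})^{t_1 t_2})^{t_2} = ((R^{t_1 t_2})^{-1})^{t_2} = (R_{21}^{-1})^{t_2}$. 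Part (iii) is the mirror image: $(\rho\otimes(\rho\circ\natural\circ S^{-1}))(\mathcal{R}) = \big((\rho\otimes\rho)((\mathrm{id}\otimes S^{-1})(\mathcal{R}))\big)^{t_2} = (R^{-1})^{t_2} = (R_{21}^{-1})^{t_1}$ by the same transpose bookkeeping.

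For parts (ii) and (iv) I would run the same argument with $\mathcal{R}$ replaced by $\mathcal{R}_{21}^{-1}$, using that $\mathcal{R}_{21}^{-1}$ is itself a universal $R$-matrix (\cite{KS}, Proposition 1), so $(S\otimes\mathrm{id})(\mathcal{R}_{21}^{-1}) = (\mathcal{R}_{21}^{-1})^{-1} = \mathcal{R}_{21}$, giving $((\rho\circ\natural\circ S)\otimes\rho)(\mathcal{R}_{21}^{-1}) = \big((\rho\otimes\rho)(\mathcal{R}_{21})\big)^{t_1}$; and $(\rho\otimes\rho)(\mathcal{R}_{21}) = \big((\rho\otimes\rho)(\mathcal{R})\big)_{21} = R_{21}$, whence the value is $(R_{21})^{t_1} = R^{t_2}$ (since $t_1$ on $R_{21}$ swaps components and transposes the first slot, equivalently transposing the second slot of $R$). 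Part (iv) is the symmetric statement with $t_2$ and $S^{-1}$. The only genuinely delicate point is getting every transpose/flip bookkeeping identity right — in particular justifying $(S\otimes\mathrm{id})(\mathcal{R}) = \mathcal{R}^{-1}$ and the interaction of $\natural$ in one tensor slot with the matrix transpose — but these are all either standard $R$-matrix facts cited above or direct inspections of the explicit formula (\ref{Rmatrix2}), so no serious obstacle is anticipated; the main care is simply in tracking which component each transpose acts on.
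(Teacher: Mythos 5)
Your argument is correct and follows essentially the same route as the paper: push $\natural$ through as a transpose in the appropriate tensor slot, use the antipode identities to convert $\mathcal{R}$ (or $\mathcal{R}_{21}^{-1}$) into its inverse, apply Lemma \ref{Rimage}, and finish with the bookkeeping identity $R^{t_1t_2}=R_{21}$; the paper's only variation is that it deduces (iii)--(iv) from (i)--(ii) by applying $t_1t_2$ and the relation $(S\otimes S)(\mathcal{R})=\mathcal{R}$, rather than running your mirror computation directly. One small correction: the identity you quote in the preamble as $(\mathrm{id}\otimes S)(\mathcal{R})=\mathcal{R}^{-1}$ should read $(\mathrm{id}\otimes S^{-1})(\mathcal{R})=\mathcal{R}^{-1}$ (equivalently $(\mathrm{id}\otimes S)(\mathcal{R}^{-1})=\mathcal{R}$, the form cited from \cite{KS}), which is in fact the version your computations for (iii) and (iv) actually use.
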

\begin{proof}
By \cite{KS}, Proposition 2, $(S\otimes Id)(\mathcal{R}) = \mathcal{R}^{-1}$ and $(Id\otimes  {S})(\mathcal{R}^{-1}) = \mathcal{R}$. It follows that \begin{align*}( (\rho\circ \natural\circ S)\otimes \rho)(\mathcal{R})=((\rho\circ \natural)\otimes \rho)(\mathcal{R}^{-1})\end{align*} The same assertion holds for $\mathcal{R}$ replaced by $\mathcal{R}_{21}^{-1}$ since the latter is also a universal $R$-matrix.  Hence 
we prove (i) and (ii) by determining the image of $\mathcal{R}$ and $\mathcal{R}_{21}$ under $(\rho\circ \natural)\otimes \rho$.  

Using the fact that $(\rho\circ\natural) (a) = {\rho({a})}^t$   and Lemma \ref{Rimage}, we see that 
\begin{align*}
((\rho\circ \natural)\otimes \rho)(\mathcal{R}^{-1}) =\left( (\rho\otimes \rho)(\mathcal{R}^{-1})\right)^{t_1}=\left( ((\rho\otimes \rho)(\mathcal{R}))^{-1}\right)^{t_1} = (R^{-1})^{t_1} = (R_{21}^{-1})^{t_2}.
\end{align*}
Similarly, 
\begin{align*}
((\rho\circ \natural)\otimes \rho)(\mathcal{R}_{21}) =\left( (\rho\otimes \rho)(\mathcal{R}_{21})\right)^{t_1} = (R_{21})^{t_1} = R^{t_2}.
\end{align*}
This proves (i) and (ii). 

Since $(S\otimes S)(\mathcal{R}) = \mathcal{R}$ (\cite{KS}, Proposition 2), we have
\begin{align*}
((\rho\otimes (\rho\circ \natural\circ S^{-1}))(\mathcal{R}))^{t_1} &= (\rho^t\otimes (\rho\circ \natural\circ S^{-1}))(\mathcal{R}) = ((\rho\circ \natural)\otimes (\rho\circ \natural\circ S^{-1}))(\mathcal{R})
\cr&=((\rho\circ \natural\circ S)\otimes (\rho\circ \natural))(\mathcal{R})=((\rho\circ \natural\circ S)\otimes \rho)(\mathcal{R})^{t_2}.
\end{align*}
Therefore $((\rho\otimes (\rho\circ \natural\circ S^{-1}))(\mathcal{R})) = ((\rho\circ \natural\circ S)\otimes \rho)(\mathcal{R})^{t_1t_2}$. Using the same argument, this equality holds with $\mathcal{R}$ replaced with $\mathcal{R}_{21}^{-1}$.  Thus  assertions (iii) and (iv) follow from applying $t_1t_2$ to 
(i) and (ii).
\end{proof}

\section{Quantized Functions on Matrices}\label{section:FRT}
\subsection{FRT Construction} \label{section:FRT1}We review here the basics about the Faddeev-Reshetikhin-Takhtajan (FRT) construction of quantized functions on $N\times N$ matrices.  A good reference for additional details is \cite{KS}, Chapter 9.

 Let $\zeta$ be an $s$-dimensional representation of $U_q(\mathfrak{gl}_N)$ and set $R_{\zeta} = (\zeta\otimes \zeta)(\mathcal{R})$.  Let $M$ be the $s^2$-dimensional vector space spanned by the $m_{ij}, 1\leq i,j\leq s$. The  FRT bialgebra $A(R_{\zeta})$ is the quotient of the tensor algebra $T(M)$ by the ideal generated by 
 \begin{align}\label{FRTrelns}\sum_{j,k}(R_{\zeta})_{jk}^{ld} m_{ja}\otimes m_{kb} - \sum_{j,k}m_{dk}\otimes m_{lj}(R_{\zeta})^{jk}_{ab}.
\end{align}
for all $i,j,a,b$ and coalgebra structure inherited from $T(M)$ so that 
$
\Delta(m_{ij}) = \sum_{k}m_{ik}\otimes m_{kj}$ and $\epsilon(m_{ij}) = \delta_{ij}
$
for all $i,j$.

\subsection{Algebra Structure} \label{section:as}
When $\zeta=\rho$, $R_{\rho} = (\rho\otimes \rho)(\mathcal{R}) = R$ and  the FRT bialgebra $A(R_{\rho})$ is  the quantized function algebra $\mathcal{O}_q(\Mat_N)$
on $N\times N$ matrices.  In this case,  $M$ can be identified with  $V\otimes W$ and each $m_{ij}$ wiith $v_i\otimes w_j$.  Moreover, we write $t_{ij}$ for   the image of $m_{ij}$ in $A(R_{\rho})$.  Using the explicit formula for entries of $R$, relations  (\ref{FRTrelns})  at $\zeta=\rho$  become
  \begin{itemize}
\item[(i)]$t_{ki}t_{kj} = qt_{kj}t_{ki}$, $t_{ik}t_{jk} = qt_{jk}t_{ik}$ ($i<j$)
\item[(ii)] $t_{il}t_{kj}  = t_{kj}t_{il}, t_{ij}t_{kl} -t_{kl}t_{ij} = (q-q^{-1})t_{kj}t_{il}$ ($i<k;j<l$)
\end{itemize}
Set $T=(t_{ij})$, the matrix with $ij$ entry equal to $t_{ij}$.  Set $T_1=T\otimes Id$ and $T_2 = Id\otimes T$.   As in \cite{KS}, Section 9.1.1, these relations can be written in matrix form as 
\begin{align}\label{FRTmatrix}
RT_1T_2 = T_2T_1R.
\end{align}
It is straightforward to check that the map $\iota$ defined by 
\begin{align}\label{iota} \iota(t_{ij}) = t_{ji} 
\end{align} 
for all $i,j=1,\dots, n$ defines an algebra automorphism of $\mathcal{O}_q(\Mat_N)$.  
It is well-known (see for example \cite{NYM}, Theorem 1.4) that $\mathcal{O}_q(\Mat_N)$  has a PBW type basis consisting of elements of the form 
\begin{align}\label{PBWbasis-ordinary-Weyl}
t_{11}^{m_{11}}t_{12}^{m_{12}}\cdots t_{1N}^{m_{1N}}t_{2,1}^{m_{21}}t_{22}^{m_{22}}\cdots t_{2N}^{m_{2N}}\cdots t_{N1}^{m_{N1}}\cdots t_{NN}^{m_{NN}}.
\end{align}
Examining the relations, we see that  each $t_{ij}$ can be replaced with $t_{N-i,N-j}$ in (\ref{PBWbasis-ordinary-Weyl}) and yield another set of monomials that form a basis for $\mathcal{O}_q(\Mat_N)$.

Applying $\flip$ to both sides of (\ref{FRTmatrix}) provides an  equivalent set of equations in matrix form 
$
R_{21}T_2T_1=T_1T_2R_{21}$.
Multiplying on the left and right of both sides by $R_{21}^{-1}$ and then switching the sides gives us 
$R_{21}^{-1}T_1T_2 = T_2T_1R_{21}^{-1}.$
In other words, the same FRT construction using the universal $R$-matrix $\mathcal{R}_{21}^{-1}$ instead of $\mathcal{R}$, results in  the same algebra $\mathcal{O}_q(\Mat_N)$.  Applying the map $\iota$ to both sides yields yet another formulation of these relations
$RT_1^tT_2^t=T_2^tT_1^tR$.


\subsection{Module Realization}\label{section:mr}
 Recall that $V$ is a left $U_q(\mathfrak{gl}_N)$-module and $W$ is a right $U_q(\mathfrak{gl}_N)$-module.  Using the coproduct  for $U_q(\mathfrak{gl}_N)$, $M$ becomes a  $U_q(\mathfrak{gl}_N)$-bimodule
 and its tensor algebra $T(M)$ becomes  a $U_q(\mathfrak{gl}_N)$-bimodule algebra.  
 \begin{lemma} \label{lemma:actions}The algebra $\mathcal{O}_q(\Mat_N)$ is a $U_q(\mathfrak{gl}_N)$-bimodule algebra with left  action defined by 
 \begin{align*}
 &E_{k} \cdot t_{i+1,j} =\delta_{ik}t_{ij},
 \quad 
F_{k} \cdot t_{ij} =\delta_{ik}  t_{i+1,j},
\quad K_{\epsilon_r} \cdot t_{ij} = q^{\delta_{ir}} t_{ij},
\end{align*} 
and
 right action defined by 
\begin{align*} &t_{ij} \cdot E_k=\delta_{jk}t_{i,j+1},\quad
 t_{i,j+1}\cdot F_k=\delta_{jk} t_{ij}, \quad
t_{ij} \cdot K_{\epsilon_r}= q^{\delta_{jr}}t_{ij},
\end{align*} 
for $r,i,j= 1, \dots, N$ and $k=1, \dots, N-1$.  Moreover, the right action is related to the left via 
$t_{ij}\cdot a = \iota({a}^{\natural}\cdot t_{ji})$ for all $a\in U_q(\mathfrak{gl}_N)$ and $i,j\in \{1, \dots, N\}$.
 \end{lemma}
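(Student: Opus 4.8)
The plan is to transport the bimodule-algebra structure down from the free algebra $T(M)$ and then to compute everything explicitly on the generators. Identify $M$ with $V\otimes W$ via $m_{ij}\leftrightarrow v_i\otimes w_j$. Since $V$ is a left and $W$ a right $U_q(\mathfrak{gl}_N)$-module, the two actions on $M$ commute, so $M$ is a $U_q(\mathfrak{gl}_N)$-bimodule, and the coproduct makes $T(M)$ a bimodule algebra in the standard way. On the degree-one piece the actions are just those of $\rho$ on $V$ (on the left) and on $W$ (on the right), so unwinding the formulas for $\rho(E_k),\rho(F_k),\rho(K_{\epsilon_r})$ from Section \ref{section:vector} together with $e_{ik}v_j=\delta_{kj}v_i$ and $w_je_{ki}=\delta_{jk}w_i$ yields precisely the displayed formulas for $E_k\cdot t_{ij}$, $F_k\cdot t_{ij}$, $K_{\epsilon_r}\cdot t_{ij}$ and for $t_{ij}\cdot E_k$, $t_{ij}\cdot F_k$, $t_{ij}\cdot K_{\epsilon_r}$ (where $t_{ij}$ still denotes $m_{ij}\in T(M)$).

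The substantive step is to show that the defining ideal $I$ with $\mathcal{O}_q(\Mat_N)=T(M)/I$ is a sub-bimodule, so that the quotient inherits the structure. As $T(M)$ is a bimodule algebra, it suffices to check that the span $J\subseteq M\otimes M$ of the FRT relations (\ref{FRTrelns}) --- equivalently, the entries of $RT_1T_2-T_2T_1R$ in (\ref{FRTmatrix}) --- is stable under both the left and right actions. This is exactly where Lemma \ref{Rimage} enters: because $R=(\rho\otimes\rho)(\mathcal{R})$ and the universal $R$-matrix intertwines $\Delta$ with $\Delta^{\mathrm{op}}$ --- concretely, (\ref{firstiso}) with $M=M'=V$ says $\flip\circ R$ is a left $U_q(\mathfrak{gl}_N)$-module endomorphism of $V\otimes V$, i.e.\ $R\,\Delta(a)=\Delta^{\mathrm{op}}(a)\,R$ on $V\otimes V$ for all $a$, with the symmetric statement on $W\otimes W$ for the right action --- the left action only touches the ``$V$-slots'' of the generators, where $R$ conjugates $\Delta$ into $\Delta^{\mathrm{op}}$, while the right action only touches the ``$W$-slots'', where the same holds; in either case the coefficient matrices $R$ are unchanged, so $J$ is carried into itself. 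I expect this index bookkeeping --- rewriting (\ref{FRTrelns}) in a manifestly module-theoretic form and matching it against the intertwining identity --- to be the main (and essentially the only) obstacle; everything else is a direct computation.

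Finally, for the identity $t_{ij}\cdot a=\iota(a^\natural\cdot t_{ji})$: both sides are $\mathbb{C}(q)$-linear in $a$, and $x\mapsto\iota(a^\natural\cdot\iota(x))$ defines a right $U_q(\mathfrak{gl}_N)$-action on $\mathcal{O}_q(\Mat_N)$ --- since $\natural$ is an algebra anti-automorphism preserving $\Delta$ and $\iota$ is an algebra automorphism with $\iota^2=\mathrm{id}$ --- so it is enough to verify the identity when $a$ ranges over the generators $E_k,F_k,K_{\epsilon_r}$ and then to propagate it along products using $t_{ij}\cdot(ab)=(t_{ij}\cdot a)\cdot b$ and $(ab)^\natural=b^\natural a^\natural$. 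The generator cases are immediate from the formulas above, the relations $E_k^\natural=q^{-1}F_kK_k$, $F_k^\natural=qK_k^{-1}E_k$, $K_{\epsilon_r}^\natural=K_{\epsilon_r}$ of (\ref{starstructure}), and $\iota(t_{ij})=t_{ji}$ from (\ref{iota}); for instance $\iota(E_k^\natural\cdot t_{ji})=\iota(q^{-1}F_kK_k\cdot t_{ji})=\iota(\delta_{jk}\,t_{j+1,i})=\delta_{jk}\,t_{i,j+1}=t_{ij}\cdot E_k$, and the $F_k$ and $K_{\epsilon_r}$ checks are entirely analogous. This step also serves as a cross-check of the right-action formulas against the left-action ones.
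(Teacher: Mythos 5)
Your proposal is correct and follows essentially the same route as the paper's proof: transport the bimodule algebra structure from $T(M)$ with $M=V\otimes W$, use Lemma \ref{Rimage} together with the intertwining isomorphisms (\ref{firstiso}) and (\ref{secondiso}) to see that the span of the FRT relations is a $U_q(\mathfrak{gl}_N)$-sub-bimodule (left action seen on the $V$-slots, right action on the $W$-slots), read off the generator formulas from $\rho$, and verify $t_{ij}\cdot a=\iota(a^{\natural}\cdot t_{ji})$ on generators via (\ref{starstructure}). The only difference is presentational: the paper phrases the invariance through the maps (\ref{matrixrelnVW})--(\ref{justV}), while you phrase it as $R\,\Delta(a)=\Delta^{\mathrm{op}}(a)\,R$ on the relevant slots, which is the same fact.
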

 \begin{proof}  Taking $\zeta=\rho$ and noting that $(R^{t_1t_2})^{jk}_{ld} = R^{ld}_{jk}$, we can rewrite (\ref{FRTrelns}) in this case as 
 \begin{align*}\sum_{j,k}(R^{t_1t_2})^{jk}_{ld} (v_j\otimes w_a)\otimes (v_k\otimes w_b) - \sum_{j,k}(v_d\otimes w_k)\otimes (v_l\otimes w_j)\flip(R^{t_1t_2})_{kj}^{ba}.
\end{align*}
By Lemma \ref{Rimage}, we have $(\rho\otimes \rho)(\mathcal{R}_{21}) = R^{t_1t_2}$.  It follows that $(\rho\otimes \rho) (\mathcal{R}_{12}) = \flip (R^{t_1t_2})$.  Thus, the combined relations for all choices of $a,b,d,l$ corresponds to  the  map of vector spaces 
 \begin{align}\label{matrixrelnVW}\mathcal{R}_{31}\cdot (V_{(2)}\otimes W_{(2)})\otimes (V_{(1)}\otimes W_{(1)}) \rightarrow (V_{(1)}\otimes W_{(1)})\otimes (V_{(2)}\otimes W_{(2)})\cdot \mathcal{R}_{24}.
\end{align}
In particular, these relations are simply the difference of a typical element preceding the arrow in (\ref{matrixrelnVW}) with its image after the arrow.  If we ignore the contributions from $W$, this becomes 
\begin{align}\label{justV}
R_{21}\cdot (V_{(2)}\otimes V_{(1)}) \rightarrow V_{(1)}\otimes V_{(2)} 
\end{align}
Recall that $V$ is a left $U_q(\mathfrak{gl}_N)$-module defined by the representation $\rho$ and that by Lemma \ref{Rimage} we have  $(\rho\otimes \rho)(\mathcal{R}) = R$.  Furthermore, by (\ref{firstiso}), the map 
\begin{align*}
V_{(1)}\otimes V_{(2)} \longrightarrow \flip\cdot ((\rho\otimes \rho)(\mathcal{R})(V_{(1)} \otimes V_{(2)})) = R_{21}\cdot (V_{(2)}\otimes V_{(1)})
\end{align*}
is an isomorphism of left $U_q(\mathfrak{gl}_N)$-modules.  It follows that (\ref{justV}) and hence (\ref{matrixrelnVW}) is invariant under the left action of $U_q(\mathfrak{gl}_N)$ and hence $\mathcal{O}_q(\Mat_N)$ inherits the structure of a left $U_q(\mathfrak{gl}_N)$-module algebra from $T(M)$. 

For the right action, we ignore the contributions from $V$  in (\ref{matrixrelnVW}).  Using (\ref{secondiso}) with $\mathcal{R}_{21}^{-1}$ instead of $\mathcal{R}$ in this mapping, we see that the map 
\begin{align*} 
W_{(2)}\otimes W_{(1)} \rightarrow \flip\cdot \left((W_{(2)}\otimes W_{(1)})\cdot (\rho\otimes \rho) (\mathcal{R}_{21})\right)
\end{align*}
is an isomorphism of right $U_q(\mathfrak{gl}_N)$-modules.   It follows that (\ref{matrixrelnVW}) is invariant under the right action of $U_q(\mathfrak{gl}_N)$. Thus $\mathcal{O}_q(\Mat_N)$ inherits the  right $U_q(\mathfrak{gl}_N)$-module algebra structure from $T(M)$.

 Explicit formulas  for this bimodule action on the generators of $\mathcal{O}_q(\Mat_N)$  follow directly from the explicit formulas for the actions on $V$ and  $W$ derived from the definition of $\rho$.  The final assertion is easily checked using (\ref{starstructure}). 
\end{proof}

Let $\mathcal{O}_q(\Mat_N)^{op}$ denote the bialgebra   with the same coalgebra structure  and opposite multiplication as $\mathcal{O}_q(\Mat_N)$.  Write $\partial_{ij}$, $1\leq i,j\leq N$ for the generators of $\mathcal{O}_q(\Mat_N)^{op}$ so that the  map sending $t_{ij}$ to $\partial_{ij}$, which we denote by $t_{ij}\rightarrow t_{ij}^* = \partial_{ij}$, for all $i,j$ is an algebra anti-isomorphism and coalgebra isomorphism.  
The bialgebra  $\mathcal{O}_q(\Mat_N)^{op}$ is also an FRT bialgebra.  In this case,  $M= V^*\otimes W^*$,     $m_{ij} = v_i^*\otimes w_j^*$ for each $i,j$, 
and $\partial_{ij}$ is the image of $m_{ij}$ when we pass from the tensor algebra of $M$ to the FRT algebra $A(R_{\rho\circ \natural\circ S})$.  Set $P$ equal to the matrix with 
$ij$ entry equal to $\partial_{ij}$ and write $P_1$ for $P\otimes Id$ and $P_2$ for $Id \otimes P$.  The  relations for $\mathcal{O}_q(\Mat_N)^{op}$ can be written in matrix form as 
\begin{align}\label{FRTmatrixop}
R_{21}P_1P_2 = P_2P_1R_{21}.
\end{align} Just as for the $t_{ij}$, the map $\iota:\mathcal{O}_q(\Mat_N)^{op} \rightarrow
\mathcal{O}_q(\Mat_N)^{op}$ defined on generators by $\iota(\partial_{ij}) = \partial_{ji}$ all $i,j$ is an algebra isomorphism.

\begin{lemma} \label{lemma:opposite relation}The algebra $\mathcal{O}_q(\Mat_N)^{op}$ is a $U_q(\mathfrak{gl}_N)$-bimodule algebra with left action given by 
 \begin{align*}
 &E_{k} \cdot \partial_{ij} =-\delta_{ik}q^{-1}\partial_{i+1,j},
 \quad 
F_{k} \cdot \partial_{i+1,j} =-\delta_{ik}  q\partial_{ij},
\quad K_{\epsilon_r} \cdot \partial_{ij} = q^{-\delta_{ir}} \partial_{ij},
\end{align*} 
and right action given by 
\begin{align*} &\partial_{i,j+1} \cdot E_k=-\delta_{jk}q\partial_{ij},\quad
 \partial_{i,j}\cdot F_k=-\delta_{jk} q^{-1}\partial_{i,j+1}, \quad
\partial_{ij} \cdot K_{\epsilon_r}= q^{-\delta_{jr}}\partial_{ij},
\end{align*} 
for $r,i,j= 1, \dots, N$ and $k=1, \dots, N-1.$  Moreover, the right action is related to the left via 
$\partial_{ij}\cdot a = \iota({a}^{\natural}\cdot \partial_{ji})$ for all $a\in U_q(\mathfrak{gl}_N)$ and $i,j\in \{1, \dots, N\}$.
 \end{lemma}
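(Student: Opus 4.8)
The plan is to follow the proof of Lemma~\ref{lemma:actions} essentially verbatim, with the left module $V$ and the right module $W$ replaced by the duals $V^*$ and $W^*$. Recall that $\mathcal{O}_q(\Mat_N)^{op}$ is the FRT bialgebra $A(R_{\rho\circ\natural\circ S})$ with underlying space $M=V^*\otimes W^*$, where $\partial_{ij}$ is the image of $v_i^*\otimes w_j^*$; as in the $\mathcal{O}_q(\Mat_N)$ case, $M$ is a $U_q(\mathfrak{gl}_N)$-bimodule whose left action is inherited from the left $U_q(\mathfrak{gl}_N)$-module $V^*$ of (\ref{firstaction}) and whose right action is inherited from the right $U_q(\mathfrak{gl}_N)$-module $W^*$ of (\ref{secondaction}), and $T(M)$ becomes a $U_q(\mathfrak{gl}_N)$-bimodule algebra via the coproduct. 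Thus it suffices to show that the ideal generated by the relations (\ref{FRTrelns}) at $\zeta=\rho\circ\natural\circ S$ is a sub-bimodule of $T(M)$.

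First I would rewrite this relation ideal as (the span of differences coming from) a map of vector spaces of the same shape as (\ref{matrixrelnVW}), but with each $V_{(i)},W_{(i)}$ replaced by $V^*_{(i)},W^*_{(i)}$ and with $\mathcal{R}$ replaced by the appropriate universal $R$-matrices in the two tensor slots. What makes this go through is Lemma~\ref{Rimage}: both $(\rho\circ\natural\circ S)\otimes(\rho\circ\natural\circ S)$ and $(\rho\circ\natural\circ S^{-1})\otimes(\rho\circ\natural\circ S^{-1})$ send $\mathcal{R}$ to $R_{21}$, which is exactly the matrix appearing in the relations (\ref{FRTmatrixop}) of $\mathcal{O}_q(\Mat_N)^{op}$. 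Discarding the $W^*$-contributions and applying the module isomorphism (\ref{firstiso}) to $V^*\otimes V^*$ then shows the relation subspace is stable under the left $U_q(\mathfrak{gl}_N)$-action; discarding the $V^*$-contributions and applying (\ref{secondiso}), with $\mathcal{R}_{21}^{-1}$ in place of $\mathcal{R}$ as in the proof of Lemma~\ref{lemma:actions}, to $W^*\otimes W^*$ shows it is stable under the right $U_q(\mathfrak{gl}_N)$-action. Hence $\mathcal{O}_q(\Mat_N)^{op}$ inherits a $U_q(\mathfrak{gl}_N)$-bimodule algebra structure from $T(M)$.

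The explicit formulas on generators are then read off from $E_k\cdot(v_i^*\otimes w_j^*)=(E_k\cdot v_i^*)\otimes w_j^*$, $(v_i^*\otimes w_j^*)\cdot E_k=v_i^*\otimes(w_j^*\cdot E_k)$, and their analogues for $F_k,K_{\epsilon_r}$, using the matrices $(\rho\circ\natural\circ S)(E_i)=-q^{-1}e_{i+1,i}$, $(\rho\circ\natural\circ S)(F_i)=-q\,e_{i,i+1}$, $(\rho\circ\natural\circ S)(K_{\epsilon_r})=q^{-1}e_{rr}+\sum_{j\neq r}e_{jj}$ recorded in Section~\ref{section:vector}, together with their $S^{-1}$-analogues obtained from (\ref{starinversestructure}). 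The final identity $\partial_{ij}\cdot a=\iota(a^{\natural}\cdot\partial_{ji})$ is checked directly on the algebra generators $E_k,F_k,K_{\epsilon_r}$ using (\ref{starstructure}), exactly as for $\mathcal{O}_q(\Mat_N)$. I expect the only delicate point to be the first step: tracking the transposes carefully enough to put the relation ideal of $A(R_{\rho\circ\natural\circ S})$ in precisely the form where (\ref{firstiso}) and (\ref{secondiso}) apply, and choosing the correct variant ($\mathcal{R}$ versus $\mathcal{R}_{21}^{-1}$) for each of the two actions; once the relations are in the form (\ref{matrixrelnVW}), the remainder is routine.
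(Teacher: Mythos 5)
Your proposal is correct and follows essentially the same route as the paper, which proves the lemma by citing Lemma \ref{Rimage} (so that both $(\rho\circ\natural\circ S)^{\otimes 2}$ and $(\rho\circ\natural\circ S^{-1})^{\otimes 2}$ send $\mathcal{R}$ to $R_{21}$) and then repeating the proof of Lemma \ref{lemma:actions} verbatim with $R$ replaced by $R_{21}$ and $\rho$ by $\rho\circ\natural\circ S$. Your version merely spells out the details (the use of (\ref{firstaction}), (\ref{secondaction}), (\ref{firstiso}), (\ref{secondiso}) and the explicit matrices) that the paper leaves implicit.
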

 \begin{proof}
By Lemma \ref{Rimage}, we have  $((\rho\otimes \natural\otimes S)\otimes (\rho\otimes \natural\otimes S))(\mathcal{R}) = R_{21}$. The proof is identical to the proof of Lemma \ref{lemma:actions} with  $R$ replaced by $R_{21}$ and $\rho$ replaced by $\rho\circ \natural\circ S$. \end{proof}

It is straightforward to check that  the   $U_q(\mathfrak{gl}_N)$-bimodule structures of $\mathcal{O}_q(\Mat_N)$ and $\mathcal{O}_q(\Mat_N)^{op}$ are related by the following formulas
$(a\cdot f)^* = (S( a))^{\natural}\cdot f^* $ and $ (f\cdot a)^*=f^*\cdot (S^{-1}(a))^{\natural}$
for all $f\in \mathcal{O}_q(\Mat_N)$  and all $a\in U_q(\mathfrak{gl}_N)$.

\section{Quantum Homogeneous Spaces}\label{section:three-families}
\subsection{Three families}\label{section:three-families-defn} In this section, we consider three families of symmetric pairs $\mathfrak{g}, \mathfrak{k}$ where $\mathfrak{g}$ is a complex Lie algebra,  $\theta$ is an involution of $\mathfrak{g}$, and $\mathfrak{k} = \mathfrak{g}^{\theta}$.
In each case,  $\theta$ takes the form $x\mapsto -Jx^tJ^{-1}$ for an appropriate matrix $J$. We give  the generators for the fixed subalgebras $U(\mathfrak{k})$ in terms of standard Chevalley generators.     Afterwards, we specify the generators for the right coideal subalgebras $\mathcal{B}_{\theta}(b)$  of $U_q(\mathfrak{g})$ which are quantum analogs of $U(\mathfrak{k})$.  Here, we follow the presentation in \cite{K} with the obvious extension from the semisimple case to the reductive setting. 
 Note that the $b$ stands for nonzero parameters $b=(b_1, \dots, b_m)$ that  correspond to  Hopf algebra automorphisms of $U_q(\mathfrak{g})$.  In particular, the quantum analogs given below are all related to each other via Hopf algebra automorphisms. 
  In the presentation below, all Lie algebras are complex and we omit $\mathbb{C}$ from the notation.

\medskip
\noindent
{\bf Type AI}:  $\mathfrak{g} = \mathfrak{gl}_n$ and $\theta$ is defined by $\theta(x) = -x^t$ for all $x\in \mathfrak{gl}(n)$ and $J=I_n$, the $n\times n$ identity matrix. 
In terms of Chevalley generators, $\theta(e_i) = -f_i$, 
$\theta(f_i) = -e_i$ and $\theta(h_{\epsilon_j}) = -h_{\epsilon_j}$ each $i=1, \dots, n-1$ and $j=1, \dots, n$.  Hence $\mathfrak{k}$ is generated by $e_i-f_i, i=1, \dots, n-1$.
Passing to the quantum case,   $\mathcal{B}_{\theta}(b)$  is generated by $F_i-b_iE_iK_i^{-1}$, for $i=1, \dots, n-1$.   

\medskip
\noindent
{\bf Type AII:} $\mathfrak{g} = \mathfrak{gl}_{2n}$ and $\theta$ is defined by $\theta(x) = -Jx^tJ^{-1}$ where 
\begin{align*} J = \sum_{k=1}^ne_{2k-1,2k}-e_{2k,2k-1}.
\end{align*}  
In this case, we have
\begin{itemize}
\item $\theta(e_i) = e_i, \theta(f_i) = f_i, \theta(h_i) = h_i$ for $i=1, 3, \dots, 2n-1$
\item $\theta(e_i) = - [f_{i-1}, [f_{i+1}, f_{i}]]$ for  $i$ even.
\item $\theta(h_{\epsilon_{2i-1}}) = -h_{\epsilon_{2i}}$ 
 for $i=1, \dots, n$.
\end{itemize}
Hence $\mathfrak{k}$ is generated by $h_i, e_i, f_i $ for $i$ odd and  $f_i -[f_{i-1},[f_i, f_{i+1}]]$ for $i$ even.  
 Passing to the quantum case, we have that 
$\mathcal{B}_{\theta}(b) $ is generated by 
\begin{itemize}
\item $K_i^{\pm 1}, E_i,F_i$ for $i$ odd 
\item $B_i= F_i -b_i (({\rm ad}\ E_{i-1}E_{i+1})E_i)K_i^{-1}= F_i-b_i[E_{i-1}, [E_{i+1}, E_i]_q]_qK_i^{-1}$ for $i$ even.
\end{itemize}

\bigskip
\noindent
{\bf Type A diagonal case}:   $\mathfrak{g} = \mathfrak{gl}_n\oplus \mathfrak{gl}_n$ viewed as the  Lie subalgebra  of $\mathfrak{gl}_{2n}$ consisting of block diagonal matrices 
\begin{align*}\left(\begin{matrix} \mathfrak{gl}_n &0\cr 0 &\mathfrak{gl}_n\cr \end{matrix}\right)\end{align*}  and $\theta$ is defined by  \begin{align*}\theta\left(\begin{matrix}x &0\cr 0 &y\cr \end{matrix}\right)=-J\left(\begin{matrix}x^t &0\cr 0 &y^t\cr \end{matrix}\right)J^{-1}=\left(\begin{matrix}-y^t &0\cr 0 &-x^t\cr \end{matrix}\right)\end{align*} 
where 
\begin{align*}
J= \sum_{k=1}^ne_{k,n+k} + e_{n+k,k}=\left(\begin{matrix} 0 &I_n\cr I_n &0\cr\end{matrix}\right).
\end{align*}
Using this notation, the set of Chevalley generators  for $\mathfrak{gl}_n\oplus \mathfrak{gl}_n$ is the union of two sets, the first consisting of the generators for the first copy of $\mathfrak{gl}_n$ and the second consisting of  the generators for the second copy of $\mathfrak{gl}_n$.  We write this as 
 $e_k,f_k,h_{\epsilon_j} $  for the first copy of $\mathfrak{gl}_n$ and $e_{n+k}, f_{n+k}, h_{\epsilon_{n+j}}$   for the second copy where  $k=1, \dots, n-1$ and $ j=1, \dots, n$. Note that  $\theta$ satisfies 
$\theta(f_i) =- e_{n+i}, \ \theta(f_{n+i}) =- e_i, \theta(h_{\epsilon_j}) = -h_{\epsilon_{n+j}}$
for $i=1, \dots, n-1$ and $j=1, \dots, n$.  Hence,   $\mathfrak{k}$ is generated by $f_i-e_{n+i}$, $ f_{n+i}-e_i$, and $\epsilon_j-\epsilon_{n+j}$ for $ i=1, \dots, n-1$.
Passing to the quantum case, the corresponding quantum symmetric pair coideal subalgebra $\mathcal{B}_{\theta}(b)$ is generated by 
\begin{align*}
B_i = F_i -b_iE_{n+i}K_{i}^{-1}, \quad B_{n+i}= F_{n+i} -b_iE_{i}K_{n+i}^{-1},{\rm \quad and\quad}(K_{\epsilon_j}^{- 1}K_{\epsilon_{n+j}})^{\pm 1}
\end{align*}
for $i=1, \dots, n-1$ and $j=1, \dots, n$.  

\bigskip
 In the remainder of the paper, we frequently use the rank of the Lie algebra $\mathfrak{g}$ to specify various parameters.  This rank, denoted ${\rm rank}(\mathfrak{g})$,  is just the dimension of the Cartan subalgebra of $\mathfrak{g}$.   For the Type AI family, this rank is $n$. It is $2n$ for Type AII as well as for the diagonal family.

\subsection{Invariant Elements}\label{section:ie}
To simplify notation, we write $\mathscr{P}= \mathcal{O}_q(\Mat_N)$ in Type AI (with $N=n$) and Type AII (with $N=2n$) and let $\mathscr{P} = \mathcal{O}_q({\rm Mat}_n)\otimes \mathcal{O}_q({\Mat}_n)$ for diagonal type. We represent the generators of $\mathscr{P}$ using $t_{ij}$ for all three families and write $T=(t_{ij})$, the matrix with $i,j$ entry equal to $t_{ij}$.  This is the standard way for the first two families. For the diagonal type,   $t_{ij}$, $i,j=1, \dots, n$ are  generators of the first copy of $\mathcal{O}_q(\Mat_n)$ and $t_{n+i,n+j} $, $i,j=1, \dots, n$ are the generators of the second copy.  Moreover $t_{i,n+j} = t_{n+i,j} = 0$ for all $i,j=1, \dots, n$. 

Using Lemma \ref{lemma:actions}, we give $\mathscr{P}$  the structure of a $U_q(\mathfrak{g})$-bimodule.  Again, for Types AI and AII, this is standard.   For the diagonal type, the $U_q(\mathfrak{g})$-bimodule structure is set so that the first copy of $U_q(\mathfrak{gl}_n)$ (i.e. the one generated by the $E_i,F_i, K_{\epsilon_j}^{\pm 1}$, $i=1,\dots, n-1$, $j=1, \dots, n$) acts on both the left and the right on 
the copy of $\mathcal{O}_q(\Mat_N)$ generated by the $t_{ij}$, $1\leq i,j\leq n$, just as in Lemma \ref{lemma:actions}.  The left and right action of the  second copy of $U_q(\mathfrak{gl}_n)$ (where here we take as generators $E_{n+i}, F_{n+i}, K_{\epsilon_{n+j}}^{\pm 1}, $  $i=1,\dots, n-1$, $j=1, \dots, n$) on 
the copy of $\mathcal{O}_q(\Mat_n)$ generated by the $t_{n+i,n+j}$, $1\leq i,j\leq n$, is also the same, where here, each term with an $i$ subscript is replaced with a term using $i+n$ as subscript.   
  We also insist that elements of the first copy of $U_q(\mathfrak{gl}_n)$ act trivially on each element of the second copy of $\mathcal{O}_q(\Mat_n)$ and vice versa.

For Types AI and AII, set $R_{\mathfrak{g}}$ equal to the matrix defined by (\ref{Rmatrix2}) where $N=n$ in Type AI and $N=2n$ in Type AII. 
The reflection equation associated  to the families of Type AI and AII is the matrix equation defined  as in \cite{N} by
\begin{align}\label{reflection-equation}
R_{\mathfrak{g}}J_1 R_{\mathfrak{g}}^{t_1} J_2 = J_2 R_{\mathfrak{g}}^{t_1} J_1 R_{\mathfrak{g}}
\end{align}
where $J$ is a ${\rm rank}(\mathfrak{g})\times {\rm rank}(\mathfrak{g})$ matrix, $I$ is the ${\rm rank}(\mathfrak{g})\times {\rm rank}(\mathfrak{g})$  identity matrix,   $J_1 = J\otimes I$ and 
$J_2 = I\otimes J$. By \cite{N},  the following  solutions  to these reflection equations 
\begin{align*}
J_{(n)}(a) &= \sum_{k=1}^na_ke_{kk}{\rm  \quad and\quad }
J_{(n)} (a)= \sum_{k=1}^na_k(e_{2k-1, 2k}-qe_{2k,2k-1})\end{align*}
are used in the construction of quantum symmetric pairs, the first in Type AI and the second in Type AII.
Here, $n$ refers to the number of parameters, not the size of the matrix.  In particular, for both solutions,    $a=(a_1, \dots, a_n)$ is an $n$-tuple of nonnegative scalars.

We can also define an $R$-matrix in the diagonal setting and the associated reflection equations.  In particular, for the diagonal family, let $R_{\mathfrak{g}}$ be the $4n^2\times 4n^2$ matrix with entries 
\begin{align*}
(R_{\mathfrak{g}})^{ij}_{kl} = (R_{\mathfrak{g}})^{i+n,j+n}_{k+n,l+n} = r^{ij}_{kl} {\rm \quad and \quad }
(R_{\mathfrak{g}})^{i+n,j}_{i+n,j} = (R_{\mathfrak{g}})^{i,j+n}_{i,j+n} =1
\end{align*} for $i,j=1, \dots, n$
and all other entries equal to $0$.
Note that in the diagonal case, $R_{\mathfrak{g}}$ can be viewed as a block diagonal matrix with diagonal entries $(R, I_{n^2}, I_{n^2}, R)$.  Moreover,  this is the $R$-matrix associated with the $U_q(\mathfrak{g})$ representation $(V_0\otimes \mathbb{C}(q)) \oplus (\mathbb{C}(q)\otimes V_1)$ where $V_i$ is the standard representation associated to the $i^{th}$ copy of $U_q(\mathfrak{gl}_n)$ inside of $U_q(\mathfrak{g})$ and $\mathbb{C}(q)$ is the trivial representation. 

The algebra $\mathscr{P}$ is a quotient of the FRT matrix defined by $R_{\mathfrak{g}}$ with all matrix entries $t_{i,j+n}$ and $t_{i+n,j}$, for $i,j=1, \dots, n$, set to zero.  Using the similarity between the reflection equation in Type AI and the diagonal type, it is straightforward to check that the  matrices $J_{(n)}(a)$ defined by \begin{align*}
{J}_{(n)}(a) = \sum_{k=1}^na_k(e_{k,n+k} + e_{n+k,k})
\end{align*}
satisfies the reflection equations with respect to the matrix  $R_{\mathfrak{g}}$.

We frequently write $J(a)$ for $J_{(n)}(a)$ where the subscript $n$, which equals  the length of the tuple $a$, can be understood from context. Let $J^k$ be the submatrix of $J(a)$ corresponding to the term with coefficient $a_k$ .  
We further write $J_{r,s}^k$ for the $r,s$ entry of $J^k$ viewed as a matrix.  
Set 
  \begin{align}\label{xa-defn1} x_{ij} (a)= \sum_{k=1}^na_k\left(\sum_{r,s}t_{ir}J^k_{r,s}t_{js}\right)
  \end{align}
  for each choice of $i,j$ and each $n$-tuple $(a_1,\dots, a_n)$.  These terms can be written explicitly for each family as 
  \begin{align*}
x_{ij}(a)
= \left\{\begin{matrix} \sum_{k=1}^na_kt_{ik}t_{jk} &{\rm for\ Type \ AI}\cr\cr
\sum_{k=1}^na_k(t_{i,2k-1}t_{j,2k} -qt_{i,2k}t_{j,2k-1}) &{\rm for\ Type \ AII}\cr\cr
\sum_{k=1}^na_kt_{ik}t_{j,n+k} {\rm\ for \ }i\leq n<j &{\rm \  for \ the \ diagonal \ type}\cr
\sum_{k=1}^na_kt_{i,n+k}t_{jk} {\rm\ for \ }i> n\geq j &{\rm \  for \ the \ diagonal \ type}
 \end{matrix}\right.
\end{align*}
These elements can also be expressed in matrix form  as 
  \begin{align*}
  X(a)= TJ(a)T^t
  \end{align*}
  where  $X(a)$  is a  matrix  with entries $x_{ij}(a)$ of size  $n\times n$ in Type AI and $2n\times 2n$ in both type  AII and the diagonal type. 
For the diagonal family, $t_{ik}$ commutes with $t_{j+n,r+n}$ for all $i,k,j,r$.  Hence
 \begin{align*} x_{i,j+n}(a) = \sum_{k=1}^n a_kt_{ik}t_{j+n,k+n} = \sum_{k=1}^na_kt_{j+n,k+n}t_{ik} = x_{j+n,i}(a)
 \end{align*}
for all $i\leq n<j$.

  In the next lemma, we determine the relationship between the $n$-tuple $a$ and the $n$-tuple $b$ so that $x_{ij}(a)$ is invariant with respect to the right action of $\mathcal{B}_{\theta}(b)$.  
  Moving the right action to the left, one could deduce this from \cite{N}, Proposition 2.3 for Types AI and AII.  However, it would still be necessary to translate between the Noumi construction of right coideal subalgebras  based on $J(a)$  to the $\mathcal{B}_{\theta(b)}$ of this paper. By \cite{L1999}, Section 5 (see also  \cite{L2002}, Theorem 7.5), these two families are essentially the same.  However, the matching between parameters $a$ and $b$ has only been made explicit in Type AI (see \cite{N}, Section 2, equation (2.21)). It is easier to determine this matching directly using the action of the generators for $\mathcal{B}_{\theta}(b)$.  This is the approach taken in the proof of the next lemma.



\begin{lemma}\label{lemma:inv-typeAIAII} The elements $x_{ij}(a)$, $1\leq i,j\leq {\rm rank}(\mathfrak{g})$   are right $\mathcal{B}_{\theta}(b)$ invariants of $\mathscr{P}$   if and only if for $u=1, \dots, n -1$, 
\begin{itemize}
\item [(i)]  $b_u= a_{u+1}a_u^{-1}$ in Type AI
\item[(ii)]  $b_{2u}=q^3a_{u+1}a_u^{-1}$    in Type AII.
\item[(iii)] $b_u= qa_{u+1}a_u^{-1}$  in the  diagonal type.
\end{itemize}
\end{lemma}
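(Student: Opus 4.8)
The plan is to reduce the invariance question to a finite check on the generators of $\mathcal{B}_\theta(b)$ and a handful of defining elements $x_{ij}(a)$. Since $\mathcal{B}_\theta(b)$ is generated as an algebra by the explicit elements listed in Section \ref{section:three-families-defn}, and since the right action of a single algebra generator $c$ on a product $fg$ in the $U_q(\mathfrak g)$-module algebra $\mathscr P$ satisfies $(fg)\cdot c = \sum (f\cdot c_{(1)})(g\cdot c_{(2)})$, it suffices to show: (a) if every generator of $\mathcal{B}_\theta(b)$ annihilates every $x_{ij}(a)$ under the right action (equivalently, acts by the counit), then $x_{ij}(a)$ is $\mathcal{B}_\theta(b)$-invariant; and (b) conversely, invariance forces the stated relations between $a$ and $b$. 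Part (a) is essentially formal once (b)'s computation is in hand, because a coideal subalgebra generated by elements annihilating a vector $v$ will fix $v$ provided one checks the grouplike/Cartan-type generators act trivially too — here the $K_i^{\pm1}$ (Type AII, odd $i$) and $(K_{\epsilon_j}^{-1}K_{\epsilon_{n+j}})^{\pm1}$ (diagonal) must be verified to fix each $x_{ij}(a)$, which is immediate from the weight bookkeeping in Lemma \ref{lemma:actions} since $x_{ij}(a)$ is a sum of products $t_{ir}t_{js}$ coupled through $J$ in a way that is $K$-weight zero for the relevant tori.

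The core computation is the action of the non-grouplike generators. In Type AI these are $F_u - b_u E_u K_u^{-1}$ for $u=1,\dots,n-1$; in the diagonal case $B_u = F_u - b_u E_{n+u}K_u^{-1}$ and $B_{n+u} = F_{n+u} - b_u E_u K_{n+u}^{-1}$; in Type AII, for odd $i$ one has the honest $\mathfrak{sl}_2$-triple $E_i,F_i,K_i$, and for even $i$ the more elaborate $B_i = F_i - b_i[E_{i-1},[E_{i+1},E_i]_q]_q K_i^{-1}$. Using the coproduct formulas $\Delta(E_i)=E_i\otimes 1+K_i\otimes E_i$, $\Delta(F_i)=F_i\otimes K_i^{-1}+1\otimes F_i$ and the explicit right action of $E_i,F_i,K_{\epsilon_r}$ on the $t_{ij}$ from Lemma \ref{lemma:actions}, I would compute $x_{ij}(a)\cdot c$ for $c$ a generator by expanding $x_{ij}(a)=\sum_k a_k\sum_{r,s}t_{ir}J^k_{rs}t_{js}$, applying the Leibniz rule through both factors $t_{ir}$ and $t_{js}$, and collecting. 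Each generator is a sum of two terms, an $F$-part lowering one column index and an $E$-part (dressed by $K^{-1}$) raising an adjacent column index; the $F$-part produces a contribution proportional to one $a_k$ and the $E$-part a contribution proportional to the neighbouring $a_{k\pm1}$, with explicit powers of $q$ coming from the $K_u^{-1}$ dressing and from the $q$-commutators. Setting the total equal to zero (i.e. to $\epsilon(c)x_{ij}(a)=0$ since $\epsilon$ vanishes on all these $c$) yields exactly the matching conditions $b_u = a_{u+1}a_u^{-1}$, $b_{2u}=q^3 a_{u+1}a_u^{-1}$, $b_u = qa_{u+1}a_u^{-1}$ in the three cases, and conversely these conditions make the two contributions cancel.

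The main obstacle is the Type AII even-index generator $B_i = F_i - b_i[E_{i-1},[E_{i+1},E_i]_q]_q K_i^{-1}$: here the "raising" part is a degree-three element of $U_q(\mathfrak g)$, so computing its right action on a product $t_{ir}t_{js}$ requires iterating the coproduct, $\Delta^{(2)}$ or $\Delta^{(3)}$, and carefully tracking which tensor legs hit which factor, together with the $q$-commutator signs and the $K_i^{-1}$ twist. I expect this to be where most of the bookkeeping lives; the cleanest route is probably to first record the right action of the single composite operator $({\rm ad}\,E_{i-1}E_{i+1})E_i$ on a generic $t_{j,2k}$ (observing it lands on $t_{j,2k-1}$ up to a scalar involving $q$), and then combine with the action of $F_i$ on $t_{j,2k-1}$ inside the alternating sum $t_{i,2k-1}t_{j,2k}-qt_{i,2k}t_{j,2k-1}$ defining $x_{ij}(a)$ in Type AII. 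A useful consistency check throughout is the final assertion of Lemma \ref{lemma:actions}, $t_{ij}\cdot a = \iota(a^\natural\cdot t_{ji})$, which converts every right-action computation into a left-action one and lets me cross-check the scalars; and the symmetry $x_{i,j+n}(a)=x_{j+n,i}(a)$ in the diagonal case reduces the work there to the range $i\le n<j$ plus its transpose. Once the generator-by-generator computation is done, invariance under all of $\mathcal{B}_\theta(b)$ follows by the module-algebra Leibniz rule and induction on word length in the generators, completing both directions.
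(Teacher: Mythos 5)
Your overall route is the same one the paper takes: reduce invariance of each $x_{ij}(a)$ to the finitely many generators of $\mathcal{B}_{\theta}(b)$ (for a single vector this is automatic because the right action is an algebra action, $x\cdot(cd)=(x\cdot c)\cdot d$ — no module-algebra Leibniz induction on word length is needed on the $\mathcal{B}_{\theta}$ side; the Leibniz rule only enters later, in Lemma \ref{lemma:B-invariance}, for products of invariants), dispose of the Cartan-type generators by the weight bookkeeping of Lemma \ref{lemma:actions}, and then compute $x_{ij}(a)\cdot B_u$ through the coproduct, balancing the $F$-contribution (proportional to $a_{u+1}$) against the dressed $E$-contribution (proportional to $b_ua_u$). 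For Type AI and the diagonal case this is exactly the paper's computation via (\ref{t-formula}) and (\ref{t-formula2}), and both directions of the "if and only if" come out as you describe.

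The gap is in your sketch of the Type AII even generators, which would fail if executed as written. Under the right action of Lemma \ref{lemma:actions} the elements $E_m$ \emph{raise} column indices, so $({\rm ad}\ E_{2u-1}E_{2u+1})E_{2u}$ does not send $t_{j,2k}$ to $t_{j,2k-1}$: on a single generator it annihilates everything except $t_{j,2u-1}$, which it sends to a multiple of $t_{j,2u+2}$. More importantly, its action on single generators does not determine its action on the quadratic element $x_{ij}(a)$; the only nonvanishing contributions to $x_{ij}(a)\cdot B_{2u}$ are \emph{cross terms} of the iterated coproduct in which one tensor leg raises the column index of the factor $t_{i,\cdot}$ and the other leg raises that of $t_{j,\cdot}$ (the "all on one leg" terms die for column-index reasons). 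The paper handles this by first using the already-established vanishing $x_{ij}\cdot E_{2u\pm1}=0$ to replace $({\rm ad}\ E_{2u-1}E_{2u+1})E_{2u}$ by $q^{-2}E_{2u}E_{2u+1}E_{2u-1}$ modulo $E_{2u-1}U_q(\mathfrak{g})+E_{2u+1}U_q(\mathfrak{g})$ (formula (\ref{ad-formula})), and then isolating the two surviving coproduct terms $K_{2u}K_{2u+1}E_{2u-1}\otimes E_{2u}E_{2u+1}$ and $E_{2u}K_{2u-1}E_{2u+1}\otimes E_{2u-1}$ acting on $t_{i,2u-1}\otimes t_{j,2u}$ and $t_{i,2u}\otimes t_{j,2u-1}$; the exponent $3$ in $b_{2u}=q^3a_{u+1}a_u^{-1}$ comes precisely from the $K$-dressings in these cross terms together with the $K_{2u}^{-1}$ twist. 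So your "cleanest route" (single-generator action of the composite, then combine with $F_{2u}$) must be replaced by this cross-term analysis — which your earlier sentence about tracking which tensor legs hit which factor gestures at, but which the concrete plan does not carry out.
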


\begin{proof} 
 To make the notation somewhat easier to read, we suppress the $a$ and write $x_{ij}$ for $x_{ij}(a)$ throughout this proof.  It follows from Lemma \ref{lemma:actions}
that 
 \begin{align}\label{t-eqn}
(t_{ir}\cdot K_{\eta})(t_{js}\cdot K_{\eta}) = q^{\eta_r + \eta_s}t_{ir}t_{js}.\end{align}
for all $i,r,j,s$ and all  $\eta= \sum_{k}\eta_k\epsilon_k$ with $\eta_k\in \mathbb{Z}$.
Hence $x_{ij}\cdot K_{\eta} = \epsilon(K_{\eta})x_{ij} = x_{ij}$ if and only if $\eta_r+ \eta_s=0$ 
for all choices of $r$ and $s$ with $t_{ir}t_{js}$ showing up as a summand of $x_{ij}$.  
Using  the description  of elements of the form $K_{\eta}$ in $\mathcal{B}_{\theta}(b)$ combined with the explicit formulas for the $x_{ij}$, it is straightforward to check that this condition on $\eta$ holds for all  $K_{\eta}\in \mathcal{B}_{\theta}(b)$.  Note that this is independent of the choice of $b_k$ and $a_k$.

We now evaluate the action of the other generators of $\mathcal{B}_{\theta}(b)$ on the $x_{ij}(a)$.  By Lemma \ref{lemma:actions}, we have
\begin{align}\label{t-formula}(t_{ir}t_{js})\cdot F_u
&=  (t_{ir}\cdot  F_u)(t_{js}\cdot K_u^{-1})+t_{ir}(t_{js}\cdot F_u))\cr
&=\delta_{r,u+1}q^{\delta_{s,u+1}-\delta_{su}}t_{iu}t_{js} +\delta_{s,u+1}t_{ir}t_{ju}.
\end{align}
A similar computation yields
\begin{align}\label{t-formula2}(t_{ir}t_{js}) \cdot E_{u}
&=\delta_{ru}t_{i,u+1}t_{js} +\delta_{su}q^{\delta_{ru}-\delta_{r,u+1} }t_{ir}t_{j,u+1}.
\end{align}

 In case (i),  $B_u = F_u-b_uE_uK_u^{-1}$ and $x_{ij}= \sum_ka_kt_{ik}t_{jk}$.  Hence, it follows from (\ref{t-eqn}), (\ref{t-formula}), (\ref{t-formula2}) that 
\begin{align*}x_{ij}\cdot B_u = qa_{u+1}t_{iu}t_{j,u+1} +a_{u+1} t_{i,u+1}t_{ju}-b_ua_ut_{i,u+1}t_{ju}-qb_ua_ut_{iu}t_{j,u+1}.
\end{align*}
 Thus $x_{ij} \cdot {B}_u =\epsilon(B_u)x_{ij} = 0$ if and only if $b_u= a_{u+1}a_u^{-1}$ for each $u$.
For case (iii), $B_u=F_u-b_uE_{u'}K_u^{-1}$ where $u' = u+n$ if $u< n$ and $u' = u-n$ otherwise.  This gives us
\begin{align*}
x_{ij} \cdot {B}_u&= \delta_{u\leq n}\left(a_{u+1}t_{iu}t_{j,u'+1} - q^{-1}b_ua_ut_{iu}t_{j,u'+1}\right)\cr &+
\delta_{u> n}\left(a_{u+1}t_{i,u+1}t_{j,u'}
-q^{-1}b_ua_ut_{i,u+1}t_{j,u'}\right).
\end{align*}
Hence,  $x_{ij}\cdot B_u=0$ if and only if $b_u = qa_{u+1}a_u^{-1}$ for $u=1, \dots, 2n$.

We only need to finish case (ii). Using (\ref{t-formula}), we see that 
\begin{align*} 
(t_{i,2k-1}t_{j,2k}-qt_{i,2k}t_{j,2k-1j})\cdot F_u = \delta_{2k,u+1}\left(t_{iu}t_{ju} -qq^{-1}t_{iu}t_{ju}\right)=0
\end{align*}
for $u$ odd and for all $k$.   Hence $x_{ij}\cdot F_u=0$ for all $u$ odd. A similar argument shows that $x_{ij}\cdot E_u=0$ for all $u$ odd.

It remains to determine conditions for $x_{ij}\cdot B_{2u} = 0$ for $u=1,\dots, n-1$. Recall that  $B_{2u} = F_{2u}-b_{2u}(({\rm ad}\ E_{2u-1}E_{2u+1})E_{2u})K_{2u}^{-1}.$
By (\ref{t-formula}), we have 
\begin{align}\label{Fform}(t_{i,2k-1}t_{j,2k}-qt_{i,2k}t_{j,2k-1})\cdot F_{2u} &=\delta_{2k-1,2u+1}t_{i,2u}t_{j,2u+2}
-q\delta_{2k-1,2u+1}t_{i,2u+2}t_{j,2u}.
\end{align}
A straightforward computation using the formulas for the adjoint action yields
\begin{align}\label{ad-formula} ({\rm ad}\ E_{2u-1}E_{2u+1})E_{2u}&= E_{2u-1}[E_{2u+1},E_{2u}]_{q^{-1}}-q^{-1}[E_{2u+1},E_{2u}]_{q^{-1}}E_{2u-1}\cr
&\in q^{-2} E_{2u}E_{2u+1}E_{2u-1} + E_{2u-1}U_q(\mathfrak{g})+E_{2u+1}U_q(\mathfrak{g}).
\end{align}
We showed earlier that  $x_{ij}\cdot E_{2u\pm 1} = 0$.  Hence,  $$x_{ij}\cdot (({\rm ad}\ E_{2u-1}E_{2u+1})E_{2u})K_{2u}^{-1} = x_{ij}\cdot \left(q^{-2} E_{2u}E_{2u+1}E_{2u-1}K_{2u}^{-1}\right).$$
Note that 
$t_{a,2k-1}\cdot E = 0 $
whenever $E \in E_{2u}U_q(\mathfrak{g}) +  E_{2u-1}U_q(\mathfrak{g}).$  Similarly, $t_{a,2k}\cdot E=0$ for 
all $E \in E_{2u-1}U_q(\mathfrak{g}) +  E_{2u+1}U_q(\mathfrak{g}).$ Hence for all $u=1, \dots, n-1$, we have  \begin{align*}\Delta( E_{2u}E_{2u+1}E_{2u-1}) 
=K_{2u}K_{2u+1}E_{2u-1}\otimes E_{2u}E_{2u+1} + E_{2u}K_{2u-1}E_{2u+1}\otimes E_{2u-1}
+Y
\end{align*}
where $Y=\sum_ry_r\otimes y'_r$ is a term in $U_q(\mathfrak{g})\otimes U_q(\mathfrak{g})$ such that 
\begin{align*}
\sum_r(t_{i,2k-1}\cdot y_r)( t_{j,2k}\cdot y'_r)= 0
{\rm \quad
and \quad }
\sum_r(t_{i,2k}\cdot y_r)( t_{j,2k-1}\cdot y'_r)= 0
\end{align*}
for all $k$.  Hence $x_{ij}\cdot  (({\rm ad}\ E_{2u-1}E_{2u+1})E_{2u})K_{2u}^{-1}$ equals
\begin{align*}
&\ q^{-2}\sum_ka_k(t_{i,2k-1}\cdot K_{2u}K_{2u+1}E_{2u-1}K_{2u}^{-1})( t_{j,2k}\cdot E_{2u}E_{2u+1}K_{2u}^{-1})\cr&-q^{-1}\sum_ka_k(t_{i,2k}\cdot E_{2u}K_{2u-1}E_{2u+1}K_{2u}^{-1})( t_{j,2k-1}\cdot E_{2u-1}K_{2u}^{-1} )
\cr &=( q^{-3}a_ut_{i,2u}t_{j,2u+2} -q^{-2}a_ut_{i,2u+2}t_{j,2u}).
\end{align*}
This combined with (\ref{Fform}) implies that $x_{ij}\cdot B_{2u}$ equals
\begin{align*}
a_{u+1}t_{i,2u}t_{j,2u+2}-qa_{u+1}t_{i,2u+2}t_{j,2u}-( q^{-3}b_{2u}a_ut_{i,2u}t_{j,2u+2} -q^{-2}b_{2u}a_ut_{i,2u+2}t_{j,2u}).
\end{align*}
and so $b_{2u} = q^3a_{u-1}a_u^{-1}$ for all $u=1, \dots, n-1$ as claimed.
\end{proof}

Set $\mathscr{D}=\mathscr{P}^{op}$.   In particular, write $\mathscr{D}= \mathcal{O}_q(\Mat_N)^{op}$ in Type AI (with $N=n$) and Type AII (with $N=2n$) and let $\mathscr{D} = \mathcal{O}_q(\Mat_n)^{op}\otimes \mathcal{O}_q(Mat_n)^{op}$ in the diagonal setting.  Using Lemma \ref{lemma:opposite relation},   $\mathscr{D}$ is given a $U_q(\mathfrak{g})$-bimodule structure for Types AI and AII.
 In the diagonal setting,  for $k=1,2$,  the $k^{th}$ copy of $U_q(\mathfrak{gl}_n)$ acts on the $k^{th}$  copy $\mathcal{O}_q(\Mat_n)^{op}$
as detailed in Lemma \ref{lemma:opposite relation} and trivially on the other  copy.

 For each $n$-tuple $(c_1, \dots, c_n)$, set \begin{align}\label{d-form}d_{ij}(c) = \sum_{k=1}^nc_k\left(\sum_{r,s}\partial_{ir}J^k_{s,r}\partial_{js}\right).
\end{align}  Comparing (\ref{d-form}) to (\ref{xa-defn1}) yields that $(x_{ji}({c}))^* = d_{ij}(c)$ for all $i,j$.
Recall that $f^*\cdot (S^{-1}( g))^{\natural} = (f\cdot g)^*$ for all $f\in \mathscr{P}$ and $g\in U_q(\mathfrak{g})$ (as in the discussion at the end of Section \ref{section:FRT}).  The next result is an analog of Lemma \ref{lemma:inv-typeAIAII} with the $t_{ij}$ replaced by the $\partial_{ij}$.

\begin{lemma} \label{lemma:diffmodule} The elements $d_{ij}(c)$, $1\leq i,j\leq {\rm rank}(\mathfrak{g})$ are right $\mathcal{B}_{\theta}(b)$ invariants if and only if for $u=1, \dots, n-1$ we have 
\begin{itemize}
\item[(i)] $ b_u = q^{-2}c_{u+1}^{-1}c_u$ in Type AI
\item[(ii)] $b_{2u}= q^{-1}c_{u+1}^{-1}c_u $ in Type AII
\item[(iii)] $ b_u = q^{-1}c_{u+1}^{-1}c_u$ for the diagonal type.
\end{itemize}
\end{lemma}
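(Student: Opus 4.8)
The plan is to reduce the statement to Lemma \ref{lemma:inv-typeAIAII} by transporting it through the algebra anti-isomorphism $f\mapsto f^{*}$ from $\mathscr{P}$ to $\mathscr{D}$. Since $(x_{ji}(c))^{*}=d_{ij}(c)$ and $(f\cdot a)^{*}=f^{*}\cdot (S^{-1}(a))^{\natural}$ for all $f\in\mathscr{P}$ and $a\in U_q(\mathfrak{g})$, one obtains $d_{ij}(c)\cdot g=\big(x_{ji}(c)\cdot\psi(g)\big)^{*}$ for every $g\in U_q(\mathfrak{g})$, where $\psi=S\circ\natural$; this $\psi$ preserves the counit and, by the identity $\natural\circ S\circ\natural=S^{-1}$, is an algebra involution of $U_q(\mathfrak{g})$ equal also to $\natural\circ S^{-1}$. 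Consequently $d_{ij}(c)$ is a right $\mathcal{B}_\theta(b)$-invariant of $\mathscr{D}$ if and only if $x_{ji}(c)$ is a right $\psi(\mathcal{B}_\theta(b))$-invariant of $\mathscr{P}$.

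Two things then remain. First, one computes $\psi$ on the generators of $\mathcal{B}_\theta(b)$ listed in Section \ref{section:three-families-defn}: using (\ref{starinversestructure}), $\psi$ sends each $K_{\epsilon_j}$ to its inverse and each Chevalley $E_i$, $F_i$ to a scalar multiple of $F_i$, $E_i$, so that, for instance, in Type AI $\psi(F_u-b_uE_uK_u^{-1})$ is a nonzero scalar multiple of $E_u-q^{4}b_uK_uF_u$, and in Type AII the bracket generator $B_{2u}$ is carried to a scalar multiple of a $K$-shifted triple $q$-commutator in the $F$'s; the diagonal case is similar with $F_{n+i},E_{n+i}$ in place of $F_i,E_i$. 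Second, one runs the generator-by-generator computation from the proof of Lemma \ref{lemma:inv-typeAIAII} with $x_{ji}(c)$ in place of $x_{ij}(a)$ and these $\psi$-images in place of the $B_u$. The only inputs needed are the Leibniz formulas (\ref{t-eqn}), (\ref{t-formula}), (\ref{t-formula2}), together with the fact, established there independently of the parameters, that every $K_\eta\in\mathcal{B}_\theta(b)$ (hence every $K_\eta$ in its $\psi$-image) acts trivially on the $x_{ij}$; expanding $x_{ji}(c)\cdot\psi(B_u)$ via the coproduct yields an expression supported on two monomials whose vanishing is equivalent to the asserted relation between $b$ and $c$, producing (i)--(iii).

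The main obstacle is, as in Lemma \ref{lemma:inv-typeAIAII}, the Type AII bookkeeping: one must track the powers of $q$ and the signs through $\psi$ applied to $(({\rm ad}\,E_{2u-1}E_{2u+1})E_{2u})K_{2u}^{-1}$, establish the analogues of (\ref{ad-formula}), (\ref{Fform}) and of the coproduct expansion of $E_{2u}E_{2u+1}E_{2u-1}$ in this twisted form, and check that all but two summands annihilate $x_{ji}(c)$. A viable alternative that avoids $\psi$ altogether is to repeat the proof of Lemma \ref{lemma:inv-typeAIAII} verbatim on $\mathscr{D}$, using the explicit right action on the $\partial_{ij}$ from Lemma \ref{lemma:opposite relation} in place of Lemma \ref{lemma:actions}; this is the same computation with the extra factors $-q^{\pm1}$ of Lemma \ref{lemma:opposite relation} inserted, and it is precisely these factors that shift the parameter relations of Lemma \ref{lemma:inv-typeAIAII} to the ones stated here.
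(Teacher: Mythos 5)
Your proposal is correct and is essentially the paper's own argument: the proof there likewise transfers the question from the $d_{ij}(c)$ to the $x_{ji}(c)$ through $*$ and $\natural\circ S^{-1}$, disposes of the $K_\eta$'s first, and then works generator by generator, handling the Type AII cubic term by exactly the direct computation on the $\partial_{ij}$ that you offer as your alternative route. The only organizational difference is that in Type AI and the diagonal case the paper pulls the coideal generators back through $\natural\circ S^{-1}$, recognizing the preimage as a $B$-type element with a shifted parameter so that Lemma \ref{lemma:inv-typeAIAII} can be cited verbatim, whereas you push them forward to $E$-type elements (e.g.\ $E_u-q^{4}b_uK_uF_u$) and rerun the short Leibniz computation, which indeed yields the same parameter conditions.
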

\begin{proof} Given $K_{\eta}$ in $\mathcal{B}_{\theta}$, we have $(S^{-1}(K_{\eta}))^{\natural}= K_{\eta}^{-1}$.  Hence by Lemma \ref{lemma:inv-typeAIAII} and its proof, 
\begin{align*}d_{ij}(c)\cdot K_{\eta} = ((x_{ji}( c))\cdot K_{\eta}^{-1}) ^{\natural} = (x_{ji}( c))^* = d_{ij}(c)\end{align*} 
for all $K_{\eta}\in \mathcal{B}_{\theta}$ independent of the relationship between the $n$-tuples $b$ and $c$.

It follows from (\ref{starinversestructure}) that  \begin{align*}S^{-1}(F_u-q^{2\delta_{u',u}-1}  b_{u'}^{-1}E_{u'}K_u^{-1})^{\natural} &= -q^{-1} E_u +q^{2\delta_{u',u}} b_{u'}^{-1}F_{u'} K_u\cr &=-q^{2\delta_{u',u}}  b_{u'}^{-1}(F_{u'}-q^{-2\delta_{u',u}} b_{u'}E_{u}K_{u'}^{-1}(K_{u'}K_{u}^{-1}))K_u\cr
&=-q^{2\delta_{u',u}} b_{u'}^{-1}(F_{u'}-b_{u'}(K_{u'}K_{u}^{-1})E_{u}K_{u'}^{-1})K_u.\end{align*} Now consider Type AI with  $u'=u$ and the diagonal type with $u'=u+n$ or $u'=u-n$.  In both cases, we have
$K_{u'}K_{u}^{-1}\in \mathcal{B}_{\theta}$ and so by the previous paragraph, $d_{ij}(c)\cdot K_{u'}K_{u}^{-1}=1$ for all $n$-tuples $c$.
Hence, the above computation shows that
\begin{align*} d_{ij}(c)\cdot (F_{u'}-b_{u'}E_uK_u^{-1}) = 0{\rm \ if \ and\ only\ if \ }x_{ji}( c)\cdot (F_u-q^{2\delta_{u',u}}  b_{u'}^{-1}E_{u'}K_u^{-1})=0.
\end{align*}
 By Lemma \ref{lemma:inv-typeAIAII} (i) , the latter equality holds in Type AI provided $q^{-2} b_{u}^{-1} = c_{u+1}c_{u}^{-1}$. This proves (i).   For the diagonal type, the latter equality holds provided 
 $ b_{u}^{-1} = qc_{u+1}c_{u}^{-1}$ which is equivalent to (iii).

For Type AII, it follows from (\ref{starinversestructure}) that  $x_{ji}(c)\cdot E_{2u-1} =0$ implies 
$d_{ij}(c)\cdot F_{2u-1}=0$ for $u=1, \dots, n$.  The same holds for the roles of $E_{2u-1}$ and $F_{2u-1}$ interchanged.  Hence, we only need to analyze the action terms of the form $B_{2u}= F_{2u} -b_{2u}(({\rm ad}\ E_{2u-1}E_{2u+1})E_{2u})K_{2u}^{-1}$.  Using (\ref{t-formula2}), we get
\begin{align*} d_{ij}(c)\cdot F_{2u} &= -q^{-1}((x_{ji}( c)\cdot E_{2u})^{\natural} \cr &= -q^{-1}(\sum_k c_k(t_{j,2k-1}t_{i,2k} - qt_{j,2k}t_{i,2k-1})\cdot E_{2u})^{\natural} \cr
&=-q^{-1} c_u(t_{j,2u-1}t_{i,2u+1} -qt_{j,2u+1}t_{i,2u-1})^*
\end{align*}
Arguing as in the proof of Lemma \ref{lemma:inv-typeAIAII}, we have \begin{align*} &d_{ij}(c)\cdot (({\rm ad}\ E_{2u-1}E_{2u+1})E_{2u})K_{2u}^{-1}  = d_{ij}(c)\cdot q^{-2}(E_{2u}E_{2u+1}E_{2u-1})K_{2u}^{-1}
\cr &=q^{-2}\sum_k c_k(\partial_{i,2k}\cdot K_{2u}K_{2u-1}E_{2u+1}K_{2u}^{-1})(\partial_{j,2k-1}\cdot E_{2u}E_{2u-1}K_{2u}^{-1})
\cr&-q^{-1}\sum_k c_k(\partial_{i,2k-1}\cdot E_{2u}K_{2u+1}E_{2u-1}K_{2u}^{-1})(\partial_{j,2k}\cdot E_{2u+1}K_{2u}^{-1})
\cr&=c_{u+1}\partial_{i,2u+1}\partial_{j,2u-1}-qc_{u+1}\partial_{i,2u-1}\partial_{j,2u+1}
\end{align*}
Hence 
\begin{align*}
d_{ij}(c) \cdot B_{2u} &= -q^{-1}c_u(\partial_{i,2u+1}\partial_{j,2u-1}-q\partial_{i,2u-1}\partial_{j,2u+1})\cr&-b_{2u}c_{u+1}(\partial_{i,2u+1}\partial_{j,2u-1}-q\partial_{i,2u-1}\partial_{j,2u+1})
\end{align*}
Thus $d_{ij}(c)\cdot B_{2u}=0$ when $b_{2u} = q^{-1}c_uc_{u+1}^{-1}$ which proves (ii).
\end{proof}

Using Lemmas \ref{lemma:actions} and  \ref{lemma:diffmodule}, we can choose parameters $a$ and $c$ so that they are compatible with the parameters $b$ at the same time.  In other words, assume that the $n$-tuple $a$ has been chosen using the conditions of Lemma \ref{lemma:actions} so that $x_{ij}(a)$ is right invariant with respect to the action of $\mathcal{B}_{\theta}(b)$.  Then setting  $c_u=q^{-2u}a_u^{-1}$ in Type AI, $c_u = q^{-2u}a_u^{-1}$ in the diagonal setting and $c_u = q^{-4u+2}a_u^{-1}$ in Type AII for all $u=1, \dots, n$ yields  right invariant elements $d_{ij}(c)$. 
Write this choice of $c$ as $a'$. 
Expanding these elements out as we did for the $x_{ij}(a)$ with respect to the $n$-tuple $a$ gives us 
\begin{align*}
d_{ij}(a') 
= \left\{\begin{matrix} \sum_{k=1}^nq^{-2k}a_k^{-1}\partial_{ik}\partial_{jk} &{\rm for\ Type \ AI}\cr\cr
\sum_{k=1}^nq^{-4k+2}a_k^{-1}(\partial_{i,2k}\partial_{j,2k-1}-q\partial_{i,2k-1}\partial_{j,2k} ) &{\rm for \ Type \ AII}\cr\cr
\sum_{k=1}^nq^{-2k}a_k^{-1}\partial_{ik}\partial_{j,n+k}{\rm \ for \ }i\leq n<j &{\rm for  \ diagonal \ type}\cr
\sum_{k=1}^nq^{-2k}a_k^{-1}\partial_{i,n+k}\partial_{jk}{\rm \ for \ }i>n\geq j &{\rm for \ diagonal \ type}
 \end{matrix}\right.
\end{align*}

For the remainder of the paper, we make a choice for $a$ and $b$ parameters in order to make the arguments easier. These parameters can be easily converted into another set using Hopf algebra automorphisms of $U_q(\mathfrak{g})$.   In particular, we choose $a_1=\cdots = a_n=1$ for each of the three cases.  Note that this means for $u=1, \dots, n$ we have $b_u=1$   in Type AI, $b_{2u} = q^3$ in Type AII,  and  $b_u=q$  in the diagonal setting.  
Set $x_{ij} = x_{ij}(1,\dots, 1)$ and $d_{ij}=d_{ij}((1,\dots, 1)')$.  Note that we can write both $x_{ij}$ where the summand no longer depends on $k$, just 
$r$ and $s$ where $J_{r,s}$ is the coefficient of $e_{rs}$ in the matrix $J$.  Indeed, it is easy to see directly from (\ref{xa-defn1}) and 
the choice of $a$ that 
\begin{align*}
x_{ij} =\sum_{r,s}t_{ir}J_{r,s}t_{js}.
\end{align*}
One further checks from the formulas for $d_{ij}(a')$ above that for the choice of $a'$ that 
\begin{align*}
d_{ij} =\sum_{r,s} q^{-2\hat{s}} \partial_{ir}J_{r,s}\partial_{js}
\end{align*}
where $\hat{s}= s$ in Types AI and AII and for the diagonal type $\hat{s}= s$ for $s\leq  n$, and $\hat{s} = s - n$ for $s\geq n+1$.  Note that we are expressing $d_{ij}$ in terms of the matrix $J$ and not its transpose as above.  This makes no difference in Types AI and the diagonal type since in both these cases, $J=J^t$.   On the other hand, for Type AII, we have 
\begin{align*}q^{-4k+2}\partial_{i,2k-1}\partial_{j,2k}-q^{-4k}q\partial_{i,2k}\partial_{j,2k-1}  = -q^{-1}q^{-4k+2}\left(\partial_{i,2k}\partial_{j,2k-1} -q\partial_{i,2k-1}\partial_{j,2k} \right).
\end{align*}
In other words, this expression for $d_{ij}$ differs from the earlier one by a scalar multiple, namely $-q^{-1}$.  Thus, without loss of generality, we may use this new formula based on $J$ instead 
of the earlier one using $J^t$.

\subsection{Module Realization}
Let $\mathscr{P}_{\theta}$ be the subalgebra of $\mathscr{P}$ generated by the $x_{ij}$, for $1\leq i,j\leq {\rm rank({\mathfrak{g}}})$ where here we are setting $x_{ij}=0$ in the diagonal case for all $i,j$ satisfying  $i-j\leq n$. Similarly, let $\mathscr{D}_{\theta}$ be the subalgebra of 
$\mathscr{D}$  generated by the $d_{ij}$ with the same restriction as above. Both of these algebras are generated by   right $\mathcal{B}_{\theta}$ invariant elements.  It follows from the next lemma  that the entire algebras $\mathscr{P}_{\theta}$ and $\mathscr{D}_{\theta}$ are right invariant with respect to the action of $\mathcal{B}_{\theta}$. It should be noted that a priori this result is not obvious since $\mathcal{B}_{\theta}$ is not a bialgebra.

\begin{lemma}\label{lemma:B-invariance} Let $u$ and $v$ be elements of a $U_q(\mathfrak{g})$-bimodule $A$ such that both are invariant with respect to the right action of $\mathcal{B}_{\theta}$.  Then the same is true for the product $uv$.
\end{lemma}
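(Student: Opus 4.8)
The plan is to reduce the statement to two formal facts: that $A$, being a $U_q(\mathfrak{g})$-bimodule algebra, is in particular a right $U_q(\mathfrak{g})$-module algebra, so that $(uv)\cdot b = \sum (u\cdot b_{(1)})(v\cdot b_{(2)})$ for $b\in\mathcal{B}_{\theta}$ with $\Delta(b)=\sum b_{(1)}\otimes b_{(2)}$; and that $\mathcal{B}_{\theta}$ is a \emph{right coideal} subalgebra of $U_q(\mathfrak{g})$, i.e. $\Delta(\mathcal{B}_{\theta})\subseteq\mathcal{B}_{\theta}\otimes U_q(\mathfrak{g})$. This second feature is exactly what compensates for $\mathcal{B}_{\theta}$ not being a sub-bialgebra, and it is what the remark preceding the lemma alludes to.

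First I would establish, or quote, the right coideal property. It can be read off the explicit generators in Section \ref{section:three-families-defn}: in Type AI one computes $\Delta(B_i)=B_i\otimes K_i^{-1}+1\otimes B_i$, and in the other two families the odd generators $K_i,E_i,F_i$ and the toral elements $K_{\eta}$ have coproduct whose first tensor leg already lies in $\mathcal{B}_{\theta}$, as does that of the even generators $B_i$ (alternatively, invoke \cite{L2002} and \cite{K}, where $\mathcal{B}_{\theta}$ is constructed as a right coideal subalgebra). Since $\mathcal{B}_{\theta}\otimes U_q(\mathfrak{g})$ is a subalgebra of $U_q(\mathfrak{g})\otimes U_q(\mathfrak{g})$ containing $1\otimes 1$, the inclusion $\Delta(\mathcal{B}_{\theta})\subseteq\mathcal{B}_{\theta}\otimes U_q(\mathfrak{g})$ then holds on all of $\mathcal{B}_{\theta}$, so in $\Delta(b)=\sum b_{(1)}\otimes b_{(2)}$ every $b_{(1)}$ may be taken in $\mathcal{B}_{\theta}$.

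Then I would run the counit collapse. Using the right module algebra axiom and then the right $\mathcal{B}_{\theta}$-invariance of $u$ (applicable precisely because $b_{(1)}\in\mathcal{B}_{\theta}$), together with linearity of the right action and the counit axiom $\sum\epsilon(b_{(1)})b_{(2)}=b$,
\begin{align*}
(uv)\cdot b = \sum (u\cdot b_{(1)})(v\cdot b_{(2)}) = \sum \epsilon(b_{(1)})\,u\,(v\cdot b_{(2)}) = u\left(v\cdot\left(\sum \epsilon(b_{(1)})\,b_{(2)}\right)\right) = u\,(v\cdot b).
\end{align*}
Invariance of $v$ now gives $v\cdot b=\epsilon(b)v$, hence $(uv)\cdot b=\epsilon(b)\,uv$; as $b\in\mathcal{B}_{\theta}$ was arbitrary, $uv$ is right $\mathcal{B}_{\theta}$-invariant. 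I do not expect a genuine obstacle: the whole content is the observation that the \emph{first} Sweedler leg of an element of $\mathcal{B}_{\theta}$ stays in $\mathcal{B}_{\theta}$, so the invariance of $u$ can be applied before the counit reassembles $b$ and the invariance of $v$ finishes the job. The only point requiring care is keeping the left/right conventions aligned — the module structure in play is the right action, matching the right coideal condition $\Delta(\mathcal{B}_{\theta})\subseteq\mathcal{B}_{\theta}\otimes U_q(\mathfrak{g})$.
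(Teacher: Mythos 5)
Your proof is correct, but it follows a different route from the paper's. You invoke the full right coideal property $\Delta(\mathcal{B}_{\theta})\subseteq\mathcal{B}_{\theta}\otimes U_q(\mathfrak{g})$, so that every first Sweedler leg can be taken in $\mathcal{B}_{\theta}$; invariance of $u$ then kills the first legs, the counit axiom reassembles $b$, and invariance of $v$ finishes. The paper deliberately avoids relying on the coideal property: it verifies by a direct computation on the generators the weaker, two-sided membership $\Delta(b)\in\epsilon(b)\,1\otimes 1+(\mathcal{B}_{\theta})_+\otimes U_q(\mathfrak{g})+U_q(\mathfrak{g})\otimes(\mathcal{B}_{\theta})_+$ (using only that each extra term in $\Delta(B_r)$ carries a factor from $\mathcal{M}_+$ or from $\mathcal{T}_{\theta}-1$ on one side or the other), and then uses the invariance of $u$ and of $v$ symmetrically to annihilate all terms except $\epsilon(b)\,uv$; invariance under products of generators is automatic. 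Your argument is shorter and more conceptual — it is the standard fact that invariants of a right coideal subalgebra form a subalgebra — while the paper's is self-contained and needs less precise information about $\Delta(B_r)$. One caution: your claim that for the Type AII even generators the first tensor leg of $\Delta(B_i)$ visibly lies in $\mathcal{B}_{\theta}$ is not a one-line check; with the generator written via the adjoint action (equivalently Lusztig's $T_{w_X}$, as in \cite{K}) the middle terms of $\Delta(B_i)$ have first legs such as $K_{i+1}E_{i-1}$, $K_{i-1}E_{i+1}$, $E_{i+1}E_{i-1}$, which do lie in $\mathcal{B}_{\theta}$, but seeing this requires an actual coproduct computation (and the naive $q$-commutator expression would not obviously work). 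So in Type AII you should either carry out that computation or lean on the citation to \cite{L2002}, \cite{K} — which you do offer as an alternative, and which is legitimate since the paper itself introduces $\mathcal{B}_{\theta}(b)$ as a right coideal subalgebra in Section \ref{section:three-families-defn}. With that point settled, your Sweedler manipulation and the passage from generators to all of $\mathcal{B}_{\theta}$ (via the fact that $\mathcal{B}_{\theta}\otimes U_q(\mathfrak{g})$ is a subalgebra and $\Delta$ is an algebra map) are sound.
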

\begin{proof} 
Let $\mathcal{M}^+$ denote the algebra  generated by those $E_i$  for which $E_i\in \mathcal{B}_{\theta}$ and write $\mathcal{T}_{\theta}$ for the group-like elements in $\mathcal{B}_{\theta}\cap U_q(\mathfrak{g})$. 
It is straightforward to see from their description in Section \ref{section:three-families-defn} that the elements  $B_r$  can be written in the form $F_r + (({\rm ad}\ Z^+_r)E_{p(r)})K_r^{-1}$ for some choice of $Z^+_r \in \mathcal{M}^+$.  It follows from the definition of the comultiplication $\Delta$ for $U_q(\mathfrak{g})$ that 
\begin{align*} \Delta(B_r) &= \Delta(F_r + [({\rm ad} Z_r^+)E_{p(r)})] K_r^{-1})\cr &\in B_r\otimes 1 + 1\otimes B_r + (K_{p(r)}K_r^{-1}-1)\otimes  [({\rm ad} Z_r^+)E_{p(r)})] K_r^{-1}) + \mathcal{M}_+\otimes U_q(\mathfrak{g}) + U_q(\mathfrak{g})\otimes \mathcal{M}_+
\end{align*}
where $\mathcal{M}_+$ denotes the augmentation ideal of $\mathcal{M}$.  
Hence 
\begin{align}\label{relnBr1}\Delta(B_r) \in (\mathcal{B}_{\theta})_+\otimes U_q(\mathfrak{g}) + U_q(\mathfrak{g})\otimes (\mathcal{B}_{\theta})_+
\end{align}
for each $B_r\in \mathcal{B}_{\theta}.$
On the other hand, given $K\in \mathcal{T}_{\theta}$, we have 
\begin{align*}\Delta(K) = K\otimes K = (K-1)\otimes K +1\otimes (K-1) +1\otimes 1.
\end{align*}
It  follows that 
\begin{align}\label{relnK2}
\Delta(K) \in 1\otimes 1 + (\mathcal{B}_{\theta})_+\otimes U_q(\mathfrak{g}) + U_q(\mathfrak{g})\otimes (\mathcal{B}_{\theta})_+
\end{align}
for all $K\in  \mathcal{T}_{\theta}$.  Recall that $\epsilon(K) = 1$ and $\epsilon(B_r)=0$ for all choices of $K\in \mathcal{T}_{\theta}$ and all $r$. Therefore by (\ref{relnBr1}) and (\ref{relnK2}),
$uv\cdot b \in \epsilon(b)uv+  (u\cdot (\mathcal{B}_{\theta})_+)A + A(v\cdot  (\mathcal{B}_{\theta})_+) =\epsilon(b)uv$
for the generators of $\mathcal{B}_{\theta}$ as given in Section \ref{section:three-families-defn}. Thus $uv$ is right invariant with respect to  the action of $\mathcal{B}_{\theta}$.  
\end{proof}

The next lemma shows that the the subspace   of $\mathscr{P}_{\theta}$ spanned by the $x_{ij}$ is a left $U_q(\mathfrak{g})$-submodule of $\mathscr{P}$.  Thus  $\mathscr{P}_{\theta}$ inherits the structure of a   left $U_q(\mathfrak{g})$-module algebra from $\mathscr{P}$.   The analogous assertion holds for $\mathscr{D}_{\theta}$ and its subspace spanned by the $d_{ij}$. To make the notation easier, we set $x_{ij} = 0=d_{ij}$ whenever either $i$ or $j$ is in the set $\{0,{\rm rank}(\mathfrak{g})\}$.

\begin{lemma}\label{lemma:left_action}
The (left) action of $U_q(\mathfrak{g})$ on the subalgebra $\mathscr{P}_{\theta}$ is defined by 
the formulas 
\begin{align*}E_r\cdot x_{ij} &= \delta_{i-1,r}x_{i-1,j} + \delta_{j-1,r}q^{\delta_{ri}-\delta_{r,i-1}}x_{i,j-1}\cr
F_r\cdot x_{ij} &= \delta_{ir}q^{-\delta_{rj} + \delta_{r,j-1}}x_{i+1,j} + \delta_{jr}x_{i,j+1}\cr
K_{\epsilon_s}\cdot x_{ij} &=q^{\delta_{is}+\delta_{js}}x_{ij}
\end{align*}
and  the (left) action of $U_q(\mathfrak{g})$ on the subalgebra $\mathscr{D}_{\theta}$ is defined by 
the formulas 
\begin{align*}F_r\cdot d_{ij} &= -(q^{1+\delta_{rj}-\delta_{r,j-1}}\delta_{i-1,r}d_{i-1,j} -q^2 \delta_{j-1,r}d_{i,j-1})\cr
E_r\cdot d_{ij} &=-(q^{-1}\delta_{ir}d_{i+1,j} + q^{1-\delta_{ri} + \delta_{r,i-1}}\delta_{jr}d_{i,j+1})\cr
K_{\epsilon_s}\cdot d_{ij} &=q^{-\delta_{is}-\delta_{js}}d_{ij}
\end{align*}
for all $i,j,s \in \{1,\dots, {\rm rank(\mathfrak{g})}\}$, $1\leq r\leq n-1$ in Type AI,  $1\leq r\leq 2n-1$ in Type AII, $1\leq r\leq n-1$ and $n+1\leq r\leq 2n-1$ in the diagonal case, and either $ i\leq n<j$ or $ j\leq n<i$ in the diagonal case.  
\end{lemma}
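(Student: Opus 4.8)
The plan is to prove all three families simultaneously by a direct computation, using that $\mathscr{P}$ and $\mathscr{D}=\mathscr{P}^{op}$ are left $U_q(\mathfrak{g})$-module algebras (Lemmas \ref{lemma:actions} and \ref{lemma:opposite relation}, together with the conventions of Section \ref{section:ie} in the diagonal case) and the closed forms $x_{ij}=\sum_{p,s}t_{ip}J_{p,s}t_{js}$ and $d_{ij}=\sum_{p,s}q^{-2\hat s}\partial_{ip}J_{p,s}\partial_{js}$ established at the end of Section \ref{section:ie}.

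First I would treat $E_r$: applying the coproduct $\Delta(E_r)=E_r\otimes 1+K_r\otimes E_r$ to the two-fold product defining $x_{ij}$ gives
\begin{align*}
E_r\cdot x_{ij}=\sum_{p,s}J_{p,s}\big[(E_r\cdot t_{ip})\,t_{js}+(K_r\cdot t_{ip})(E_r\cdot t_{js})\big].
\end{align*}
By Lemma \ref{lemma:actions} we have $E_r\cdot t_{ab}=\delta_{a-1,r}t_{a-1,b}$ and $K_r\cdot t_{ab}=q^{\delta_{ar}-\delta_{a,r+1}}t_{ab}$, so the first summand of the bracket contributes $\delta_{i-1,r}t_{i-1,p}t_{js}$ and the second $q^{\delta_{ir}-\delta_{i,r+1}}\delta_{j-1,r}t_{ip}t_{j-1,s}$; resumming against $J_{p,s}$ yields $\delta_{i-1,r}x_{i-1,j}+q^{\delta_{ir}-\delta_{i,r+1}}\delta_{j-1,r}x_{i,j-1}$, which is the asserted formula after using $\delta_{i,r+1}=\delta_{r,i-1}$. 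The $F_r$ case is identical with $\Delta(F_r)=F_r\otimes K_r^{-1}+1\otimes F_r$, and the $K_{\epsilon_s}$ case is immediate since $K_{\epsilon_s}$ is group-like with $K_{\epsilon_s}\cdot t_{ab}=q^{\delta_{as}}t_{ab}$.

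For $\mathscr{D}_\theta$ I would run the same argument with Lemma \ref{lemma:opposite relation} in place of Lemma \ref{lemma:actions}, carrying the signs $-q^{\pm1}$ and the weights $q^{-2\hat s}$ along; alternatively, and more transparently, one can transfer from $\mathscr{P}_\theta$ via the identity $(x_{ji}(c))^*=d_{ij}(c)$ and the compatibility $(a\cdot f)^*=(S(a))^\natural\cdot f^*$ recorded at the end of Section \ref{section:FRT}, using the explicit values of $S$ and $\natural$ on $E_r,F_r,K_{\epsilon_s}$ from (\ref{starstructure}). Either way the factors $q^{-2\hat s}$ pass through unchanged exactly when the action lands on the factor whose $J$-index is a summation variable, which accounts for the asymmetric extra $q$-powers in the two summands; in Type AII one should also confirm that the global scalar $-q^{-1}$ relating the $J$- and $J^t$-normalizations of $d_{ij}$ noted in Section \ref{section:ie} does not disturb the formulas. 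Finally, since each of $E_r\cdot x_{ij}$, $F_r\cdot x_{ij}$, $K_{\epsilon_s}\cdot x_{ij}$ --- and the analogues for $d_{ij}$ --- is a $\mathbb{C}(q)$-linear combination of $x_{kl}$ (resp.\ $d_{kl}$), where one uses the vanishing conventions $x_{kl}=0=d_{kl}$ for out-of-range indices and, in the diagonal case, the restriction to $i\le n<j$ or $j\le n<i$ together with $x_{i,n+j}=x_{n+j,i}$, the spans of the $x_{ij}$ and of the $d_{ij}$ are left $U_q(\mathfrak{g})$-submodules of $\mathscr{P}$ and $\mathscr{D}$; combined with Lemma \ref{lemma:B-invariance}, this yields the claimed module-algebra structures on $\mathscr{P}_\theta$ and $\mathscr{D}_\theta$.

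The main obstacle is entirely the bookkeeping: keeping track of the $q$-powers produced by the $K_r^{\pm1}$ factors in $\Delta(E_r)$ and $\Delta(F_r)$ as they interact with the weights of $t_{ab}$, $\partial_{ab}$ and with the $q^{-2\hat s}$ appearing in $d_{ij}$, and --- in the diagonal case --- verifying that the generators $E_r,F_r$ for $r\in\{1,\dots,n-1\}\cup\{n+1,\dots,2n-1\}$ move the index pair $(i,j)$ only among admissible configurations, which reduces to inspecting the $\delta$-factors together with the symmetry and vanishing conventions above.
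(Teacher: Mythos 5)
Your proposal is correct and follows essentially the same route as the paper: expand $x_{ij}$ (resp.\ $d_{ij}$) as a sum of products $t_{ia}t_{jb}$ (resp.\ $\partial_{ia}\partial_{jb}$) with constant coefficients, apply the coproduct together with the explicit generator actions of Lemmas \ref{lemma:actions} and \ref{lemma:opposite relation}, and resum, noting that the left action only moves row indices so the $J_{p,s}$ and $q^{-2\hat s}$ coefficients pass through untouched. (Only a cosmetic remark: the asymmetric $q$-powers in the $d_{ij}$ formulas come from the $K_r^{\pm1}$ legs of the coproduct interacting with the weights of the $\partial$-factors, not from the $q^{-2\hat s}$ weights, but this does not affect the validity of the computation you outline.)
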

\begin{proof} It follows from  the formulas for the action on the $t_{ij}$ as given in Lemma \ref{lemma:actions} that
\begin{align*} E_r\cdot (t_{ia}t_{jb})&=  (E_r\cdot t_{ia}) t_{jb} + (K_r\cdot t_{ia})(E_r\cdot (t_{jb}) \cr
&=\delta_{i-1,r}t_{i-1,a}t_{jb} + \delta_{j-1,r}q^{\delta_{ir}-\delta_{i-1,r}}t_{ia}t_{j-1,b}
\end{align*}
\begin{align*}
F_r\cdot (t_{ia}t_{jb} )&=  (F_r\cdot t_{ia}) (K_r^{-1}\cdot t_{jb}) + t_{i+1,a}(F_r\cdot (t_{jb}) \cr
&=\delta_{ir}q^{-\delta_{jr}+\delta_{j-1,r}}t_{i+1,a}t_{jb} + \delta_{jr}t_{ia}t_{j+1,b}
\end{align*}
and 
\begin{align*}
K_{\epsilon_s}\cdot (t_{ia}t_{jb} )=  (K_{\epsilon_s}\cdot t_{ia}) (K_{\epsilon_s}\cdot t_{jb}) =q^{\delta_{is}+\delta_{js}}t_{ia}t_{jb}
\end{align*}
for all choices of $a$ and $b$.
Note that $x_{ij}$ is a sum of terms of the form $t_{ia}t_{jb}$. Hence the action of   $E_r$ on 
the $x_{ij}$ and the $F_r,$, $K_{\epsilon_s}$ on the $x_{ij}$ follow directly from the above formulas.  The argument that checks the action of these elements on the $d_{ij}$ is similar.
\end{proof}

It is straightforward to check from the above lemma that $\sum_{i,j}\mathbb{C}(q)x_{ij}$ forms a simple module for the action of $U_q(\mathfrak{g})$.  In Type AI, this module is isomorphic to  $L(2\epsilon_1)$ with highest weight generating vector $x_{11}$.  In Type AII, this module is isomorphic to $L(\epsilon_1+\epsilon_2)$ with highest weight generating vector $x_{12}$.  For diagonal type, this module is isomorphic to  $L(\epsilon_1+ \epsilon_{n+1})$ with highest weight generating vector $x_{1,n+1}$.  Similarly,  $\sum_{i,j}\mathbb{C}(q)d_{ij}$ is a simple module generated by the lowest weight vector $d_{1r}$ of weight $-\epsilon_1-\epsilon_r$ where $r=1$ in Type AI, $r=2$ in Type AII, and $r=n+1$ in  diagonal type.

\subsection{Algebra Structure}\label{section:algebrastructure}
The next result establishes relations satisfied by the $x_{ij}$.  For Type AI and AII this is just \cite{N}, Proposition 4.4 with a slight change in notation.  
 
 \begin{proposition}\label{proposition:relations_overview}  The   elements $x_{ij}$, $i,j=1, \dots, {\rm rank}(\mathfrak{g})$ satisfy   the following linear relations
\begin{itemize}
\item[(i)] ${x}_{ij} = q{x}_{ji}$ all $1\leq i<j\leq n$ in type AI
\item[(ii)] ${x}_{ij}=-q^{-1}{x}_{ji}$ for all $1\leq i<j\leq 2n$ and $x_{ii} = 0$ for all $i$ in type AII.
\item[(iii)] $x_{i,j+n} = x_{j+n,i}$ and $x_{ij}=x_{i+n,j+n}=0$ for all $i,j=1,\dots, n$ in the diagonal type
\end{itemize} and the quadratic  relations expressed in matrix form by   
\begin{align}\label{quad-reln}R_{\mathfrak{g}}{X}_1R_{\mathfrak{g}}^{t_1}{X}_2 ={X}_2R_{\mathfrak{g}}^{t_1}{X}_1R_{\mathfrak{g}}\end{align}
where $X$ is the ${\rm rank}(\mathfrak{g})\times {\rm rank}(\mathfrak{g})$ matrix with $ij$ entry $x_{ij}$.\end{proposition}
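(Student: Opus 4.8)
The plan is to read off both families of relations from the matrix presentation $X = TJT^t$ of Section~\ref{section:ie}, where $T = (t_{ij})$ is the matrix of generators of $\mathscr{P}$ and $J = J_{(n)}(1,\dots,1)$. The two ingredients I would rely on are: first, that $T$ satisfies the FRT relation $R_{\mathfrak{g}}T_1T_2 = T_2T_1R_{\mathfrak{g}}$ of~(\ref{FRTmatrix}) together with its $\iota$-image $R_{\mathfrak{g}}T_1^tT_2^t = T_2^tT_1^tR_{\mathfrak{g}}$ and the other variants listed in Section~\ref{section:as} --- and in the diagonal case that $\mathscr{P}$ is the quotient of $A(R_{\mathfrak{g}})$ by the ideal generated by the $t_{i,n+j}$ and $t_{n+i,j}$, so these matrix identities persist there; and second, that $J$ has scalar entries and solves the constant reflection equation $R_{\mathfrak{g}}J_1R_{\mathfrak{g}}^{t_1}J_2 = J_2R_{\mathfrak{g}}^{t_1}J_1R_{\mathfrak{g}}$ of~(\ref{reflection-equation}), which is in \cite{N} for Types~AI and~AII and is the computation recorded just before the statement in the diagonal case. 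Since for Types~AI and~AII the proposition is \cite{N}, Proposition~4.4 up to notation, the only genuinely new content is the diagonal case, but I would run a uniform argument.

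For the linear relations I would substitute the explicit expansions of the $x_{ij}$ given after~(\ref{xa-defn1}) into the commutation rules (i)--(ii) of Section~\ref{section:as}. In Type~AI, for $i<j$ each summand $t_{ik}t_{jk}$ of $x_{ij}=\sum_k t_{ik}t_{jk}$ equals $qt_{jk}t_{ik}$ by the column relation in~(i), so $x_{ij}=qx_{ji}$. In Type~AII, $x_{ii}=0$ follows from the row relation $t_{i,2k-1}t_{i,2k}=qt_{i,2k}t_{i,2k-1}$; and for $i<j$ one rewrites the $k$th summand $t_{i,2k-1}t_{j,2k}-qt_{i,2k}t_{j,2k-1}$ of $x_{ij}$ using both halves of~(ii), namely $t_{i,2k}t_{j,2k-1}=t_{j,2k-1}t_{i,2k}$ and $t_{i,2k-1}t_{j,2k}-t_{j,2k}t_{i,2k-1}=(q-q^{-1})t_{j,2k-1}t_{i,2k}$, to obtain $-q^{-1}\bigl(t_{j,2k-1}t_{i,2k}-qt_{j,2k}t_{i,2k-1}\bigr)$, i.e. $-q^{-1}$ times the $k$th summand of $x_{ji}$; summing gives $x_{ij}=-q^{-1}x_{ji}$. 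In the diagonal case, $x_{ij}=x_{i+n,j+n}=0$ for $i,j\le n$ is immediate since $J_{(n)}(1,\dots,1)$ is supported on the off-diagonal $n\times n$ blocks, while $x_{i,j+n}=x_{j+n,i}$ is exactly the identity already observed in Section~\ref{section:ie}, coming from the commutation of the two copies of $\mathcal{O}_q(\Mat_n)$ inside $\mathscr{P}$.

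For the quadratic relation~(\ref{quad-reln}) I would run the standard Kulish--Sklyanin computation underlying Noumi's construction. Starting from $R_{\mathfrak{g}}X_1R_{\mathfrak{g}}^{t_1}X_2 = R_{\mathfrak{g}}T_1J_1(T^t)_1R_{\mathfrak{g}}^{t_1}T_2J_2(T^t)_2$, one uses that $J_1,J_2$ have scalar entries, so they commute with every $t_{ij}$, together with the FRT relation and its transposed forms, to move the $T$- and $T^t$-factors past one another and past the $R_{\mathfrak{g}}$'s until a block of the shape $R_{\mathfrak{g}}J_1R_{\mathfrak{g}}^{t_1}J_2$ is exposed; replacing that block by $J_2R_{\mathfrak{g}}^{t_1}J_1R_{\mathfrak{g}}$ via the reflection equation for $J$ and reassembling yields $X_2R_{\mathfrak{g}}^{t_1}X_1R_{\mathfrak{g}}$. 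For Types~AI and~AII this is \cite{N}, Proposition~4.4. For the diagonal case the identical sequence of moves applies once the two facts from the first paragraph are in place; concretely $R_{\mathfrak{g}}$ is block diagonal of type $(R,I_{n^2},I_{n^2},R)$ and $J$ lives on the off-diagonal blocks, so the computation decouples and on each surviving block reduces to the Type~AI case.

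I expect the only real difficulty to be the bookkeeping in the reflection-equation computation: keeping straight which transpose ($t_1$ versus $t_2$) acts on which factor, how transposition interacts with the order of the noncommuting generators $t_{ij}$, and which transposed variant of the FRT relation is invoked at each step. The spot where it is easiest to slip is matching the convention $X=TJT^t$ with the precise form of the reflection equation satisfied by $J$ --- especially in Type~AII, where $J\ne J^t$ and a normalization factor $-q^{-1}$ already surfaced in passing between the $J$- and $J^t$-forms of $d_{ij}$ --- so I would fix those conventions at the outset and propagate them carefully.
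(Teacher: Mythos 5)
Your proposal is correct and follows essentially the same route as the paper: the linear relations are read off directly from the defining relations of the $t_{ij}$ (with relation (iii) coming from the vanishing of the mixed entries and the commutation of the two copies of $\mathcal{O}_q(\Mat_n)$), while the quadratic relation is Noumi's Proposition 4.4 for Types AI and AII, and the same reflection-equation argument carries over to the diagonal case because $J$ solves the reflection equation for $R_{\mathfrak{g}}$ there as recorded in Section \ref{section:ie} and $T$ still satisfies the FRT identity in the quotient. The only difference is that you spell out the verifications that the paper leaves as citations or as ``straightforward'' checks, which is fine.
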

\begin{proof}By \cite{N}, Proposition 4.4, the  $x_{ij}$ satisfy the relations  obtained from (\ref{quad-reln}) in type AI and in type AII.  Here we are using the fact that $R'_{21}$ in the notation of \cite{N} is the same as $R_{12}$ in the notation of this paper. Moreover, since the diagonal type is also defined using a solution to the reflection equations, the proof in \cite{N} carries over to this family as well.   

Relations (i), (ii) are also part of \cite{N} Proposition 4.4; both can be deduced in a straightforward manner from the relations for the $t_{ij}$.  Relations in (iii) follow from  the fact that  $t_{i+n,j+n}$ commutes with $t_{kl}$  and  $t_{i,j+n} = t_{j+n,i}=0$ for all $i,j,k,l$ in $\{1,\dots, n\}$.  
\end{proof}

Let us take a closer look at relation (\ref{quad-reln}).   Recall that $X_1 = X\otimes I$.   Hence  $(X_1)^{ab}_{cd} = x_{ac}$ if $b=d$ and is $0$ otherwise. 
Similarly, $(X_2)^{ab}_{cd} = x_{bd}$ if $a=c$ and $0$ otherwise.
The $ld,hk$ entry of the left hand side of (\ref{quad-reln}) is  
\begin{align*}
(R_{\mathfrak{g}}{X}_1R_{\mathfrak{g}}^{t_1}{X}_2)^{ld}_{hk} = \sum_{j,s,a,b,u,v}(R_{\mathfrak{g}})^{ld}_{js}(X_1)^{js}_{ab}(R_{\mathfrak{g}})^{ub}_{av}(X_2)^{uv}_{hk}
 = \sum_{j,s,a,v}(R_{\mathfrak{g}})^{ld}_{js}x_{ja}(R_{\mathfrak{g}})^{hs}_{av}x_{vk}
\end{align*}
The $ld, hk$ entry of the right hand side of (\ref{quad-reln}) can be evaluated in the same way yielding 
\begin{align*}
({X}_2R_{\mathfrak{g}}^{t_1} {X}_1R_{\mathfrak{g}})^{ld}_{hk} =\sum_{j,s,a,b,u,v} (X_2)^{ld}_{js}(R_{\mathfrak{g}})^{as}_{jb}(X_1)^{ab}_{uv}(R_{\mathfrak{g}})^{uv}_{hk}=\sum_{s,a,b,u} x_{ds}(R_{\mathfrak{g}})^{as}_{lb}x_{au}(R_{\mathfrak{g}})^{ub}_{hk}
\end{align*}
Hence 
\begin{align}\label{full-relation1}
\sum_{j,s,a,v}(R_{\mathfrak{g}})^{ld}_{js}x_{ja}(R_{\mathfrak{g}})^{hs}_{av}x_{vk}=\sum_{s,a,b,u} x_{ds}(R_{\mathfrak{g}})^{as}_{lb}x_{au}(R_{\mathfrak{g}})^{ub}_{hk}
\end{align}
for all $l,d,h,k$. 

\begin{lemma} For diagonal type, relation (\ref{quad-reln}) is equivalent to the set of matrix relations $R\hat{X}_1 \hat{X}_2
=\hat{X}_2\hat{X}_1 R$ where $R$ is the matrix defined by (\ref{Rmatrix2}) and $\hat X$ is the $n\times n$ matrix with $ij$ entry $x_{i,j+n}$ for $i,j=1,\dots, n$.
\end{lemma}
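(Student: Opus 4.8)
The approach is to exploit the block structure of $R_{\mathfrak g}$ and $X$ with respect to the splitting of $\{1,\dots,2n\}$ into $\{1,\dots,n\}$ and $\{n+1,\dots,2n\}$; write $V=V_0\oplus V_1$ accordingly, where $V_0$ has basis $v_1,\dots,v_n$ and $V_1$ has basis $v_{n+1},\dots,v_{2n}$. First I would record two facts. By the linear relations of part (iii) of Proposition \ref{proposition:relations_overview} (namely $x_{ij}=x_{i+n,j+n}=0$ and $x_{i,j+n}=x_{j+n,i}$), in $n\times n$ block form the matrix $X$ is block off-diagonal, with upper right block $\hat X$ and lower left block $\hat X^t$; equivalently, the map $V_1\to V_0$ induced by $X$ has matrix $\hat X$ and the map $V_0\to V_1$ induced by $X$ has matrix $\hat X^t$. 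From the explicit description of $R_{\mathfrak g}$ in the diagonal case, both $R_{\mathfrak g}$ and $R_{\mathfrak g}^{t_1}$, viewed as operators on $V^{\otimes 2}=\bigoplus_{a,b\in\{0,1\}}V_a\otimes V_b$, are block diagonal for this decomposition: they act as $R$ (respectively $R^{t_1}$) on $V_0\otimes V_0$ and on $V_1\otimes V_1$, and as the identity on $V_0\otimes V_1$ and on $V_1\otimes V_0$.

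Since $X$ is block off-diagonal, $X_1=X\otimes Id$ and $X_2=Id\otimes X$ each reverse exactly one of the two block labels, whereas $R_{\mathfrak g}$ and $R_{\mathfrak g}^{t_1}$ preserve both; hence both sides of (\ref{quad-reln}), viewed as operators on $V^{\otimes 2}$, send $V_a\otimes V_b$ into $V_{1-a}\otimes V_{1-b}$. Evaluating on the basis vectors $v_h\otimes v_k$ and grouping them by type, it follows that (\ref{quad-reln}) is equivalent to the four relations obtained by restricting to inputs in $V_a\otimes V_b$, one for each $(a,b)\in\{0,1\}^2$. Next I would compute these four restrictions directly from (\ref{full-relation1}), tracking which blocks of $R_{\mathfrak g}$ and $R_{\mathfrak g}^{t_1}$ enter, which of $\hat X,\hat X^t$ appears for each occurrence of $X$, and in which order the two $\hat X$-entries multiply. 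The outcome is that the restriction to the entries indexed by $l,d\le n<h,k$ is exactly $R\hat X_1\hat X_2=\hat X_2\hat X_1 R$; the restriction to the entries indexed by $h,k\le n<l,d$ is $R(\hat X^t)_1(\hat X^t)_2=(\hat X^t)_2(\hat X^t)_1 R$; and the two mixed restrictions, after relabelling of indices, reproduce these same two equations. Thus (\ref{quad-reln}) is equivalent to the conjunction of $R\hat X_1\hat X_2=\hat X_2\hat X_1 R$ and $R(\hat X^t)_1(\hat X^t)_2=(\hat X^t)_2(\hat X^t)_1 R$.

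Finally I would remove the second equation. The algebra $\mathscr P=\mathcal O_q(\Mat_n)\otimes\mathcal O_q(\Mat_n)$ admits the algebra automorphism $\phi$ interchanging the two tensor factors, so $\phi(t_{ij})=t_{n+i,n+j}$ and $\phi(t_{n+i,n+j})=t_{ij}$ for $1\le i,j\le n$. Since $\hat X_{ij}=x_{i,j+n}=\sum_k t_{ik}t_{n+j,n+k}$ and the two copies of $\mathcal O_q(\Mat_n)$ commute with one another, we get $\phi(\hat X_{ij})=\sum_k t_{n+i,n+k}t_{jk}=\sum_k t_{jk}t_{n+i,n+k}=\hat X_{ji}$, that is, $\phi$ carries $\hat X$ to $\hat X^t$. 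Applying $\phi$ to the identity $R\hat X_1\hat X_2=\hat X_2\hat X_1 R$ therefore yields $R(\hat X^t)_1(\hat X^t)_2=(\hat X^t)_2(\hat X^t)_1 R$, so the second equation is a consequence of the first. Hence (\ref{quad-reln}) is equivalent to $R\hat X_1\hat X_2=\hat X_2\hat X_1 R$ alone, which is the claim.

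The step I expect to be the main obstacle is the explicit evaluation of the four sector restrictions: the bookkeeping requires keeping straight which partial transpose of $R$ occurs in each sector and — more subtly — the order in which the two noncommuting entries of $\hat X$ are multiplied, which is precisely what is needed to see that the two mixed sectors do not yield new relations but merely copies of the $V_1\otimes V_1$ and $V_0\otimes V_0$ ones. The block decomposition and the automorphism argument are routine by comparison.
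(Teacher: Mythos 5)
Your proposal follows essentially the same route as the paper: decompose the entries of (\ref{quad-reln}) according to which of the two blocks the indices lie in, observe that $R_{\mathfrak g}$, $R_{\mathfrak g}^{t_1}$ are block diagonal (acting as $R$, $R^{t_1}$ on the pure blocks and as the identity on the mixed ones) while $X$ is block off-diagonal with blocks $\hat X$ and $\hat X^t$, and then check that the four resulting sectors yield only the two matrix equations $R\hat X_1\hat X_2=\hat X_2\hat X_1R$ and $R\hat X^t_1\hat X^t_2=\hat X^t_2\hat X^t_1R$ — the paper carries out the sector $1\le l,d\le n$ in detail exactly as you indicate and declares the other sectors similar. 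The only genuine deviation is your last step: where the paper disposes of the transposed equation by appealing to the general FRT fact from Section \ref{section:as} that $RT_1T_2=T_2T_1R$ and $RT^t_1T^t_2=T^t_2T^t_1R$ are equivalent sets of relations, you derive it by applying the tensor-swap automorphism $\phi$ of $\mathscr P=\mathcal O_q(\Mat_n)\otimes\mathcal O_q(\Mat_n)$, which indeed satisfies $\phi(x_{i,j+n})=x_{j,i+n}$, i.e.\ carries $\hat X$ to $\hat X^t$; this is correct and in effect realizes concretely the transposition map $\iota$ that the paper's cited argument uses, though the paper's formulation (flip, multiply by the invertible $R_{21}^{-1}$, transpose) has the advantage of being purely formal in the generators and so transfers verbatim to the abstract generators $\tilde x_{ij}$ used later in Proposition \ref{prop:Palgebra}.
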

\begin{proof}The matrix of relations defined by (\ref{quad-reln}) can be broken into four submatrices of relations consisting of all $l,d,h,k$ entries where $l$ and $d$ are each chosen to lie in either $\{1, \dots, n\}$ or $\{n+1,\dots, 2n\}$.  These submatrices turn out to correspond to either $R\hat{X}_1 \hat{X}_2
=\hat{X}_2\hat{X}_1 R$ or $R\hat{X}^t_1 \hat{X}^t_2
=\hat{X}^t_2\hat{X}^t_1 R$.   As explained in Section \ref{section:as}, these two relations are equivalent.  

We explain how to go from the submatrix consisting of those relations corresponding to the  $l,d,h,k$ entries of (\ref{quad-reln}) under the assumption $1\leq l,d\leq n$ to the relation $R\hat{X}_1 \hat{X}_2
=\hat{X}_2\hat{X}_1 R$.  The proof for the other submatrices is similar. 
Since $R_{\mathfrak{g}}$ is the block diagonal matrix $(R, I_{n^2\times n^2}, I_{n^2\times n^2}, R)$, we have $(R_{\mathfrak{g}})_{js}^{ld}\neq 0$
 implies that $\{j,s\} = \{l,d\}$, and, moreover, in this case $(R_{\mathfrak{g}})_{js}^{ld}=r_{js}^{ld}$.  It follows that all choices of $j$ and $s$ appearing in 
the left hand side of (\ref{full-relation1}) must satisfy $1\leq j,s\leq n$.  Moreover, $x_{ja}\neq 0$ implies that $n+1\leq a\leq 2n$.
This condition on $a$ combined with the fact that $1\leq s\leq n$ ensures that $(R_{\mathfrak{g}})^{hs}_{av}\neq 0$
if and only if $a=h$ and $v=s$, and, moreover, when this happens, we get $(R_{\mathfrak{g}})^{hs}_{av}=1$. This also implies the only values of $h$ and $v$ for which the left hand side is nonzero must  satisfy 
$n+1\leq h\leq 2n$ and $1\leq v\leq n$. Thus $x_{vk}\neq 0$ ensures that $n+1\leq k\leq 2n$.  Hence the original assumptions on $l$ and $d$, namely $1\leq l,d\leq n$, guarantee that the left hand side of (\ref{full-relation1}) is nonzero only if $n+1 \leq h,k\leq 2n$. 

The same type of analysis shows that we only get nonzero terms showing up in (\ref{full-relation1}) provided one of the following conditions hold:
\begin{itemize}
\item $1\leq l,d\leq n$ and $n+1 \leq h,k\leq 2n$
\item $1\leq l,k\leq n$ and $n+1 \leq h,d\leq 2n$
\item $1\leq h,d \leq n$ and $n+1 \leq l,k\leq 2n$
\item $1\leq h,k \leq n$ and $n+1 \leq l,d\leq 2n$
\end{itemize}

Lets return to the first case, namely $1\leq l,d\leq n$ and $n+1 \leq h,k\leq 2n$.  We showed above that  $(R_{\mathfrak{g}})^{hs}_{av}=1$ if $a=h$ and $v=s$ and $0$ otherwise. A similar argument yields  $(R_{\mathfrak{g}})_{as}^{lb}=1$
provided $a=l$, $s=b$ and is $0$ otherwise. On the other hand, again as explained above, $(R_{\mathfrak{g}})_{js}^{ld}=r_{js}^{ld}$ and a similar argument gives us $(R_{\mathfrak{g}})^{us}_{hk}=r^{us}_{hk}$ where here we are using the fact that $s=b$..
 Thus (\ref{full-relation1})  is \begin{align*}\sum_{j,s}r^{ld}_{js}x_{jh}x_{sk}=\sum_{s,u} x_{ds}x_{lu}r^{us}_{hk}
\end{align*}
  Hence the set of all such equalities together under these assumptions gives us $R\hat{X}_1 \hat{X}_2
=\hat{X}_2\hat{X}_1 R$. The arguments for the other cases are similar.
\end{proof}

We now turn our attention to Types AI and AII and so $R_{\mathfrak{g}}$ is just the $R$-matrix $R$ as in (\ref{Rmatrix2}) with entries $r^{ij}_{kl}$.  Recall that  $r^{ij}_{ij} = q^{\delta_{ij}}$ for all $i,j$, $r^{ij}_{ji} = (q-q^{-1})$ for $j<i$ and all other $r^{ab}_{cd}$ equal $0$. 
Hence, we can rewrite  (\ref{full-relation1}) as 
 \begin{align*}
&q^{\delta_{ld}+\delta_{hd}}x_{lh}x_{dk} +(q-q^{-1})\left( q^{\delta_{hl}}\delta_{d<l}x_{dh} x_{lk}+ q^{\delta_{ld}}\delta_{d<h}x_{ld}x_{hk} + \delta_{l<h}\delta_{d<l}(q-q^{-1}) x_{dl} x_{hk}\right)\cr
&=q^{\delta_{hk}+\delta_{lk}}x_{dk}x_{lh}+
(q-q^{-1})\left(q^{\delta_{hk}}\delta_{l<k}x_{dl}x_{kh}+q^{\delta_{lh}}\delta_{h<k}x_{dh}x_{lk}+\delta_{l<h}\delta_{h<k}(q-q^{-1})x_{dl}x_{hk}\right)
\end{align*}
Moving all but the first term of the left hand side to the right hand side, this relation can be rewritten as 
\begin{align}\label{full-relation} q^{\delta_{ld}+\delta_{hd}}x_{lh}x_{dk} &= q^{\delta_{hk}+\delta_{lk}}x_{dk}x_{lh} + (\delta_{h<k}-\delta_{d<l})q^{\delta_{lh}}(q-q^{-1}) x_{dh} x_{lk}\cr
&+\delta_{l<h}(\delta_{h<k}-\delta_{d<l})(q-q^{-1})^2x_{dl}x_{hk}+q^{\delta_{hk}}\delta_{l<k}(q-q^{-1})x_{dl}x_{kh}\cr&- q^{\delta_{ld}}\delta_{d<h}(q-q^{-1})x_{ld}x_{hk}.
\end{align}

In \cite{N}, the algebra generated by the $x_{ij}$ in Type AI  (resp. Type AII) is referred to as a quantized function algebra on the space of symmetric matrices (resp. skew symmetric matrices).   The next lemma gives explicit relations for these Type AI and Type AII quantum functions algebras.  It should be noted,  that in Type AI, the algebra generated by the $x_{ij}$ is the same as the quantum analog of the function algebra on symmetric matrices  studied  in \cite{Kamita} (see also \cite{B2}, Section 3).  


\begin{lemma}\label{lemma:explicit-relations} In Type AI, we have 
\begin{align*}
x_{lh}x_{dk}=\left\{\begin{matrix}q^{-2}x_{dk}x_{lh}  &{\rm \ if \ }d=l=k<h\cr
q^{-1}x_{dk}x_{lh}&{\rm\ if \ }d=l<k<h\cr
q^{-2}x_{dk}x_{lh}&{\rm\ if \ }d<l=k=h\cr
q^{-1}x_{dk}x_{lh}&{\rm\ if \ }d<l<k=h\cr
q^{-1}x_{dk}x_{lh}  -(q-q^{-1}) x_{dh} x_{lk} &{\rm \ if \ }d<l=k<h\cr
x_{dk}x_{lh}-(q-q^{-1})x_{dh}x_{lk} &{\rm \ if \ }d<l<k<h\cr
x_{dk}x_{lh} -q^{-1}(q^2-q^{-2}) x_{dh} x_{kl}&{\rm \ if \ }d=k<l=h\cr
x_{dk}x_{lh} -(q^2-q^{-2}) x_{dh} x_{kl}&{\rm \ if \ }d=k<l<h\cr
x_{dk}x_{lh} -q^{-1}(q^2-q^{-2}) x_{dh} x_{kl}&{\rm \ if \ }d<k<l=h\cr
x_{dk}x_{lh} -(q-q^{-1}) (q^{-1}x_{dh} x_{kl}+x_{dl}x_{kh})&{\rm \ if \ }d<k<l<h\cr
 x_{dk}x_{lh}&{\rm \ if \ }d<l<h<k\cr
x_{dk}x_{lh}&{\rm \ if \ }d<l=h<k\cr
\end{matrix}\right.
\end{align*}
and for Type AII, we have 
\begin{align*}
x_{lh}x_{dk}=\left\{\begin{matrix}
q^{-1}x_{dk}x_{lh}&{\rm\ if \ }d=l<k<h\cr
q^{-1}x_{dk}x_{lh}&{\rm\ if \ }d<l<k=h\cr
q^{-1}x_{dk}x_{lh} &{\rm \ if \ }d<l=k<h\cr
x_{dk}x_{lh}-(q-q^{-1})x_{dh}x_{lk} &{\rm \ if \ }d<l<k<h\cr
x_{dk}x_{lh} +(q-q^{-1}) (qx_{dh} x_{kl}-x_{dl}x_{kh}).&{\rm \ if \ }d<k<l<h\cr
x_{dk}x_{lh}&{\rm \ if \ }d<l<h<k\cr
\end{matrix}\right.
\end{align*}
\end{lemma}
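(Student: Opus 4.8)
Looking at this problem, I need to derive explicit commutation relations for the generators $x_{ij}$ from the general quadratic relation (\ref{full-relation}). This is essentially a careful case analysis, so let me sketch how the proof would go.

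\medskip

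The plan is to specialize the master relation (\ref{full-relation}) to each ordering of the four indices $d,l,h,k$ and simplify, then use the linear relations from Proposition \ref{proposition:relations_overview}(i) (Type AI: $x_{ij}=qx_{ji}$ for $i<j$) or (ii) (Type AII: $x_{ij}=-q^{-1}x_{ji}$ for $i<j$, $x_{ii}=0$) to rewrite everything in terms of a fixed normal form for the quadratic monomials. Concretely, relation (\ref{full-relation}) reads
\begin{align*}
q^{\delta_{ld}+\delta_{hd}}x_{lh}x_{dk} &= q^{\delta_{hk}+\delta_{lk}}x_{dk}x_{lh} + (\delta_{h<k}-\delta_{d<l})q^{\delta_{lh}}(q-q^{-1}) x_{dh} x_{lk}\\
&+\delta_{l<h}(\delta_{h<k}-\delta_{d<l})(q-q^{-1})^2x_{dl}x_{hk}+q^{\delta_{hk}}\delta_{l<k}(q-q^{-1})x_{dl}x_{kh}\\
&- q^{\delta_{ld}}\delta_{d<h}(q-q^{-1})x_{ld}x_{hk}.
\end{align*}
First I would fix, without loss of generality using (i)/(ii), a convention such as writing every monomial $x_{ab}x_{cd}$ with $a\le b$, $c\le d$ (in Type AII also with the indices distinct, since $x_{ii}=0$), and observe that the listed cases in the Lemma correspond precisely to the inequivalent orderings of $\{d,l,h,k\}$ that can arise after this normalization — this explains why the list has the shape it does (e.g. in Type AI the twelve cases, in Type AII only six because equalities among indices force vanishing).

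\medskip

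Next, for each case I would substitute the relevant Kronecker-delta values into the displayed relation. For instance, when $d<l<k<h$ all the deltas of the form $\delta_{ab}$ vanish, $\delta_{h<k}=0$, $\delta_{d<l}=1$, $\delta_{l<h}=1$, $\delta_{l<k}=1$, $\delta_{d<h}=1$, so the relation collapses to $x_{lh}x_{dk}=x_{dk}x_{lh}-(q-q^{-1})(q^{-1}\cdot 0)\cdots$ — one tracks which of the five correction terms survive and simplifies the coefficients, arriving at $x_{lh}x_{dk}=x_{dk}x_{lh}-(q-q^{-1})x_{dh}x_{lk}$ after using a linear relation to put $x_{hk}$ into normal form $x_{kh}$ (picking up a factor of $q$ or $-q^{-1}$). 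The cases with coincidences among indices, e.g. $d=l=k<h$ or $d=k<l=h$, are where the prefactor $q^{\delta_{ld}+\delta_{hd}}$ on the left and $q^{\delta_{hk}+\delta_{lk}}$ on the right become nontrivial and produce the powers $q^{-2}$, $q^{-1}(q^2-q^{-2})$, etc.; here one must be especially careful, since a term like $x_{dh}x_{kl}$ must be renormalized via $x_{kl}=q x_{lk}$ (Type AI) before comparing with the tabulated answer, which accounts for the asymmetric coefficients $q^{-1}(q^2-q^{-2})$ versus $(q^2-q^{-2})$.

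\medskip

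The Type AII computation proceeds identically but with the substitution $x_{ij}\to -q^{-1}x_{ji}$ for $i<j$ and the vanishing $x_{ii}=0$, which kills all cases where two of $d,l,h,k$ coincide on the "wrong" side; the surviving sign changes produce the $+(q-q^{-1})(qx_{dh}x_{kl}-x_{dl}x_{kh})$ term in the case $d<k<l<h$. I expect the main obstacle to be purely bookkeeping: keeping the index-normalization convention consistent across all cases, correctly tracking the extra powers of $q$ (or signs) introduced each time a linear relation is applied to put a monomial into normal form, and verifying that the cases I enumerate genuinely exhaust all orderings (including the "already normal-ordered" cases $d<l<h<k$ and $d<l=h<k$ where no relation is needed and the answer is simply $x_{lh}x_{dk}=x_{dk}x_{lh}$). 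A sensible cross-check at the end is to confirm the $2\times 2$ specialization against the known presentation of the quantum symmetric-matrix algebra of \cite{Kamita}, \cite{B2}, and to verify internal consistency by applying the relation to $x_{lh}x_{dk}$ and its "mirror" and checking the two rewritings agree.
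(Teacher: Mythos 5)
Your proposal follows essentially the same route as the paper: specialize the master relation (\ref{full-relation}) to each ordering of $d,l,h,k$, track the Kronecker deltas, and use the linear relations of Proposition \ref{proposition:relations_overview} ($x_{ij}=qx_{ji}$ in Type AI, $x_{ij}=-q^{-1}x_{ji}$ and $x_{ii}=0$ in Type AII) to renormalize the surviving correction terms, which is exactly how the paper proceeds (it carries out Type AII in full via the simplified relation (\ref{TypeAIIreln}) and notes Type AI is analogous but more computational). The only caveat is that yours is a plan with one representative case rather than the full bookkeeping, but the worked case $d<l<k<h$ and the cancellation mechanism you describe match the paper's computation.
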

\begin{proof} We prove the lemma in  Type AII. The proof in Type AI is similar, albeit somewhat more computationally involved.   

Since $x_{ii}=0$ for all $i$ in Type AII, we must have $l<h$ and $d<k$ in all cases.  
Note also that $x_{ji} = -qx_{ij}$ for all $i<j$.
Assume in addition that $d\leq l\leq k\leq h$. Formula (\ref{full-relation}) simplifies to 
 \begin{align} \label{TypeAIIreln} q^{\delta_{ld}}x_{lh}x_{dk} &= q^{\delta_{hk}+\delta_{lk}}x_{dk}x_{lh} -\delta_{d<l}(q-q^{-1})x_{dh} x_{lk}
-\delta_{d<l}(q-q^{-1})^2x_{dl}x_{hk}\cr&+\delta_{l<k}(q-q^{-1})x_{dl}x_{kh}- \delta_{d<h}(q-q^{-1})x_{ld}x_{hk}.
\end{align} 
If $d=l<k<h$, this further simplifies to
$qx_{lh}x_{dk} = x_{dk}x_{lh} $.  If $d<l<k=h$,  (\ref{TypeAIIreln}) becomes 
 \begin{align*} x_{lh}x_{dk} &= qx_{dk}x_{lh} -(q-q^{-1})x_{dh} x_{lk} = q^{-1}x_{dk}x_{lh}\end{align*} 
 which yields the same equality as in the previous case. For $d<l=k<h$, (\ref{TypeAIIreln}) reduces to
  \begin{align*}x_{lh}x_{dk} &= qx_{dk}x_{lh} 
+q(q-q^{-1})^2x_{dl}x_{kh}-q^2 (q-q^{-1})x_{dl}x_{kh}
\cr &=  qx_{dk}x_{lh} 
-(q-q^{-1})x_{dl}x_{kh}=q^{-1}x_{dk}x_{lh}
\end{align*} 
with the last equality following because $l=k$, thus verifying this case.
Now assume $d<l<k<h$.   Note that $q(q-q^{-1})^2 + (q-q^{-1})-q^2(q-q^{-1}) = 0.$ Hence, under this assumption, (\ref{TypeAIIreln}) is equivalent to
 \begin{align*} x_{lh}x_{dk} &= x_{dk}x_{lh} -(q-q^{-1})x_{dh} x_{lk}
+q(q-q^{-1})^2x_{dl}x_{kh}\cr&+(q-q^{-1})x_{dl}x_{kh}- q^2(q-q^{-1})x_{dl}x_{kh}
\cr&=x_{dk}x_{lh} -(q-q^{-1})x_{dh} x_{lk}.
\end{align*} 
which is the fourth entry of the relations for Type AII.

Now consider $d<k<l<h$.  For this case (\ref{full-relation}) can be rewritten as 
\begin{align*} x_{lh}x_{dk} &= x_{dk}x_{lh} +q(q-q^{-1}) x_{dh} x_{kl}
+q(q-q^{-1})^2x_{dl}x_{kh}- q^2(q-q^{-1})x_{dl}x_{kh}\cr
&=x_{dk}x_{lh} +q(q-q^{-1}) x_{dh} x_{kl}- (q-q^{-1})x_{dl}x_{kh}.
\end{align*}This takes care of the penultimate case.  For the final case, 
assume that $d<l<h<k$.   We simplify (\ref{full-relation}) with respect to these assumptions, yielding
\begin{align*}x_{lh}x_{dk} = x_{dk}x_{lh} + 
(q-q^{-1})x_{dl}x_{kh}- (q-q^{-1})x_{ld}x_{hk}.
\end{align*}
Using the facts tht $x_{kh} = -qx_{hk}$ and $x_{ld} = -qx_{dl}$, we see that the last two terms vanish which 
leaves us with $x_{lh}x_{dk} = x_{dk}x_{lh}$ as claimed.
\end{proof}

Define  subalgebras of $\mathscr{P}_{\theta}$ by
 $P_{(r-1)n+s} = \mathbb{C}(q)[x_{11}, x_{12}, \dots x_{rs}]$ for $r=1, \dots, n$ and $s=r, \dots, n$ in Type AI and $P_{(r-1)2n+s} = \mathbb{C}(q)[x_{12}, x_{13}, \dots x_{rs}]$ for $r=1, \dots, 2n$ and $s=r+1, \dots, 2n$ in Type AII.
It is well known that the quantized function algebra $\mathcal{O}_q(\Mat_N)$ can be expressed as an iterated Ore extension (see  for example \cite{GY}, Example 3.4)).  By Proposition \ref{proposition:relations_overview}, $\mathscr{P}_{\theta}$ in the diagonal setting satisfies the relations of $\mathcal{O}_q(\Mat_N)$ and hence $\mathscr{P}_{\theta}$ is an iterated Ore extension in this setting. A consequence of the next result is that  $\mathscr{P}_{\theta}$ can also be expressed as an iterated Ore extension in Types AI and AII.  

\begin{lemma} \label{lemma:homog}
 In Types AI and AII, the $x_{ij}$ satisfy the following 
$q$-type commuting property
\begin{align}\label{lexexp}
x_{lh}x_{dk} - q^{s(l,h,d,k)}x_{dk}x_{lh} \in P_{(l-1)m+h-1}
\end{align}
where  $m=n$ in Type AI, $m=2n$ in Type AII, $d\leq k$, $l\leq h$,  $(d,k)$ is less than $(l,h)$ in the lexicographic ordering, and $s(l,h,d,k)$ is a function  
from $\{1, 2, \dots, m\}^4$ to the integers.  
\end{lemma}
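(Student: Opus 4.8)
The plan is to derive (\ref{lexexp}) directly from the explicit relations in Lemma \ref{lemma:explicit-relations}, reading off the scalar $q^{s(l,h,d,k)}$ and checking that all the remaining ``error'' terms land in the subalgebra $P_{(l-1)m+h-1}$. Recall that $P_{(l-1)m+h-1}$ is generated by those $x_{dk}$ whose index pair $(d,k)$ is strictly smaller than $(l,h)$ in the lexicographic order (among the admissible pairs with $d\le k$ in Type AI, $d<k$ in Type AII). So the heart of the argument is to verify that every monomial $x_{ab}x_{cd}$ appearing on the right-hand side of the relations in Lemma \ref{lemma:explicit-relations}, other than the leading term $x_{dk}x_{lh}$ itself, is a product of two generators each of whose index pair precedes $(l,h)$ lexicographically, after using the linear relations (i)--(iii) of Proposition \ref{proposition:relations_overview} to reorder indices as needed.

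First I would fix the hypotheses $d\le k$, $l\le h$, and $(d,k)<_{\mathrm{lex}}(l,h)$, which means either $d<l$, or $d=l$ and $k<h$. The linear relations $x_{ij}=qx_{ji}$ (Type AI) or $x_{ij}=-q^{-1}x_{ji}$ (Type AII) let me assume without loss of generality that in each generator $x_{ab}$ we have $a\le b$; applying such a relation only changes a monomial by a nonzero scalar, which is harmless. Then I would go through the case list of Lemma \ref{lemma:explicit-relations} line by line. In each case the right-hand side is $(\text{scalar})\,x_{dk}x_{lh}$ plus a $\mathbb{C}(q)$-linear combination of terms of the form $x_{d?}x_{??}$ where the first index of the first factor is $d$ (hence $\le l$) and the indices are obtained from $\{l,h,d,k\}$ by ``mixing'': typically $x_{dh}x_{lk}$, $x_{dl}x_{kh}$, $x_{dh}x_{kl}$, etc. The key observation is that in every such mixed term, after normalizing each factor so its indices are weakly increasing, both factors have index pair strictly less than $(l,h)$: the factor involving $d$ as an index is fine because $d<l$ or ($d=l$ and the other index is $\le k<h$), and the second factor, after normalization, has first index $\le l$ with second index drawn from $\{h,k,l\}$ in a way that keeps it below $(l,h)$ — here one uses $d<l\le h$ and $k\le h$ together with the specific index pattern in each case. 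Thus each mixed term lies in $P_{(l-1)m+h-1}$, and so does $x_{dk}x_{lh}$'s ``lower'' partner only when needed; but actually $x_{dk}x_{lh}$ is precisely the leading term we subtract off, so it does not need to be in $P$.

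The scalar $q^{s(l,h,d,k)}$ is then simply the coefficient of $x_{dk}x_{lh}$ appearing in the relevant line of Lemma \ref{lemma:explicit-relations} (e.g. $q^{-2}$, $q^{-1}$, $1$, or the more elaborate combinations in the $d=k$ rows where one must also re-expand $x_{kl}$ via the linear relation to collect the $x_{dk}x_{lh}$ coefficient); defining $s$ to be the exponent in each case gives a well-defined function $\{1,\dots,m\}^4\to\mathbb{Z}$, and the cases exhaust all admissible $(l,h,d,k)$ with $(d,k)<_{\mathrm{lex}}(l,h)$. In the diagonal type the statement is not claimed (the lemma is stated only for Types AI and AII), so no argument is needed there, though one could remark that $\mathscr{P}_\theta\cong\mathcal{O}_q(\Mat_n)$ makes it automatic.

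The main obstacle I anticipate is purely bookkeeping: in the rows of Lemma \ref{lemma:explicit-relations} with $d=k$ (the cases $d=k<l=h$, $d=k<l<h$, $d<k<l=h$), the term $x_{dh}x_{kl}$ has its second factor $x_{kl}$ with $k<l$ already in increasing order, but one must be careful that after normalization $(k,l)<_{\mathrm{lex}}(l,h)$ — which holds since $k<l$ — and simultaneously track that these same rows may hide an extra contribution to the $x_{dk}x_{lh}$ coefficient once indices are re-sorted, so the value of $s$ there requires a short separate computation. Similarly, the Type AI line $d<k<l<h$ produces two mixed terms $x_{dh}x_{kl}$ and $x_{dl}x_{kh}$, and one must check $(k,l)$, $(d,l)$, $(k,h)$, $(d,h)$ are all $<_{\mathrm{lex}}(l,h)$; this uses $d<k<l$ and $k<l\le h$ but is entirely elementary. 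No genuinely hard step arises — the content is that the ordering of generators inherited from $\mathcal{O}_q(\Mat_N)$ (cf. (\ref{PBWbasis-ordinary-Weyl})) is compatible with the reflection-equation relations, giving $\mathscr{P}_\theta$ the structure of an iterated Ore extension.
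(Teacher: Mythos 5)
Your proposal is correct and follows exactly the paper's route: the paper's proof simply states that (\ref{lexexp}) is verified case by case from the explicit relations of Lemma \ref{lemma:explicit-relations}, which is the check you carry out in detail (your only extra caution, about re-sorting $x_{kl}$ in the $d=k$ rows, is unnecessary since $k<l$ there, but it is harmless).
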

\begin{proof}   It is straightforward to check that (\ref{lexexp})  holds for all cases using the explicit relations given in Lemma \ref{lemma:explicit-relations}. 
\end{proof}

Recall the PBW basis for $\mathcal{O}_q(\Mat_N)$ as described in Section \ref{section:as}.  We use this basis to verify that certain sets of  monomials form PBW basis for $\mathscr{P}_{\theta}$ in each of the three cases. 

\begin{lemma} \label{lemma:PBW} The following monomials form a basis for $\mathscr{P}_{\theta}$:
\begin{itemize}
\item[(i)] Type AI: $$x_{11}^{m_{11}}x_{12}^{m_{12}}\cdots x_{1n}^{m_{1n}}x_{22}^{m_{22}}x_{23}^{m_{23}}\cdots x_{2n}^{m_{2n}}\cdots x_{n-1,n-1}^{m_{n-1,n-1}}x_{n-1,n}^{m_{n-1,n}}x_{nn}^{m_{nn}}$$
\item[(ii)] Type AII: 
$$x_{12}^{m_{12}}x_{13}^{m_{13}}\cdots x_{1,2n}^{m_{1,2n}}x_{23}^{m_{23}}x_{24}^{m_{24}}\cdots x_{2,2n}^{m_{2,2n}}\cdots x_{2n-1,2n}^{m_{2n-1,2n}}$$
\item[(iii)] Diagonal type: $$
x_{1,n+1}^{m_{11}}x_{12}^{m_{12}}\cdots x_{1,2n}^{m_{1n}}x_{2,n+1}^{m_{21}}x_{2,n+2}^{m_{22}}\cdots x_{2,2n}^{m_{2n}}\cdots x_{n,n+1}^{m_{n1}}\cdots x_{n,2n}^{m_{nn}}$$
\end{itemize}
as each $m_{ij}$ runs over nonnegative integers.  Moreover, we also get a basis if the order of the monomials in the terms above are reversed. \end{lemma}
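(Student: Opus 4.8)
The plan is to deduce all three PBW bases from the known PBW basis of $\mathcal{O}_q(\Mat_N)$ described in Section~\ref{section:as}, using the $q$-commuting property of Lemma~\ref{lemma:homog} to handle Types AI and AII, and the algebra isomorphism $\mathscr{P}_\theta\cong\mathcal{O}_q(\Mat_n)$ to handle the diagonal case. I would organize the argument around two claims: first that the listed monomials \emph{span} $\mathscr{P}_\theta$, and second that they are \emph{linearly independent}.

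\emph{Spanning.} In Types AI and AII, $\mathscr{P}_\theta$ is generated by the $x_{ij}$ with $i\le j$ (after using the linear relations (i),(ii) of Proposition~\ref{proposition:relations_overview} to eliminate $x_{ij}$ with $i>j$, and $x_{ii}=0$ in Type AII). Lemma~\ref{lemma:homog} shows that for $(d,k)$ lexicographically less than $(l,h)$, $x_{lh}x_{dk}$ equals a scalar multiple of $x_{dk}x_{lh}$ modulo $P_{(l-1)m+h-1}$, i.e.\ modulo a subalgebra generated by lexicographically earlier generators. A standard straightening/induction argument (induction on total degree, with a secondary induction on the lexicographic position of the largest out-of-order adjacent pair) then rewrites any monomial in the $x_{ij}$ as a linear combination of the ordered monomials listed in (i) and (ii). The reversed-order spanning statement follows the same way using the reversed lexicographic order; alternatively, one observes the algebra anti-automorphism or simply runs the straightening in the opposite direction. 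For the diagonal case, by the remark preceding the lemma (and Corollary~\ref{diagiso} / Proposition~\ref{proposition:relations_overview}(iii)) the map $x_{i,n+j}\mapsto t_{ij}$ identifies $\mathscr{P}_\theta$ with $\mathcal{O}_q(\Mat_n)$, and the listed monomials in (iii) correspond exactly to the monomials (\ref{PBWbasis-ordinary-Weyl}), which are known to span; the reversed statement is the one noted in Section~\ref{section:as} obtained by replacing $t_{ij}$ with $t_{N-i,N-j}$.

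\emph{Linear independence.} The cleanest route is a counting/PBW-deformation comparison: $\mathscr{P}_\theta$ is a subalgebra of $\mathscr{P}=\mathcal{O}_q(\Mat_N)$ (or $\mathcal{O}_q(\Mat_n)^{\otimes 2}$), and each $x_{ij}$ is homogeneous of degree $2$ in the $t_{kl}$. Using the explicit leading terms of the $x_{ij}$ in the $t$-PBW basis — e.g.\ in Type AI the monomial $x_{lh}$ has $\sum_k t_{lk}t_{hk}$, whose lexicographically leading $t$-monomial is $t_{l1}t_{h1}$ (or, more robustly, pick the term $t_{ll}t_{hh}$, resp.\ $t_{l,2l-1}t_{h,2l}$ in Type AII) — one shows that the map sending an ordered monomial in the $x_{ij}$ to its top-degree $t$-monomial is injective on the listed set. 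Concretely: order the index pairs $(i,j)$ so that $x$-monomials are sent to $t$-monomials whose exponent vectors are distinct, then invoke independence of the $t$-PBW basis. For the diagonal case independence is immediate from the isomorphism with $\mathcal{O}_q(\Mat_n)$.

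\emph{Main obstacle.} The delicate point is the injectivity argument in the independence step for Types AI and AII: one must verify that distinct ordered $x$-monomials really do have distinct ``leading'' $t$-monomials, which requires choosing the term-order on the $t_{kl}$ and the distinguished summand of each $x_{ij}$ compatibly, and checking that products of such leading terms do not collide (the symmetry $x_{ij}=\pm q^{\pm1}x_{ji}$ in the relations is exactly what one has arranged to avoid by only using $i\le j$). An alternative that sidesteps this is a Hilbert-series argument: show the listed monomials span, compute that their generating function equals that of a polynomial ring in $\binom{n+1}{2}$ (resp.\ $\binom{2n}{2}$) variables, and compare with a lower bound on $\dim\mathscr{P}_\theta$ in each degree coming from its known structure as an iterated Ore extension (guaranteed by Lemma~\ref{lemma:homog}), forcing the spanning set to be a basis. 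I would present the Ore-extension/Hilbert-series version as the primary argument since Lemma~\ref{lemma:homog} already packages precisely the $q$-commutation data an iterated Ore extension requires, making both spanning and independence simultaneous consequences.
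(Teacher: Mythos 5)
Your spanning argument is fine and agrees with the paper (Lemma \ref{lemma:homog} for Types AI and AII, the relations of Proposition \ref{proposition:relations_overview} for the diagonal case), but both of your routes to linear independence have genuine gaps. The leading-term route is not carried out, and the specific choices you propose fail: in Type AI the summands of $x_{lh}=\sum_k t_{lk}t_{hk}$ always pair equal column indices, so $t_{ll}t_{hh}$ is not a summand at all; the choice $t_{l1}t_{h1}$ is a summand but the induced map on exponent vectors is not injective (for example $x_{12}^2$ and $x_{11}x_{22}$ both lead to $t_{11}^2t_{21}^2$ after straightening); and in Type AII the term $t_{l,2l-1}t_{h,2l}$ does not exist once $l>n$. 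Moreover, because the straightening relations of $\mathcal{O}_q(\Mat_N)$ such as $t_{ij}t_{kl}-t_{kl}t_{ij}=(q-q^{-1})t_{kj}t_{il}$ reshuffle which row index is paired with which column index, one cannot simply read off exponent vectors; any leading-term argument must choose a genuinely multiplicative term order in which the distinguished summand of each $x_{ij}$ is strictly extremal, and you have not verified this. Your preferred fallback, the Hilbert-series/Ore-extension comparison, is circular: Lemma \ref{lemma:homog} only says $x_{lh}x_{dk}-q^{s}x_{dk}x_{lh}$ lies in the subalgebra on earlier generators, which gives spanning, i.e.\ an \emph{upper} bound on the graded dimensions; knowing that $\mathscr{P}_{\theta}$ is an honest iterated Ore extension in the $x_{ij}$ (so that each new variable is free over the preceding subalgebra) is exactly equivalent to the independence you are trying to prove, and indeed the paper states the Ore-extension structure in Types AI and AII as a \emph{consequence} of this lemma, not an input to it. A correct lower bound needs an independent source, which is what the paper supplies by working inside $\mathcal{O}_q(\Mat_N)$: for Type AI it inducts on $n$, isolates the component with the top power of $t_{11}$ and then the extreme right $K_{\epsilon_1}$-eigencomponent, and invokes the known PBW basis of $\mathcal{O}_q(\Mat_N)$; Type AII is handled similarly.

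The diagonal case in your proposal is also circular. At this point in the paper one only knows (from Proposition \ref{proposition:relations_overview} and the lemma on $R\hat{X}_1\hat{X}_2=\hat{X}_2\hat{X}_1R$) that the $x_{i,j+n}$ satisfy the relations of $\mathcal{O}_q(\Mat_n)$, i.e.\ there is a surjection $\mathcal{O}_q(\Mat_n)\twoheadrightarrow\mathscr{P}_{\theta}$; that yields spanning but not independence. The statement that this surjection is an isomorphism (and Corollary \ref{diagiso}) is deduced in the paper from Proposition \ref{prop:Palgebra}, whose proof rests on the present lemma, so it cannot be quoted here. The paper instead proves diagonal independence directly inside $\mathcal{O}_q(\Mat_n)\otimes\mathcal{O}_q(\Mat_n)$, by passing to the extreme eigencomponent for the right action of $K_{\epsilon_n}$, using that the two tensor factors commute, and then applying the PBW basis of $\mathcal{O}_q(\Mat_n)$. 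To repair your write-up you would need to replace both independence arguments with an argument of this kind (or another non-circular input, such as a specialization at $q=1$ with a flatness justification).
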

\begin{proof} 

It follows from Lemma \ref{lemma:homog} that the above monomials span the algebra $\mathscr{P}_{\theta}$ in Type AI and AII.  By Proposition \ref{proposition:relations_overview}, the same is true for the diagonal type.  Hence, we need only check linear independence of the proposed basis elements.  


Consider first the Type AI case.   Set $\bar x_{ij} = \sum_{k\geq 2}t_{ik}t_{jk}$ for each $i,j$.   We proceed by induction on $n$, assuming that monomials as in (i) with $x_{ij}$ is replaced by $\bar{x}_{ij}$ form a basis for the subalgebra generated by $x_{ij}, i\geq 2, j\geq 2$.  Since there is only one monomial, $\bar{x}_{nn}$, for $i\geq n, j\geq n$, the base case is  is clearly true.

Let $\mathcal{S}$ be the subalgebra of $\mathcal{O}_q(\Mat_N)$ generated by 
$t_{ij}, i\geq 2, j\geq 1$.  It is straightforward to check from the relations of $\mathcal{O}_q(\Mat_N)$, that $t_{11}\notin \mathcal{S}$.  Moreover,  $\mathcal{O}_q(\Mat_N) =\sum_{r\geq 0}t_{11}^r \mathcal{S}$ and, as vector spaces,  this sum is isomorphic to a direct sum. Suppose that $Y=0$ where $Y$ is defined by
\begin{align}\label{comb}Y=\sum_{m}a_mx_{11}^{m_{11}}\cdots x_{nn}^{m_{nn}}
\end{align}
and the sum runs over  tuples $m=(m_{11}, \dots, m_{nn})$ so that each product in the $x_{ij}$ appearing in the right-hand side of (\ref{comb}) is a basis element as described in (i).
 Set $|m_1| = m_{11} + m_{12}+\cdots + m_{1n}$ and $M= \max_{m, a_m\neq 0} m_{11} + |m_1|$. 
 Set  \begin{align*}
Y':=\sum_{ m_{11}+|m_1|= M}a_m (t_{11}t_{11})^{m_{11}}(t_{11}t_{21})^{m_{12}} \cdots 
(t_{11}t_{n1})^{m_{nn}}x_{22}^{m_{22}} \cdots x_{{nn}}^{m_{nn}}
\end{align*}
and note that $Y\in Y' + \sum_{s<M}t_{11}^s\mathcal{S}$.
Hence $Y=0$ implies that $Y'=0$.

We can express $Y'$ as a sum of right $K_{\epsilon_1}$ eigenvectors with eigenvalues of the form $q^s$ where $s$ is an integer.   Let $M'= \min_{m, a_m\neq 0, m_{11} + |m_1| = M} |m_1|$ and let $\mathcal{S}'' = \{m, a_m\neq 0, m_{11} + |m_1| = M, |m_1| = M'\}$.   The eigenvector with smallest exponent in the eigenvalue is 
\begin{align*}
Y'' =\sum_{ m\in \mathcal{S}''}a_m (t_{11}t_{11})^{m_{11}}(t_{11}t_{21})^{m_{12}} \cdots 
(t_{11}t_{n1})^{m_{nn}}\bar{x}_{22}^{m_{22}} \cdots \bar{x}_{{nn}}^{m_{nn}}.  
\end{align*} Hence $Y=0$ implies that $Y''=0$.
Given an $n$-tuple $w$, write $\mathcal{S}''_w = \{m\in \mathcal{S}''| m_1 = w\}$.  We can express $Y''$ as a sum $\sum_wY''_w$ where 
\begin{align*}Y''_w = \sum_{w}(t_{11}t_{11})^{w_1}(t_{11}t_{21})^{w_2} \cdots 
(t_{11}t_{n1})^{w_n}\sum_{m\in \mathcal{S}''_w}a_m\bar{x}_{22}^{m_{22}} \cdots \bar{x}_{{nn}}^{m_{nn}}
\end{align*}
It follows from the form of the PBW basis for $\mathcal{O}_q(\Mat_N)$ that $Y''=0$ implies $Y''_w=0$ each $w$.  By the inductive assumption, we get $a_m=0$ for all $m\in \mathcal{S}''_w$, a contradiction.  This proves the linear independence of the basis elements listed in (i).

 For (ii), we assume that $Y$ is defined similarly to (\ref{comb}) where the sum runs  over  tuples $m=(m_{12}, m_{13}, \dots, m_{2n-1,2n})$ so that each product in the $x_{ij}$ appearing in the right-hand side of (\ref{comb}) is a basis element as described in (ii).  As in case (i), $\mathcal{S}$ is the subalgebra generated by the $t_{ij}$ with $i\geq 2$ and $j\geq 1$. We have $Y\in Y' + \sum_{s<M}t_{11}^s\mathcal{S}$ where
    \begin{align*}
Y':=\sum_{ |m_1|= M}a_m (t_{11}t_{22})^{m_{11}}(t_{11}t_{32})^{m_{12}} \cdots 
(t_{11}t_{2n,2})^{m_{2n,2}}x_{22}^{m_{22}} \cdots x_{m_{2n,2n}}^{m_{2n,2n}}
\end{align*}
and $M = \max_{m,a_m\neq 0} |m_1|.$ The argument now follows as in case (i) using the decomposition of $Y'$ into a sum of eigenvectors with respect to the right action of $K_{\epsilon_1}$.

 Now consider case (iii).  Suppose that $Y$ is defined as in (\ref{comb}) where the sum runs  over  tuples $m=(m_{11}, m_{12}, \dots, m_{nn})$ so that each product in the $x_{ij}$ appearing in the right-hand side of (\ref{comb}) is a basis element as described in (iii). Note that the $Y$ can be expressed as a sum of eigenvectors with respect to the right action of $K_{\epsilon_n}$. The eigenvector with eigenvalue $q^s$ where $s$ is maximum is   \begin{align}\label{secondsum}Y'= \sum_{m}a_m(t_{1n} t_{n+1,2n})^{m_{11}} \cdots (t_{nn}t_{2n,2n})^{m_{nn}}   \end{align}
  Hence $Y=0$ implies $Y'=0$.  
Recall that in this case, we are viewing the $t_{ij}, 1\leq i\leq n$ as generators for the first copy of $\mathcal{O}_q(\Mat_N)$ and 
$t_{i+n,j+n}, 1\leq i,j\leq n$ for the second copy. In particular, each $t_{ij}$ commutes with each $t_{k+n,l+n}$ for all  $1\leq i,j,k,l\leq n$.  Thus we can rewrite $Y'$ as 
 \begin{align*}Y'=  \sum_{m}a_mt_{1n}^{m_{11}} \cdots t_{nn}^{m_{nn}} t_{n+1,2n}^{m_{11}} \cdots t_{2n,2n}^{m_{nn}}
  \end{align*}
  It follows from the description of the PBW basis for $\mathcal{O}_q(\Mat_N)$ that $Y=0$ implies that each $a_m=0$, which establishes linear independence for case (iii).
  
  The final assertion is proved in the same way by reversing the order of the $t_{ij}$ terms in each of the above monomials.  \end{proof}

The next lemma provides a nice  algebraic description of the algebras $\mathscr{P}_{\theta}$ for all three cases. 
It should be noted that  this result is  implicit in \cite{N} in Types AI and AII.  In particular, one could prove this result  using specialization techniques combined with the similarity of the structure of these algebras as  $U_q(\mathfrak{gl}_n)$-bimodules and their classical counterparts.

\begin{proposition}\label{prop:Palgebra} The map sending each $\tilde{x}_{ij}$ to $x_{ij}$ defines an algebra surjection from   the free algebra $\mathbb{C}(q)\langle \tilde{x}_{ij}, 1\leq i,j\leq {\rm rank}(\mathfrak{g})\rangle$ to $\mathscr{P}_{\theta}$ with kernel equal to the ideal generated by the following elements  
\begin{itemize}
\item[(i)] $\tilde{x}_{ij} - q\tilde{x}_{ji}$ all $i<j$ in Type AI
\item[(ii)] $\tilde{x}_{ij}+q^{-1}\tilde{x}_{ji}$ for all $i<j$ and $\tilde{x}_{ii}$ for all $i$ in Type AII.
\item[(iii)] $\tilde{x}_{i,j+n} - \tilde{x}_{j+n,i}$ and $\tilde{x}_{ij},\tilde{x}_{n+i,n+j}$  for all $i,j=1, \dots, n$ in diagonal type.
\item[(iv)] the matrix entries of  $R_{\mathfrak{g}}\tilde{X}_1R_{\mathfrak{g}}^{t_1}\tilde{X}_2 - \tilde{X}_2R_{\mathfrak{g}}^{t_1} \tilde{X}_1R_{\mathfrak{g}}$ 
 where $\tilde{X}$ is the ${\rm rank}(\mathfrak{g})\times {\rm rank}(\mathfrak{g})$ matrix with $ij$ entry $\tilde{x}_{ij}$.
 \end{itemize}
 Moreover, in the diagonal case,  (iv) is equivalent to the set of 
 matrix entries of  $R{\hat{\tilde{X}}}_1\hat{\tilde{X}}_2 -\hat{\tilde{X}}_2\hat{\tilde{X}}_1R$  where $\hat{\tilde{X}}'$ is the $n\times n$ matrix with $ij$ entry $x_{i,j+n}$ for $i,j=1, \dots, n.$
\end{proposition}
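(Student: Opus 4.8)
The plan is to prove Proposition~\ref{prop:Palgebra} by a dimension-counting argument that leverages the PBW basis from Lemma~\ref{lemma:PBW} and the relations already established in Proposition~\ref{proposition:relations_overview}. Let $\mathcal{F}$ denote the free algebra $\mathbb{C}(q)\langle \tilde{x}_{ij}\rangle$, let $\mathcal{I}$ be the ideal generated by the elements listed in (i)--(iv) (according to type), and let $\mathcal{A} = \mathcal{F}/\mathcal{I}$. First I would observe that the assignment $\tilde{x}_{ij}\mapsto x_{ij}$ does indeed kill every generator of $\mathcal{I}$: the linear relations (i), (ii), (iii) are precisely parts (i), (ii), (iii) of Proposition~\ref{proposition:relations_overview}, and the quadratic relations (iv) are exactly the matrix identity (\ref{quad-reln}) of that proposition. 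Hence the surjection $\mathcal{F}\twoheadrightarrow\mathscr{P}_\theta$ factors through a surjection $\pi\colon\mathcal{A}\twoheadrightarrow\mathscr{P}_\theta$. The whole content is that $\pi$ is injective, equivalently that $\mathcal{I}$ is the \emph{entire} kernel.

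The key step is to show that $\mathcal{A}$ is spanned as a $\mathbb{C}(q)$-vector space by the same ordered monomials that Lemma~\ref{lemma:PBW} exhibits as a basis of $\mathscr{P}_\theta$. In Types AI and AII this is a diamond-lemma / straightening argument: using the linear relations (i) or (ii) I can eliminate all $\tilde{x}_{ij}$ with $i>j$ (and, in AII, the $\tilde{x}_{ii}$) in favor of the $\tilde{x}_{ij}$ with $i\le j$ (resp. $i<j$); then the quadratic relations (iv), rewritten in the explicit scalar form of Lemma~\ref{lemma:explicit-relations} (equivalently (\ref{full-relation})), provide for each pair $(l,h)>(d,k)$ in lexicographic order a rewriting rule $\tilde{x}_{lh}\tilde{x}_{dk} = q^{s(l,h,d,k)}\tilde{x}_{dk}\tilde{x}_{lh} + (\text{lower terms in }P_{(l-1)m+h-1})$, exactly as in Lemma~\ref{lemma:homog}. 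Applying these rules repeatedly terminates (the leading monomial strictly decreases in the lexicographic/degree ordering used in the proof of Lemma~\ref{lemma:PBW}), so the ordered monomials of Lemma~\ref{lemma:PBW}(i),(ii) span $\mathcal{A}$. In the diagonal case the relations (i)--(iv) are, after the identifications in (iii) and the reduction in the final sentence of the proposition, exactly the FRT relations $R\hat{\tilde X}_1\hat{\tilde X}_2 = \hat{\tilde X}_2\hat{\tilde X}_1 R$ defining $\mathcal{O}_q(\Mat_n)$, whose iterated-Ore-extension structure (cited via \cite{GY}) immediately gives that the ordered monomials of Lemma~\ref{lemma:PBW}(iii) span $\mathcal{A}$; the equivalence of (iv) with the smaller $n\times n$ matrix equation is just the lemma preceding Proposition~\ref{proposition:relations_overview} applied on the level of free algebras.

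With the spanning statement in hand, the proof concludes formally: $\pi$ maps a spanning set of $\mathcal{A}$ onto the basis of $\mathscr{P}_\theta$ from Lemma~\ref{lemma:PBW}, so that spanning set is linearly independent in $\mathcal{A}$, hence a basis of $\mathcal{A}$, and $\pi$ carries a basis to a basis, so $\pi$ is an isomorphism. I expect the main obstacle to be the straightening argument in Types AI and AII: one must check that the reduction system coming from (\ref{full-relation})/Lemma~\ref{lemma:explicit-relations} is confluent, or else sidestep confluence by arguing only that \emph{some} reduction order brings every word into the span of the ordered monomials (which is all that is needed, since independence is imported from $\mathscr{P}_\theta$ rather than proved intrinsically). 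Concretely, the cleanest route is: (a) reduce to $i\le j$ indices using the linear relations; (b) prove by induction on total degree and on the lexicographic position of the leading factor that any monomial lies in $P_{(l-1)m+h} + (\text{span of properly ordered monomials})$, invoking Lemma~\ref{lemma:homog} at each step for the commutation of the two largest factors; (c) quote Lemma~\ref{lemma:PBW} for independence. Types AII and the diagonal case are then either a specialization of the same argument or a direct appeal to the known presentation of $\mathcal{O}_q(\Mat_n)$.
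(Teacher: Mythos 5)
Your proposal is correct and is essentially the paper's own argument: the paper proves the kernel claim in one line by citing exactly the ingredients you spell out, namely the straightening property of Lemma \ref{lemma:homog} (whose rewriting rules are formal consequences of the relations (i)--(iv), hence valid in the abstract quotient) together with the linear independence of the ordered monomials in $\mathscr{P}_{\theta}$ from Lemma \ref{lemma:PBW}, and it disposes of the diagonal equivalence by citing the relevant lemma. The only slip is bibliographic rather than mathematical: the lemma giving the equivalence of (iv) with $R\hat{\tilde{X}}_1\hat{\tilde{X}}_2=\hat{\tilde{X}}_2\hat{\tilde{X}}_1R$ in the diagonal case follows, rather than precedes, Proposition \ref{proposition:relations_overview} (the paper's proof in fact cites Lemma \ref{lemma:diagxequalst} for this point).
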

\begin{proof} 
 It follows from Lemmas \ref{lemma:PBW} and \ref{lemma:homog} that there are no additional relations satisfied by the $x_{ij}$ beyond those listed in Proposition \ref{proposition:relations_overview}. The final assertion follows from Lemma \ref{lemma:diagxequalst}.
\end{proof}

Recall that in defining the $x_{ij}$, we took a particular choice of the $n$-tuple $a$, namely $a= (1, \dots, 1)$ and set 
each $x_{ij}=x_{ij}(1,\dots, 1)$. It should be noted that if we picked another choice, the resulting elements would still statisfy the same relations as given in the above proposition. This is because different choices correspond to algebra automorphisms of $\mathscr{P}$. 

Recall also that the antialgebra automorphism $*$ sending each $t_{ij}$ to $\partial_{ji}$ has the following impact on the $x_{ij}$: $(x_{ij}(a))^* = d_{ji}(a)$ for all $n$-tuples $a$.   This does not mean that $x_{ij}^*=d_{ij}$ since $x_{ij}$ and $d_{ij}$ are defined using different, though related, $n$-tuples.  However, just as in the case of the $x_{ij}(a)$, we have that the algebra generated by the $d_{ij}$ is isomorphic to the algebra generated by $d_{ij}(a)$ for all $n$-tuples $a$.  Thus, the map $x_{ij}$ to $d_{ji}$ for all $i,j$ defines an algebra antiautomorphism from $\mathscr{P}_{\theta}$ to $\mathscr{D}_{\theta}$. 
 Using this antiautomorphism, it is  straightforward  to deduce the following result  for $\mathscr{D}_{\theta}$ directly from  
 Proposition \ref{prop:Palgebra}.

\begin{proposition}\label{prop:Dalgebra} The map sending each $\tilde{d}_{ij}$ to $d_{ij}$ defines an algebra surjection from  the free algebra $\mathbb{C}(q)\langle \tilde{d}_{ij}, 1\leq i,j\leq n\rangle$  to $\mathscr{D}_{\theta}$  with kernel equal to the ideal of relations generated by the following elements
\begin{itemize}
\item[(i)] $\tilde{d}_{ij} - q^{-1}\tilde{d}_{ji}$ all $i<j$ in type AI
\item[(ii)] $\tilde{d}_{ij}+q\tilde{d}_{ji}$ for all $i<j$ and $\tilde{d}_{ii} $ for all $i$ in type AII.
\item[(iii)] $\tilde{d}_{i,j+n} -\tilde{d}_{j+n,i}$ and $\tilde{d}_{ij}, \tilde{d}_{i+n,j+n}$ for all $i,j=1, \dots, n$ in  diagonal type.
\item[(iv)] the matrix entries of  $R_{\mathfrak{g}}\tilde{D}_2R_{\mathfrak{g}}^{t_1}\tilde{D}_1 - \tilde{D}_1R_{\mathfrak{g}}^{t_1} \tilde{D}_2R_{\mathfrak{g}}$  in types AI and AII where $\tilde{D}$ is the ${\rm rank}(\mathfrak{g})\times {\rm rank}(\mathfrak{g})$ matrix with $ij$ entry $\tilde{d}_{ij}$.
\end{itemize}
Moreover, in the diagonal setting, (iv) is equivalent to the set of  matrix entries of  $R\hat{\tilde{D}}_2\hat{\tilde{D}}_1= \hat{\tilde{D}}_1\hat{\tilde{D}}_2R$ where $\hat{\tilde{D}}$ is the $n\times n$ matrix with $ij$ entry $\tilde{d}_{i,j+n}.$
\end{proposition}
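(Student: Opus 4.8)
The plan is to derive Proposition \ref{prop:Dalgebra} from Proposition \ref{prop:Palgebra} using the algebra antiautomorphism $\Phi\colon\mathscr{P}_\theta\to\mathscr{D}_\theta$ that sends $x_{ij}$ to $d_{ji}$ for all $i,j$. The existence of this map is already established in the paragraph preceding the statement: indeed $*$ sends $t_{ij}$ to $\partial_{ji}$, so $(x_{ij}(a))^{*}=d_{ji}(a)$ for every $n$-tuple $a$, and since any two choices of $a$ give isomorphic algebras generated by the $d_{ij}(a)$, the assignment $x_{ij}\mapsto d_{ji}$ does extend to an algebra antiautomorphism $\Phi$. First I would note that $\Phi$ is an isomorphism of $\mathscr{P}_\theta$ onto $\mathscr{D}_\theta$ as an antialgebra map, hence it sends a presentation of $\mathscr{P}_\theta$ by generators and relations to a presentation of $\mathscr{D}_\theta$, after reversing the order of products in every relation. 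Concretely, if $\mathscr{P}_\theta=\mathbb{C}(q)\langle\tilde x_{ij}\rangle/I$ where $I$ is generated by the elements in Proposition \ref{prop:Palgebra}(i)--(iv), then $\mathscr{D}_\theta=\mathbb{C}(q)\langle\tilde d_{ij}\rangle/I'$ where $I'$ is generated by the images of those generators of $I$ under the substitution $\tilde x_{ij}\mapsto \tilde d_{ji}$ together with reversal of products.

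The next step is to check that each family of relations transforms as claimed. For the linear relations this is immediate: $\tilde x_{ij}-q\tilde x_{ji}$ becomes $\tilde d_{ji}-q\tilde d_{ij}$, i.e. (up to sign and a factor $-q$) the relation $\tilde d_{ij}-q^{-1}\tilde d_{ji}$ of part (i); similarly $\tilde x_{ij}+q^{-1}\tilde x_{ji}\mapsto \tilde d_{ji}+q^{-1}\tilde d_{ij}$ which is a scalar multiple of $\tilde d_{ij}+q\tilde d_{ji}$, and $\tilde x_{ii}\mapsto\tilde d_{ii}$, giving part (ii); and $\tilde x_{i,j+n}-\tilde x_{j+n,i}\mapsto \tilde d_{j+n,i}-\tilde d_{i,j+n}$ together with $\tilde x_{ij},\tilde x_{n+i,n+j}\mapsto\tilde d_{ij},\tilde d_{n+i,n+j}$, giving part (iii). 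For the quadratic relations I would work at the level of matrices. Writing $\tilde X=(\tilde x_{ij})$ and $\tilde D=(\tilde d_{ij})$, the antiautomorphism together with $\tilde x_{ij}\mapsto\tilde d_{ji}$ means $\Phi$ applied entrywise sends the matrix $\tilde X$ to $\tilde D^{t}$ while reversing products; so the reflection equation $R_{\mathfrak{g}}\tilde X_1 R_{\mathfrak{g}}^{t_1}\tilde X_2=\tilde X_2 R_{\mathfrak{g}}^{t_1}\tilde X_1 R_{\mathfrak{g}}$ transforms into a matrix equation in $\tilde D$ with the order of matrix factors reversed and each $\tilde X_k$ replaced by $\tilde D_k^{t}$. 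Using the symmetry properties of $R_{\mathfrak{g}}$ recorded in Section \ref{section:images} and Section \ref{section:as} (in particular $R^{t_1t_2}=R_{21}$ and the equivalence of the various FRT matrix presentations under $\iota$ and $\flip$), this rearranges precisely into $R_{\mathfrak{g}}\tilde D_2 R_{\mathfrak{g}}^{t_1}\tilde D_1=\tilde D_1 R_{\mathfrak{g}}^{t_1}\tilde D_2 R_{\mathfrak{g}}$, which is part (iv).

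Finally, for the ``moreover'' clause in the diagonal case, I would invoke the same block-decomposition argument used in the lemma following Proposition \ref{proposition:relations_overview} (the lemma showing (\ref{quad-reln}) is equivalent to $R\hat X_1\hat X_2=\hat X_2\hat X_1 R$), now applied to $\tilde D$ instead of $\tilde X$: since $R_{\mathfrak{g}}$ is block diagonal with blocks $(R,I_{n^2},I_{n^2},R)$, the relation $R_{\mathfrak{g}}\tilde D_2 R_{\mathfrak{g}}^{t_1}\tilde D_1=\tilde D_1 R_{\mathfrak{g}}^{t_1}\tilde D_2 R_{\mathfrak{g}}$ collapses to the four submatrix relations, and the same index bookkeeping as before shows these amount to $R\hat{\tilde D}_2\hat{\tilde D}_1=\hat{\tilde D}_1\hat{\tilde D}_2 R$ where $\hat{\tilde D}$ has $ij$ entry $\tilde d_{i,j+n}$; alternatively one simply applies the antiautomorphism $\Phi$ directly to the diagonal-case ``moreover'' statement of Proposition \ref{prop:Palgebra} and reverses the order of factors in $R\hat{\tilde X}_1\hat{\tilde X}_2=\hat{\tilde X}_2\hat{\tilde X}_1 R$. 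The main obstacle I anticipate is bookkeeping: making sure that reversing products while transposing the matrices $\tilde X_k$ and using the correct $R$-matrix symmetries produces exactly the stated form of the reflection equation for $\tilde D$ rather than an equivalent but differently-arranged one; but since all these rearrangements (via $\iota$, $\flip$, and the transpose identities for $R$) have already been shown in Sections \ref{section:as} and \ref{section:images} to preserve the defining ideal, this is a routine verification rather than a genuine difficulty.
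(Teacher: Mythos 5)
Your proposal is correct and follows essentially the same route as the paper: the paper's own justification is precisely the sentence preceding the statement, namely that the antiautomorphism $x_{ij}\mapsto d_{ji}$ from $\mathscr{P}_{\theta}$ to $\mathscr{D}_{\theta}$ transfers the presentation of Proposition \ref{prop:Palgebra} (linear relations rescaled, the reflection-equation relation with products reversed and $\tilde X$ replaced by $\tilde D^{t}$, then rearranged using the symmetries of $R_{\mathfrak{g}}$) to the stated presentation of $\mathscr{D}_{\theta}$, with the diagonal ``moreover'' clause handled by the same block-diagonal reduction. You simply spell out the bookkeeping that the paper labels ``straightforward,'' so there is nothing further to add.
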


In the diagonal setting, using  Proposition \ref{prop:Palgebra}, we have that $\mathscr{P}_{\theta}$ is isomorphic  as an algebra to $\mathcal{O}_q(\Mat_n)$ via the map sending $x_{i,j+n}$ to $t_{ij}$.  Lemma \ref{lemma:left_action} shows that   this is an isomorphism of left $U_q(\mathfrak{gl}_n)$-modules using the first copy of $U_q(\mathfrak{gl}_n)$.  The next result shows that this map is actually a $U_q(\mathfrak{gl}_n)$-bimodule algebra isomorphism where the right  action of $U_q(\mathfrak{gl}_n)$ on $\mathcal{O}_q(\Mat_n)$ corresponds to the left action of the second copy of $U_q(\mathfrak{gl}_n)$ on $\mathscr{P}_{\theta}$.

\begin{lemma}\label{lemma:diagxequalst} For diagonal  type, the map $\psi$ defined by 
  $\psi(t_{ ij})=x_{i,j+n} $ all $i,j$ with $1\leq i,j\leq n$  is a $U_q(\mathfrak{gl}_n)$-bimodule algebra isomorphism  from $\mathcal{O}_q(\Mat_n)$ onto $\mathscr{P}_{\theta}$ where 
\begin{align}\label{adotformulas}
\psi(a \cdot t_{ij}) =a\cdot  x_{i,j+n} {\rm \ and \ } \psi(t_{ji}\cdot a^{\natural}) = \gamma(a)\cdot x_{i,j+n}
\end{align}
for all $i,j\in \{1, \dots, n\}$, where $\gamma$ is the map from the first copy of $U_q(\mathfrak{gl}_n)$ to the second defined by $\gamma(E_r) = E_{n+r}, 
\gamma(F_r)=F_{n+r},$ and $\gamma(K_{\epsilon_s}) = K_{\epsilon_{n+s}}$ for all $r,s,\in  \{1, \dots, n\}$.
\end{lemma}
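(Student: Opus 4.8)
The plan is to verify the three claims in Lemma \ref{lemma:diagxequalst} in order: that $\psi$ is a well-defined algebra isomorphism, that it intertwines the two left actions as in the first formula of (\ref{adotformulas}), and that it converts the right action of the first copy of $U_q(\mathfrak{gl}_n)$ into the left action of the second copy via $\gamma$, as in the second formula. For the algebra isomorphism, the key input is Proposition \ref{prop:Palgebra} together with the final assertion there: in the diagonal case $\mathscr{P}_\theta$ is presented by the free algebra on generators $\tilde x_{i,j+n}$ (after eliminating the $\tilde x_{ij}$, $\tilde x_{n+i,n+j}$ and identifying $\tilde x_{i,j+n}$ with $\tilde x_{j+n,i}$) modulo the matrix entries of $R\hat{\tilde X}_1\hat{\tilde X}_2 - \hat{\tilde X}_2 \hat{\tilde X}_1 R$, where $\hat{\tilde X}$ has $ij$ entry $\tilde x_{i,j+n}$. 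On the other hand $\mathcal{O}_q(\Mat_n)$ is the FRT algebra $A(R_\rho)$ with relations $R T_1 T_2 = T_2 T_1 R$ from (\ref{FRTmatrix}). So the map $\psi(t_{ij}) = x_{i,j+n}$ sends the defining relations of $\mathcal{O}_q(\Mat_n)$ precisely to the defining relations of $\mathscr{P}_\theta$ in the identification $T \leftrightarrow \hat X$; a presentation-by-presentation comparison gives that $\psi$ is a well-defined algebra homomorphism, and its inverse is constructed the same way, so it is an isomorphism. (One could also cite the Ore extension / PBW basis structure from Lemma \ref{lemma:PBW}(iii) matched against (\ref{PBWbasis-ordinary-Weyl}) to see directly that $\psi$ is bijective.)

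Next I would check the left-module intertwining. By Lemma \ref{lemma:left_action}, the left $U_q(\mathfrak{g})$-action on the span of the $x_{ij}$ is given by explicit formulas; restricting to generators $E_r, F_r, K_{\epsilon_s}$ of the first copy of $U_q(\mathfrak{gl}_n)$ (so $1 \le r \le n-1$, $1 \le s \le n$) and to the entries $x_{i,j+n}$ with $1 \le i,j \le n$, one reads off
\begin{align*}
E_r \cdot x_{i,j+n} &= \delta_{i-1,r} x_{i-1,j+n}, \quad
F_r \cdot x_{i,j+n} = \delta_{ir} x_{i+1,j+n}, \quad
K_{\epsilon_s} \cdot x_{i,j+n} = q^{\delta_{is}} x_{i,j+n},
\end{align*}
because the indices $j+n$ satisfy $j+n > n$ and hence never equal $r$, $r+1$, or $s$ in the relevant ranges. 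Comparing with the left action of $U_q(\mathfrak{gl}_n)$ on $t_{ij}$ from Lemma \ref{lemma:actions}, these are exactly the same formulas under $t_{ij} \leftrightarrow x_{i,j+n}$; since $\psi$ is an algebra map and both sides act as module algebras, this proves $\psi(a \cdot t_{ij}) = a \cdot x_{i,j+n}$ for all $a$ in the first copy, and hence on all of $\mathcal{O}_q(\Mat_n)$.

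For the second formula, I would use Lemma \ref{lemma:actions}'s identity $t_{ji} \cdot a^\natural = \iota(a \cdot t_{ij})$ — wait, more precisely $t_{ij} \cdot a = \iota(a^\natural \cdot t_{ji})$, so $t_{ji} \cdot a^\natural$ unwinds to a statement about the left action of $a$ on $t_{ij}$; but the cleanest route is simply to compute the right action of the first copy on $t_{ij}$ directly from Lemma \ref{lemma:actions}: $t_{ij} \cdot E_r = \delta_{jr} t_{i,j+1}$, $t_{i,j+1} \cdot F_r = \delta_{jr} t_{ij}$, $t_{ij}\cdot K_{\epsilon_s} = q^{\delta_{js}} t_{ij}$, and hence (using $\natural$ from (\ref{starstructure})) the action of $a^\natural$ in terms of $E,F,K$ of the first copy. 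Then compare with the left action of the \emph{second} copy of $U_q(\mathfrak{gl}_n)$ — i.e. $E_{n+r}, F_{n+r}, K_{\epsilon_{n+s}}$ — on $x_{i,j+n}$, again read off from Lemma \ref{lemma:left_action}: here the second index $j+n$ \emph{does} interact with $n+r$, $n+s$ while the first index $i \le n$ does not, giving
\begin{align*}
E_{n+r} \cdot x_{i,j+n} = \delta_{j-1,r} x_{i,j+n-1}, \quad
F_{n+r} \cdot x_{i,j+n} = \delta_{jr} x_{i,j+n+1}, \quad
K_{\epsilon_{n+s}} \cdot x_{i,j+n} = q^{\delta_{js}} x_{i,j+n}.
\end{align*}
Matching these against the right-action formulas transported through $\natural$ and $\iota$ shows $\psi(t_{ji} \cdot a^\natural) = \gamma(a) \cdot x_{i,j+n}$ on generators, and extends to all $a$ since $\psi$, the right action, and $\gamma$ composed with the left action are all (anti)multiplicative in the compatible way. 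The main obstacle I anticipate is purely bookkeeping: getting the $\natural$-twist and the $\iota$-flip to line up with the correct powers of $q$ and the correct shift of indices so that the right action of the first copy really does become the \emph{left} action of the second copy (rather than, say, a right action or a $\natural$-twisted one); this is where one must be careful, but Lemma \ref{lemma:actions}'s explicit relation $t_{ij} \cdot a = \iota(a^\natural \cdot t_{ji})$ plus the formulas of Lemma \ref{lemma:left_action} make it a finite check on generators.
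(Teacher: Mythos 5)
Your proposal is correct and follows essentially the same route as the paper's (very terse) proof: the algebra isomorphism and the left-module compatibility are exactly the content of the discussion preceding the lemma (Proposition \ref{prop:Palgebra} plus Lemmas \ref{lemma:actions} and \ref{lemma:left_action}), and the second formula is handled, as you propose, by converting the right action of the first copy into a left action via $\natural$ and matching generator-by-generator against the left action of the second copy. One bookkeeping remark on the step you flagged: carrying out your generator check (try $a=K_{\epsilon_s}$ with $i\neq j$) yields the identity $\psi(t_{ij}\cdot a^{\natural})=\gamma(a)\cdot x_{i,j+n}$, i.e.\ the form appearing in Corollary \ref{diagiso} with $u=t_{ij}$; the index pattern $t_{ji}$ in the displayed statement is best read as a transposition slip, so your hedged final matching goes through once the indices are aligned this way.
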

\begin{proof} By the discussion proceeding the proposition, we only need to show that the right action of $U_q(\mathfrak{gl}_n)$ on $\mathcal{O}_q(\Mat_n)$ corresponds to the left action of  $\gamma(U_q(\mathfrak{gl}_n))$ on $\mathscr{P}_{\theta}$ using the second equality in (\ref{adotformulas}). This follows from the method for converting the right action of $U_q(\mathfrak{gl}_n)$ on $\mathcal{O}_q(\Mat_n)$
into a left one using Lemmas \ref{lemma:actions}.
\end{proof}

Note that the above argument can be easily tweaked to apply to $\mathscr{D}_{\theta}$.  More precisely, in the diagonal case, $\mathscr{D}_{\theta}$ is isomorphic as a bimodule algebra  to $\mathcal{O}_q(\Mat_n)^{op}$ via the map sending $\partial_{ij}$ to $d_{i,j+n}$ for all $i,j$ satisfying $1\leq i,j\leq n$.  

\section{Graded Weyl Algebras for Matrices}\label{section:graded-weyl}
\subsection{Twisted Tensor Products}\label{section:twisted}
We consider here a particular type of twisted tensor products for bialgebras, though the end result does not necessarily have a bialgebra structure.  This twisted tensor product is similar, but not the same as the construction for Drinfeld doubles as presented 
in \cite{KS}, Section 8. First, we recall the notions of twisted tensor products and dual pairings.

 Let $A$ and $B$ be algebras over a field.  The twisted tensor product, as defined  in \cite{CSV}, is an algebra $C$ with multiplication map $m_C$ along with two inclusion maps $\iota_A$ and $\iota_B$ such that 
$m\circ(\iota_A\otimes \iota_B)$ defines an isomorphism as vector spaces from $A\otimes B$ to $C$.  The twisted tensor product comes equipped with a twisting map $\tau$  which is a linear map from $B\otimes A$ to $A\otimes B$  that satisfies 
\begin{itemize}
\item[(i)]$\tau(1\otimes a) = a\otimes 1{\rm \ and \ }
\tau(b\otimes 1) = 1\otimes b$
\item[(ii)] $m_C = (m_A\otimes m_B)\circ (Id_A\otimes \tau\otimes 
Id_B)$
\item[(iii)]$\tau\circ (m_B\otimes m_A) = m_C\circ (\tau \otimes \tau)\circ (Id_B\otimes \tau\otimes Id_A)$
\end{itemize}
where  $m_A$ denotes multiplication for $A$, $m_B$ denotes multiplication for $B$, $Id_B$ is the identity map on $B$ and $Id_A$ is the identity map on $A$.  Moreover, the existence of a twisting map is essential here.  In other words, given a twisting map $\tau$  (i.e. a linear map that satisfies (i) and the map $m_C$ defined by (ii) that satisfies the constraints in (iii))  one can form a twisted tensor product $A\otimes_{\tau}B$ with multiplication $m_C$ as defined in  (ii).

\subsection{Dual Pairing Construction}
In the next lemma, we use dual pairings as in the construction for Drinfeld doubles and other forms of twisted tensor products in   \cite{KS}, Section 8.  A dual pairing  is a bilinear map  $\langle \cdot, \cdot \rangle$ from $A\times B$ to the scalars where $A,B$ are bialgebras such that 
\begin{align*}
\langle a,1\rangle = \epsilon(a) {\rm \ and \ }\langle 1,b\rangle = \epsilon(b)
\end{align*}
along with the following compatibility formulas with respect to  comultiplication:
\begin{align}\label{pairing-ext}
\langle \Delta_{A}(a), b_1\otimes b_2 \rangle = \langle a, b_1b_2\rangle 
{\rm \quad  and \quad} \langle a_1\otimes a_2, \Delta_{B}(b) \rangle = \langle a_1a_2, b\rangle
\end{align}

\begin{lemma}\label{twisted-tensors} Let $A$ and $B$ be two bialgebras with two pairings: ${\bf u}\langle\cdot,\cdot\rangle$ is a dual pairing of $A^{op}$ and $B$ and ${\bf v}\langle\cdot,\cdot\rangle$ is a dual pairing of  $A$ and $B^{op}$.  Then $A\otimes B$ becomes a twisted tensor product  with twisting map defined by 
\begin{align*}
\tau(b\otimes a) = \sum a_{(2)} \otimes b_{(2)}  {\bf v}\langle a_{(1)},b_{(1)}\rangle{\bf u}\langle a_{(3)}, b_{(3)}\rangle.
\end{align*}
\end{lemma}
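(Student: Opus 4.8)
The plan is to verify that the linear map
\[
\tau(b\otimes a) = \sum a_{(2)} \otimes b_{(2)}\, {\bf v}\langle a_{(1)},b_{(1)}\rangle\, {\bf u}\langle a_{(3)}, b_{(3)}\rangle
\]
satisfies conditions (i), (ii), (iii) of the definition of a twisting map from Section \ref{section:twisted}, so that $A\otimes_\tau B$ is a well-defined twisted tensor product with the multiplication $m_C$ produced by (ii). First I would check the unit conditions (i): setting $a=1$ gives $\sum 1\otimes b_{(2)}\,{\bf v}\langle 1,b_{(1)}\rangle{\bf u}\langle 1,b_{(3)}\rangle = \sum 1\otimes b_{(2)}\,\epsilon(b_{(1)})\epsilon(b_{(3)}) = 1\otimes b$ using the counit axioms for the two dual pairings together with the counit axiom in $B$; the computation $\tau(b\otimes 1)=b\otimes 1$ is symmetric. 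These are short and purely formal.

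The substantive work is condition (iii), the compatibility of $\tau$ with the two multiplications. I would expand both sides of $\tau\circ(m_B\otimes m_A) = m_C\circ(\tau\otimes\tau)\circ(Id_B\otimes\tau\otimes Id_A)$ on a general element $b\otimes b'\otimes a\otimes a'$. On the left one uses coassociativity to write $\Delta^{(4)}$ of the products $bb'$ and $aa'$ and then applies the compatibility formulas (\ref{pairing-ext}): since ${\bf v}\langle\cdot,\cdot\rangle$ is a dual pairing of $A$ and $B^{op}$ and ${\bf u}\langle\cdot,\cdot\rangle$ is a dual pairing of $A^{op}$ and $B$, the pairings of products $aa'$ against a single element of $B$ (and of $bb'$ against a single element of $A$) split into sums of products of pairings, but with the order of the tensor factors reversed in the $A^{op}$-slot of ${\bf u}$ and in the $B^{op}$-slot of ${\bf v}$. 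The right-hand side, after inserting the definition of $\tau$ twice and then $m_C$ (which by (ii) is $(m_A\otimes m_B)\circ(Id_A\otimes\tau\otimes Id_B)$, so it involves a \emph{third} application of $\tau$), similarly becomes a big sum of tensor components weighted by products of four ${\bf v}$-pairings and four ${\bf u}$-pairings. The key is that after using coassociativity to line up the Sweedler indices and the two compatibility identities in (\ref{pairing-ext}) to collapse the iterated pairings on the right, the two sides agree term by term; the bialgebra axiom $\Delta(xy)=\Delta(x)\Delta(y)$ in $A$ and $B$ is what makes the index bookkeeping on both sides match.

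The main obstacle, and essentially the only real content, is this index-matching in (iii): one has to be careful that the ``op'' in $A^{op}$ and $B^{op}$ is correctly tracked — that is, that ${\bf u}\langle xy,\,b\rangle = \sum {\bf u}\langle y, b_{(1)}\rangle{\bf u}\langle x, b_{(2)}\rangle$ (note the reversal) rather than the naive order, and likewise ${\bf v}\langle a,\, bc\rangle=\sum{\bf v}\langle a_{(1)}, c\rangle{\bf v}\langle a_{(2)}, b\rangle$ — and that these reversals are exactly what is needed for the middle $\tau$-insertion inside $m_C$ on the right-hand side to produce the same expression as the left. I would organize this by first recording these four ``split'' identities as a preliminary display, then expanding left and right into normal form as sums over a common set of Sweedler labels, and finally observing the identification. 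Condition (ii) requires no proof: it is the definition of $m_C$ in the twisted tensor product, so once (i) and (iii) hold, the construction of \cite{CSV} (recalled in Section \ref{section:twisted}) yields the twisted tensor product $A\otimes_\tau B$, completing the lemma.
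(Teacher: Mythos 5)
Your proposal is correct, and its substance matches the paper's: condition (i) is checked by the same one-line counit computation, and (ii) is taken as the definition of $m_C$. The one place you diverge is condition (iii). The paper does not expand (iii) at all: it invokes \cite{CSV}, Proposition/Definition 2.3, which says that (given (i)) condition (iii) is equivalent to associativity of the product defined in (ii), and then observes that associativity follows verbatim from the proof of \cite{KS}, Section 8.2.1, Proposition 8 (the Drinfeld-double-type product), since the extra hypothesis there — that ${\bf u}$ and ${\bf v}$ are convolution inverses — is never used in that associativity argument. You instead propose to verify (iii) head-on by a Sweedler expansion of both sides, using coassociativity and the four ``split'' identities for the pairings; your two displayed identities, with the order reversed in the $A^{op}$-slot of ${\bf u}$ and the $B^{op}$-slot of ${\bf v}$, are exactly right, and they are indeed the crux of the bookkeeping. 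This direct route is essentially the computation hidden inside the cited \cite{KS} proposition, so what you gain is a self-contained argument at the cost of a longer index-matching verification; the paper's route is shorter but citation-dependent. (Minor slip: after computing $\tau(b\otimes 1)=1\otimes b$ you label the remaining case as ``$\tau(b\otimes 1)=b\otimes 1$''; you mean $\tau(1\otimes a)=a\otimes 1$.)
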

\begin{proof}  
 For property (i), note  that 
\begin{align*}
\tau(b\otimes 1) = \sum 1\otimes b_{(2)}{\bf v}\langle 1,b_{(1)}\rangle{\bf u}\langle 1,b_{(3)}\rangle = \sum 1\otimes b_{(2)} \epsilon(b_{(1)}) \epsilon(b_{(3)}) = 1\otimes b
\end{align*}
for all $b\in B$. 
A similar argument yields $\tau(1\otimes a) = a\otimes 1$ for all $a\in A$.  As explained in \cite{CSV}, Section 2.2, we make take (ii) as the definition for the multiplication map $m_C$ since we have already verified that $\tau$ satisfies (i). For  property (iii), note that by \cite{CSV}, Proposition/Definition 2.3,
this condition is equivalent to  associativity. Moreover, associativity follows as in the proof of \cite{KS}, Section 8.2.1, Proposition 8.  Indeed, \cite{KS}, Section 8.2.1, Proposition 8 focuses on a product whose only difference from the one here is the extra assumption that ${\bf u}$ and ${\bf v}$ are convolution inverses.  This additional assumption is not needed for the proof of associativity.  \end{proof}

 Let $\zeta$ be an $s$-dimensional representation of $U_q(\mathfrak{gl}_N)$ and set $R_{\zeta} = (\zeta\otimes \zeta)(\mathcal{R})$ as in Section  \ref{section:FRT1}.  Let $A(R_{\zeta})$ denote the FRT algebra defined by $\zeta$ realized as a quotient of tensor algebra $T(M)$ over the $r^2$ dimensional vector space $M$ (as defined in Section \ref{section:FRT1}).  Let $\xi$ be another $U_q(\mathfrak{gl}_N)$ representation of dimension $s$ and write $A(R_{\xi})$ for the FRT bialgebra defined by $\xi$ where $R_{\xi} = (\xi\otimes \xi)(\mathcal{R})$.
 For the construction of $A(R_{\xi})$,   we use the vector space $M'$ spanned by the variables $m'_{ij}$, $1\leq i,j\leq s$ in order to distinguish 
$A(R_{\xi})$ from $A(R_{\zeta})$.   Set $ R_{\zeta,\xi}=(\zeta\otimes \xi)(\mathcal{R})$.

Note that a  bilinear pairing $\langle \cdot, \cdot \rangle$ on  $M\times M'$ extends uniquely to a dual pairing on  $T(M)$ and $ T(M')^{op}$ by insisting that 
\begin{align*}
\langle m_{ij}, 1\rangle = \delta_{ij} {\rm \ and \ }\langle 1, m'_{kl}\rangle = \delta_{kl}
\end{align*}
for all $i,j,k,l$ 
and  that (\ref{pairing-ext}) holds (using induction and properties of the coproduct which ensure associativity). 
The next lemma provides us with dual pairings that can be used to form twisted tensor products. Its proof follows closely the proof of  \cite{KS}, 10.1.7.
\begin{lemma}\label{lemma:dualpairing} 
Assume that 
$(\xi\otimes \xi)(\mathcal{R}) = \flip\circ (\zeta\otimes \zeta)(\mathcal{R})$.  If  $(\zeta\otimes \xi)(\mathcal{R}) =\flip\circ ((\zeta\otimes \xi)(\mathcal{R} ))$ then the bilinear map ${\bf y}\langle \cdot, \cdot \rangle$ defined by 
\begin{align*}
{\bf y}\langle m_{ij}, m'_{kl}\rangle =[ (\zeta\otimes \xi)(\mathcal{R})]^{ik}_{jl}
\end{align*}
can be uniquely extended to  a dual pairing of the bialgebra  $A(R_{\zeta})$ and $A(R_{\xi})$.  Moreover, the same result holds for $\mathcal{R}$ replace by $\mathcal{R}_{21}^{-1}$.
\end{lemma}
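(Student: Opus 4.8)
The plan is to verify that the bilinear map $\mathbf{y}\langle\cdot,\cdot\rangle$, once extended to a pairing of the tensor algebras $T(M)$ and $T(M')^{op}$ as described before the lemma, actually descends to a pairing of the FRT quotients $A(R_\zeta)$ and $A(R_\xi)$. The pairing of tensor algebras always exists and is automatically a dual pairing of $T(M)$ and $T(M')^{op}$ by induction using the coproduct; the only content of the lemma is that it annihilates the defining ideals of the two FRT algebras on the appropriate sides. So the proof reduces to two ideal-annihilation checks: first, that the FRT relations (\ref{FRTrelns}) for $A(R_\zeta)$ pair to zero against every monomial in the $m'_{kl}$, and second, the symmetric statement for the relations of $A(R_\xi)$ against monomials in the $m_{ij}$.

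First I would set up the pairing of a quadratic element of $M\otimes M$ with a quadratic element of $M'\otimes M'$. Using the compatibility formulas (\ref{pairing-ext}) together with $\Delta(m_{ij})=\sum_k m_{ik}\otimes m_{kj}$, one gets $\mathbf{y}\langle m_{ja}m_{kb},\, m'_{pw}\rangle = \sum \mathbf{y}\langle m_{jc}, m'_{p?}\rangle\,\mathbf{y}\langle m_{kb}, m'_{?w}\rangle$ and similar expansions, which translate directly into products of entries of the matrix $R_{\zeta,\xi}=(\zeta\otimes\xi)(\mathcal{R})$. The key algebraic input is that $R_{\zeta,\xi}$ satisfies a mixed Yang-Baxter / intertwining relation with $R_\zeta=(\zeta\otimes\zeta)(\mathcal{R})$ coming from the hexagon axiom $(\zeta\otimes\zeta\otimes\xi)(\mathcal{R}_{13}\mathcal{R}_{23}) = (\zeta\otimes\zeta\otimes\xi)(\Delta\otimes\mathrm{id})(\mathcal{R})$ and its companion; schematically $R_{\zeta}^{(12)}R_{\zeta,\xi}^{(13)}R_{\zeta,\xi}^{(23)} = R_{\zeta,\xi}^{(23)}R_{\zeta,\xi}^{(13)}R_{\zeta}^{(12)}$. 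Pairing the difference in (\ref{FRTrelns}) against $m'_{pw}$ and unwinding the definitions turns the vanishing into exactly this identity (after transposing indices and using that $R_\xi = \flip\circ R_\zeta$, i.e. $R_\xi = (R_\zeta)_{21}$, so that the FRT relations for $A(R_\xi)$ are the $R_\zeta$-relations with tensor factors swapped). The extra hypothesis $(\zeta\otimes\xi)(\mathcal{R})=\flip\circ((\zeta\otimes\xi)(\mathcal{R}))$ is what makes the two sides of the pairing computation match up symmetrically — it ensures $R_{\zeta,\xi}$ is itself "symmetric" in the relevant sense so that pairing on the left against $A(R_\xi)$-relations gives the same mixed-YBE identity.

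Then I would run the mirror-image computation: pair an arbitrary monomial $m_{ij}$-word against the difference of the two terms in the FRT relation for $A(R_\xi)$, using now the companion hexagon identity $(\zeta\otimes\xi\otimes\xi)(\mathcal{R}_{12}\mathcal{R}_{13}) = (\zeta\otimes\xi\otimes\xi)(\mathrm{id}\otimes\Delta)(\mathcal{R})$, and conclude vanishing in the same way. Because this is word-for-word the argument of \cite{KS}, Section 10.1.7 with the roles of the two copies of $U_q(\mathfrak{gl}_N)$ replaced by the (possibly different) representations $\zeta,\xi$, I would state it as such rather than reproduce every index manipulation. Finally, for the last sentence of the lemma, I would observe that $\mathcal{R}_{21}^{-1}$ is again a universal $R$-matrix for $U_q(\mathfrak{gl}_N)$ (\cite{KS}, Proposition 1, as already invoked in Section \ref{section:as}), and that the hypotheses of the lemma are phrased purely in terms of the images of $\mathcal{R}$ under $\zeta\otimes\zeta$, $\xi\otimes\xi$, $\zeta\otimes\xi$; so replacing $\mathcal{R}$ by $\mathcal{R}_{21}^{-1}$ throughout leaves every hypothesis and every step of the argument intact, giving the same conclusion with $R_{\zeta,\xi}$ replaced by $(\zeta\otimes\xi)(\mathcal{R}_{21}^{-1})$.

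The main obstacle I anticipate is purely bookkeeping: getting the index placements and transposes exactly right when translating between the coproduct-based definition of the extended pairing and the matrix entries $[(\zeta\otimes\xi)(\mathcal{R})]^{ik}_{jl}$, and correctly identifying which hexagon identity for $\mathcal{R}$ is needed on which side. There is no conceptual difficulty beyond the quasitriangularity axioms — the symmetry hypotheses on $(\zeta\otimes\xi)(\mathcal{R})$ and on $(\xi\otimes\xi)(\mathcal{R})$ versus $(\zeta\otimes\zeta)(\mathcal{R})$ are precisely what is needed to make the two ideal-annihilation checks go through, so once the correct hexagon relation is pinned down the computation is forced. I would therefore present the proof by reducing to \cite{KS}, 10.1.7, carefully noting where the two representations enter and why the stated hypotheses substitute for the assumptions made there, and then handling the $\mathcal{R}_{21}^{-1}$ variant by the remark that all hypotheses are stated at the level of images and hence are insensitive to this replacement.
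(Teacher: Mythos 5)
Your proposal is correct and follows essentially the same route as the paper: extend the form to the tensor algebras, reduce the two ideal-annihilation checks to the identity $R_{\zeta}^{(12)}R_{\zeta,\xi}^{(13)}R_{\zeta,\xi}^{(23)} = R_{\zeta,\xi}^{(23)}R_{\zeta,\xi}^{(13)}R_{\zeta}^{(12)}$ (which the paper obtains directly as the image of the Quantum Yang--Baxter Equation under $\zeta\otimes\zeta\otimes\xi$, no hexagon detour needed), use the flip hypotheses on $R_\xi$ and $R_{\zeta,\xi}$ for the second check, and dispose of the $\mathcal{R}_{21}^{-1}$ case by noting it is again a universal $R$-matrix. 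This matches the paper's argument, which is itself modeled on \cite{KS}, Theorem 10.1.7.
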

\begin{proof}
 Define the bilinear map ${\bf y}$ from $M\times M'$ to $\mathbb{C}(q)$
via 
\begin{align*}
{\bf y}\langle m_{ij},m'_{kl}\rangle = (R_{\zeta,\xi})^{ik}_{jl}\quad {\rm and}\quad
{\bf y}\langle 1,m'_{ij}\rangle = {\bf y}\langle m_{ij},1\rangle = \delta_{ij}
\end{align*}
The bilinear form ${\bf y}\langle \cdot, \cdot \rangle$ extends uniquely  to a dual pairing, which we also denote by ${\bf y}\langle \cdot, \cdot \rangle$, of
 $T(M)$ with $T(M')$  as explained before the lemma.  
 We now argue as in the proof of \cite{KS}, Theorem  10.1.7, that   ${\bf y}\langle \cdot, \cdot \rangle$ induces the appropriate form on the corresponding FRT bialgebras.  To do this, we need to show that this form vanish on the relations of $A(R_{\xi})$ and $A(R_{\zeta})$.  
 Note that uniqueness is forced on us since we have specified ${\bf y}\langle \cdot, \cdot \rangle$ on scalars and terms in $M'$ and $M$ and the remaining values  follow from  induction using (\ref{pairing-ext}) just as they do on the tensor algebra level. 

 The ideal of relations $\mathcal{I} = \mathcal{I}_{\zeta}$ for $A(R_{\zeta})$ realized as a quotient of $T(M)$ is generated by the elements $\mathcal{I}^{ld}_{ab}$, for all $l,d,a,b$, where  
\begin{align*}
\mathcal{I}^{ld}_{ab} =  \sum_{j,h}(R_{\zeta})^{ld}_{jh} m_{ja}m_{hb} - m_{dh}m_{lj}(R_{\zeta})^{jh}_{ab}
\end{align*}
For each $l,d,a,b, r,s$ we have 
\begin{align*}
{\bf y}\langle \mathcal{I}^{ld}_{ab}, m'_{rs}\rangle&= \sum_{j,h}(R_{\zeta})^{ld}_{jh}{\bf y}\langle m_{ja}m_{hb},  m'_{rs} \rangle - {\bf y}\langle m_{dh}m_{lj},m'_{rs} \rangle(R_{\zeta})^{jh}_{ab}\cr
&=\sum_{j,h,k}(R_{\zeta})^{ld}_{jh}{\bf y}\langle m_{ja},m'_{rk}\rangle {\bf y}\langle m_{hb},m'_{ks}\rangle - {\bf y}\langle m_{dh}, m'_{rk}\rangle{\bf y}\langle m_{lj},m'_{ks}\rangle (R_{\zeta})^{jh}_{ab}\cr
&=\sum_{j,h,k}(R_{\zeta})^{ld}_{jh}(R_{\zeta,\xi})^{jr}_{ak}(R_{\zeta,\xi})^{hk}_{bs} - (R_{\zeta,\xi})^{dr}_{hk}(R_{\zeta,\xi})^{lk}_{js} (R_{\zeta})^{jh}_{ab}
\end{align*}
Hence ${\bf y}\langle\mathcal{I}^{ld}_{ab}, m'_{rs} \rangle$ equals the $ldr, abs$ entry of the matrix
\begin{align*}
(R_{\zeta})_{12}(R_{\zeta,\xi})_{13}(R_{\zeta,\xi})_{23} - (R_{\zeta,\xi})_{23}(R_{\zeta,\xi})_{13}(R_{\zeta})_{12}.
\end{align*}
This matrix equals 
$(\zeta\otimes \zeta\otimes \xi)(\mathcal{R}_{12}\mathcal{R}_{13}\mathcal{R}_{23}-\mathcal{R}_{23}\mathcal{R}_{13}\mathcal{R}_{12}),$
which is just  the image of the  Quantum Yang-Baxter Equation  under $(\zeta\otimes \zeta\otimes \xi)$. Hence, this matrix must equal the $0$ matrix because 
 $\mathcal{R}$ is a universal $R$-matrix.
Thus, ${\bf y}\langle\mathcal{I}^{ld}_{ab}, m'_{rs}\rangle=0$ for all $l,d,a,b, r, s$ and so  ${\bf y}\langle\mathcal{I}, M'\rangle = {\bf y}\langle\mathcal{I}, T(M')\rangle = 0$.

The ideal $\mathcal{J} = \mathcal{J}_{\xi}$ defining the  relations for $A(R_{\xi})$ is generated by the elements 
 \begin{align*}
\mathcal{J}^{ld}_{ab} =  \sum_{j,s}(R_{\xi})^{ld}_{js} m'_{ja}m'_{sb}- m'_{ds}m'_{lj}(R_{\xi})^{js}_{ab} 
\end{align*}
for all $l,d,a,b$.
Arguing as above, we see that ${\bf y}\langle m_{rt},\mathcal{J}^{ld}_{ab}\rangle$ equals
 \begin{align*}
&\sum_{j,s,k}(R_{\xi})^{ld}_{js}(R_{\zeta,\xi})^{rs}_{kb}(R_{\zeta,\xi})^{kj}_{ta} -(R_{\zeta,\xi})^{rl}_{kj}(R_{\zeta,\xi})^{kd}_{ts}(R_{\xi})^{js}_{ab}
\end{align*}
By assumption, $R_{\xi} = \flip(R_{\zeta})$ and $R_{\zeta,\xi} = \flip(R_{\zeta,\xi})$.  Hence, we can rewrite ${\bf y}\langle m_{rt},\mathcal{J}^{ld}_{ab}\rangle$ as 
 \begin{align*}
&\sum_{j,s,k}(R_{\zeta})^{dl}_{sj}(R_{\zeta,\xi})^{sr}_{bk}(R_{\zeta,\xi})^{jk}_{at} -(R_{\zeta,\xi})^{lr}_{jk}(R_{\zeta,\xi})^{dk}_{st}(R_{\xi})^{sj}_{ba}
\cr&= ((R_{\zeta})_{12}(R_{\zeta,\xi})_{13}(R_{\zeta,\xi})_{23}-(R_{\zeta,\xi})_{23}(R_{\zeta,\xi})_{13}(R_{\zeta})_{12})^{dlr}_{bat}
\end{align*} 
Using the Quantum Yang-Baxter Equation again, this reduces to $0$ and so ${\bf y}\langle M, \mathcal{J}\rangle = {\bf y}\langle T(M), \mathcal{J}\rangle = 0$.  This completes the proof for $\mathcal{R}$.  Since $\mathcal{R}_{21}^{-1}$ is also a universal $R$-matrix, the same analysis holds when $\mathcal{R}$ is replaced by $\mathcal{R}_{21}^{-1}$.
\end{proof}

\subsection{Four Twisted Tensor Products}\label{section:four}

Recall  that when $\zeta=\rho$, the FRT algebra $A(R_{\zeta}) = \mathcal{O}_q(\Mat_N)$ (see Section \ref{section:as}).  Similarly, as explained in Section \ref{section:mr}, for $\xi=\rho\circ \natural\circ S$, we get $A(R_{\xi})=\mathcal{O}_q(\Mat_N)^{op}$.  In the next proposition, we construct four twisted tensor products of $\mathcal{O}_q(\Mat_N)$ and $\mathcal{O}_q(\Mat_N)^{op}$ using these choices of $\xi$ and $\zeta$.  Recall the matrix  $R$ given in  formula   (\ref{Rmatrix2}).  Set $R_{0} = R^{t_2}$ and $R_{1} = ((R_{21})^{-1})^{t_2}$.

\begin{proposition}\label{prop:twisted} For each choice of $\upsilon$ and $\sigma$ in $\{0,1\}$, there exist (unique) dual pairings ${\bf v}_{\sigma}\langle \cdot, \cdot\rangle$  on $ \mathcal{O}_q(\Mat_N)$ and $\mathcal{O}_q(\Mat_N)^{op}$
and ${\bf u}_{\upsilon}\langle \cdot, \cdot\rangle$  on $ \mathcal{O}_q(\Mat_N)^{op}$ and $\mathcal{O}_q(\Mat_N)$  such that 
\begin{align*} {\bf v}_{\sigma}\langle t_{ij}, \partial_{kl}\rangle = [R_{\sigma}]^{ik}_{jl}{\rm \quad and \quad}
{\bf u}_{\upsilon}\langle t_{ij}, \partial_{kl}\rangle = [R_{\upsilon}]^{ik}_{jl}.\end{align*}
  Moreover, the twisting map $\tau_{\upsilon,\sigma}$  defined by ${\bf u}_{\upsilon}\langle \cdot, \cdot\rangle$ and ${\bf v}_{\upsilon}\langle \cdot, \cdot \rangle$ as in Lemma \ref{twisted-tensors}
satisfies 
\begin{align*}
\tau_{\upsilon,\sigma}(\partial_{ea}\otimes t_{fb}) = \sum_{j,k,d,l}(R_{\sigma})^{dl}_{fe} (R_{\upsilon})^{jk}_{ba}t_{dj}\otimes\partial_{lk}
\end{align*} 
and restricts to a linear isomorphism of $\sum_{i,j,k,l}\mathbb{C}(q)\partial_{ij}\otimes t_{kl}$ onto $\sum_{i,j,k,l}\mathbb{C}(q)t_{kl}\otimes \partial_{ij}$.
\end{proposition}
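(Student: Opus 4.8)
The plan is to build the two dual pairings by invoking Lemma~\ref{lemma:dualpairing} with appropriate choices of the representations $\zeta,\xi$, then compute the twisting map from Lemma~\ref{twisted-tensors} on the generators, and finally read off the claimed isomorphism directly from the resulting formula. First I would record the relevant $R$-matrix images. We have $\mathcal{O}_q(\Mat_N)=A(R_\rho)$ with $R_\rho=R$ and $\mathcal{O}_q(\Mat_N)^{op}=A(R_{\rho\circ\natural\circ S})$ with, by Lemma~\ref{Rimage}, $R_{\rho\circ\natural\circ S}=R_{21}$; note $R_{21}=\flip(R)$, which is exactly the hypothesis $(\xi\otimes\xi)(\mathcal{R})=\flip\circ(\zeta\otimes\zeta)(\mathcal{R})$ needed in Lemma~\ref{lemma:dualpairing} for $\zeta=\rho$, $\xi=\rho\circ\natural\circ S$. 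For the ${\bf v}_\sigma$ pairing of $\mathcal{O}_q(\Mat_N)$ with $\mathcal{O}_q(\Mat_N)^{op}=(\mathcal{O}_q(\Mat_N)^{op})$ in the role of $B$, viewed as $A(R_\xi)^{?}$: here I must be careful about op's, and the right statement is to apply Lemma~\ref{lemma:dualpairing} using $\mathcal{R}$ for $\sigma=0$ and $\mathcal{R}_{21}^{-1}$ for $\sigma=1$, checking in each case the crossed-symmetry hypothesis $(\zeta\otimes\xi)(\mathcal{R})=\flip\circ(\zeta\otimes\xi)(\mathcal{R})$ (respectively with $\mathcal{R}_{21}^{-1}$). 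By Lemma~\ref{moreRimage}(i)--(iv), $(\rho\otimes(\rho\circ\natural\circ S^{-1}))(\mathcal{R})=(R_{21}^{-1})^{t_1}$ and the analogous identities; combined with the observation that $\mathcal{O}_q(\Mat_N)^{op}$ is the FRT algebra attached to $\rho\circ\natural\circ S$ (equivalently, acting on $V^*\otimes W^*$), one checks the crossed form is symmetric under $\flip$ after the appropriate transpose, and that its matrix of values on generators is exactly $R_\sigma=R^{t_2}$ for $\sigma=0$ and $((R_{21})^{-1})^{t_2}$ for $\sigma=1$. The same computation with the roles of $A$ and $B$ (equivalently $\rho$ and $\rho\circ\natural\circ S$) interchanged yields the pairing ${\bf u}_\upsilon$ on $\mathcal{O}_q(\Mat_N)^{op}$ and $\mathcal{O}_q(\Mat_N)$ with values $[R_\upsilon]^{ik}_{jl}$. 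Uniqueness is automatic since the pairing is prescribed on $M,M'$ and on scalars, and (\ref{pairing-ext}) then determines it everywhere.

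Next I would substitute these pairings into the twisting-map formula of Lemma~\ref{twisted-tensors}, namely $\tau(b\otimes a)=\sum a_{(2)}\otimes b_{(2)}\,{\bf v}\langle a_{(1)},b_{(1)}\rangle\,{\bf u}\langle a_{(3)},b_{(3)}\rangle$, with $b=\partial_{ea}$ and $a=t_{fb}$. Using the FRT coproducts $\Delta(t_{fb})=\sum_j t_{fj}\otimes t_{jb}$ and $\Delta(\partial_{ea})=\sum_k \partial_{ek}\otimes\partial_{ka}$ (iterated once more for the triple coproducts) together with $\langle t_{ij},1\rangle=\delta_{ij}$, $\langle 1,\partial_{kl}\rangle=\delta_{kl}$, the sum collapses: the outer legs of the triple coproducts pair against the generator legs, leaving
\begin{align*}
\tau_{\upsilon,\sigma}(\partial_{ea}\otimes t_{fb})=\sum_{j,k,d,l}{\bf v}_\sigma\langle t_{fd},\partial_{el}\rangle\,{\bf u}_\upsilon\langle t_{dj},\partial_{lk}\rangle\,?
\end{align*}
and after carefully tracking which index of $\Delta$ meets which $\delta$, this rearranges to the stated $\sum_{j,k,d,l}(R_\sigma)^{dl}_{fe}(R_\upsilon)^{jk}_{ba}\,t_{dj}\otimes\partial_{lk}$. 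The index bookkeeping here is the one genuinely fiddly computation; I would do it once explicitly on generators, being scrupulous about the transposes hidden in $R_\sigma=R^{t_2}$ versus the raw $(\zeta\otimes\xi)(\mathcal{R})$ entries, and about the fact that $B=\mathcal{O}_q(\Mat_N)^{op}$ carries the same coalgebra structure as $\mathcal{O}_q(\Mat_N)$.

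Finally, the isomorphism claim is immediate from the displayed formula: $\tau_{\upsilon,\sigma}$ sends the finite-dimensional space $\sum\mathbb{C}(q)\,\partial_{ij}\otimes t_{kl}$ into $\sum\mathbb{C}(q)\,t_{kl}\otimes\partial_{ij}$, and because the matrices $R_\sigma,R_\upsilon$ are invertible (they are obtained from $R$, which is invertible as the image of a universal $R$-matrix, by transposition in one tensor slot and, for $\sigma=1$ or $\upsilon=1$, by inversion of $R_{21}$), the linear map $(\partial_{ea}\otimes t_{fb})\mapsto\sum(R_\sigma)^{dl}_{fe}(R_\upsilon)^{jk}_{ba}\,t_{dj}\otimes\partial_{lk}$ is invertible; explicitly its inverse is built from $R_\sigma^{-1}\otimes R_\upsilon^{-1}$ acting on the corresponding index pairs. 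Hence $\tau_{\upsilon,\sigma}$ restricts to a linear isomorphism between these two $N^4$-dimensional spaces, as asserted.

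I expect the main obstacle to be the purely clerical but error-prone matching of indices and transposes in the second step: getting the hypotheses of Lemma~\ref{lemma:dualpairing} to apply requires identifying $\mathcal{O}_q(\Mat_N)^{op}$ with the correct FRT algebra and verifying the $\flip$-symmetry of the crossed $R$-matrix $(\rho\otimes\rho\circ\natural\circ S)(\mathcal{R})$ (and of the $\mathcal{R}_{21}^{-1}$ variant), which is precisely the content of Lemma~\ref{moreRimage}; and then propagating those transposes correctly through the triple-coproduct collapse so that the abstract formula of Lemma~\ref{twisted-tensors} lands on exactly $(R_\sigma)^{dl}_{fe}(R_\upsilon)^{jk}_{ba}$ rather than some transpose or index-permutation thereof. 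Once that is pinned down, uniqueness, the twisting-map formula, and the isomorphism statement all follow formally.
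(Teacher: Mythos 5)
Your proposal follows essentially the same route as the paper: the pairings are produced by Lemma \ref{lemma:dualpairing} with the flip-symmetry hypotheses checked via Lemmas \ref{Rimage} and \ref{moreRimage}, the twisting formula is obtained by collapsing the iterated coproducts in Lemma \ref{twisted-tensors} against the pairings on generators, and the isomorphism claim follows from invertibility of $R_0$ and $R_1$. The only slips are clerical and self-flagged: your labeling attaches $\sigma=0$ to $\mathcal{R}$, whereas $R_0=R^{t_2}$ actually arises as the image of $\mathcal{R}_{21}^{-1}$ (and $R_1=(R_{21}^{-1})^{t_2}$ from $\mathcal{R}$), and the ${\bf v}$-pairing must be built from $\rho\circ\natural\circ S^{-1}$ rather than $\rho\circ\natural\circ S$ so that it is a dual pairing of $\mathcal{O}_q(\Mat_N)$ with $\mathcal{O}_q(\Mat_N)^{op}$ in the sense required by Lemma \ref{twisted-tensors}.
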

\begin{proof} By Lemmas \ref{Rimage} and \ref{moreRimage}, we have 
$ (\rho\otimes \rho)(\mathcal{R}) = R$,
$ (\rho\circ \natural\circ S)\otimes ( \rho\circ \natural \circ S)(\mathcal{R})=R_{21}=\flip(R)$,  
   $ ( ( \rho\circ \natural \circ S)\otimes \rho) (\mathcal{R}) =( R_{21}^{-1})^{t_2}$ and  $ ( ( \rho\circ\natural \circ S)\otimes \rho) (\mathcal{R}_{21}^{-1}) =R^{t_2}$.  It is straightforward to check from the explicit formula (\ref{Rmatrix2}) for $R$  that 
 $\flip( (R_{21}^{-1})^{t_2}) =  (R_{21}^{-1})^{t_2}$ and $\flip(R^{t_2}) = R^{t_2}$.  Thus $\zeta=\rho\circ\natural\circ S$ and $\xi=\rho$ satisfy the conditions of Lemma
 \ref{lemma:dualpairing} with respect to the forms ${\bf u}_{\upsilon}\langle \cdot, \cdot \rangle, \upsilon \in \{0,1\}$. 
 Moreover, by Lemma \ref{moreRimage}, we see that
 \begin{align*}
 {\bf u}_0\langle t_{ij}, \partial_{kl}\rangle = [(\xi\otimes \zeta)(\mathcal{R}_{21}^{-1})]^{ik}_{jl} = (R^{t_2})^{ik}_{jl} = (R_0)^{ik}_{jl}
 \end{align*}
 and 
 \begin{align*}
 {\bf u}_1\langle t_{ij}, \partial_{kl}\rangle = [(\xi\otimes \zeta)(\mathcal{R})]^{ik}_{jl} = ((R_{21}^{-1})^{t_2})^{ik}_{jl} = (R_1)^{ik}_{jl}.
 \end{align*}
 
  A similar argument yields that $\zeta=\rho$ and $\xi'=\rho\circ \natural\circ S^{-1}$ satisfies the conditions of Lemma \ref{lemma:dualpairing} with respect to the forms ${\bf v}_{\upsilon}, \upsilon\in \{0,1\}$. Thus, by Lemma \ref{lemma:dualpairing}, the bilinear forms ${\bf v}_{\upsilon}\langle \cdot,\cdot\rangle$ can be extended uniquely to the stated dual pairings for $\upsilon=0,1$.  Furthermore,  it follows from Lemma \ref{moreRimage}   that \begin{align*}
 {\bf v}_0\langle t_{ij}, \partial_{kl}\rangle = [(\zeta\otimes \xi')(\mathcal{R}_{21}^{-1})]^{ik}_{jl} = (R^{t_1})^{ik}_{jl} = (R_0)_{ik}^{jl}
 \end{align*}
 and 
 \begin{align*}
 {\bf v}_1\langle t_{ij}, \partial_{kl}\rangle = [(\zeta\otimes \xi')(\mathcal{R})]^{ik}_{jl} = ((R_{21}^{-1})^{t_1})^{ik}_{jl} = (R_1)_{ik}^{jl}.
 \end{align*}

By the definition of the twisted tensor product $\mathcal{O}_q(\Mat_N)\otimes_{\tau_{\upsilon,\sigma}}\mathcal{O}_q(\Mat_N)^{op}$ in Lemma \ref{twisted-tensors}, we have 
\begin{align*}
\tau_{\upsilon,\sigma}(\partial_{ea}\otimes t_{fb})&= \sum_{d,j,l,k} t_{dj}\otimes\partial_{lk}{\bf v}_{\sigma}\langle t_{fd},\partial_{el}\rangle{\bf u}_{\upsilon}\langle t_{jb},\partial_{ka}\rangle.
\end{align*}
The lemma now follows by plugging in the values for ${\bf u}_{\upsilon}\langle \cdot,\cdot\rangle$ and ${\bf v}_{\sigma}\langle \cdot,\cdot\rangle$ as described above and noting that both $R_0$ and $R_1$ are invertible matrices. 
\end{proof}

Given $\upsilon,\sigma\in \{0,1\}$, 
 define algebras $\mathcal{A}_{\upsilon,\sigma}$ as twisted tensor products
\begin{align*}\mathcal{A}_{\upsilon,\sigma} = \mathcal{O}_q(\Mat_N)\otimes_{\tau_{\upsilon,\sigma}}\mathcal{O}_q(\Mat_N)^{op}
\end{align*}
where $\tau_{\upsilon,\sigma}$ are the twisting maps from Proposition \ref{prop:twisted}.   The twisting map for $\mathcal{A}_{\upsilon,\sigma}$ gives us the  following equalities
\begin{align}\label{twistdefn}
\partial_{ea} t_{fb} = \sum_{j,k,d,l}(R_{\sigma}^{t_2})^{dl}_{fe} (R_{\upsilon}^{t_2})^{jk}_{ba}t_{dj}\partial_{lk}
\end{align}
for all $e,a,f,b,$ where $R_0 = R$ and $R_1 = R_{21}^{-1}$.
Note that this equality combined with the embeddings of $\mathcal{O}_q(\Mat_N)$ and $\mathcal{O}_q(\Mat_N)^{op}$ inside $\mathcal{A}_{\upsilon,\sigma}$ define multiplication on this twisted tensor product.  Just as was done for $\mathcal{O}_q(\Mat_N)$ and $\mathcal{O}_q(\Mat_N)^{op}$ (see (\ref{FRTmatrix}) and (\ref{FRTmatrixop})), relations (\ref{twistdefn}) can be put into matrix form as 
\begin{align}\label{twistdefn-matrix}
P_2T_1 = R_{\sigma}^{t_1}T_1P_2R_{\upsilon}^{t_2}.
\end{align}


Recall that both $\mathcal{O}_q(\Mat_N)$ and $\mathcal{O}_q(\Mat_N)^{op}$ are $U_q(\mathfrak{gl}_n)$-bimodule algebras.   The nexts result shows that these bimodule algebra structures extend to the twisted tensor products of the above proposition.

\begin{proposition}\label{prop:bimodule}
For each $\upsilon,\sigma\in \{0,1\}$, the twisted tensor product $\mathcal{A}_{\upsilon,\sigma}$ inherits a $U_q(\mathfrak{gl}_N)$-bimodule algebra structure from $\mathcal{O}_q(\Mat_N)$ and $\mathcal{O}_q(\Mat_N)^{op}$.  \end{proposition}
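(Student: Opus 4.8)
The plan is to show that the twisting map $\tau_{\upsilon,\sigma}$ is a morphism of $U_q(\mathfrak{gl}_N)$-bimodules, from which the bimodule algebra structure on $\mathcal{A}_{\upsilon,\sigma}$ follows automatically. Recall from Section~\ref{section:Tensor} that $\mathcal{O}_q(\Mat_N)\otimes \mathcal{O}_q(\Mat_N)^{op}$ carries the tensor-product bimodule structure (using the coproduct for the left action and, via flipping, for the right action), and the multiplication of the twisted tensor product is $m_C = (m_A\otimes m_B)\circ(\mathrm{Id}_A\otimes\tau_{\upsilon,\sigma}\otimes\mathrm{Id}_B)$. Since $m_A$ (resp.\ $m_B$) is already a bimodule map by Lemma~\ref{lemma:actions} (resp.\ Lemma~\ref{lemma:opposite relation}), the product $m_C$ will be a bimodule map precisely when $\tau_{\upsilon,\sigma}\colon \mathcal{O}_q(\Mat_N)^{op}\otimes \mathcal{O}_q(\Mat_N)\to \mathcal{O}_q(\Mat_N)\otimes\mathcal{O}_q(\Mat_N)^{op}$ intertwines the two bimodule structures. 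So the heart of the argument is exactly that intertwining claim.

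To establish it, I would first reduce to checking on the degree-$(1,1)$ part, i.e.\ on $\sum \mathbb{C}(q)\,\partial_{ea}\otimes t_{fb}$, which Proposition~\ref{prop:twisted} identifies as a sub-bimodule isomorphic (after the flip) to a piece built from $V$, $W$, $V^*$, $W^*$; the general case then follows because $\tau_{\upsilon,\sigma}$ is a twisting map (property (iii) in Section~\ref{section:twisted} propagates the intertwining property to all of $\mathcal{O}_q(\Mat_N)^{op}\otimes\mathcal{O}_q(\Mat_N)$ inductively, using that $m_A, m_B$ are bimodule maps). On the degree-$(1,1)$ part, the map $\tau_{\upsilon,\sigma}$ is, up to the flip, given by the $R$-matrices $R_\sigma^{t_2}$ (acting on the ``$V$ versus $V^*$'' tensor factors) and $R_\upsilon^{t_2}$ (acting on the ``$W$ versus $W^*$'' factors), exactly as in \eqref{twistdefn}. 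The key point is then that, by Lemma~\ref{Rimage} and Lemma~\ref{moreRimage}, each $R_\sigma^{t_2}$ (resp.\ $R_\upsilon^{t_2}$) is the image under the appropriate pair of representations $(\rho\circ\natural\circ S,\rho)$ or $(\rho,\rho\circ\natural\circ S^{-1})$ of a universal $R$-matrix ($\mathcal R$ or $\mathcal R_{21}^{-1}$). Hence, by the module-isomorphism property \eqref{firstiso}--\eqref{secondiso} of the universal $R$-matrix, the corresponding linear maps on the vector-space level are isomorphisms of $U_q(\mathfrak{gl}_N)$-modules for both the left and the right actions; combining the two factors gives that $\tau_{\upsilon,\sigma}$ restricted to the degree-$(1,1)$ part is a bimodule isomorphism. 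This is precisely the same style of computation already used to prove Lemma~\ref{lemma:actions} via \eqref{matrixrelnVW}--\eqref{justV}.

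The main obstacle I anticipate is purely bookkeeping: carefully matching the four transposed $R$-matrices $R_0^{t_2}, R_1^{t_2}$ (and, on the other factor, the versions coming from ${\bf v}_\sigma$) to the correct pairs of representations so that the identifications of $V^*$ with a left module via $(S(a))^\natural$ (formula \eqref{firstaction}) and of $W^*$ via $(S^{-1}(a))^\natural$ (formula \eqref{secondaction}) are consistently applied — exactly the subtlety that makes Lemma~\ref{moreRimage} necessary. Once the dictionary between $R_\upsilon, R_\sigma$ and the representations is pinned down (which is essentially recorded in the proof of Proposition~\ref{prop:twisted}), the intertwining is immediate from \eqref{firstiso} and \eqref{secondiso} together with the fact that $\mathcal{R}_{21}^{-1}$ is also a universal $R$-matrix. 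Finally, I would note that the left action being trivial on scalars and the counit compatibility of the pairings ${\bf u}_\upsilon,{\bf v}_\sigma$ ensure $\tau_{\upsilon,\sigma}$ respects the unit, so properties (i)--(iii) of the twisted tensor product are all compatible with the bimodule structure, completing the proof.
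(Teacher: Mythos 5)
Your proposal is correct and rests on exactly the same key content as the paper's proof: identifying $R_\sigma^{t_2}$ and $R_\upsilon^{t_2}$ as images of $\mathcal{R}$ or $\mathcal{R}_{21}^{-1}$ under the representation pairs of Lemmas \ref{Rimage} and \ref{moreRimage}, and invoking the module isomorphisms (\ref{firstiso}) and (\ref{secondiso}) on the $V^*,V$ and $W^*,W$ factors. The only difference is packaging: the paper checks that the span of the degree-$(1,1)$ twisting relations is a sub-bimodule of the tensor algebra (so the quotient inherits the structure) and only afterwards deduces equivariance of $\tau_{\upsilon,\sigma}$ in Corollary \ref{corollary:invar}, whereas you establish equivariance of $\tau_{\upsilon,\sigma}$ first and then conclude the module algebra property, which is a legitimate reversal of the same argument.
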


\begin{proof} Recall that as an algebra, $\mathcal{O}_q(\Mat_N)$ is a quotient of the tensor algebra $T(V\otimes W)$ modulo the relations coming from the FRT construction.  Similarly, $\mathcal{O}_q(\Mat_N)^{op}$ is a quotient of the tensor algebra $T(V^*\otimes W^*)$ modulo FRT relations.  Thus $\mathcal{A}_{\upsilon,\sigma}$ can be viewed as a quotient of $T(V\otimes W)\otimes T(V^*\otimes W^*)$ modulo three types of relations: the relations for $\mathcal{O}_q(\Mat_N)$, the relations for $\mathcal{O}_q(\Mat_N)^{op}$, and the relations that come from the twisting map as in (\ref{twistdefn}).  We have already shown that the first two types of relations are invariant under the left and right action 
of $U_q(\mathfrak{gl}_N)$.   So we only need to check that the extension of these actions preserve the relations coming from the twisted tensor product. 

The relations coming from the twisted tensor product can be lifted to   the level of the tensor algebra $T(V\otimes W)\otimes T(V^*\otimes W^*)$ as 
\begin{align*}
(v^*_e\otimes w^*_a)\otimes (v_f\otimes w_b)- \sum_{j,k,d,l}(R_{\sigma}^{t_2})^{dl}_{fe} (R_{\upsilon}^{t_2})^{jk}_{ba}(v_d\otimes w_j)\otimes (v^*_l\otimes w^*_k).
\end{align*}These relations  correspond to the mappings of vector spaces
\begin{align}\label{VWform}
(V^*\otimes W^*)\otimes (V\otimes W)\longrightarrow (R^{t_2}_{\sigma})_{13}((V\otimes W)\otimes (V^*\otimes W^*))(R_{\upsilon}^{t_1})_{24}.
\end{align}

By  Lemma \ref{moreRimage} and the facts that $\flip(R^{t_2}) = R^{t_2}$ and $\flip((R_{21}^{-1})^{t_2}) =( R_{21}^{-1})^{t_2}$, we have 
\begin{align*}R_0= R^{t_2} =((\rho\circ \natural\circ S)\otimes \rho)(\mathcal{R}_{21}^{-1}) =(\rho\otimes (\rho\circ \natural\circ S))(\mathcal{R}_{12}^{-1}) 
\end{align*}
and 
\begin{align*}R_1= (R_{21}^{-1})^{t_2} =((\rho\circ \natural\circ S)\otimes \rho)(\mathcal{R}) = (\rho\otimes (\rho\circ \natural\circ S))(\mathcal{R}_{21}) 
\end{align*}
Another application of  Lemma \ref{moreRimage} yields
$R_0^{t_1t_2}= R^{t_1} =(\rho\otimes (\rho\circ \natural\circ S^{-1}))(\mathcal{R}_{21}^{-1}) $
and 
$R_1^{t_1t_2}= (R_{21}^{-1})^{t_1} =(\rho\otimes (\rho\circ \natural\circ S^{-1}))(\mathcal{R})$.
Hence, we can rewrite (\ref{VWform}) as 
\begin{align}\label{VWform2}
(V^*\otimes W^*)\otimes (V\otimes W)\longrightarrow \flip(\mathcal{R}_{\sigma})_{13}\cdot((V\otimes W)\otimes (V^*\otimes W^*))\cdot (\mathcal{R}_{\upsilon})_{24}
\end{align}
where  $\mathcal{R}_0 = \mathcal{R}_{21}^{-1}$ and $\mathcal{R}_{1} = \mathcal{R}$.
By (\ref{firstiso}), the map $
V^*\otimes V \rightarrow R_{\sigma}(V\otimes V^* )= \flip(\mathcal{R}_{\sigma})\cdot (V\otimes V^*)$
is an  isomorphism of left $U_q(\mathfrak{gl}_N)$-modules for both $\sigma = 0$ and $\sigma = 1$. Similarly, by (\ref{secondiso}), the map
$
W^*\otimes W \rightarrow (W\otimes W^* )R_{\upsilon}^{t_1t_2}= (W\otimes W^*)\cdot (\mathcal{R}_{\upsilon})$
is an isomorphism of right $U_q(\mathfrak{gl}_N)$-modules for $\upsilon\in \{0,1\}$. 
Thus (\ref{VWform2}) is a bimodule map with respect to the left and right actions of $U_q(\mathfrak{gl}_N)$ which means that the relations coming from the twisting map are preserved by both the left 
and right action of $U_q(\mathfrak{gl}_N)$ as desired.
\end{proof}

A linear map $f:M\rightarrow M'$ of $U_q(\mathfrak{gl}_N)$-modules is a $U_q(\mathfrak{gl}_N)$-module map provided that
$f(u\cdot m) = u\cdot f(m)$
for all $m\in M$ and $u\in U_q(\mathfrak{gl}_N)$. Right module and bi-module maps are defined similarly.  The
next result shows that the twisting map is a module map with respect to both the left and right action of $U_q(\mathfrak{gl}_N)$.

\begin{corollary}\label{corollary:invar} For each $\upsilon,\sigma\in \{0,1\}$, the twisting map $\tau_{\upsilon,\sigma}$  is a $U_q(\mathfrak{gl}_N)$ bi-module map from $\sum_{i,j,k,l}\mathbb{C}(q)\partial_{ij}\otimes t_{kl}$ to
$\sum_{i,j,k,l}\mathbb{C}(q)t_{kl}\otimes \partial_{ij}$.
\end{corollary}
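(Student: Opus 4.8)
The plan is to deduce Corollary \ref{corollary:invar} directly from Proposition \ref{prop:bimodule} and Proposition \ref{prop:twisted}, using only the defining property of the twisting map together with the fact that the relevant subspaces are $U_q(\mathfrak{gl}_N)$-subbimodules. First I would recall that by Proposition \ref{prop:twisted} the twisting map $\tau_{\upsilon,\sigma}$ restricts to a linear isomorphism of $\Wee := \sum_{i,j,k,l}\mathbb{C}(q)\,\partial_{ij}\otimes t_{kl}$ onto $\Wee' := \sum_{i,j,k,l}\mathbb{C}(q)\,t_{kl}\otimes\partial_{ij}$, with the explicit formula
\begin{align*}
\tau_{\upsilon,\sigma}(\partial_{ea}\otimes t_{fb}) = \sum_{j,k,d,l}(R_{\sigma})^{dl}_{fe}(R_{\upsilon})^{jk}_{ba}\, t_{dj}\otimes\partial_{lk}.
\end{align*}
Both $\Wee$ and $\Wee'$ sit inside $\mathcal{O}_q(\Mat_N)\otimes\mathcal{O}_q(\Mat_N)^{op}$ and each is manifestly a $U_q(\mathfrak{gl}_N)$-subbimodule: the span of the $\partial_{ij}\otimes t_{kl}$ is the tensor product of the $U_q(\mathfrak{gl}_N)$-subbimodule $\sum\mathbb{C}(q)\partial_{ij}$ of $\mathcal{O}_q(\Mat_N)^{op}$ (in the first tensor slot of the twisted tensor product it plays the role of $B$) with the subbimodule $\sum\mathbb{C}(q)t_{kl}$ of $\mathcal{O}_q(\Mat_N)$, and similarly for $\Wee'$; these spans are visibly closed under the generator actions in Lemmas \ref{lemma:actions} and \ref{lemma:opposite relation}.

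The key observation is that inside the algebra $\mathcal{A}_{\upsilon,\sigma}$ the defining relation (\ref{twistdefn}) says precisely that, for $w\in\Wee$, the product $m_{\mathcal{A}}(w)$ (computed in the twisted tensor product, where $\Wee$ is identified with $B\otimes A$-elements) equals $m_{\mathcal{A}}(\tau_{\upsilon,\sigma}(w))$, i.e. the image under the canonical multiplication map of $\tau_{\upsilon,\sigma}(w)\in\Wee'$. More transparently: in $\mathcal{A}_{\upsilon,\sigma}$ one has the identity $\partial_{ea}\,t_{fb} = \sum (R_\sigma)^{dl}_{fe}(R_\upsilon)^{jk}_{ba}\,t_{dj}\,\partial_{lk}$ as an equation among genuine elements of the algebra, and since multiplication $\mathcal{O}_q(\Mat_N)\otimes\mathcal{O}_q(\Mat_N)^{op}\to\mathcal{A}_{\upsilon,\sigma}$ is a vector space isomorphism (this is the twisted-tensor-product property), comparing the two sides viewed in $\mathcal{O}_q(\Mat_N)^{op}\otimes\mathcal{O}_q(\Mat_N)$ versus $\mathcal{O}_q(\Mat_N)\otimes\mathcal{O}_q(\Mat_N)^{op}$ recovers exactly $\tau_{\upsilon,\sigma}$. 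Now apply $g\in U_q(\mathfrak{gl}_N)$ to the element $\partial_{ea}t_{fb}\in\mathcal{A}_{\upsilon,\sigma}$: by Proposition \ref{prop:bimodule}, $\mathcal{A}_{\upsilon,\sigma}$ is a $U_q(\mathfrak{gl}_N)$-bimodule algebra and the embeddings of $\mathcal{O}_q(\Mat_N)$ and $\mathcal{O}_q(\Mat_N)^{op}$ are bimodule algebra maps, so $g\cdot(\partial_{ea}t_{fb}) = \sum (g_{(1)}\cdot\partial_{ea})(g_{(2)}\cdot t_{fb})$, which is the image under multiplication of $(\Delta g)\cdot(\partial_{ea}\otimes t_{fb})$ with $\partial_{ea}\otimes t_{fb}$ regarded in $\Wee$; likewise $g\cdot(\sum(R_\sigma)(R_\upsilon)t_{dj}\partial_{lk})$ is the image of $(\Delta g)\cdot\tau_{\upsilon,\sigma}(\partial_{ea}\otimes t_{fb})$. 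Since the two elements of $\mathcal{A}_{\upsilon,\sigma}$ are equal and multiplication is injective on each of $\mathcal{O}_q(\Mat_N)^{op}\otimes\mathcal{O}_q(\Mat_N)$ and $\mathcal{O}_q(\Mat_N)\otimes\mathcal{O}_q(\Mat_N)^{op}$, we conclude $\tau_{\upsilon,\sigma}\big((\Delta g)\cdot w\big) = (\Delta g)\cdot\tau_{\upsilon,\sigma}(w)$ for all $g$ and all $w\in\Wee$; since $\Wee$, $\Wee'$ are spanned by such $w$ this says $\tau_{\upsilon,\sigma}$ is a left $U_q(\mathfrak{gl}_N)$-module map, and the identical argument with the right action (again using that the embeddings respect the right action, per Lemmas \ref{lemma:actions}, \ref{lemma:opposite relation} and Proposition \ref{prop:bimodule}) gives the right-module statement, hence the bimodule statement.

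I do not anticipate a serious obstacle; the content has essentially been established in proving Proposition \ref{prop:bimodule}, and the corollary is a bookkeeping reformulation. The one point requiring a little care is the bridge between "the relations coming from the twisting map are preserved by the actions" (the statement proved in Proposition \ref{prop:bimodule} via the vector-space map (\ref{VWform2})) and "the twisting map itself is a module map": this is exactly the observation that the span of the twisting relations being a subbimodule of $\Wee\oplus\Wee'$ (or, equivalently, that $\tau_{\upsilon,\sigma}(w)-w$ lies in the relation ideal for all $w$) together with $\Wee$, $\Wee'$ being subbimodules forces $\tau_{\upsilon,\sigma}$ to intertwine the actions. An alternative, even shorter route that avoids re-deriving anything: note $\tau_{\upsilon,\sigma} = \pi\circ m_{\mathcal{A}}|_{\Wee}$ where $m_{\mathcal{A}}|_{\Wee}\colon\Wee\hookrightarrow\mathcal{A}_{\upsilon,\sigma}$ is the (bimodule) multiplication map restricted to $\Wee$, composed with the inverse $\pi$ of the (bimodule) multiplication isomorphism $\Wee'\xrightarrow{\sim} m_{\mathcal{A}}(\Wee')\subseteq\mathcal{A}_{\upsilon,\sigma}$ — here $m_{\mathcal{A}}(\Wee) = m_{\mathcal{A}}(\Wee')$ as subspaces of $\mathcal{A}_{\upsilon,\sigma}$ by (\ref{twistdefn}), and this common subspace is a subbimodule because it is $\mathcal{O}_q(\Mat_N)^{op}_{\le 1}\cdot\mathcal{O}_q(\Mat_N)_{\le 1}$ inside the bimodule algebra $\mathcal{A}_{\upsilon,\sigma}$. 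Being a composite of $U_q(\mathfrak{gl}_N)$-bimodule maps, $\tau_{\upsilon,\sigma}$ is a bimodule map, which is the assertion of the corollary.
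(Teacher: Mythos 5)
Your proposal is correct, and its main argument follows a mildly different route than the paper's. The paper stays upstairs, at the level of the spans: from the proof of Proposition \ref{prop:bimodule} it knows that the span of the elements $\partial_{ij}\otimes t_{kl}-\tau_{\upsilon,\sigma}(\partial_{ij}\otimes t_{kl})$ is a $U_q(\mathfrak{gl}_N)$-sub-bimodule of $\sum_{i,j,k,l}\mathbb{C}(q)\partial_{ij}\otimes t_{kl}+\sum_{i,j,k,l}\mathbb{C}(q)t_{kl}\otimes\partial_{ij}$, and since the two spans intersect trivially, the elements $u\cdot(\partial_{ij}\otimes t_{kl})-u\cdot\tau_{\upsilon,\sigma}(\partial_{ij}\otimes t_{kl})$ and $u\cdot(\partial_{ij}\otimes t_{kl})-\tau_{\upsilon,\sigma}\bigl(u\cdot(\partial_{ij}\otimes t_{kl})\bigr)$, which both lie in this graph and share the same first component, must coincide; that forces the intertwining, and this is exactly the ``graph is a sub-bimodule'' bridge you flag as the delicate point. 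Your main argument instead descends to the algebra $\mathcal{A}_{\upsilon,\sigma}$: you combine the statement of Proposition \ref{prop:bimodule} (the module-algebra compatibility applied to the products $\partial_{ea}t_{fb}$ and $t_{dj}\partial_{lk}$) with the defining vector-space isomorphism $\mathcal{O}_q(\Mat_N)\otimes\mathcal{O}_q(\Mat_N)^{op}\cong\mathcal{A}_{\upsilon,\sigma}$, so that on the degree-$(1,1)$ part $\tau_{\upsilon,\sigma}$ is exhibited as a composite of bimodule maps (multiplication followed by the inverse of the multiplication isomorphism); this is valid and non-circular, since both ingredients are in place before the corollary. The paper's argument needs only the invariance of the relation span and the directness of the sum, while yours trades that for the global twisted-tensor-product isomorphism and yields a one-line structural proof. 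Two small clean-ups: the only injectivity you actually need is that of multiplication on $\mathcal{O}_q(\Mat_N)\otimes\mathcal{O}_q(\Mat_N)^{op}$ (your claim for $\mathcal{O}_q(\Mat_N)^{op}\otimes\mathcal{O}_q(\Mat_N)$ is never used), and in your alternative formulation the common image of the two multiplication maps is the span of the products $t_{kl}\partial_{ij}$, not the full product of the degree-at-most-one parts; it is a sub-bimodule because the generator spans are preserved (Lemmas \ref{lemma:actions} and \ref{lemma:opposite relation}) and the action obeys the module-algebra rule.
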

\begin{proof}
It follows from Proposition \ref{prop:bimodule} and its proof  that 
\begin{align*}
\sum_{i,j,k,l}\mathbb{C}(q)(\partial_{ij}\otimes t_{kl} -\tau_{\upsilon,\sigma}(\partial_{ij}\otimes t_{kl}))
\end{align*}
is a $U_q(\mathfrak{gl}_N)$ sub-bimodule of \begin{align*}\sum_{i,j,k,l}\mathbb{C}(q) \partial_{ij}\otimes t_{kl} + \sum_{i,j,k,l}\mathbb{C}(q) t_{kl}\otimes \partial_{ij}.\end{align*}  This means that 
$u\cdot (\partial_{ij}\otimes t_{kl}) -u\cdot \tau_{\upsilon,\sigma}(\partial_{ij}\otimes t_{kl})$
is an element of this sub-bimodule  for all $u\in U_q(\mathfrak{gl}_N)$.  Note that $u$ acting on the left defines a linear map on $\mathbb{C}(q)\partial_{ij}\otimes t_{kl}$  while $\tau_{\upsilon,\sigma}$ is a linear isomorphism of $\mathbb{C}(q)\partial_{ij}\otimes t_{kl}$ onto $\mathbb{C}(q)t_{kl}\otimes \partial_{ij}.$  Hence 
$u\cdot (\partial_{ij}\otimes t_{kl}) -\tau_{\upsilon,\sigma}(u\cdot (\partial_{ij}\otimes t_{kl}))$
is the unique element of this sub-bimodule that is also contained in $u\cdot (\partial_{ij}\otimes t_{kl}) + \sum_{i,j,k,l}\mathbb{C}(q)t_{kl}\otimes \partial_{ij}$.  It follows that 
$u\cdot \tau_{\upsilon,\sigma}(\partial_{ij}\otimes t_{kl})= \tau_{\upsilon,\sigma}(u\cdot (\partial_{ij}\otimes t_{kl}).$
This shows that  $\tau_{\upsilon,\sigma}$ is a left module map.  The proof for left replaced by right is the same using right actions instead of left ones.
\end{proof}

The four twisted tensor products $\mathcal{A}_{\upsilon,\sigma}, \upsilon, \sigma\in \{0,1\}$ can be viewed as graded quantum analogs of the Weyl algebra.  Indeed, these algebras have an obvious grading 
using the fact that all the relations 
are homogeneous with respect to the degree function defined by $\deg(t_{ij})=\deg(\partial_{ij}) = 1$ for all $i,j$.  Moreover, it is straightforward to see that their relations specialize to those of the graded Weyl algebra at $q=1$ (i.e. the constant terms are dropped).  In Section \ref{section:PBWdef}, we show how to transform two of these graded algebras into non-graded ones which, in turn, can be viewed as quantum analogs of the Weyl algebra.  For now, we note that these four algebras fall into two classes via $\mathbb{C}$-algebra isomorphisms.

Write $\bar{a}$ for the image of $a\in \mathbb{C}(q)$ under the $\mathbb{C}$-automorphism of $\mathbb{C}(q)$ sending $q$ to $q^{-1}$. 
  
  \begin{proposition} \label{prop:iso} The map sending each scalar $a$ to $\bar{a}$, each $t_{ij}$ to $t_{N-i,N-j}$ and each $\partial_{kl}$ to $\partial_{N-k,N-l}$ defines a  $\mathbb{C}$-algebra isomorphism from $\mathcal{A}_{00}$ to $\mathcal{A}_{11}$ and defines a $\mathbb{C}$-algebra isomorphism from $\mathcal{A}_{01}$ to $\mathcal{A}_{10}$.  
  \end{proposition}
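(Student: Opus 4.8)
The plan is to verify that the stated substitution is compatible with all the defining relations of the twisted tensor products $\mathcal{A}_{\upsilon,\sigma}$. Since each $\mathcal{A}_{\upsilon,\sigma}$ is presented as a quotient of the tensor algebra on the $t_{ij}$ and $\partial_{ij}$ by three families of relations---the FRT relations for $\mathcal{O}_q(\Mat_N)$, the FRT relations for $\mathcal{O}_q(\Mat_N)^{op}$, and the twisting relations \eqref{twistdefn}---it suffices to check that the map carries each family of relations for the source algebra into the corresponding ideal of relations for the target algebra. Because the map is visibly invertible on generators (applying $q\mapsto q^{-1}$ and $i\mapsto N-i$ twice gives the identity) and reverses scalars in a ring isomorphism of $\mathbb{C}(q)$ over $\mathbb{C}$, establishing one direction suffices.

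\textbf{Key steps.} First I would record the effect of the index reversal on the $R$-matrix. Writing $R=\sum r^{ij}_{kl}e_{ik}\otimes e_{jl}$ as in \eqref{Rmatrix2}, a direct inspection of the three cases for $r^{ij}_{kl}$ shows that $\overline{r^{N-i,N-j}_{N-k,N-l}}$ equals the corresponding entry of $(R_{21})^{-1}$: the diagonal entries $q$ become $q^{-1}$, the entries $1$ stay $1$, and the lower-triangular entries $q-q^{-1}$ become $-(q-q^{-1})=q^{-1}-q$ while simultaneously being moved to the upper-triangular position by the index reversal---which is exactly the recipe for passing from $R$ to $R_{21}^{-1}$. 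Equivalently, if $w$ denotes the permutation matrix $w=\sum_k e_{k,N+1-k}$, then conjugating $R$ by $w\otimes w$ and applying $q\mapsto q^{-1}$ yields $R_{21}^{-1}$. Consequently the index-reversal-plus-bar operation interchanges the roles of $R_0=R^{t_2}$ and $R_1=(R_{21}^{-1})^{t_2}$, and also interchanges $R_0^{t_1t_2}=R^{t_1}$ with $R_1^{t_1t_2}$. Second, I would feed this into the FRT relations: the $\mathcal{O}_q(\Mat_N)$ relations $RT_1T_2=T_2T_1R$ are preserved because the same equation using $\mathcal{R}_{21}^{-1}$ in place of $\mathcal{R}$ defines the same algebra $\mathcal{O}_q(\Mat_N)$ (as observed in Section~\ref{section:as}), and applying the substitution to \eqref{FRTmatrix} produces exactly $R_{21}^{-1}T_1T_2=T_2T_1R_{21}^{-1}$; the same argument handles $\mathcal{O}_q(\Mat_N)^{op}$. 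Third, I would apply the substitution to the twisting relation in matrix form \eqref{twistdefn-matrix}, namely $P_2T_1=R_\sigma^{t_1}T_1P_2R_\upsilon^{t_2}$. Under the substitution, $T_1,T_2,P_1,P_2$ are conjugated by the appropriate $w$'s and scalars are barred; by the first step, $R_\sigma^{t_1}$ transforms into $R_{1-\sigma}^{t_1}$ and $R_\upsilon^{t_2}$ into $R_{1-\upsilon}^{t_2}$, which is precisely the twisting relation defining $\mathcal{A}_{1-\upsilon,1-\sigma}$. Taking $(\upsilon,\sigma)=(0,0)$ gives the isomorphism $\mathcal{A}_{00}\cong\mathcal{A}_{11}$ and $(\upsilon,\sigma)=(0,1)$ gives $\mathcal{A}_{01}\cong\mathcal{A}_{10}$. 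Finally, since the source relations map onto the full relation ideal of the target (the reversal being a bijection on monomials and the bar being an automorphism of $\mathbb{C}(q)/\mathbb{C}$), the induced algebra homomorphism is an isomorphism, with inverse given by the same prescription.

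\textbf{Main obstacle.} The one genuinely computational point---and the step I expect to require the most care---is the bookkeeping in the first step: verifying cleanly that $(w\otimes w)R(w\otimes w)$ with $q\mapsto q^{-1}$ equals $R_{21}^{-1}$, and then tracking how the transposes $t_1,t_2$ and the various $R_\sigma$, $R_\upsilon$ transform through conjugation by $w$. Everything downstream is then a formal matching of matrix equations. A secondary subtlety worth a sentence is making explicit that the map is $\mathbb{C}$-linear but not $\mathbb{C}(q)$-linear, so that "isomorphism'' here means isomorphism of $\mathbb{C}$-algebras (equivalently, a semilinear isomorphism over the bar-automorphism), which is already flagged in the statement.
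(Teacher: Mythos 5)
Your proposal is correct and follows essentially the same route as the paper: both verify that the bar-plus-index-reversal map permutes the three families of defining relations (the two FRT ideals and the twisting relations), with the key computational input being that this operation carries $R$ to $R_{21}^{-1}$ (the paper phrases this as $\overline{R^{t_2}}=((R_{21}^{-1})^{t_2})^{t_1t_2}$, you as conjugation by $w\otimes w$ combined with $q\mapsto q^{-1}$, which is the same check). Your observation that the $R_{21}^{-1}$-form of the FRT relations presents the same algebra is exactly the equivalence the paper invokes from Section \ref{section:as}, so no genuinely new ingredient is needed beyond the bookkeeping you describe.
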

  \begin{proof} Note that the map sending $i$ to $N-i$ has the effect of switching order:  $i<j$ becomes $N-j<N-i$.   Note further that the set of relations in Section \ref{section:as} for $\mathcal{O}_q(\Mat_N)$ 
  is equivalent to the same relations in (i), the same first relation of (ii) and the final relation of (ii) replaced by 
     \begin{align*}t_{ij}t_{kl} -t_{kl}t_{ij} = (q-q^{-1})t_{il}t_{kj}{\rm \ for \ }i<k;j<l
  \end{align*}
  since by the first relation of (ii), $t_{il}t_{kj} = t_{kj}t_{il}$ for $i<k;j<l.$
 Hence,  applying the map sending $a$ to $\bar{a}$ and each $t_{ij}$ to $t_{N-i,N-j}$ to the relations for $\mathcal{O}_q(\Mat_N)$ as written out in (i) and (ii) in Section \ref{section:as}, yields an equivalent set of  relations.   Thus, this map extends to  a $\mathbb{C}$-algebra isomorphism on $\mathcal{O}_q(\Mat_N)$.  A similar argument shows that the map sending each scalar $a$ to $\bar{a}$ and each $\partial_{ij}$ to $\partial_{N-i,N-j}$
 extends to  a $\mathbb{C}$-algebra isomorphism on  $\mathcal{O}_q(\Mat_N)^{op}$.  
 
 It remains to show that the combination of these two  maps, which is just the map described in the proposition, sends the relations defined by the twisting map of $\tau_{00}$ to that of $\tau_{11}$ and the relations defined by the twisting map of $\tau_{01}$ to that of $\tau_{10}$. This can be seen easily from the relations (\ref{twistdefn}) derived from these twisting maps and the fact that applying the map $r\rightarrow \bar{r}$ to the entries of the  matrix ${R^{t_2}}$ yields  $((R_{21}^{-1})^{t_2})^{t_1t_2}$. \end{proof}

For $\mathcal{A}_{00}$, the relations coming from the twisting map can be expanded out as follows.  For all $a,b,c,d$, we have 
\begin{itemize}
\item[(i)] $\partial_{cb}t_{da} =t_{da} \partial_{cb}$ if $b\neq a $ and $c\neq d$.  
\item[(ii)] $\partial_{cb}t_{ca} =qt_{ca} \partial_{cb}+ \sum_{c'>c} (q-q^{-1})t_{c'a} \partial_{c'b}$ if $b\neq a $ and $c=d$.
\item[(iii)] $\partial_{ca}t_{da} = qt_{da} \partial_{ca}+\sum_{a'>a}(q-q^{-1})t_{da'} \partial_{ca'}$ if $b=a$ and $c\neq d$.
\item[(iv)]$\partial_{ca}t_{ca} = q^2t_{ca} \partial_{ca} +q \sum_{c'>c}(q-q^{-1})t_{c'a} \partial_{c'a}+q\sum_{a'>a}(q-q^{-1}) t_{ca'} \partial_{ca'}$  

  $+ \sum_{a'>a}\ \sum_{c'>c} (q-q^{-1})^2t_{c'a'} \partial_{c'a'}$ if $b=a$ and $c= d$.
\end{itemize}
Using Proposition \ref{prop:iso}, it is easy to translate these relations into ones for $\mathcal{A}_{11}$.  For the other two cases, only one of the two inequalities $a'>a$ and $c'>c$ is changed and the powers of $q$ showing up before the various summands are modified appropriately.

\subsection{Comparison with other Constructions}\label{section:comp} It is natural to ask whether the twisted tensor products of this section  correspond to a standard construction such as a quantum double or, more generally, a  double cross product (see for example \cite{KS}, Chapters 8 and 10). 
For starters, as pointed out in its proof, 
Lemma \ref{twisted-tensors}  is very similar to Proposition 8 in Section 8.2.1 of \cite{KS} used to define quantum doubles.  
Moreover, the twisted tensor product of Lemma \ref{twisted-tensors} resembles the one for double cross product bialgebras (see Proposition 26 of Section 10.2.5 in \cite{KS}). 

Despite these similarities, the algebras $\mathcal{A}_{\upsilon,\sigma}$ are not double cross product bialgebras.  This can be verified in a straightforward manner using the fact that the double crossed product  admits a tensor product coalgebra structure (see \cite{KS} Proposition 26).  In contrast, the map sending $\Delta':yx \rightarrow \sum y_{(1)}x_{(1)}\otimes y_{(2)}x_{(2)}$ is not an algebra isomorphism of $\mathcal{A}_{\upsilon,\sigma}$ to 
$\mathcal{A}_{\upsilon,\sigma}\otimes \mathcal{A}_{\upsilon,\sigma}$ where $y\in \mathcal{O}_q(\Mat_N)^{op}$ and $x\in \mathcal{O}_q(\Mat_N)$ for any choice of $\upsilon,\sigma$ in $\{0,1\}$. 
For instance, consider the case where $\sigma=\upsilon=0$,  $y=\partial_{cb}$, $x=t_{ca}$, and $a\neq b$.  Assume that $\Delta'$ is an algebra homomorphism. We have 
\begin{align*}
\Delta'(\partial_{cb}t_{ca}) &= \sum_{j,k} \partial_{cj} t_{ck} \otimes \partial_{jb}t_{ka}\cr &\in\sum_{j,k} q^{1+2\delta_{jk}}t_{ck}\partial_{cj}\otimes t_{ka}\partial_{jb}+
\sum_{(c',k',j', k'', j'')\in C}\mathbb{C}(q) t_{c'k'}\partial_{c'j'}\otimes t_{k'a}\partial_{j'b}
\end{align*}
where $C$ is the set of $5$-tuples $(c',k',j',k'',j'')$ satisfying $c'\geq c$, $k'\geq k$, $j'\geq j$, $k''\geq k, j''\geq j$ with at least one of these inequalities strict (plus other conditions such as $k'=k$ and $j'=j$ unless $k=j$, etc.).  
On the other hand,
\begin{align*}
\Delta'(qt_{ca})\Delta'(\partial_{cb}) &+ \sum_{c'>c}(q-q^{-1})\Delta'(t_{c'a})\Delta'(\partial_{c'b})
\cr&\in \sum_{j,k} qt_{ck}\partial_{cj}\otimes t_{ka}\partial_{jb}+
\sum_{c'>c}\mathbb{C}(q) t_{c'k}\partial_{c'j}\otimes t_{ka}\partial_{jb}
\end{align*}
By relation (ii) of Section \ref{section:four} and the assumption that $\Delta'$ is an algebra homomorphism, these two values should be equal. But, since
\begin{align*}
\sum_{j,k} q^{1+2\delta_{jk}}t_{ck}\partial_{cj}\otimes t_{ka}\partial_{jb}\neq  \sum_{j,k} qt_{ck}\partial_{cj}\otimes t_{ka}\partial_{jb}
\end{align*}
these two values are not equal.  Hence $\Delta'$ is not an algebra homomorphism and does not define a comultipication for $\mathcal{A}_{00}$.  A similar argument yields the same negative result for  $\mathcal{A}_{11}$; a somewhat more complicated argument estasblishes this result for the other two possibilities $\mathcal{A}_{10}$ and $\mathcal{A}_{01}$.

\section{Graded Weyl Algebras for Homogeneous Spaces}\label{section:graded-wafhs}
\subsection{Inverting a Matrix Related to $R$}
 The next two computational lemmas show how to invert $R^{t_2}$ and related matrices.  These results will allow us to use the reflection equations in the construction of certain twisting maps.
  \begin{lemma}\label{lemma:zero} Suppose that $\alpha,\upsilon\in \{0,1\}$ and $\alpha+\upsilon=1$.  For all $g, u\in \{1, \dots, n\}$ with $g\neq u$, we have 
\begin{align}\label{zeroidentity} \sum_{k=1}^nq^{-2k}(S_{\upsilon}^{t_2})^{gg}_{kk}(S_{\alpha}^{t_1})_{uu}^{kk}= 0
\end{align}
where $S_{\gamma} = R_{\mathfrak{g}}$ if $\gamma=0$ and $S_{\gamma} = (R_{\mathfrak{g}})^{-1}_{21}$ if $\gamma=1$.
\end{lemma}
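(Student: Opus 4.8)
The plan is to establish the identity \eqref{zeroidentity} by direct computation, exploiting the very sparse structure of the $R$-matrices $R_0 = R^{t_2}$ and $R_1 = (R_{21}^{-1})^{t_2}$ after the relevant transpositions are applied. First I would write down explicitly the four matrices involved: $S_{\upsilon}^{t_2}$ and $S_{\alpha}^{t_1}$ for the two cases $(\upsilon,\alpha) = (0,1)$ and $(\upsilon,\alpha)=(1,0)$. Recall from \eqref{Rmatrix2} that $R_{\mathfrak{g}} = \sum_i q e_{ii}\otimes e_{ii} + \sum_{i<j}(e_{ii}\otimes e_{jj} + e_{jj}\otimes e_{ii}) + (q-q^{-1})\sum_{j<i} e_{ij}\otimes e_{ji}$; for the diagonal type the argument reduces to a single block since the indices $k$ in \eqref{zeroidentity} run over a range where $R_{\mathfrak{g}}$ agrees with $R$. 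Since the left-hand side of \eqref{zeroidentity} only involves the "diagonal" entries $(S^{t_2})^{gg}_{kk}$ and $(S^{t_1})^{kk}_{uu}$ — i.e., entries where the matrix-unit pair on both tensor legs is of the form $e_{aa}\otimes e_{bb}$ — the key observation is that for such entries the $(q-q^{-1})$ off-diagonal term of $R_{\mathfrak{g}}$ does not contribute, so only the first two sums matter.

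Next I would compute these diagonal entries. For $R_{\mathfrak{g}}$ itself, the entry of $R_{\mathfrak{g}}$ in the $e_{gk}\otimes e_{k u}$-to-$e_{gg}\otimes e_{uu}$ ... more carefully: writing $R_{\mathfrak{g}} = \sum r^{ij}_{kl} e_{ik}\otimes e_{jl}$, a transpose $t_2$ swaps the lower and upper index on the second leg and $t_1$ on the first leg. So $(R_{\mathfrak{g}}^{t_2})^{gg}_{kk} = (R_{\mathfrak{g}})^{gk}_{kg}$ and $(R_{\mathfrak{g}}^{t_1})^{kk}_{uu} = (R_{\mathfrak{g}})^{uk}_{ku}$, and similarly for $(R_{21}^{-1})$ in place of $R_{\mathfrak{g}}$. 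I would then read off from \eqref{Rmatrix2}: $(R_{\mathfrak{g}})^{gk}_{kg}$ is nonzero only when the pair $(g,k)$ matches one of the allowed index patterns — namely $q^{\delta_{gk}}$ when the "transposed" entry lands on a diagonal, and $(q-q^{-1})$ when $g<k$ (coming from the $e_{kg}\otimes e_{gk}$ term after applying $t_2$). So the sum over $k$ splits into the term $k=g$ (giving $q$), the terms $k>g$ or $k<g$ (giving $(q-q^{-1})$ or $0$ depending on orientation), appropriately paired with the corresponding factor involving $u$. The inverse $R_{21}^{-1}$ has an analogous but sign-and-orientation-flipped structure, which I would compute from the known formula $R^{-1} = \sum q^{-\delta_{ij}} e_{ii}\otimes e_{jj} - (q-q^{-1})\sum_{j<i} e_{ij}\otimes e_{ji}$ (up to the relevant reindexing).

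Carrying this out, the sum $\sum_k q^{-2k} (S_{\upsilon}^{t_2})^{gg}_{kk}(S_{\alpha}^{t_1})^{kk}_{uu}$ becomes a finite sum whose only surviving terms occur at $k=g$, $k=u$, and $k$ strictly between or beyond $g$ and $u$ in the appropriate direction, and I would check that the weights $q^{-2k}$ together with the factors $q$, $1$, $\pm(q-q^{-1})$ combine to cancel. I expect the cancellation to be a telescoping-type identity: for instance, a contribution $q^{-2g}\cdot q \cdot (q-q^{-1})$ from $k=g$ against a contribution $q^{-2u}\cdot(q-q^{-1})\cdot q$ from $k=u$, with the factors $q^{-2k}$ bridging the gap so that the geometric-series-like sum collapses. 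The main obstacle — and the only real content — is bookkeeping: correctly tracking which of the two cases $g<u$ versus $g>u$ one is in, how the $t_1$ versus $t_2$ transposes act on the second-leg versus first-leg indices, and how passing from $R_{\mathfrak{g}}$ to $(R_{21}^{-1})$ flips the relevant inequalities and introduces the minus sign; once the entries are correctly tabulated the vanishing is immediate. I would organize the verification as a short case analysis ($g<u$ with $(\upsilon,\alpha)=(0,1)$; $g<u$ with $(\upsilon,\alpha)=(1,0)$; and then $g>u$, which by a symmetry of the expression reduces to the previous cases or is handled identically), checking in each case that the two or three nonzero summands cancel in pairs.
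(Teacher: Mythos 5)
Your plan is the same as the paper's proof: reduce to the block $R$, tabulate the entries $(S_\upsilon^{t_2})^{gg}_{kk}$ and $(S_\alpha^{t_1})^{kk}_{uu}$ explicitly, and check that the weighted sum cancels by a case analysis in the relative order of $g$ and $u$. However, as written the proposal stumbles exactly on the bookkeeping that you yourself identify as the only real content, and it never carries that bookkeeping out. First, the ``key observation'' that the $(q-q^{-1})$ part of $R_{\mathfrak g}$ does not contribute rests on a misreading of the index convention: with $R=\sum r^{ij}_{kl}e_{ik}\otimes e_{jl}$, the entry $(R^{t_2})^{gg}_{kk}$ is the coefficient of $e_{gk}\otimes e_{gk}$ in $R^{t_2}$, i.e.\ $r^{gk}_{kg}$, and for $k\neq g$ this is precisely an entry coming from the off-diagonal term $(q-q^{-1})\sum_{j<i}e_{ij}\otimes e_{ji}$; if only the diagonal part mattered, the sum would be supported at $k=g$ and $k=u$ simultaneously and the lemma would be a triviality, which is a sign the parsing is off. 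You effectively retract this in the next paragraph, but there the orientation is reversed: $r^{gk}_{kg}=q-q^{-1}$ requires $k<g$, not $g<k$, so $(R^{t_2})^{gg}_{kk}$ is supported on $k\le g$ (value $q$ at $k=g$), while $((R_{21}^{-1})^{t_1})^{kk}_{uu}=\bar r^{ku}_{uk}$ is supported on $k\ge u$ (value $q^{-1}$ at $k=u$ and $-(q-q^{-1})$ for $k>u$).

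With the correct table, the two cases for $(\upsilon,\alpha)=(0,1)$ are not both telescoping: when $u>g$ the supports $\{k\le g\}$ and $\{k\ge u\}$ are disjoint, so every summand already vanishes and no cancellation is needed; when $u<g$ the sum equals $-q^{-2u}(q^{-2}-1)-q^{-2g}(q^{2}-1)+\sum_{u<k<g}q^{-2k}(q^{2}-1)(q^{-2}-1)$, whose endpoint terms are $q^{-2u}(q-q^{-1})q^{-1}$ and $-q^{-2g}\,q\,(q-q^{-1})$ --- not the symmetric pairing you sketch --- and this does telescope to zero; the case $(\upsilon,\alpha)=(1,0)$ is the mirror image. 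So the route is right and coincides with the paper's, but with the inequalities as you state them the cancellation would fail, and since the decisive verification is only ``expected,'' the proposal is an outline rather than a proof; correcting the table and writing out the two short cases completes it along exactly the paper's lines.
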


\begin{proof} We prove the lemma for $\alpha=1$ and $\upsilon = 0$; the argument is easily modified for the  case $\alpha=0$ and $\upsilon=1$. 
Since $R_{\mathfrak{g}}$ is block diagonal with diagonal entries $(R,I_{n^2},I_{n^2},R)$, it is straightforward to reduce to the case where $R_{\mathfrak{g}}= R$.

Note that  $(R^{t_2})^{gg}_{kk}=r^{gk}_{kg}$. Using the formulas for $r^{ij}_{kl}$   given in Section \ref{section:images}, we have  $(R^{t_2})^{gg}_{kk} = q$ for $k=g$, $(R^{t_2})^{gg}_{kk} = (q-q^{-1})$ for $k<g$, and $(R^{t_2})^{gg}_{kk} = 0$ for $k>g$.  One checks from these explicit formulas for the entries of $R$, that  $R_{21}^{-1} = \bar{R}^{t_1t_2}$ where $a\rightarrow\bar{a}$ is the $\mathbb{C}$-algebra isomorphism of $\mathbb{C}(q)$ sending $q$ to $q^{-1}$. Hence
$((R_{21}^{-1})^{t_1})_{uu}^{kk}  = \bar{r}^{ku}_{uk}=q^{-1}$ for $k=u$,  $((R_{21}^{-1})^{t_1})_{uu}^{kk}  = -(q-q^{-1})$ for $k>u$, and $((R_{21}^{-1})^{t_1})_{uu}^{kk}  = 0$ for $k<u$.  It follows that (\ref{zeroidentity}) holds 
for $u>g$.   For $u<g$, we have 
\begin{align*}
\sum_{k=1}^n&q^{-2k}(R^{t_2})^{gg}_{kk}((R^{-1}_{21})^{t_1})_{uu}^{kk}=-q^{-2u}(q^{-2}-1) -q^{-2g}(q^{2}-1) + \sum_{u<k<g}q^{-2k}(q^{2}-1)(q^{-2}-1)
\cr&=-q^{-2u}(q^{-2}-1) +q^{-2g+2}(q^{-2}-1) - \sum_{k=u+1}^{g-1}q^{-2k}(q^{-2}-1) +\sum_{k=u+1}^{g-1}q^{-2k+2} (q^{-2}-1)
\end{align*}
which simplifies to
\begin{align*}
 - \sum_{k=u}^{g-1}q^{-2k}(q^{-2}-1) +\sum_{k=u+1}^{g}q^{-2k+2} (q^{-2}-1)=0.
\end{align*}
\end{proof}

 Let $G$ be the $({\rm rank}\ \mathfrak{g})\times ({\rm rank}\ \mathfrak{g}$) diagonal matrix with 
$k^{th}$ entry $G_{kk} = q^{-2k}$ for all $1\leq k\leq {\rm rank}\ \mathfrak{g}$.  Set $G_1 = G\times I$ and $G_2 = I\otimes G$ where here $I$ is the $({\rm rank}\ \mathfrak{g})\times ({\rm rank}\ \mathfrak{g})$ identity matrix.

 \begin{lemma}\label{lemma:allthree} We have  $(R_{\mathfrak{g}}^{t_2})^{-1} = G_1((R_{\mathfrak{g}})_{21}^{-1})^{t_1} )G_1^{-1} =G_2((R_{\mathfrak{g}})_{21}^{-1})^{t_1} )G_2^{-1}$. A similar assertion holds for  $R_{\mathfrak{g}}$ replaced by $(R_{\mathfrak{g}})^{-1}_{21}$. \end{lemma}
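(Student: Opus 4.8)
The plan is to verify directly that $G_1\big((R_{\mathfrak{g}})_{21}^{-1}\big)^{t_1}G_1^{-1}$ is a two-sided inverse of $R_{\mathfrak{g}}^{t_2}$, and likewise with $G_2$. First I would reduce to the case $R_{\mathfrak{g}}=R$: since in the diagonal case $R_{\mathfrak{g}}$ is block diagonal with blocks $(R,I_{n^2},I_{n^2},R)$ and $G$ is diagonal, all of $R_{\mathfrak{g}}^{t_2}$, $\big((R_{\mathfrak{g}})_{21}^{-1}\big)^{t_1}$, $G_1$, $G_2$ respect the same block decomposition, so the identity on each block follows from the corresponding identity for the honest $R$-matrix on $\mathbb{C}^n\otimes\mathbb{C}^n$ (the identity blocks are trivial, and the exponent shift $q^{-2k}\mapsto q^{-2(k-n)}$ on the second $\mathfrak{gl}_n$ block is absorbed since $G_1,G_2$ on that block are just rescalings of a smaller $G$). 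So it suffices to treat Types AI and AII, where $R_{\mathfrak{g}}=R$ is the standard $R$-matrix of (\ref{Rmatrix2}).

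Next I would use the key structural fact already observed in the proof of Lemma \ref{lemma:zero}: $R_{21}^{-1}=\bar R^{t_1t_2}$, where $a\mapsto\bar a$ sends $q\mapsto q^{-1}$. Hence $\big((R)_{21}^{-1}\big)^{t_1}=\bar R^{t_2}$, and the claim $G_1\bar R^{t_2}G_1^{-1}=(R^{t_2})^{-1}$ becomes a concrete $N^2\times N^2$ matrix identity whose entries I can write out: using $(R^{t_2})^{gu}_{kv}=r^{gk}_{uv}$ (with transpose acting on the second tensor slot) and the explicit formula for $r^{ij}_{kl}$, both sides have nonzero $(gu),(vw)$-entry only when the index sets match up in the limited ways allowed by $R$, and one checks the product $R^{t_2}\cdot G_1\bar R^{t_2}G_1^{-1}$ entrywise against $\mathrm{Id}$. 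The off-diagonal relations are forced by the sparsity of $R$; the genuinely nontrivial computation is the ``diagonal'' contribution, i.e. verifying $\sum_k (R^{t_2})^{gg}_{kk}\,q^{-2k}(\bar R^{t_2})^{kk}_{uu}\,q^{2u}=\delta_{gu}$, which is exactly the content of Lemma \ref{lemma:zero} for $g\neq u$ (that lemma gives $0$ when $g\neq u$, in the notation $S_0=R,S_1=R_{21}^{-1}$), together with an easy direct check that the $g=u$ sum telescopes to $1$.

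For the $G_2$ version one argues symmetrically, swapping the roles of the two tensor factors; here the relevant vanishing identity is again Lemma \ref{lemma:zero} but with the $\upsilon,\alpha$ roles (equivalently $t_1\leftrightarrow t_2$) interchanged, which is precisely the ``$\alpha=0,\upsilon=1$'' case stated there. Finally, ``a similar assertion holds for $R_{\mathfrak{g}}$ replaced by $(R_{\mathfrak{g}})^{-1}_{21}$'': since $(R_{\mathfrak{g}})^{-1}_{21}$ is itself a matrix of the same shape (indeed $\bar R^{t_1t_2}$ up to the block structure) and Lemma \ref{lemma:zero} was stated uniformly for both $(\alpha,\upsilon)=(1,0)$ and $(0,1)$, the identical argument with $R_{\mathfrak{g}}$ and $(R_{\mathfrak{g}})^{-1}_{21}$ interchanged goes through verbatim.

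The main obstacle I anticipate is purely bookkeeping: keeping the transpose conventions ($t_1$ vs $t_2$), the subscript-$21$ flip, and the conjugation $q\mapsto q^{-1}$ straight simultaneously, so that the diagonal sum lines up exactly with the normalization in Lemma \ref{lemma:zero} (including the correct placement of the $q^{-2k}$ weights coming from $G$). Once the reduction to the standard $R$ is made and the identity $R_{21}^{-1}=\bar R^{t_1t_2}$ is invoked, there is no conceptual difficulty — it is an entrywise check powered by Lemma \ref{lemma:zero}.
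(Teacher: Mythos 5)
Your proposal is correct and follows essentially the same route as the paper: an entrywise evaluation of $R_{\mathfrak{g}}^{t_2}G_i\bigl((R_{\mathfrak{g}})_{21}^{-1}\bigr)^{t_1}$ using the sparsity pattern of $R^{t_2}$, with the only nontrivial (diagonal-block) sum handled by Lemma \ref{lemma:zero} and a one-term check when $g=u$, plus the block-diagonal reduction in the diagonal case. One small bookkeeping note: the $G_2$ identity actually reuses the same $(\alpha,\upsilon)=(1,0)$ sum as the $G_1$ identity, since the weight $q^{-2k}$ sits on the repeated index $c=e=k$ regardless of whether it comes from $G_1$ or $G_2$; the $(\alpha,\upsilon)=(0,1)$ case of Lemma \ref{lemma:zero} is what the version with $R_{\mathfrak{g}}$ replaced by $(R_{\mathfrak{g}})_{21}^{-1}$ requires, so your attribution is swapped but harmless because both cases are available.
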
 
\begin{proof} 
Using the formulas for the entries of $R$ (Section \ref{section:images}), we see  that $(R_{\mathfrak{g}}^{t_2})^{ab}_{ce}\neq 0$ provided one of the following two conditions hold
\begin{itemize}
\item $c=e$ and $a=b$ 
\item $c\neq e$ and $(a,b)=(c,e)$. 
\end{itemize} Moreover, in the latter case,  $(R_{\mathfrak{g}}^{t_2})^{ab}_{ce} = 1$.  
Hence, using Lemma \ref{lemma:zero}, we have \begin{align*}
[R_{\mathfrak{g}}^{t_2}&G_1((R_{\mathfrak{g}})_{21}^{-1})^{t_1}]^{ab}_{rs} = \sum_{ce}[R_{\mathfrak{g}}^{t_2}]^{ab}_{ce}[G_1]^{ce}_{ce}[((R_{\mathfrak{g}})_{21}^{-1})^{t_1}]^{ce}_{rs}\cr &=\delta_{ar}\delta_{bs}q^{-2a}[R_{\mathfrak{g}}^{t_2}]^{ab}_{ab}
[((R_{\mathfrak{g}})_{21}^{-1})^{t_1}]^{ab}_{ab} + \delta_{ab}\delta_{rs}\delta_{a\neq r}\sum_{c}[R_{\mathfrak{g}}^{t_2}]^{aa}_{cc}[((R_{\mathfrak{g}})_{21}^{-1})^{t_1}]^{cc}_{rr}
\cr&=\left\{\begin{matrix} q^{-2a}&{\rm \ if \ }a=r, b=s \cr 0&{\rm \ otherwise\ }\end{matrix}\right.
\end{align*}
It follows that $R_{\mathfrak{g}}^{t_2}G_1((R_{\mathfrak{g}})_{21}^{-1})^{t_1} = G_1$ which proves the first equality.  The other equality as well as the versions with $R_{\mathfrak{g}}$ replaced by $(R_{\mathfrak{g}})_{21}^{-1}$ follow using a similar argument.\end{proof}

 \subsection{Extended Graded Weyl Algebras}\label{section:diagonal}
We construct here graded quantum Weyl Algebras for each of the three families of symmetric pairs where the polynomial part is $\mathscr{P}_{\theta}$.  The starting point is the formation of algebras which can be viewed as extended versions of the graded Weyl algebras of Section \ref{section:four}.  In particular, they are closely related to the algebras $\mathcal{A}_{\upsilon,\sigma}$. 

Let $\mathcal{O}$ be the algebra generated by two copies of $\mathscr{P}$, the first generated by $t_{ij}$ and the second  by $t'_{ij}$, with \emph{no relations} between the $t_{ij}$ and the $t'_{ij}$.  In other words, $\mathcal{O}$ is  a quotient of the tensor algebra 
$T(M)\otimes T(M')= T(M\oplus M')$ by the ideal generated by the ideals  corresponding to  the relations of the first and second copy of 
$\mathscr{P}$.  Similarly, let $\mathcal{O}^{op}$ be the algebra generated by two copies of  $\mathscr{D}=\mathscr{P}^{op}$, the first generated by $\partial_{ij},$ and the second generated by $\partial'_{ij}$ with no relations between the $\partial_{ij}$ and the $\partial'_{ij}$.  

Given $\alpha,\beta,\upsilon, \sigma\in \{0,1\}$, define the map $\tau_{\alpha,\beta,\upsilon,\sigma}$ from $ \mathcal{O}^{op}  \otimes{\mathscr{P}}$  to  ${\mathscr{P}}\otimes \mathcal{O} ^{op}$ by 
\begin{align*}\tau_{\alpha,\beta,\upsilon,\sigma}(\partial_{ea}\otimes t_{fb}) = \tau_{\alpha,\beta}(\partial_{ea}\otimes t_{fb})
{\rm \quad and \quad }\tau_{\alpha,\beta,\upsilon,\sigma}(\partial'_{ea}\otimes t_{fb}) = \tau_{\upsilon,\sigma}(\partial'_{ea}\otimes t_{fb})
\end{align*}
for all $e,a,f,b $ in $\{1,\dots, n\}$. By Section \ref{section:four}, the maps  $\tau_{\alpha,\beta}$ and $\tau_{\upsilon,\sigma}$ define twisting maps on  $ \mathcal{O}^{op}  \otimes{\mathscr{P}}$.  Hence, $\tau_{\alpha,\beta,\upsilon,\sigma}$ is a twisting map on   $ \mathcal{O}^{op}  \otimes{\mathscr{P}}$.  Therefore, we can form the twisted tensor product 
${\mathcal{A}}_{\alpha,\beta,\upsilon,\sigma}= {\mathscr{P}}  \otimes_{\tau_{\alpha,\beta,\upsilon,\sigma} }\mathcal{O}^{op}.$
The twisting relations can be put into matrix format in analogy to the definitions of $\mathcal{A}_{\upsilon,\sigma}$ in Section \ref{section:four_twisted}.  In particular, we have  \begin{align}\label{anotherreln5}
P_2T_1 = S_{\alpha}^{t_1}T_1P_2S_{\beta}^{t_2} {\rm \ and \ }{P}'_2{T}_1 = S_{\upsilon}^{t_1}{T}_1{P}'_2S_{\sigma}^{t_2}
\end{align}
where $T$ is the matrix with entries $t_{ij}$, $P$ is the matrix with entries $\partial_{ij}$, $P'$ is the matrix with entries $\partial'_{ij}$, $S_{\gamma} = R_{\mathfrak{g}}$ if $\gamma=0$ and $ S_{\gamma} = (R_{\mathfrak{g}})_{21}^{-1}$
if $\gamma=1$.

Similarly, the map $\tau'_{\alpha,\beta,\upsilon,\sigma}$ is a twisting map where
\begin{align*}\tau'_{\alpha,\beta,\upsilon,\sigma}(\partial_{ea}\otimes t_{fb}) = \tau_{\alpha,\beta}(\partial_{ea}\otimes t_{fb})
{\rm \quad and \quad }\tau'_{\alpha,\beta,\upsilon,\sigma}(\partial_{ea}\otimes t'_{fb}) = \tau_{\upsilon,\sigma}(\partial_{ea}\otimes t'_{fb}).
\end{align*}
Hence we can form the twisted tensor product 
$
{\mathcal{A}}'_{\alpha,\beta,\upsilon,\sigma}:=\mathcal{O}\otimes_{\tau'_{\alpha,\beta,\upsilon,\sigma}}{\mathscr{D}}.
$ These relations can also be put into matrix format  as
\begin{align}\label{anotherreln6}
P_2T_1 = S_{\alpha}^{t_1}T_1P_2S_{\beta}^{t_2} {\rm \ and \ }{P}_2{T}'_1 = S_{\upsilon}^{t_1}{T}'_1{P}_2S_{\sigma}^{t_2}
\end{align}
using the same notation as above where $P'$ is the matrix with  entries $\partial'_{ij}$.

We are going to use the   algebras to ${\mathcal{A}}_{\alpha,\beta,\upsilon,\sigma}$ and ${\mathcal{A}}'_{\alpha,\beta,\upsilon,\sigma}$ in order to identify a 
twisting map on $\mathscr{D}_{\theta}\otimes \mathscr{P}_{\theta}$.  To do this, we take a step back and consider the ${\rm rank}(\mathfrak{g})\times {\rm rank}(\mathfrak{g})$ matrices $\tilde{D}$ and $\tilde X$ with 
$ij$ entries $\tilde{d}_{ij}$ and $\tilde{x}_{ij}$ respectively where the $\tilde{d}_{ij}$ and $\tilde{x}_{ij}$ can be viewed as independent non-commuting variables. Alternatively, in the discussion below, we will be taking $\tilde{d}_{ij}$ to be either $d'_{ij}$ or $d_{ij}$ and a similar statement holds for $\tilde{x}_{ij}$.  The following lemma allows us to express what will be come the desired twisting map both on the element level and in matrix form. 

\begin{lemma} \label{lemma:matrixform} The set of equalities 
\begin{align}\label{firsteqn}
\tilde{d}_{ab}\tilde{x}_{ef} = \sum_{r,w,p,q,x,y,m,l}(S_{\alpha}^{t_2})_{xq}^{wr}(S_{\alpha}^{t_2})_{ma}^{pq}(S_{\upsilon}^{t_2})^{xy}_{fl}(S_{\upsilon}^{t_2})^{ml}_{eb}\tilde{x}_{pw}\tilde{d}_{ry}
\end{align}
for all $a,b,e,f$ in $\{1, \dots, n\}$ is equivalent to the matrix equality 
\begin{align}\label{thirdeqn}
\tilde{D}_2((S_{\upsilon}^{t_1})^{-1})^{t_2}\tilde{X}_1=S_{\alpha}^{t_1}\tilde{X}_1S_{\alpha}\tilde{D}_2S_{\upsilon}^{t_2}.
\end{align}
where $S_{\gamma} = R_{\mathfrak{g}}$ if $\gamma=0$ and $S_{\gamma} = (R_{\mathfrak{g}})_{21}^{-1}$ if $\gamma = 1$.
\end{lemma}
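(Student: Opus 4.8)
The plan is to verify the equivalence of \eqref{firsteqn} and \eqref{thirdeqn} by a direct translation between index notation and matrix notation, treating all eight free indices carefully. First I would record the conventions: for a matrix $C\in\Mat_N\otimes\Mat_N$ written as $C=\sum C^{ij}_{kl}e_{ik}\otimes e_{jl}$, the operators $C_{12}$, $C_{21}$ on a longer tensor product act on the indicated slots, and transposition in a component swaps the upper/lower labels of that component. The crucial bookkeeping device is that $\tilde X_1=\tilde X\otimes I$ has entries $(\tilde X_1)^{ab}_{cd}=\tilde x_{ac}\delta_{bd}$, while $\tilde D_2=I\otimes\tilde D$ has entries $(\tilde D_2)^{ab}_{cd}=\delta_{ac}\tilde d_{bd}$. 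Since this is exactly the kind of computation carried out just before \eqref{full-relation1} for the reflection equation \eqref{quad-reln}, I would follow that model closely.

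The main computation is to expand the right-hand side of \eqref{thirdeqn}. I would compute the $(ef),(ab)$ matrix entry (upper indices $e,f$, lower indices $a,b$, in the two-slot tensor product) of $S_{\alpha}^{t_1}\tilde X_1 S_{\alpha}\tilde D_2 S_{\upsilon}^{t_2}$ by inserting resolution-of-identity sums between consecutive factors, then using the entry formulas for $\tilde X_1$ and $\tilde D_2$ to collapse the Kronecker deltas, leaving sums over the "genuine" internal indices $p,w,r,y$ and the contracted indices $q,x,m,l$. The $S^{t_1}$ on the leftmost factor accounts for the $(S_\alpha^{t_2})$'s appearing transposed in \eqref{firsteqn} (note $S^{t_1}$ of the flipped matrix equals $S^{t_2}$ up to reordering, so one must be careful whether the indices land as $(S_\alpha^{t_2})^{wr}_{xq}$ versus $(S_\alpha^{t_1})$ — this sign/placement matching is the place where an error is most likely to creep in). Similarly, the left-hand side of \eqref{thirdeqn} is $\tilde D_2\,((S_\upsilon^{t_1})^{-1})^{t_2}\,\tilde X_1$; here I would use Lemma \ref{lemma:allthree}, which identifies $(R_\mathfrak{g}^{t_2})^{-1}$ with conjugates of $((R_\mathfrak{g})^{-1}_{21})^{t_1}$ by the diagonal matrices $G_1,G_2$, to rewrite $((S_\upsilon^{t_1})^{-1})^{t_2}$ in a form whose entries are simple enough that, after contracting against $\tilde D_2$ and $\tilde X_1$, the whole left side reduces to $\tilde d_{ab}\tilde x_{ef}$ with no surviving $R$-matrix coefficients. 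That reduction is exactly what makes \eqref{thirdeqn} a genuine "$d$ past $x$" straightening relation rather than a tangled identity.

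Concretely, the key steps in order are: (1) fix index conventions and the entry formulas for $\tilde X_1,\tilde D_2$ and for $S_0^{t_2},S_1^{t_2}$ from Section \ref{section:images}; (2) expand the RHS of \eqref{thirdeqn} entry-wise, collapsing deltas, and match the resulting expression term-by-term with the RHS of \eqref{firsteqn}, checking the placement of every upper/lower index and each transpose; (3) expand the LHS of \eqref{thirdeqn}, invoke Lemma \ref{lemma:allthree} to simplify $((S_\upsilon^{t_1})^{-1})^{t_2}$, and check that after the two contractions it collapses to $\tilde d_{ab}\tilde x_{ef}$, so that \eqref{thirdeqn} as a matrix identity says precisely \eqref{firsteqn} for every choice of $a,b,e,f$; (4) observe that since $R_\mathfrak{g}$, hence all the $S_\gamma$ and their transposes appearing here, are invertible, no information is lost in passing between the two forms, giving the "if and only if". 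I expect the main obstacle to be purely notational: keeping the eight free indices and the four contracted indices consistently placed through the transposes $t_1,t_2$ and the flip in $S_1=(R_\mathfrak{g})_{21}^{-1}$, so that the coefficients come out exactly as the four factors $(S_\alpha^{t_2})_{xq}^{wr}(S_\alpha^{t_2})_{ma}^{pq}(S_\upsilon^{t_2})^{xy}_{fl}(S_\upsilon^{t_2})^{ml}_{eb}$ in \eqref{firsteqn} rather than some permutation of them; the algebra itself is routine once Lemma \ref{lemma:allthree} is in hand.
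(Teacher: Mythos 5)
Your step (3) contains a genuine error: the left-hand side of \eqref{thirdeqn} does \emph{not} collapse to the single monomial $\tilde d_{ab}\tilde x_{ef}$, and Lemma \ref{lemma:allthree} cannot make it do so. Computing entries directly, $(\tilde D_2)^{ma}_{mb}=\tilde d_{ab}$ and $(\tilde X_1)^{el}_{fl}=\tilde x_{ef}$, so the $(m,a;f,l)$ entry of $\tilde D_2((S_{\upsilon}^{t_1})^{-1})^{t_2}\tilde X_1$ is $\sum_{e,b}((S_{\upsilon}^{t_1})^{-1})^{ml}_{eb}\,\tilde d_{ab}\tilde x_{ef}$, i.e.\ a genuine linear combination of the products $\tilde d_{ab}\tilde x_{ef}$ with surviving $R$-matrix coefficients; it would only reduce to one term if $(S_{\upsilon}^{t_1})^{-1}$ were the identity. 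Relatedly, your plan in step (2) to match the $(e,f;a,b)$ entry of the right side of \eqref{thirdeqn} against \eqref{firsteqn} term-by-term cannot work as stated, because the free indices of the matrix identity are not $(e,f;a,b)$: in passing between \eqref{firsteqn} and \eqref{thirdeqn} the indices $e$ and $b$ get contracted away and new free indices $m,l$ appear. Lemma \ref{lemma:allthree} plays no role in this lemma at all (it is needed later, in Proposition \ref{prop:twotwistingmaps}); invoking it here to produce a "collapse" is a misreading of what it says, since it only relates $(R_{\mathfrak{g}}^{t_2})^{-1}$ to a $G$-conjugate of $((R_{\mathfrak{g}})_{21}^{-1})^{t_1}$ and never turns any of these matrices into a Kronecker delta.

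The correct route — which the paper follows — is the reverse of what you set up: reorder the right-hand side of \eqref{firsteqn} so that it is recognized as $\sum_{m,l}(S_{\upsilon}^{t_1})^{eb}_{ml}\,[S_{\alpha}^{t_1}\tilde X_1 S_{\alpha}\tilde D_2 S_{\upsilon}^{t_2}]^{ma}_{fl}$, then contract both sides of \eqref{firsteqn} with the inverse matrix $(S_{\upsilon}^{t_1})^{-1}$; the resulting left-hand side $\sum_{e,b}((S_{\upsilon}^{t_1})^{-1})^{ml}_{eb}\tilde d_{ab}\tilde x_{ef}$ is exactly the entry of $\tilde D_2((S_{\upsilon}^{t_1})^{-1})^{t_2}\tilde X_1$, giving \eqref{thirdeqn}. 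Your step (4) — invertibility of the $S_{\gamma}$ guaranteeing no loss of information — is the right ingredient for the "if and only if," but it must be applied to this contraction with $S_{\upsilon}^{t_1}$, not to a supposed simplification of the left side of \eqref{thirdeqn}. As written, your argument would either fail at step (3) or, if forced through, would prove a different (false) identity.
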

\begin{proof}
Reordering the right hand side  of (\ref{firsteqn}) yields
\begin{align*}
\tilde{d}_{ab}\tilde{x}_{ef} &=\sum_{r,w,p,q,x,y,m,l} (S_{\upsilon}^{t_1})_{ml}^{eb}(S_{\alpha}^{t_1})^{ma}_{pq}(\tilde{X}_1)^{pq}_{wq}(S_{\alpha})^{wq}_{xr}(\tilde{D}_2)^{xr}_{xy}(S_{\upsilon}^{t_2})^{xy}_{fl}\cr&=\sum_{m,l} (S_{\upsilon}^{t_1})_{ml}^{eb}[S_{\alpha}^{t_1}\tilde{X}_1S_{\alpha}\tilde{D}_2S_{\upsilon}^{t_2})]^{ma}_{fl}\end{align*}
Applying $ (S_{\upsilon}^{t_1})^{-1}$ to both sides gives us
\begin{align}\label{secondeqn}
\sum_{e,b}((S_{\upsilon}^{t_1})^{-1})_{eb}^{ml}\tilde{d}_{ab}\tilde{x}_{ef}=
[S_{\alpha}^{t_1}X_1S_{\alpha}D'_2S_{\upsilon}^{t_2})]^{ma}_{fl}
\end{align}
We can rewrite the left hand side as 
\begin{align*}
\sum_{eb}((S_{\upsilon}^{t_1})^{-1})_{eb}^{ml}\tilde{d}_{ab}\tilde{x}_{ef}=\sum_{eb}(\tilde{D}_2)_{mb}^{ma}((S_{\upsilon}^{t_1})^{-1})_{eb}^{ml}(\tilde{X}_1)^{el}_{fl} = [\tilde{D}_2((S_{\upsilon}^{t_1})^{-1})^{t_2}\tilde{X}_1]^{ma}_{fl}
\end{align*}
Plugging the last term on the right of the above equality into the left hand side of (\ref{secondeqn}) yields the desired matrix form (\ref{thirdeqn}).\end{proof}

Both ${O}$ and ${O}^{op}$ inherit the structure of a $U_q(\mathfrak{g})$-bimodule algebra from  $\mathscr{P}$ and $\mathscr{D}$. Here, we assume the action of $U_q(\mathfrak{g})$ on $\mathscr{P}'$ and $\mathscr{D}'$  is exactly the same as for the first copies with $t_{ij}$ replaced by $t'_{ij}$ everywhere and similarly, each $\partial_{ij}$ replaced by $\partial'_{ij}$.   Since the twisting maps $\tau_{\alpha,\beta}$, $\alpha,\beta\in \{0,1\}$ induce relations 
that are bi-invariant with respect to these actions, the same is true for the twisting maps $\tau_{\alpha,\beta,\upsilon,\sigma}$ and $\tau'_{\alpha,\beta,\upsilon,\sigma}$, 
$\alpha,\beta,\upsilon,\sigma\in \{0,1\}$.  Hence, the algebras ${\mathcal{A}}_{\alpha,\beta,\upsilon,\sigma}$ and  ${\mathcal{A}}'_{\alpha,\beta,\upsilon,\sigma}$ inherit a $U_q(\mathfrak{g})$-bimodule structure from the subalgebras used to construct them.

Note that the algebra $\mathscr{P}_{\theta}$ embeds inside ${\mathcal{A}}_{\alpha,\beta,\upsilon,\sigma}$  via the inclusion of $\mathscr{P}_{\theta}$ inside $\mathscr{P}$ and $\mathscr{D}_{\theta}$ embeds inside of 
${\mathcal{A}'}_{\alpha,\beta,\upsilon,\sigma}$ via the inclusion of   $\mathscr{D}_{\theta}$ inside $\mathscr{D}$ for each choice of $\alpha,\beta,\upsilon,$ and $\sigma$.  We also consider  subalgebras  of invariants  inside ${\mathcal{O}}$ and ${\mathcal{O}}^{op}$ with respect to the action of $\mathcal{B}_{\theta}$ that are closely related to $\mathscr{P}_{\theta}$ and  $\mathscr{D}_{\theta}$.  In particular, 
define
 $x_{ij}'\in {\mathcal{O}}$ and $d_{ij}'\in {\mathcal{O}}^{op}$ by 
 \begin{align}\label{xprime-defn1} x'_{ij} = \sum_{r,s}t'_{ir}J_{r,s}t_{js}
 {\rm \  and \ }
 d'_{ij} = \sum_{r,s}q^{-2\hat{s}}\partial_{ir}J_{r,s}\partial'_{js}
 \end{align}
 where $\hat{s} = s$ in Types AI and AII; for diagonal type  $\hat{s} =s$ for $s\leq n$, and   $\hat{s} = s - n$  when $s\geq n+1$. 
 Let $\mathscr{P}'_{\theta}$ be the subalgebra of ${\mathcal{O}}$ generated by the $x'_{ij}$ and let $\mathscr{D}'_{\theta}$ be the subalgebra of ${\mathcal{O}}^{op}$  generated by the $d'_{ij}$.  
 
 Let $X'$ be the ${\rm rank}(\mathfrak{g})\times {\rm rank}(\mathfrak{g})$ matrix with $ij$ entry equal to $x'_{ij}$ and let $D'$ be the ${\rm rank}(\mathfrak{g})\times {\rm rank}(\mathfrak{g})$ matrix with $ij$ entry equal to $d'_{ij}$.  It is straightforward to check that $X'=T'JT^t$ while $D' = PGJ(P')^t$  for all three families where $J=J(1) $ is defined as in Section \ref{section:ie} and $G$ is the diagonal matrix defined in Section \ref{section:diagonal}.

It follows from  the arguments in Section \ref{section:ie} that each $x'_{ij}$ and each $d'_{ij}$ is right invariant with respect to the action of $\mathcal{B}_{\theta}$. However, since there are no relations satisfied between the $t_{ij}$ and the $t'_{ij}$, we see that  the algebra $\mathscr{P}'_{\theta}$ generated by the $t'_{ij}$, $1\leq i,j\leq n$, is a free algebra with these generators.  The same assertion holds for  $\mathscr{D}'_{\theta}$ with the $t'_{ij}$ replaced by the $d'_{ij}$.

Recall the reflection equations (\ref{reflection-equation}).  It is straightforward to check that $J$ is a nonzero scalar multiple of $J^{-1}$ for all three families.  Hence, we also have
\begin{align}\label{reflection-equation2}
J_2R_{\mathfrak{g}}J_1 R_{\mathfrak{g}}^{t_1} =  R_{\mathfrak{g}}^{t_1} J_1 R_{\mathfrak{g}}J_2
\end{align}
Given a matrix $M$ with entries in $\mathbb{C}(q)$, write $\bar M$ for the image of $M$ under the $\mathbb{C}$-algebra map sending $q$ to $q^{-1}$.  It is also straightforward to check that $\bar J^t $ is a nonzero scalar multiple of $J$ and ${\bar{R}_{\mathfrak{g}}}^{t_1t_2} = (R_{\mathfrak{g}})_{21}^{-1}$. Hence, applying the transpose and bar map to both sides of (\ref{reflection-equation}) and (\ref{reflection-equation2}) yields the same equations with $R_{\mathfrak{g}}$ replaced by $(R_{\mathfrak{g}})_{21}^{-1}$ everywhere.

\begin{proposition}\label{prop:twotwistingmaps} Set  $S_{\gamma} = R_{\mathfrak{g}}$ if $\gamma = 0$ and $S_{\gamma} = (R_{\mathfrak{g}})_{21}^{-1}$ if $\gamma = 1$. If $\beta+\sigma= 1$, then the subalgebra of ${\mathcal{A}}_{\alpha,\beta,\upsilon,\sigma}$ generated by $\mathscr{P}_{\theta}$ and $\mathscr{D}'_{\theta}$ satisfies the relations 
\begin{align*} 
d'_{ab}x_{ef} = \sum_{r,w,p,q,x,y,m,l}(S_{\alpha}^{t_2})_{xq}^{wr}(S_{\alpha}^{t_2})_{ma}^{pq}(S_{\upsilon}^{t_2})^{xy}_{fl}(S_{\upsilon}^{t_2})^{ml}_{eb}x_{pw}d'_{ry}
\end{align*}
for all $a,b,e,f$.
Similarly, if $\beta+\sigma = 1$, then the subalgebra of ${\mathcal{A}}'_{\alpha,\beta,\upsilon,\sigma}$ generated by $\mathscr{D}_{\theta}$ and $\mathscr{P}'_{\theta}$ satisfies the relations 
\begin{align*} 
d_{ab}x'_{ef} =\sum_{r,w,p,q,x,y,m,l}(S_{\alpha}^{t_2})_{xq}^{wr}(S_{\alpha}^{t_2})_{ma}^{pq}(S_{\upsilon}^{t_2})^{xy}_{fl}(S_{\upsilon}^{t_2})^{ml}_{eb}x'_{pw}d_{ry}
\end{align*}
for all $a,b,e,f$.
\end{proposition}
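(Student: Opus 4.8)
The plan is to reduce the claimed relations among the $x_{ef}$ and $d'_{ab}$ to a computation inside the ambient algebra $\mathcal{A}_{\alpha,\beta,\upsilon,\sigma}$, using the matrix reformulation supplied by Lemma~\ref{lemma:matrixform}. First I would recall that in $\mathcal{A}_{\alpha,\beta,\upsilon,\sigma}$ the generators $t_{ij}$ and $\partial'_{kl}$ obey the twisting relation ${P}'_2{T}_1 = S_{\upsilon}^{t_1}{T}_1{P}'_2 S_{\sigma}^{t_2}$ from (\ref{anotherreln5}), while the $t_{ij}$ satisfy the reflection-type quadratic relations $R_{\mathfrak{g}}X_1R_{\mathfrak{g}}^{t_1}X_2 = X_2 R_{\mathfrak{g}}^{t_1}X_1 R_{\mathfrak{g}}$ (Proposition~\ref{proposition:relations_overview}) and the $\partial'_{ij}$ satisfy the analogous opposite reflection relations (Proposition~\ref{prop:Dalgebra}). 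Since $X = TJT^t$ and $D' = PGJ(P')^t$ (with $J = J(1)$ and $G$ the diagonal matrix from Section~\ref{section:diagonal}), the key is to push the matrix $D'$ past the matrix $X$ using only these three ingredients together with the reflection equations (\ref{reflection-equation}), (\ref{reflection-equation2}) and their $(R_{\mathfrak{g}})_{21}^{-1}$-versions.

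The core computation I would carry out is: starting from $D'_2 X_1 = P_{(3)}G_{(3)}J_{(3)} (P')^{t_3}_{(3)} T_{(1)}J_{(1)} T^{t_1}_{(1)}$ written on a suitable tensor product of copies of $V$ and $V^*$ (using the position-subscript notation of Section~\ref{section:Tensor}), repeatedly apply the twisting relation (\ref{anotherreln5}) to move each $\partial'$-factor to the right of each $t$-factor, absorbing the resulting $R$-matrix conjugations. Each such move produces factors of $S_{\upsilon}^{t_1}$ and $S_{\sigma}^{t_2}$; the matrices $J$ and $G$ are then pushed through these using the reflection equations — here Lemma~\ref{lemma:zero} and Lemma~\ref{lemma:allthree} are exactly what is needed to handle the interaction of $G$ with $R_{\mathfrak{g}}^{t_2}$ and to convert $(R_{\mathfrak{g}}^{t_2})^{-1}$ into $G_1((R_{\mathfrak{g}})_{21}^{-1})^{t_1})G_1^{-1}$, which is what forces the appearance of $S_{\upsilon}^{t_2}$ rather than its inverse in the final formula. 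After collecting terms, the result should be exactly the matrix identity $\tilde D_2 ((S_{\upsilon}^{t_1})^{-1})^{t_2} \tilde X_1 = S_{\alpha}^{t_1}\tilde X_1 S_{\alpha}\tilde D_2 S_{\upsilon}^{t_2}$ of Lemma~\ref{lemma:matrixform} with $\tilde x_{ij} = x_{ij}$ and $\tilde d_{ij} = d'_{ij}$, and the hypothesis $\beta + \sigma = 1$ is precisely the condition $\{\beta,\sigma\} = \{0,1\}$ that makes $S_\beta$ and $S_\sigma$ a "dual pair" $(R_{\mathfrak{g}}, (R_{\mathfrak{g}})_{21}^{-1})$, so that the $t$-$\partial'$ contractions telescope correctly. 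Lemma~\ref{lemma:matrixform} then translates this matrix identity back into the stated elementwise relations. For the second statement, I would run the identical argument inside $\mathcal{A}'_{\alpha,\beta,\upsilon,\sigma}$ with the roles of the primed and unprimed copies swapped: now $t'_{ij}$ and $\partial_{kl}$ are the "generic" generators, $X' = T'JT^t$, $D' $ is replaced by $D = PGJP^t$, and the same chain of reflection-equation manipulations applies verbatim since all the relevant $R$-matrix identities are symmetric in the two copies.

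The main obstacle I anticipate is bookkeeping: correctly tracking the tensor-leg positions when four matrices ($T$, $T'$ or equivalently two copies of the vector rep, together with two copies of $V^*$) are involved, and ensuring the transposes $t_1$, $t_2$ land on the right legs at every step. A subtle point worth isolating as a sub-lemma is the commutation of $J$ and $G$ with the twisting $R$-matrices: one must verify that $J$ satisfies not only (\ref{reflection-equation}) but also the "mixed" version needed when one $R_{\mathfrak{g}}$ is replaced by $(R_{\mathfrak{g}})_{21}^{-1}$ — this is exactly the content of applying the bar-and-transpose map to (\ref{reflection-equation}) and (\ref{reflection-equation2}), as noted just before the proposition, combined with the fact that $\bar J^t$ is a scalar multiple of $J$. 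Once these matrix identities are assembled, the passage to (\ref{thirdeqn}) via Lemma~\ref{lemma:matrixform} and then to the displayed elementwise relations is purely formal. I would also remark at the end that, because $\mathscr{P}_\theta \subseteq \mathscr{P}$ and $\mathscr{D}'_\theta \subseteq \mathcal{O}^{op}$ are \emph{sub}algebras of $\mathcal{A}_{\alpha,\beta,\upsilon,\sigma}$, no new relations beyond those derived are being asserted — the proposition only claims that these relations \emph{hold}, which is all the above computation establishes.
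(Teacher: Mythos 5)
Your strategy matches the paper's proof essentially step for step: the paper also expands $D'_2((S_{\upsilon}^{t_1})^{-1})^{t_2}X_1$ using $X=TJT^t$ and $D'=PGJ(P')^t$, pushes $P$ and $P'$ past $T$ via both relations in (\ref{anotherreln5}), rearranges $J$-factors with the reflection equations (including the barred/transposed versions), invokes Lemma \ref{lemma:allthree} exactly where $\beta+\sigma=1$ is needed (yielding $S_{\beta}^{t_2}G_2S_{\sigma}^{t_1}=G_2$ and $G_2S_{\sigma}=((S_{\beta}^{t_2})^{-1})^{t_1}G_2$), and concludes via Lemma \ref{lemma:matrixform}, treating the second set of relations by the symmetric argument. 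The only caveat is cosmetic: moving the unprimed $P$-factor produces $S_{\alpha}^{t_1}$, $S_{\beta}^{t_2}$ factors (not only $S_{\upsilon}$, $S_{\sigma}$ as your sketch says), which is where the $S_{\alpha}$'s in the final identity come from.
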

\begin{proof}  
Applying  $(S_{\upsilon}^{t_1})^{-1}$ to both sides of the second equation of  (\ref{anotherreln5}) yields 
\begin{align*}
(S_{\upsilon}^{t_1})^{-1}{P'_2}{T_1} = {T_1}{P'_2}S_{\sigma}^{t_2}
\end{align*}
which is equivalent to 
\begin{align}\label{anotherreln10}
{(P')_2^t}((S_{\upsilon}^{t_1})^{-1})^{t_2}{T'_1} = T_1S_{\sigma}{(P')_2^t}
\end{align}
where $(P')^t_2 = Id\otimes (P')^t$.
Note that applying the  map $t=t_1t_2$ to both sides of  the second equality of (\ref{anotherreln5}) becomes 
\begin{align}
\label{anotherreln11}
(P')_2^tT_1^t = S_{\sigma}^{t_1}T^t_1{(P')^t_2}S_{\epsilon}^{t_2}
\end{align}
where here we are using the fact that $T^t_1$ and ${(P')^t_2}$ commute with each other.

We prove the proposition by using the matrix form of the equations given by Lemma \ref{lemma:matrixform}.
In particular, expanding out the left hand side of (\ref{thirdeqn}) yields
\begin{align*} D'_2((S_{\upsilon}^{t_1})^{-1})^{t_2}X_1 &= \left(P_2G_2J_2(P')_2^t\right) ((S_{\upsilon}^{t_1})^{-1})^{t_2}\left(T_1J_1T_1^t \right)
\cr &=P_2G_2J_2\left((P')^t_2 ((S_{\upsilon}^{t_1})^{-1})^{t_2}T_1\right)J_1T_1^t
\end{align*}
Replacing the middle term by the right hand side of (\ref{anotherreln10}) and using the fact that $T'_1$ commutes with $G_2J_2$ gives us 
\begin{align*}
D'_2((S_{\upsilon}^{t_1})^{-1})^{t_2}X_1 
&= P_2T_1G_2J_2 S_{\sigma}J_1(P')_2^tT_1^t
\end{align*}
Using (\ref{anotherreln5}) and (\ref{anotherreln11})  to rewrite the first two and last two matrices yields
\begin{align*}
D'_2((S_{\upsilon}^{t_1})^{-1})^{t_2}X_1&=\left(S_{\alpha}^{t_1}T_1P_2S_{\beta}^{t_2}\right)G_2J_2 S_{\sigma}J_1\left(S_{\sigma}^{t_1}{T_1}^t(P')_2^tS_{\upsilon}^{t_2}\right)\cr
&=S_{\alpha}^{t_1}T_1P_2S_{\beta}^{t_2}G_2\left(J_2 S_{\sigma}J_1S_{\sigma}^{t_1}\right){T_1}^t(P')_2^tS_{\upsilon}^{t_2}
\end{align*}
Using the reflection equation as presented before the proposition, we can replace $J_2 S_{\sigma}J_1S_{\sigma}^{t_1}$ with $S_{\sigma}^{t_1}J_1 S_{\sigma}J_2$
which yields 
\begin{align*} D'_2((S_{\upsilon}^{t_1})^{-1})^{t_2}X_1 &= S_{\alpha}^{t_1}T_1P_2S_{\beta}^{t_2}G_2\left(S_{\sigma}^{t_1}J_1 S_{\sigma}J_2\right)T_1^t(P')_2^tS_{\upsilon}^{t_2}.
\end{align*}
Since $\beta +\sigma = 1$, Lemma \ref{lemma:allthree} ensures that $S_{\beta}^{t_2}G_2S_{\sigma}^{t_1} = G_2$ and so the above becomes
\begin{align*} D'_2((S_{\upsilon}^{t_1})^{-1})^{t_2}X_1 &= S_{\alpha}^{t_1}T_1P_2G_2J_1 S_{\sigma}J_2{T_1}^t(P')_2^tS_{\upsilon}^{t_2}
=S_{\alpha}^{t_1}T_1J_1P_2G_2 S_{\sigma}{T_1}^tJ_2(P')_2^tS_{\upsilon}^{t_2}
\end{align*}
where  the last equality follows from the fact that $J_1$ commutes with $P_2G_2$.

The key to finishing the proof is showing that 
 \begin{align}\label{key}
G_2S_{\sigma} = ((S_{\beta}^{t_2})^{-1})^{t_1}G_2
 \end{align} when $\beta+\sigma = 1$. 
By Lemma \ref{lemma:allthree}, we have  $(R_{21}^{-1})^{t_2}G_2R^{t_1} = G_2$.
Multiplying both sides by $((R_{21}^{-1})^{t_2})^{-1}$ produces $G_2R^{t_1} = ((R_{21}^{-1})^{t_2})^{-1}G_2$.  Applying $t_1$ to both sides yields the equality
$G_2R= ((((R_{21}^{-1})^{t_2})^{-1})^{t_1}G_2 .$   This establishes (\ref{key}) when $S_{\sigma} = R$ and $S_{\beta} = R_{21}^{-1}$.  This immediately extends to the diagonal type using the fact that $R(\mathfrak{g}) = (R,I_{n^2},I_{n^2},R)$ in that setting. The other case ($\sigma = 1$ and $\beta=0$) is similar with the roles of $R$ and $R_{21}^{-1}$ interchanged.

Now using (\ref{key}) we see that 
\begin{align*}
S_{\alpha}^{t_1}T_1J_1P_2((S_{\beta}^{t_2})^{-1})^{t_1}G_2 {T_1}^tJ_2{P'_2}^tS_{\upsilon}^{t_2}=S_{\alpha}^{t_1}T_1J_1T_1^t S_{\alpha}P_2G_2J_2{P'_2}^tS_{\upsilon}^{t_2}
=S_{\alpha}^{t_1}X_1S_{\alpha}D_2'S_{\upsilon}^{t_2}
\end{align*}
as desired.  This proves the first set of equalities.  The argument for the second set is very similar using two copies of the polynomial part and only one copy of the partial  part.\end{proof}

\subsection{Four Twisted Tensor Products for Homogeneous Spaces}\label{section:four_twisted}
We introduce a family of twisting maps, and refer to each of them by $\tau^{\theta}_{\alpha,\upsilon}$ for $\alpha,\upsilon\in \{0,1\}$, as follows.  Let $\tau^{\theta}_{\alpha,\upsilon}$ be the map from $\sum_{i,j,k,l}\mathbb{C}(q)\tilde{d}_{ij}\otimes \tilde{x}_{kl}$ to $\sum_{i,j,k,l}\mathbb{C}(q)\tilde{x}_{kl}\otimes\tilde{d}_{ij}$ defined by 
\begin{align} \label{twistingagain}
{\tau}^{\theta}_{\alpha,\upsilon}(\tilde{d}_{ab}\otimes \tilde{x}_{ef}) = \sum_{r,w,p,q,x,y,m,l}(S_{\alpha}^{t_2})_{xq}^{wr}(S_{\alpha}^{t_2})_{ma}^{pq}(S_{\upsilon}^{t_2})^{xy}_{fl}(S_{\upsilon}^{t_2})^{ml}_{eb}\tilde{x}_{pw}\otimes \tilde{d}_{ry}
\end{align}
for all $a,b,e,f$.  
Here, we let $\tilde{d}_{ij}$ be either $d'_{ij}$ or $d_{ij}$ and similarly, $\tilde{x}_{ij}$ is either $x_{ij}$ or $x'_{ij}$.   Since both $\mathscr{D}_{\theta}'$ and $\mathscr{P}_{\theta}'$ are free algebras, the map ${\tau}^{\theta}_{\alpha,\upsilon}$ extends to a twisting map from 
$\mathscr{D}'_{\theta}\otimes \mathscr{P}'_{\theta}$ to $\mathscr{P}'_{\theta}\otimes \mathscr{D}'_{\theta}$.  Using this twisting map, we obtain a twisted tensor product 
$\mathscr{P}'_{\theta}\otimes_{{{\tau}^{\theta}_{\alpha,\upsilon}}}\mathscr{D}'_{\theta}$.

Assume that $\beta+\sigma=1$.  Recall that by construction,  $\mathcal{A}_{\alpha,\beta,\upsilon,\sigma}$ is a twisted tensor product.  Since $\mathscr{P}_{\theta}$ embeds in the first component and $\mathscr{D}'_{\theta}$ in the second, we have that multiplication induces an injection 
\begin{align*}
\mathscr{P}_{\theta}\otimes \mathscr{D}'_{\theta} \hookrightarrow \mathcal{A}_{\alpha,\beta,\upsilon,\sigma}.
\end{align*}
By Proposition \ref{prop:twotwistingmaps}, the subalgebra of $\mathcal{A}_{\alpha,\beta,\upsilon,\sigma}$ generated by $\mathscr{P}_{\theta}$ and $\mathscr{D}'_{\theta}$ is isomorphic to the  twisted tensor product 
$\mathscr{P}_{\theta}\otimes_{\tau^{\theta}_{\alpha,\upsilon}}\mathscr{D}'_{\theta}$.  Similarly, the subalgebra of $\mathcal{A}'_{\alpha,\beta,\upsilon,\sigma}$ generated by $\mathscr{D}_{\theta}$ and $\mathscr{P}'_{\theta}$ is isomorphic to the  twisted tensor product 
$\mathscr{P}'_{\theta}\otimes_{\tau^{\theta}_{\alpha,\upsilon}}\mathscr{D}_{\theta}$.

Since $\mathscr{P}'_{\theta}$ is a free algebra generated by the $x'_{ij}$, we can realize $\mathscr{P}_{\theta}$ as a quotient of $\mathscr{P}'_{\theta}$.  The defining ideal $\mathcal{I}_{\theta}$ is generated by the elements described in Proposition \ref{prop:Palgebra} where each $\tilde{x}_{ij}$ is replaced by $x'_{ij}$.  Similarly, $\mathscr{D}_{\theta}$ can be realized as a quotient 
$\mathscr{D}'_{\theta}/\mathcal{J}_{\theta}$ where $\mathcal{J}_{\theta}$ is  the ideal of relations generated by elements read off of Proposition \ref{prop:Dalgebra}.

\begin{proposition}\label{prop:twisting-homcase} Let $\tau$ be one of the twisting maps $\tau^{\theta}_{\alpha,\upsilon}$ for some choice of $\alpha,\upsilon\in \{0,1\}$.  Define two algebra maps by
\begin{itemize}
\item  $\varphi_1: \mathscr{P}'_{\theta}\otimes_{\tau}\mathscr{D}'_{\theta} \rightarrow \mathscr{P}_{\theta}\otimes_{\tau}\mathscr{D}'_{\theta}$  is the identity on $\mathscr{D}'_{\theta}$ and sends each $x'_{ij}$ to $x_{ij}$
\item   $\varphi_2: \mathscr{P}'_{\theta}\otimes_{\tau}\mathscr{D}'_{\theta} \rightarrow \mathscr{P}'_{\theta}\otimes_{\tau}\mathscr{D}_{\theta}$ sends $d'_{ij}$ to $d_{ij}$ and is the identity on $\mathscr{P}'_{\theta}$.
\end{itemize} The kernel  of $\varphi_1$ is generated by $\mathcal{I}_{\theta}$ and the kernel of $\varphi_2$ is generated by $\mathcal{J}_{\theta}$. 
Moreover, both  maps are  left $U_q(\mathfrak{g})$-module and right $\mathcal{B}_{\theta}$-module maps and all elements under consideration are invariant with respect to the right action of $\mathcal{B}_{\theta}$.  
\end{proposition}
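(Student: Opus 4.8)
The plan is to prove Proposition \ref{prop:twisting-homcase} in three stages: first establish that $\varphi_1$ and $\varphi_2$ are well-defined algebra homomorphisms, then identify their kernels, and finally verify the equivariance and invariance claims. For well-definedness of $\varphi_1$, I would observe that $\mathscr{P}'_{\theta}\otimes_{\tau}\mathscr{D}'_{\theta}$ is a twisted tensor product, so as a vector space it is $\mathscr{P}'_{\theta}\otimes \mathscr{D}'_{\theta}$, and its multiplication is built from $m_{\mathscr{P}'_{\theta}}$, $m_{\mathscr{D}'_{\theta}}$ and the twisting map $\tau=\tau^{\theta}_{\alpha,\upsilon}$. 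The map $\mathscr{P}'_{\theta}\to \mathscr{P}_{\theta}$ sending $x'_{ij}\mapsto x_{ij}$ is the algebra surjection of Proposition \ref{prop:Palgebra} with kernel $\mathcal{I}_{\theta}$. Since the twisting formula (\ref{twistingagain}) has exactly the same shape whether $\tilde x_{ij}$ denotes $x'_{ij}$ or $x_{ij}$, the key point is that $\tau^{\theta}_{\alpha,\upsilon}$ descends: I need $\tau^{\theta}_{\alpha,\upsilon}$ applied to $d'_{ab}\otimes (\text{relation in }\mathcal{I}_{\theta})$ to land in $\mathscr{P}_{\theta}\otimes \mathscr{D}'_{\theta}$ after passing to the quotient, which is automatic once we know (from Proposition \ref{prop:twotwistingmaps}) that $\mathscr{P}_{\theta}$ and $\mathscr{D}'_{\theta}$ generate a genuine twisted tensor product inside $\mathcal{A}_{\alpha,\beta,\upsilon,\sigma}$ with twisting map $\tau^{\theta}_{\alpha,\upsilon}$. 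Concretely I would invoke the universal property of the twisted tensor product: given algebra maps $\mathscr{P}'_{\theta}\to\mathscr{P}_{\theta}$ and $\mathrm{id}_{\mathscr{D}'_{\theta}}$ that are compatible with the two twisting maps (i.e.\ the square with $\tau$ on top and $\tau$ on the bottom commutes), one gets an induced algebra map on the twisted tensor products. Compatibility here is immediate because both twisting maps are given by literally the same coefficient formula. The same reasoning handles $\varphi_2$ using Proposition \ref{prop:Dalgebra}, whose kernel is $\mathcal{J}_{\theta}$.

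For the kernel computation, the cleanest route is a PBW/dimension-counting argument rather than a direct ideal chase. Since $\mathscr{P}'_{\theta}\otimes_{\tau}\mathscr{D}'_{\theta}$ is a twisted tensor product, multiplication gives a vector space isomorphism $\mathscr{P}'_{\theta}\otimes\mathscr{D}'_{\theta}\xrightarrow{\sim}\mathscr{P}'_{\theta}\otimes_{\tau}\mathscr{D}'_{\theta}$, and similarly $\mathscr{P}_{\theta}\otimes\mathscr{D}'_{\theta}\xrightarrow{\sim}\mathscr{P}_{\theta}\otimes_{\tau}\mathscr{D}'_{\theta}$. Under these identifications $\varphi_1$ is just $(\text{quotient map }\mathscr{P}'_{\theta}\twoheadrightarrow\mathscr{P}_{\theta})\otimes\mathrm{id}_{\mathscr{D}'_{\theta}}$, whose kernel is exactly $\mathcal{I}_{\theta}\otimes\mathscr{D}'_{\theta}$; I then need to check that the two-sided ideal generated by $\mathcal{I}_{\theta}$ inside $\mathscr{P}'_{\theta}\otimes_{\tau}\mathscr{D}'_{\theta}$ coincides with this subspace. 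One inclusion is clear since $\mathcal{I}_{\theta}$-elements are killed by $\varphi_1$, so the generated ideal lies in $\ker\varphi_1$. For the reverse inclusion, I use that $\ker\varphi_1 = \mathcal{I}_{\theta}\cdot(\mathscr{P}'_{\theta}\otimes_\tau\mathscr{D}'_{\theta})$: any element of $\mathcal{I}_{\theta}\otimes\mathscr{D}'_{\theta}$ is a sum of terms $r\otimes d$ with $r\in\mathcal{I}_{\theta}$, and $r\otimes d$ equals the product $r\cdot d$ in the twisted tensor product (by property (ii) of twisting maps, since $\tau(1\otimes r)=r\otimes 1$ etc.), hence lies in the left ideal generated by $\mathcal{I}_{\theta}$, a fortiori in the two-sided ideal. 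The same argument gives $\ker\varphi_2$ generated by $\mathcal{J}_{\theta}$, noting that here one should use the \emph{right} ideal structure because $\mathscr{D}'_{\theta}$ sits in the second tensor factor.

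For the equivariance statements: both $\mathscr{P}'_{\theta}\otimes_{\tau}\mathscr{D}'_{\theta}$ and its quotients carry a left $U_q(\mathfrak{g})$-module algebra structure and trivial right $\mathcal{B}_{\theta}$-action inherited from $\mathcal{A}_{\alpha,\beta,\upsilon,\sigma}$ and $\mathcal{A}'_{\alpha,\beta,\upsilon,\sigma}$, by Corollary \ref{corollary:invar} combined with the discussion in Section \ref{section:diagonal} showing the twisting maps $\tau_{\alpha,\beta,\upsilon,\sigma}$ and $\tau'_{\alpha,\beta,\upsilon,\sigma}$ induce bi-invariant relations. The maps $x'_{ij}\mapsto x_{ij}$ and $d'_{ij}\mapsto d_{ij}$ are restrictions of module maps on the ambient extended Weyl algebras (the action on primed and unprimed generators being identical by definition), so $\varphi_1,\varphi_2$ are left $U_q(\mathfrak{g})$- and right $\mathcal{B}_{\theta}$-module maps. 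Invariance of all elements under the right action of $\mathcal{B}_{\theta}$ follows from Lemma \ref{lemma:B-invariance}: each $x_{ij}, x'_{ij}, d_{ij}, d'_{ij}$ is right $\mathcal{B}_{\theta}$-invariant by the discussion in Section \ref{section:ie} (and for the primed versions, the remark right after (\ref{xprime-defn1})), and Lemma \ref{lemma:B-invariance} propagates this to arbitrary products. I expect the main obstacle to be the bookkeeping in the kernel step — specifically making precise the identification of $\ker\varphi_i$ with the ideal generated by $\mathcal{I}_{\theta}$ (resp.\ $\mathcal{J}_{\theta}$) in a way that correctly uses that $\mathscr{P}_{\theta}$ occupies the first slot and $\mathscr{D}_{\theta}$ the second, so that the relevant one-sided ideal (left for $\mathcal{I}_{\theta}$, right for $\mathcal{J}_{\theta}$) already equals the full two-sided ideal after composing with the twisting relations; everything else is a formal consequence of the twisted tensor product machinery and results already established.
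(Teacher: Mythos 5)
Your proposal is correct and follows essentially the same route as the paper, which simply defers to the discussion in Section \ref{section:four_twisted} (realizing $\mathscr{P}_{\theta}\otimes_{\tau}\mathscr{D}'_{\theta}$ and $\mathscr{P}'_{\theta}\otimes_{\tau}\mathscr{D}_{\theta}$ inside $\mathcal{A}_{\alpha,\beta,\upsilon,\sigma}$ and $\mathcal{A}'_{\alpha,\beta,\upsilon,\sigma}$ via Proposition \ref{prop:twotwistingmaps}, identifying the kernels through the vector-space isomorphism given by multiplication, and using that the module structures on primed and unprimed generators are defined identically); you merely spell out these steps. One harmless slip: $r\cdot d$ with $r\in\mathcal{I}_{\theta}$ lies in the \emph{right} ideal $\mathcal{I}_{\theta}\cdot C$ (and $p\cdot s$ with $s\in\mathcal{J}_{\theta}$ in the \emph{left} ideal $C\cdot\mathcal{J}_{\theta}$), so your ``left''/``right'' labels are interchanged, but since the conclusion concerns the two-sided ideal this does not affect the argument.
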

\begin{proof} The fact that these two maps are algebra homomorphisms with the desired kernels follows immediately from the discussion preceding the lemma on various twisted tensor products.   The last assertion follows from the fact that $\mathcal{A}_{\alpha,\beta,\upsilon,\sigma}$ and 
 $\mathcal{A}'_{\alpha,\beta,\upsilon,\sigma}$ are  both two-sided $U_q(\mathfrak{g})$-modules and $\mathscr{P}_{\theta}, \mathscr{P}'_{\theta}, \mathscr{D}_{\theta}$, and $\mathscr{D}'_{\theta}$ are all left $U_q(\mathfrak{g})$-submodules and right $\mathcal{B}_{\theta}$-submodules with the latter action trivial.   Moreover, by construction, these module structures for each of the algebras under consideration are defined in the same way and so compatible with these algebra maps. \end{proof}
 
 A consequence of the above proposition is that the ideal $\mathcal{I}_{\theta}$ ``commutes" with elements in $\mathscr{D}'_{\theta}$.  More precisely, we have 
$d'\mathcal{I}_{\theta} \subset \mathcal{I}_{\theta} \mathscr{D}'_{\theta}$
for all $d'\in \mathscr{D}'_{\theta}$ and, moreover, 
$\mathscr{D}'_{\theta} \mathcal{I}_{\theta} = \mathcal{I}_{\theta}\mathscr{D}'_{\theta}$.
This is because the twisting map $ \tau^{\theta}_{\alpha,\upsilon}$ defines an isomorphism of $\mathcal{D'}_{\theta}\otimes \mathscr{P}'_{\theta}$ onto $\mathscr{P}'_{\theta}\otimes \mathscr{D}'_{\theta}$ which induces an isomorphism by the twisting map of  $\mathcal{D'}_{\theta}\otimes \mathscr{P}_{\theta}$ onto $\mathscr{P}_{\theta}\otimes \mathscr{D}'_{\theta}$.  Hence
$\tau(\mathscr{D}'_{\theta}\otimes \mathcal{I}_{\theta}) = \mathcal{I}_{\theta}\otimes \mathscr{D}'_{\theta}$.
Analogous results hold for $\mathcal{J}_{\theta}$.  In particular, we have 
$\mathcal{J}_{\theta}\mathscr{P}'_{\theta} = \mathscr{P}'_{\theta} \mathcal{J}_{\theta}.$
It follows that the ideal in $\mathscr{P}'_{\theta}\otimes_{ \tau^{\theta}_{\alpha,\upsilon}}\mathscr{D}'_{\theta}$ generated by $\mathcal{I}_{\theta}$ and $\mathcal{J}_{\theta}$ takes the form 
$\mathcal{I}_{\theta}\mathcal{D'}_{\theta} + \mathscr{P}'_{\theta}\mathcal{J}_{\theta}$
and is isomorphic via multiplication as vector spaces to 
$
\mathcal{I}_{\theta}\otimes \mathscr{D}'_{\theta} + \mathscr{P}'_{\theta}\otimes\mathcal{J}_{\theta}.  
$   
This ensures that the quotient $\mathscr{P}'_{\theta}\otimes_{\tau^{\theta}_{\alpha,\upsilon}} \mathscr{D}'_{\theta}$ by the ideal $\mathcal{I}_{\theta}\mathcal{D'}_{\theta} + \mathscr{P}'_{\theta}\mathcal{J}_{\theta}$
 is itself a twisted tensor product of $\mathscr{P}_{\theta}$ and $\mathscr{D}_{\theta}$.  Thus we have the following result on twisted tensor products of $\mathscr{D}_{\theta}$ and $\mathscr{P}_{\theta}$.

\begin{theorem}\label{theorem:graded-hom}
For each $\alpha,\upsilon\in \{0,1\}$, the  map $\tau^{\theta}_{\alpha,\upsilon}$ 
defined by 
\begin{align*} {\tau}^{\theta}_{\alpha,\upsilon}(d_{ab}\otimes x_{ef}) = \sum_{r,w,p,q,x,y,m,l}(S_{\alpha}^{t_2})_{xq}^{wr}(S_{\alpha}^{t_2})_{ma}^{pq}(S_{\upsilon}^{t_2})^{xy}_{fl}(S_{\upsilon}^{t_2})^{ml}_{eb}x_{pw}\otimes d_{ry}
\end{align*}
is
 a twisting map from $\mathscr{D}_{\theta}\otimes \mathscr{P}_{\theta}$ to $\mathscr{P}_{\theta}\otimes \mathscr{D}_{\theta}$ where $S_0 = R_{\mathfrak{g}}$ and $S_1 = (R_{\mathfrak{g}})_{21}^{-1}$. 
Moreover, $\mathscr{P}_{\theta}\otimes_{{\tau^{\theta}_{\alpha,\upsilon}}}\mathscr{D}_{\theta}$ inherits the structure of a left $U_q(\mathfrak{g})$-module algebra and trivial right $\mathcal{B}_{\theta}$-module algebra from the subalgebras 
$\mathscr{P}_{\theta}$ and $\mathscr{D}_{\theta}$.
\end{theorem}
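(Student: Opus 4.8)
The plan is to deduce Theorem \ref{theorem:graded-hom} almost entirely from machinery already assembled in Section \ref{section:graded-wafhs}. The key observation is that the map $\tau^{\theta}_{\alpha,\upsilon}$ in the theorem statement is literally the specialization of the map $\tau^{\theta}_{\alpha,\upsilon}$ in formula (\ref{twistingagain}) when we take $\tilde{x}_{ij}=x_{ij}$ and $\tilde{d}_{ij}=d_{ij}$. So the content to be proved is twofold: (a) that this map genuinely defines a twisting map on $\mathscr{D}_{\theta}\otimes\mathscr{P}_{\theta}$, i.e. that it satisfies properties (i)--(iii) from Section \ref{section:twisted} with the multiplication of $\mathscr{P}_{\theta}$ and $\mathscr{D}_{\theta}$, and (b) that the resulting twisted tensor product carries the asserted module structures.

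For part (a), the strategy is not to verify the twisting axioms by hand but to transport them through the quotient construction already described just before the theorem. First I would fix $\beta,\sigma\in\{0,1\}$ with $\beta+\sigma=1$ (say $\sigma=1-\beta$), so that Proposition \ref{prop:twotwistingmaps} applies. By the discussion following Proposition \ref{prop:twisting-homcase}, the two-sided ideal of $\mathscr{P}'_{\theta}\otimes_{\tau^{\theta}_{\alpha,\upsilon}}\mathscr{D}'_{\theta}$ generated by $\mathcal{I}_{\theta}$ and $\mathcal{J}_{\theta}$ equals $\mathcal{I}_{\theta}\mathscr{D}'_{\theta}+\mathscr{P}'_{\theta}\mathcal{J}_{\theta}$, and multiplication gives a vector-space isomorphism of this ideal with $\mathcal{I}_{\theta}\otimes\mathscr{D}'_{\theta}+\mathscr{P}'_{\theta}\otimes\mathcal{J}_{\theta}$; crucially, $\tau^{\theta}_{\alpha,\upsilon}$ maps $\mathscr{D}'_{\theta}\otimes\mathcal{I}_{\theta}$ onto $\mathcal{I}_{\theta}\otimes\mathscr{D}'_{\theta}$ and $\mathcal{J}_{\theta}\otimes\mathscr{P}'_{\theta}$ onto $\mathscr{P}'_{\theta}\otimes\mathcal{J}_{\theta}$. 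Therefore the quotient algebra is a twisted tensor product with underlying spaces $\mathscr{P}'_{\theta}/\mathcal{I}_{\theta}\cong\mathscr{P}_{\theta}$ and $\mathscr{D}'_{\theta}/\mathcal{J}_{\theta}\cong\mathscr{D}_{\theta}$, and the induced twisting map on $\mathscr{D}_{\theta}\otimes\mathscr{P}_{\theta}$ is exactly $\tau^{\theta}_{\alpha,\upsilon}$ as written in the theorem (using Proposition \ref{prop:twotwistingmaps} to identify the relations $d_{ab}x_{ef}=\dots$ with those coming from the formula, via the embedding of $\mathscr{P}_{\theta}$ and $\mathscr{D}'_{\theta}$ into $\mathcal{A}_{\alpha,\beta,\upsilon,\sigma}$ and then the further specialization $d'\mapsto d$). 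The twisting axioms (i)--(iii) for the induced map then hold automatically because they hold for the ambient twisted tensor product and are inherited by quotients by ideals of the above form.

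For part (b), I would invoke the module-theoretic statements already proved: $\mathcal{A}_{\alpha,\beta,\upsilon,\sigma}$ is a left $U_q(\mathfrak{g})$-module algebra with trivial right $\mathcal{B}_{\theta}$-action (this is the paragraph on the bimodule structure of $\mathcal{A}_{\alpha,\beta,\upsilon,\sigma}$ together with Lemma \ref{lemma:B-invariance}), and $\mathscr{P}_{\theta}$, $\mathscr{D}'_{\theta}$ (hence $\mathscr{D}_{\theta}$) are left $U_q(\mathfrak{g})$-submodules and right $\mathcal{B}_{\theta}$-submodules with trivial $\mathcal{B}_{\theta}$-action. Since $\mathscr{P}_{\theta}\otimes_{\tau^{\theta}_{\alpha,\upsilon}}\mathscr{D}_{\theta}$ is realized as a subquotient of $\mathcal{A}_{\alpha,\beta,\upsilon,\sigma}$ (subalgebra generated by $\mathscr{P}_{\theta}$ and $\mathscr{D}'_{\theta}$, then pushed through $\varphi_2$, which by Proposition \ref{prop:twisting-homcase} is a left $U_q(\mathfrak{g})$- and right $\mathcal{B}_{\theta}$-module map), the left $U_q(\mathfrak{g})$-module algebra and trivial right $\mathcal{B}_{\theta}$-module algebra structures descend. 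One should note that Lemma \ref{lemma:B-invariance} is what guarantees the $\mathcal{B}_{\theta}$-invariance is preserved under multiplication, so that the twisted tensor product really is a right $\mathcal{B}_{\theta}$-module algebra and not merely a module.

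The main obstacle I anticipate is purely bookkeeping rather than conceptual: one must be careful that the twisting map $\tau^{\theta}_{\alpha,\upsilon}$ appearing in (\ref{twistingagain}) for the pair $(x'_{ij},d'_{ij})$, the pair $(x_{ij},d'_{ij})$ used inside $\mathcal{A}_{\alpha,\beta,\upsilon,\sigma}$ via Proposition \ref{prop:twotwistingmaps}, and the pair $(x_{ij},d_{ij})$ in the final statement are all the same formula with the same structure constants $(S_{\alpha}^{t_2})^{wr}_{xq}(S_{\alpha}^{t_2})^{pq}_{ma}(S_{\upsilon}^{t_2})^{xy}_{fl}(S_{\upsilon}^{t_2})^{ml}_{eb}$, and that passing through the quotients $\varphi_1,\varphi_2$ does not alter these constants — it only imposes the linear and quadratic relations of Propositions \ref{prop:Palgebra} and \ref{prop:Dalgebra}. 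Once the compatibility of $\tau^{\theta}_{\alpha,\upsilon}$ with the ideals $\mathcal{I}_{\theta}$ and $\mathcal{J}_{\theta}$ (established in the discussion before the theorem) is in hand, the theorem follows without further computation. I would therefore write the proof as a short assembly of the quotient argument for part (a) and the functoriality of the module structures for part (b).
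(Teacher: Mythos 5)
Your proposal is correct and follows essentially the same route as the paper: the paper's own proof is a one-line assembly of the discussion preceding the theorem (compatibility of $\tau^{\theta}_{\alpha,\upsilon}$ with the ideals $\mathcal{I}_{\theta}$ and $\mathcal{J}_{\theta}$, so the quotient of $\mathscr{P}'_{\theta}\otimes_{\tau^{\theta}_{\alpha,\upsilon}}\mathscr{D}'_{\theta}$ is again a twisted tensor product) together with the fact that the twisting map preserves the left $U_q(\mathfrak{g})$- and trivial right $\mathcal{B}_{\theta}$-module structures, which is exactly your parts (a) and (b) via Propositions \ref{prop:twotwistingmaps} and \ref{prop:twisting-homcase} and Lemma \ref{lemma:B-invariance}.
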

\begin{proof} This follows from the discussion above combined with the fact  that the twisting map $\tau^{\theta}_{\alpha,\upsilon}$  preserves the left $U_q(\mathfrak{g})$-module and right $\mathcal{B}_{\theta}$-module structures of $\mathcal{I}_{\theta}$ and $\mathcal{J}_{\theta}$.
\end{proof}

Note that Theorem \ref{theorem:graded-hom} gives  us four graded Weyl algebras for each of  the three types of homogeneous spaces $\mathscr{P}_{\theta}$.  This is similar to the situation for  the original quantum graded Weyl algebras $\mathcal{A}_{\alpha, \upsilon}$ associated to $\mathscr{P}$. In analogy to the original case, we set $\mathcal{A}^{\theta}_{\alpha,\upsilon}
=\mathscr{P}_{\theta}\otimes_{\tau^{\theta}_{\alpha,\upsilon}}\mathscr{D}_{\theta}$ for each $\alpha,\upsilon\in \{0,1\}$. The relations relating the $x_{ij}$ and the $d_{ij}$ can easily be read off of the definition of the twisting map $\tau^{\theta}_{\alpha,\upsilon}$ in Theorem \ref{theorem:graded-hom}.  Namely, we have 
\begin{align}\label{relnsforxd} d_{ab}x_{ef} = \sum_{r,w,p,q,x,y,m,l}(S_{\alpha}^{t_2})_{xq}^{wr}(S_{\alpha}^{t_2})_{ma}^{pq}(S_{\upsilon}^{t_2})^{xy}_{fl}(S_{\upsilon}^{t_2})^{ml}_{eb}x_{pw}d_{ry}
\end{align}
for all $a,b,e,f\in \{1, \dots, {\rm rank}(\mathfrak{g})\}$  and where $ S_0 = R_{\mathfrak{g}}$ and $S_1 = (R_{\mathfrak{g}})_{21}^{-1}$.
Using Lemma \ref{lemma:matrixform}, these relations can be put in matrix form in a way that resembles the reflection equations, or, more precisely, the relations for $\mathscr{P}_{\theta}$.  
For example, when $\alpha=\upsilon=0$, we have 
\begin{align*}
D_2((R_{\mathfrak{g}}^{t_1})^{-1})^{t_2} X_1 R_{\mathfrak{g}}^{t_2} = R_{\mathfrak{g}}^{t_1}X_1R_{\mathfrak{g}}D_2
\end{align*}

In the next result, we show that for the diagonal type, we recover the graded Weyl algebras $\mathcal{A}_{\alpha,\upsilon}$, $\alpha,\upsilon\in \{0,1\}$.
 
 \begin{corollary}\label{diagiso}In the diagonal type, the subalgebra generated by $\mathscr{P}_{\theta}$ and 
$\mathscr{D}_{\theta}$ inside  of  $\mathcal{A}_{\alpha,\beta,\upsilon,\sigma}$, for $\beta+\sigma=1$, is isomorphic to the  graded quantum Weyl algebra 
$\mathcal{A}_{\alpha,\upsilon}$ via the map  $\psi$ sending each $x_{i,j+n}$ to $t_{ij}$ and each $d_{i,j+n}$ to $\partial_{ij}$.  Moreover  this map is a  $U_q(\mathfrak{g})$-bimodule algebra  isomorphism where
\begin{align*}\psi(a \cdot u) =a\cdot \psi(u) {\rm \ and \ } \psi(u\cdot a^{\natural}) = \gamma(a)\cdot \psi(u)
\end{align*} 
for all $a\in U_q(\mathfrak{gl}_n)$ and $u$ in the subalgebra generated by $\mathscr{P}_{\theta}$ and 
$\mathscr{D}_{\theta}$ where $\gamma$ is the map from the first copy of $U_q(\mathfrak{gl}_n)$ to the second defined by $\gamma(E_r) = E_{n+r}, 
\gamma(F_r)=F_{n+r},$ and $\gamma(K_{\epsilon_s}) = K_{\epsilon_{n+s}}$ for all $r,s\in  \{1, \dots, n\}$.  \end{corollary}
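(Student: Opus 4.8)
The plan is to assemble the isomorphism from pieces that are already in place. First I would recall from Lemma \ref{lemma:diagxequalst} that in the diagonal type the assignment $x_{i,j+n}\mapsto t_{ij}$ extends to an algebra isomorphism $\mathscr{P}_\theta\xrightarrow{\sim}\mathcal{O}_q(\Mat_n)$, and from the remark following that lemma that $d_{i,j+n}\mapsto \partial_{ij}$ extends to an algebra isomorphism $\mathscr{D}_\theta\xrightarrow{\sim}\mathcal{O}_q(\Mat_n)^{op}$; moreover these isomorphisms carry the left $U_q(\mathfrak{g})$-action to the left action of the first copy of $U_q(\mathfrak{gl}_n)$ and the left action of the \emph{second} copy of $U_q(\mathfrak{gl}_n)$ to the $\natural$-twisted right action of $U_q(\mathfrak{gl}_n)$ on $\mathcal{O}_q(\Mat_n)$ (resp.\ $\mathcal{O}_q(\Mat_n)^{op}$), via the formulas $\psi(a\cdot u)=a\cdot\psi(u)$ and $\psi(u\cdot a^\natural)=\gamma(a)\cdot\psi(u)$ of Lemma \ref{lemma:diagxequalst}. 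Since the subalgebra of $\mathcal{A}_{\alpha,\beta,\upsilon,\sigma}$ generated by $\mathscr{P}_\theta$ and $\mathscr{D}_\theta$ is (by Proposition \ref{prop:twotwistingmaps} and the discussion in Section \ref{section:four_twisted}) the twisted tensor product $\mathscr{P}_\theta\otimes_{\tau^\theta_{\alpha,\upsilon}}\mathscr{D}_\theta=\mathcal{A}^\theta_{\alpha,\upsilon}$, and $\mathcal{A}_{\alpha,\upsilon}=\mathcal{O}_q(\Mat_n)\otimes_{\tau_{\alpha,\upsilon}}\mathcal{O}_q(\Mat_n)^{op}$, it suffices to check that the tensor product of the two component isomorphisms intertwines the twisting maps $\tau^\theta_{\alpha,\upsilon}$ and $\tau_{\alpha,\upsilon}$.

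The heart of the proof is therefore the identity of twisting relations: I would show that under $x_{i,j+n}\mapsto t_{ij}$, $d_{i,j+n}\mapsto\partial_{ij}$ the defining relation (\ref{relnsforxd}) for $\mathcal{A}^\theta_{\alpha,\upsilon}$ becomes exactly the defining relation (\ref{twistdefn}) for $\mathcal{A}_{\alpha,\upsilon}$. The cleanest route is via the matrix forms. Recall $R_{\mathfrak{g}}$ in the diagonal case is the block-diagonal matrix with blocks $(R,I_{n^2},I_{n^2},R)$; indexing rows/columns in $\{1,\dots,n\}\sqcup\{n+1,\dots,2n\}$ and restricting to the block where the first index lies in $\{1,\dots,n\}$ and the second in $\{n+1,\dots,2n\}$, the $2n\times 2n$ matrices $X$ and $D$ collapse to the $n\times n$ matrices $\hat X$ and $\hat D$ with entries $x_{i,j+n}$ and $d_{i,j+n}$ (cf.\ Lemma \ref{lemma:diagxequalst} and its proof, which already performs exactly this kind of block reduction for $\mathscr{P}_\theta$). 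Carrying out the same block extraction on the matrix form of (\ref{relnsforxd}) — for instance, when $\alpha=\upsilon=0$ the equation $D_2((R_{\mathfrak{g}}^{t_1})^{-1})^{t_2}X_1R_{\mathfrak{g}}^{t_2}=R_{\mathfrak{g}}^{t_1}X_1R_{\mathfrak{g}}D_2$ given after Theorem \ref{theorem:graded-hom} — all the $R_{\mathfrak{g}}$-factors reduce to $R$-factors on the relevant $n^2$-dimensional blocks, and one is left with precisely the matrix relation (\ref{twistdefn-matrix}), i.e.\ $P_2T_1=R_\sigma^{t_1}T_1P_2R_\upsilon^{t_2}$ with the appropriate $\sigma,\upsilon$. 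The remaining blocks of the $2n\times 2n$ relation either vanish (mixed blocks, since $x_{ij}=x_{i+n,j+n}=0$) or give the transpose relation $R\hat X_1^t\hat X_2^t=\hat X_2^t\hat X_1^t R$-type equation, which by the discussion in Section \ref{section:as} is equivalent to the first. Once the defining relations match, bijectivity follows from the twisted-tensor-product structure (multiplication $\mathscr{P}_\theta\otimes\mathscr{D}_\theta\to\mathcal{A}^\theta_{\alpha,\upsilon}$ and $\mathcal{O}_q(\Mat_n)\otimes\mathcal{O}_q(\Mat_n)^{op}\to\mathcal{A}_{\alpha,\upsilon}$ are vector-space isomorphisms, and the component maps are bijections).

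Finally, the bimodule statement: the left $U_q(\mathfrak{g})=U_q(\mathfrak{gl}_n\oplus\mathfrak{gl}_n)$-action on $\mathcal{A}^\theta_{\alpha,\upsilon}$ restricts on $\mathscr{P}_\theta$ and $\mathscr{D}_\theta$ to the actions described in Lemma \ref{lemma:left_action}, and by Lemma \ref{lemma:diagxequalst} these correspond under $\psi$ to the left action of the first copy and the $\gamma$-twisted (equivalently, $\natural$-right) action of the second copy on $\mathcal{O}_q(\Mat_n)$ and $\mathcal{O}_q(\Mat_n)^{op}$; since $\psi$ is an algebra map and $U_q(\mathfrak{g})$ acts by module-algebra automorphisms on both sides, the intertwining formulas $\psi(a\cdot u)=a\cdot\psi(u)$ and $\psi(u\cdot a^\natural)=\gamma(a)\cdot\psi(u)$ propagate from generators to all of $\mathcal{A}^\theta_{\alpha,\upsilon}$. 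I expect the main obstacle to be purely bookkeeping: carefully tracking which $4$-index blocks of the $2n\times 2n$ matrix equation (\ref{relnsforxd}) survive the diagonal degeneration of $R_{\mathfrak{g}}$ and verifying that the surviving block is genuinely (\ref{twistdefn-matrix}) and not a transposed or index-reversed variant — but as noted this is handled by the equivalence of the two presentations of the $\mathcal{O}_q(\Mat_N)$ relations already recorded in Section \ref{section:as}, so no new idea is required.
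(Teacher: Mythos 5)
Your proposal is correct and follows essentially the same route as the paper's proof: the component isomorphisms $\mathscr{P}_{\theta}\cong\mathcal{O}_q(\Mat_n)$ and $\mathscr{D}_{\theta}\cong\mathcal{O}_q(\Mat_n)^{op}$, a verification that the cross relations (\ref{relnsforxd}) collapse to (\ref{twistdefn}) using the block-diagonal form $(R,I_{n^2},I_{n^2},R)$ of $R_{\mathfrak{g}}$, bijectivity from the two twisted-tensor-product vector space decompositions, and the bimodule claim from Lemma \ref{lemma:diagxequalst} and its $\mathscr{D}_{\theta}$ analog. One small correction to your description of the mechanism: it is not that ``all the $R_{\mathfrak{g}}$-factors reduce to $R$-factors'' on the surviving block -- two of the four factors in (\ref{relnsforxd}) necessarily land in the mixed identity blocks $I_{n^2}$ and contribute only Kronecker deltas, and it is precisely this that degenerates the four-factor, reflection-equation-style relation to the two-factor relation (\ref{twistdefn-matrix}); this is exactly the index tracking the paper carries out explicitly, and your planned block bookkeeping would produce it.
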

\begin{proof} By Proposition \ref{prop:Palgebra},  the map sending $x_{i,n+j}$ to $t_{ij}$ is an isomorphism of $\mathscr{P}_{\theta}$ onto $\mathcal{O}_q(\Mat_n)$. Similarly, by Proposition \ref{prop:Dalgebra}, the map sending $d_{i,n+j}$ to $\partial_{ij}$ is an isomorphism of $\mathscr{D}_{\theta}$ onto $\mathcal{O}_q(\Mat_n)^{op}$.   

We show below that 
\begin{align}\label{dxreln}
d_{a, b+n}x_{e,f+n}=\sum_{r,m,j,l} (R_{\alpha}^{t_2})^{rj}_{ea}(R_{\upsilon}^{t_2})^{ml}_{fb}x_{r,m+n}d_{j,l+n}\end{align} for all $a,b,e,f$ in $\{1,\dots, n\}$.  In other words, 
the elements $d_{e,a+n}, x_{f,b+n}$ satisfy the same twisted tensor product rules as $\partial_{ea}$ and $t_{fb}$.   Thus, 
 the  map defined by sending $\partial_{ij}$ to $d_{i,j+n}$ and each $t_{ij}$ to $x_{i,j+n}$  is an algebra homomorphism.  To see that  this is an isomorphism, we note that by Theorem \ref{theorem:graded-hom}, the map from 
$\mathscr{P}_{\theta}\otimes \mathscr{D}_{\theta}$ to $\mathcal{A}^{\theta}_{\alpha,\upsilon}$ induced by multiplication is a vector space isomorphism.   Therefore, multiplication induces a vector space isomorphism from $\mathscr{P}_{\theta}\otimes \mathscr{D}_{\theta}$ into  
$\mathcal{A}_{\alpha,\beta,\upsilon,\tau}$.   Since  (\ref{dxreln}) corresponds to the defining twisting map for the twisted tensor product $\mathcal{A}_{\alpha,\upsilon}$, there are no additional relations then those generated by the ideal including the 
relations satisfied by $\mathscr{P}_{\theta}$, those satisfied by $\mathscr{D}_{\theta}$ and the above relation relating $d_{ea}$ and $x_{fb}$.  Thus the proposition follows once we establish (\ref{dxreln}).

By Theorem \ref{theorem:graded-hom}, we have 
\begin{align}\label{dxreln10}
d_{a, b+n}x_{e,f+n}=\sum_{r,w,p,q,x,y,m,l} (S_{\alpha}^{t_2})^{wr}_{xq}(S_{\alpha}^{t_2})^{pq}_{ma}(S_{\upsilon}^{t_2})^{xy}_{f+n,l}(S_{\upsilon}^{t_2})^{ml}_{e,b+n}x_{pw}d_{ry}\end{align} for all $a,b,e,f$ in $\{1,\dots, n\}$ where  the sum is over elements $r,w,p,q,x,y,m,l$ in $\{1, \dots, 2n\}$.  Since $e,b$ are both in $\{1, \dots, n\}$, we must have $m=e$ and $l=b+n$ in order for $(S_{\upsilon}^{t_2})^{m,l}_{e,b+n}$ to be nonzero. Moreover, it follows from the definition of $R_{\mathfrak{g}}$ in the diagonal type that  $(S_{\upsilon}^{t_2})^{e,b+n}_{e,b+n}=I^{e,b+n}_{e,b+n} = 1$. Hence (\ref{dxreln10}) becomes
\begin{align}\label{dxreln20}
d_{a, b+n}x_{e,f+n}=\sum_{r,w,x,p,q,y} (S_{\alpha}^{t_2})^{wr}_{xq}(S_{\alpha}^{t_2})^{pq}_{ea}(S_{\upsilon}^{t_2})^{xy}_{f+n,b+n}x_{pw}d_{ry}\end{align}
Since $e\in \{1,\dots, n\}$, $x_{e,w}$ is nonzero if and only if $w\in \{n+1,\dots, 2n\}$.  On the other hand, $(S_{\upsilon}^{t_2})^{x,y}_{f+n,b+n}x_{e,w}\neq 0$ implies that both $x$ and $y$ are also in $\{n+1,\dots, 2n\}$.  Hence $d_{ry}\neq 0$ implies that $r\in \{1, \dots, n\}$.  With $w\in \{n+1,\dots, 2n\}$ and $r\in \{1,\dots, n\}$, we get $(S_{\alpha}^{t_2})^{w,r}_{x,q}\neq 0$ if and only if $w=x$, $r=q$, and $(S_{\alpha}^{t_2})^{w,r}_{w,r}\neq 1$.  Finally, since both $e$ and $a$ are in $\{1, \dots, n\}$, the same must be true for $p$ and $q$ in order for $(S_{\alpha}^{t_2})^{p,q}_{e,a}$ to be nonzero.  Hence (\ref{dxreln20}) becomes
\begin{align}\label{dxreln22}
d_{a, b+n}x_{e,f+n}=\sum_{w',p,r,y'} (S_{\alpha}^{t_2})^{pr}_{ea}(S_{\upsilon}^{t_2})^{w'+n,y'+n}_{f+n,b+n}x_{p,w'+n}d_{r,y'+n}\end{align}
Now $(S_{\upsilon}^{t_2})^{w'+n,y'+n}_{f+n,b+n} = (R_{\upsilon}^{t_2})^{w',y'}_{f,b}$ and 
$(S_{\alpha}^{t_2})^{pq}_{ea}=(R_{\alpha}^{t_2})^{pq}_{ea}$ for all values of 
$a,e,p,q,w',y',f,b$ in $\{1,\dots, n\}$.  Thus (\ref{dxreln22}) is the same as (\ref{dxreln}) up to a change of variables.

The bimodule isomorphism follows from Lemma \ref{lemma:diagxequalst} and its  analog for  $\mathscr{D}_{\theta}$.
\end{proof}

The next result shows that the $\mathbb{C}$-algebra isomorphisms among the $\mathcal{A}_{\upsilon,\sigma}$ of Proposition \ref{prop:iso} extend to this setting for all three families. Recall that $\bar{a}$ denotes the image of $a\in \mathbb{C}(q)$ under the $\mathbb{C}$-automorphism of $\mathbb{C}(q)$ sending $q$ to $q^{-1}$. 

\begin{proposition} 
Set $N={\it rank}(\mathfrak{g})$. The map sending each scalar $a$ to $\bar{a}$,  $x_{ij}$ to $x_{N-i,N-j}$ (resp. $x_{i,j+n}$ to $x_{n-i,2n-j}$)  and $d_{ij}$ to $d_{N-i,N-j}$ (resp. $d_{ij}$ to $d_{n-i,2n-j}$)  defines  a $\mathbb{C}$-algebra isomorphism from $\mathcal{A}^{\theta}_{00}$ to $\mathcal{A}^{\theta}_{11}$ and from $\mathcal{A}^{\theta}_{10}$ to $\mathcal{A}^{\theta}_{01}$ in Types AI   and AII  (resp.  diagonal type).
\end{proposition}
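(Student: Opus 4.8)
The plan is to follow the same template used in the proof of Proposition \ref{prop:iso}, but now applied to the three families of homogeneous spaces. The key observation is that the relations defining $\mathscr{P}_{\theta}$, $\mathscr{D}_{\theta}$, and the twisting relations (\ref{relnsforxd}) are all governed by the matrix $R_{\mathfrak{g}}$, and that the substitution $i \mapsto N-i$ together with $q \mapsto q^{-1}$ interacts with $R_{\mathfrak{g}}$ in a controlled way. Concretely, the first thing I would do is record the analogue of the identity used at the end of the proof of Proposition \ref{prop:iso}: if $\pi$ denotes the permutation $k \mapsto N+1-k$ (or its diagonal-type variant $(i,j+n) \mapsto (n+1-i, 2n+1-j)$ on the relevant index set), then conjugating $R_{\mathfrak{g}}$ by $\pi \otimes \pi$ and applying the bar map $q \mapsto q^{-1}$ sends $R_{\mathfrak{g}}$ to $(R_{\mathfrak{g}})_{21}^{-1}$ (up to the $t_1 t_2$ transpose, exactly as in the matrix case), and hence interchanges $S_0$ and $S_1$. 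This is a direct inspection of the explicit entries $r^{ij}_{kl}$ in (\ref{Rmatrix2}) together with the block-diagonal description of $R_{\mathfrak{g}}$ in the diagonal case, and it is essentially the computation already carried out before Lemma \ref{lemma:zero} (namely $R_{21}^{-1} = \bar R^{t_1 t_2}$).

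Next I would check that this substitution preserves the defining relations of $\mathscr{P}_{\theta}$ and $\mathscr{D}_{\theta}$. For the linear relations in Proposition \ref{proposition:relations_overview}/\ref{prop:Palgebra}, one just notes that $\pi$ reverses the order $i<j \leftrightarrow N-j < N-i$ and that the bar map sends $q \mapsto q^{-1}$, so e.g. $x_{ij} - q x_{ji}$ goes to (a scalar multiple of) $x_{N-i,N-j} - q^{-1} x_{N-j,N-i}$ in Type AI, which is again a defining relation; Type AII and the diagonal linear relations are handled identically. For the quadratic reflection-type relations (\ref{quad-reln}), applying $\pi \otimes \pi$ and bar to $R_{\mathfrak{g}} \tilde X_1 R_{\mathfrak{g}}^{t_1} \tilde X_2 = \tilde X_2 R_{\mathfrak{g}}^{t_1} \tilde X_1 R_{\mathfrak{g}}$ and using Step 1 converts it into the same equation with $R_{\mathfrak{g}}$ replaced by $(R_{\mathfrak{g}})_{21}^{-1}$ — and by the remark following Proposition \ref{prop:Palgebra} (and the argument in Section \ref{section:as} that the FRT relations for $\mathcal{R}$ and $\mathcal{R}_{21}^{-1}$ coincide, which carries over to the reflection-equation setting) this is an equivalent set of relations. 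So the substitution is an algebra isomorphism $\mathscr{P}_{\theta} \to \mathscr{P}_{\theta}$; likewise for $\mathscr{D}_{\theta}$, using Proposition \ref{prop:Dalgebra} and noting that the extra powers $q^{-2\hat s}$ in $d_{ij}$ are handled by the bar map together with the index reversal exactly as the scalars in $R_{\mathfrak{g}}$ are. (Here I would invoke the remark that different choices of the $n$-tuple $a$ give isomorphic algebras, so one does not need the substituted generators to be literally the $x_{ij}$ but only to satisfy the same relations.)

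Finally, I would verify that the combined map sends the twisting relations of $\mathcal{A}^{\theta}_{00}$ to those of $\mathcal{A}^{\theta}_{11}$, and those of $\mathcal{A}^{\theta}_{10}$ to those of $\mathcal{A}^{\theta}_{01}$. This is where Step 1 does the real work: in the matrix form of (\ref{relnsforxd}) given by Lemma \ref{lemma:matrixform}, namely $D_2((S_{\upsilon}^{t_1})^{-1})^{t_2} X_1 S_{\alpha}^{t_2}\text{-type}$ relations, applying $\pi\otimes\pi$ and the bar map simultaneously to both sides replaces every occurrence of $S_0$ by $S_1$ and vice versa, hence converts the $(\alpha,\upsilon)=(0,0)$ relation into the $(1,1)$ relation and the $(1,0)$ relation into the $(0,1)$ relation. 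Since $\mathscr{P}_{\theta}\otimes\mathscr{D}_{\theta} \to \mathcal{A}^{\theta}_{\alpha,\upsilon}$ is a vector space isomorphism (Theorem \ref{theorem:graded-hom}), matching the three families of defining relations (the $\mathscr{P}_{\theta}$ relations, the $\mathscr{D}_{\theta}$ relations, and the twisting relations) suffices to conclude that the map extends to a $\mathbb{C}$-algebra isomorphism of the whole twisted tensor product. I expect the main obstacle to be bookkeeping rather than anything conceptual: getting all the transposes, the $21$-flips, and the bar map to line up correctly in the matrix identity of Step 1 and then tracking them through Lemma \ref{lemma:matrixform}'s several index manipulations — in particular making sure the diagonal-type index reversal $(i,j+n)\mapsto(n+1-i,2n+1-j)$ respects the block structure $R_{\mathfrak{g}}=(R, I_{n^2}, I_{n^2}, R)$, which it does because reversing $1,\dots,n$ on the first factor and $n+1,\dots,2n$ on the second fixes each block setwise.
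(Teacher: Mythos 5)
Your overall plan is the paper's plan: match the three families of defining relations ($\mathscr{P}_{\theta}$, $\mathscr{D}_{\theta}$, and the twisting relations), using the interplay of the bar map and index reversal with $R_{\mathfrak{g}}$ (your Step 1 identity, that conjugation by the reversal permutation together with $q\mapsto q^{-1}$ sends $R_{\mathfrak{g}}$ to $(R_{\mathfrak{g}})_{21}^{-1}$, is correct and is exactly the fact the paper imports from the proof of Proposition \ref{prop:iso}); your treatment of the twisting relations and of the diagonal case is sound, the latter amounting to the paper's reduction via Corollary \ref{diagiso} and Proposition \ref{prop:iso}.

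However, there is a genuine gap in your Step 2 for Types AI and AII. After applying the substitution to the quadratic relations (\ref{quad-reln}), you obtain the entries of $S\tilde X_1 S^{t_1}\tilde X_2-\tilde X_2 S^{t_1}\tilde X_1 S$ with $S=(R_{\mathfrak{g}})_{21}^{-1}$, and you assert these generate the same ideal as the original relations because ``the argument in Section \ref{section:as} that the FRT relations for $\mathcal{R}$ and $\mathcal{R}_{21}^{-1}$ coincide carries over to the reflection-equation setting.'' It does not carry over. The FRT manipulation (flip both sides, then clear $R_{21}$ by multiplying with $R_{21}^{-1}$) works because no $R$ sits between $T_1$ and $T_2$; for the reflection equation, flipping $R_{\mathfrak{g}}X_1R_{\mathfrak{g}}^{t_1}X_2=X_2R_{\mathfrak{g}}^{t_1}X_1R_{\mathfrak{g}}$ gives $ (R_{\mathfrak{g}})_{21}X_2((R_{\mathfrak{g}})_{21})^{t_2}X_1=X_1((R_{\mathfrak{g}})_{21})^{t_2}X_2(R_{\mathfrak{g}})_{21}$, and clearing the outer factors leaves $((R_{\mathfrak{g}})_{21})^{t_2}$, not $((R_{\mathfrak{g}})_{21}^{-1})^{t_1}$, in the middle, so you do not arrive at the reflection equation for $(R_{\mathfrak{g}})_{21}^{-1}$. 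In fact the statement that the $(R_{\mathfrak{g}})_{21}^{-1}$-reflection relations lie in the original ideal is essentially equivalent to the automorphism claim you are trying to prove, so invoking it here is circular. The conclusion is nevertheless true, and the repair is what the paper does: verify the automorphism of $\mathscr{P}_{\theta}$ directly on the explicit relations of Lemma \ref{lemma:explicit-relations} (index reversal flips all the inequalities and the bar map inverts the powers of $q$, and each transformed relation is readily re-derived from the listed ones together with the linear relations), and then transfer the result to $\mathscr{D}_{\theta}$ via the antiautomorphism $x_{ij}\mapsto d_{ji}$ discussed before Proposition \ref{prop:Dalgebra}, rather than rechecking Proposition \ref{prop:Dalgebra} by the same matrix-level argument. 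With that substitution your Step 3, which only needs the entrywise identity from Step 1 applied to (\ref{relnsforxd}) (or its matrix form from Lemma \ref{lemma:matrixform}), completes the proof as in the paper.
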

\begin{proof}
The diagonal case follows immediately from Corollary \ref{diagiso} and Proposition \ref{prop:iso}.  For Types AI and AII, it is straightforward to check directly from the relations  that this map defines an isomorphism of $\mathscr{P}_{\theta}$ onto itself.  Using the $\mathbb{C}(q)$ antiautomorphism defined by $x_{ij}\mapsto d_{ij}$ (see the discussion preceding Proposition \ref{prop:Dalgebra}) we see that the same result holds for $\mathscr{D}_{\theta}$.  It remains to show that the relations defined by the twisting map $\tau^{\theta}_{00}$ becomes that of $\tau^{\theta}_{11}$ and the ones defined by the twisting map $\tau^{\theta}_{10}$ becomes those of $\tau^{\theta}_{01}$ under this mapping.  The argument follows as in the proof of Proposition \ref{prop:iso} using the explicit form of these twisting maps given above.
\end{proof}

Using the formulas for the entries of $R$ and $R_{21}^{-1}$,   one can expand out the relations defined by the twisting 
map in Theorem \ref{theorem:graded-hom} in Types AI and AII.  We won't do a complete expansion here, but instead, note important properties.

\begin{corollary}\label{cor:relnsgradedWeyl} 
The following inclusions hold for the quantum graded Weyl algebra $\mathcal{A}_{00}^{\theta}$:  \begin{align*}
d_{ab}x_{ef} - q^{\delta_{af}+\delta_{ae}+\delta_{bf} +\delta_{be}} x_{ef}d_{ab} \in  \sum_{(e',f',a',b')>(e,f,a,b)} \mathbb{C}(q)x_{e'f'}d_{a'b'}
\end{align*}
for all $a,b,e,f\in \{1, \dots, {\rm rank}(\mathfrak{g})\}$ where
\begin{itemize}
\item $a\leq b$ and $e\leq f$ in Type AI
\item $a<b$ and $e<f$ in Type AII
\item $a\leq n<b$ and $e\leq n<f$ in  diagonal type
\end{itemize}
and $(e',f',a',b')>(e,f,a,b)$ if and only if $e'\geq e,f'\geq f, a'\geq a, b'\geq b$ and at least one of these inequalities is strict.   \end{corollary}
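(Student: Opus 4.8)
The strategy is to reduce the statement to the explicit twisting relation (\ref{relnsforxd}) with $S_0=R_{\mathfrak{g}}$, $S_1=(R_{\mathfrak{g}})_{21}^{-1}$, specialized to $\alpha=\upsilon=0$, and then to read off which monomials $x_{e'f'}d_{a'b'}$ can occur on the right-hand side. First I would substitute $S_\alpha=S_\upsilon=R_{\mathfrak{g}}$ and use the explicit formula (\ref{Rmatrix2}) for $R$ (and its block-diagonal form in the diagonal case) to describe the support of $R^{t_2}$: recall $(R^{t_2})^{gg}_{kk}=r^{gk}_{kg}$, so the entries $(R^{t_2})^{wr}_{xq}$ are nonzero only when $(x,q)=(w,r)$ (with value $1$, requiring $w\ne r$) or when $x=q$ and $w=r$ (the ``diagonal'' case, with value $q$ if $w=x$, $(q-q^{-1})$ if $w>x$, and $0$ if $w<x$). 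Feeding these two possibilities into the four $R^{t_2}$-factors in (\ref{relnsforxd}) gives a manageable finite case analysis: the factor $(S_\upsilon^{t_2})^{ml}_{eb}$ forces $m\geq e$, $l$ comparable to $b$; $(S_\upsilon^{t_2})^{xy}_{fl}$ forces $x\geq f$ type constraints; and the two $\alpha$-factors similarly constrain $p,w,r,y$ relative to $a,b,e,f$. The upshot I expect is that every summand has $x_{pw}d_{ry}$ with $p\geq e$, $w\geq f$, $r\geq a$, $y\geq b$ — i.e. $(p,w,r,y)\geq(e,f,a,b)$ in the partial order of the corollary.

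Next I would isolate the ``diagonal-diagonal-diagonal-diagonal'' term, i.e. the summand in which all four $R^{t_2}$-factors are taken in their diagonal ($x=q$, etc.) mode with the index pattern forcing $p=e$, $w=f$, $r=a$, $y=b$. Here each factor contributes a power of $q$ governed by the Kronecker deltas: tracking the value $q^{\delta}$ versus $(q-q^{-1})$ versus $q$ in each of the four factors, the coefficient of $x_{ef}d_{ab}$ works out to $q^{\delta_{af}+\delta_{ae}+\delta_{bf}+\delta_{be}}$, which is exactly the scalar appearing in the statement. All other summands have strictly larger index tuple, so they land in $\sum_{(e',f',a',b')>(e,f,a,b)}\mathbb{C}(q)x_{e'f'}d_{a'b'}$. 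This proves the Type AI and Type AII cases (with the respective normalizations $a\leq b$, $e\leq f$ versus $a<b$, $e<f$, the latter reflecting $x_{ii}=d_{ii}=0$ in Type AII). For the diagonal type, I would instead invoke Corollary \ref{diagiso}, which identifies $\mathcal{A}^\theta_{00}$ with $\mathcal{A}_{00}$ via $x_{i,j+n}\mapsto t_{ij}$, $d_{i,j+n}\mapsto \partial_{ij}$, and then use the explicit relations (i)--(iv) listed for $\mathcal{A}_{00}$ at the end of Section \ref{section:four}, which already exhibit precisely this triangular $q$-commutation shape with leading coefficient $q^{2\delta_{cd}}$ (after translating the index conventions: $c\leftrightarrow e,a$ and $d\leftrightarrow f,b$ give the four deltas).

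The main obstacle will be the bookkeeping in the four-fold case analysis for Types AI and AII: each of the four $R^{t_2}$-factors has a two-branch (off-diagonal vs. diagonal) behavior, and one must confirm that in every one of the combinations the resulting index tuple $(p,w,r,y)$ is $\geq (e,f,a,b)$, never smaller in any coordinate. This requires care because the off-diagonal branch of $R^{t_2}$, $(R^{t_2})^{wr}_{xq}=\delta_{xw}\delta_{qr}$ with $w\ne r$, does not by itself impose an inequality — the inequalities come only from chaining it against the diagonal branches of the neighboring factors and from the constraint $w>x$ (not $w<x$) in the diagonal branch. I would organize this by first using the two $\upsilon$-factors to pin down $m,l,x,y$ in terms of $e,f,b$, then the two $\alpha$-factors to pin down $p,w,r,q$, and verify monotonicity coordinate-by-coordinate. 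Once the support and leading-term computations are in hand, the inclusion is immediate and the normalization conventions in each type just restrict the range of indices without affecting the argument.
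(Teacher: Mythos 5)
Your proposal is correct and follows essentially the same route as the paper: specialize (\ref{relnsforxd}) to $\alpha=\upsilon=0$, use the support of $R^{t_2}$ to force $p\geq e$, $w\geq f$, $r\geq a$, $y\geq b$, read off the leading coefficient $q^{\delta_{af}+\delta_{ae}+\delta_{bf}+\delta_{be}}$ from the diagonal entries $(R^{t_2})^{ji}_{ji}=q^{\delta_{ij}}$, and handle the diagonal type via Corollary \ref{diagiso} together with the explicit relations for $\mathcal{A}_{00}$. The bookkeeping you worry about is in fact lighter than you anticipate, since a single nonzero entry $(R^{t_2})^{lk}_{ji}\neq 0$ already forces $k\geq i$ and $l\geq j$ (no chaining against neighboring factors is needed beyond transitivity), which is exactly how the paper disposes of the case analysis.
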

\begin{proof} The corollary follows in the diagonal case using the explicit relations given at the end of Section \ref{section:four}.  Hence, we just consider Types AI and AII and so   $R_{\mathfrak{g}} = R$ where $N=n$ in Type AI  and $N=2n$ in Type AII. An examination of the formulas from Section \ref{section:as} yields $(R^{t_2})^{lk}_{ji}\neq 0$ implies that $k\geq i$ and $l\geq j$. 
It follows that 
\begin{align*}
(R^{t_2})_{xq}^{wr}(R^{t_2})^{pq}_{ma}(R^{t_2})^{xy}_{fl}(R^{t_2})^{ml}_{eb}\neq 0
\end{align*}
implies that  ${y\geq l\geq b}$, $ {p\geq m\geq e}$, ${w\geq x\geq f}$, and ${r\geq q\geq a}$.
Hence 
\begin{align*}
d_{ab}x_{ef} = (R^{t_2})^{fa}_{fa}(R^{t_2})^{ea}_{ea}(R^{t_2})^{fb}_{fb}(R^{t_2})^{eb}_{eb}
x_{ef}d_{ab} +X
\end{align*}
where
\begin{align*} X\in \sum_{(e',f',a',b')>(e,f,a,b)} \mathbb{C}(q)x_{e'f'}d_{a'b'}.
\end{align*}
The corollary now follows from the fact that $(R^{t_2})^{ji}_{ji}  = q^{\delta_{ij}}$.
\end{proof} 

Note that a version of Corollary \ref{cor:relnsgradedWeyl} holds for  $\mathcal{A}_{11}^{\theta}$ with the coefficient $q^{\delta_{af}+\delta_{ae}+\delta_{bf} +\delta_{be}}$ replaced by $q^{-(\delta_{af}+\delta_{ae}+\delta_{bf} +\delta_{be})}$ and the sum runs over tuples $(e',f',a',b')$ satisfying 
the opposite inequality $(e,f,a,b)>(e',f',a',b')$.  

\begin{remark}  As explained in \cite{GY}, $\mathcal{O}_q(\Mat_N)$ is a  CGL extension and hence a  quantum nilpotent algebra.   One checks using Lemma \ref{lemma:explicit-relations} that the same holds for $\mathscr{P}_{\theta}$ for all three families. It turns out that $\mathcal{A}^{\theta}_{00}$ is an example of a symmetric CGL extension (see \cite{GY}, Section 3.3) for each family.  Certainly the previous result suggests that this is true.  This property can be verified via a complete expansion of the relations for $\mathcal{A}^{\theta}_{00}$ arising from the twisting map that defines these algebras.  Note that it is further shown in \cite{GY} that $\mathcal{O}_q(\Mat_N)$ is an example of a quantum cluster algebra. We suspect the same is true for $\mathscr{P}_{\theta}$ as well as $\mathcal{A}^{\theta}_{00}$ for all three symmetric pair families.
\end{remark} 




\section{Quantum Weyl Algebras}\label{section:QWA}
 
\subsection{An invariant bilinear form} \label{section:inv-bi-form}
The starting point for lifting the graded quantum Weyl algebras is to determine how to introduce constant terms in the relations coming from the twisting map.  This will be accomplished using  a $U_q(\mathfrak{gl}_N)$ bi-invariant   bilinear form.

\begin{lemma} \label{lemma:bilinear-form}The bilinear form $\langle \cdot, \cdot \rangle$ on  $\left(\sum_{i,j}\mathbb{C}(q)\partial_{ij} \right)\times \left(\sum_{i,j}\mathbb{C}(q)t_{ij} \right)$ defined by $$\langle\partial_{ij}, t_{sk}\rangle = \delta_{is}\delta_{jk}
{\rm \ for\  all\  }i,j,s,k$$
is $U_q(\mathfrak{gl}_N)$ bi-invariant with respect to the $U_q(\mathfrak{gl}_N)$-module structures on $\sum_{i,j}\mathbb{C}(q)\partial_{ij}$ and $\sum_{i,j}\mathbb{C}(q)t_{ij}$.
\end{lemma}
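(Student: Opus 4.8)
The plan is to verify the bi-invariance condition directly on algebra generators of $U_q(\mathfrak{gl}_N)$, using the explicit module structures recorded in Lemma~\ref{lemma:actions} (for the $t_{ij}$) and Lemma~\ref{lemma:opposite relation} (for the $\partial_{ij}$), together with the coproduct formulas on $E_i$, $F_i$, $K_{\epsilon_r}$. Since both $\sum_{i,j}\mathbb{C}(q)\partial_{ij}$ and $\sum_{i,j}\mathbb{C}(q)t_{ij}$ are finite-dimensional and bi-invariance of a bilinear form is an algebra-linear condition in $H=U_q(\mathfrak{gl}_N)$, it suffices to check $\sum \langle a_{(1)}\cdot \partial_{ij},\ a_{(2)}\cdot t_{sk}\rangle = \epsilon(a)\langle \partial_{ij},t_{sk}\rangle$ for $a$ ranging over the generators $E_r,F_r,K_{\epsilon_r}^{\pm1}$ (for the left action), and similarly with right actions on the other side; indeed one knows that the set of $a\in H$ for which the invariance identity holds is a subalgebra, so checking generators is enough.

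First I would handle the Cartan generators: $\Delta(K_{\epsilon_r})=K_{\epsilon_r}\otimes K_{\epsilon_r}$, and by the two lemmas $K_{\epsilon_r}\cdot\partial_{ij}=q^{-\delta_{ir}}\partial_{ij}$, $K_{\epsilon_r}\cdot t_{sk}=q^{\delta_{sr}}t_{sk}$, so the left-hand side becomes $q^{-\delta_{ir}+\delta_{sr}}\langle\partial_{ij},t_{sk}\rangle$, which equals $\langle\partial_{ij},t_{sk}\rangle = \epsilon(K_{\epsilon_r})\langle\partial_{ij},t_{sk}\rangle$ because the pairing is nonzero only when $i=s$, forcing the exponent to vanish. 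Next, for $E_r$ with $\Delta(E_r)=E_r\otimes 1 + K_r\otimes E_r$: the invariance sum is $\langle E_r\cdot\partial_{ij},\ t_{sk}\rangle + \langle K_r\cdot\partial_{ij},\ E_r\cdot t_{sk}\rangle$. Using $E_r\cdot\partial_{ij}=-\delta_{ir}q^{-1}\partial_{i+1,j}$, $K_r\cdot\partial_{ij}=q^{-\delta_{ir}+\delta_{i+1,r}}\partial_{ij}$ (from $K_r=K_{\epsilon_r}K_{\epsilon_{r+1}}^{-1}$ acting via Lemma~\ref{lemma:opposite relation}), and $E_r\cdot t_{sk}=\delta_{s-1,r}t_{s-1,k}$, one gets $-\delta_{ir}q^{-1}\delta_{i+1,s}\delta_{jk} + q^{-\delta_{ir}+\delta_{i+1,r}}\delta_{s-1,r}\delta_{is}\delta_{jk}$. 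The two indicator conditions $i=r,\ s=i+1$ versus $s-1=r,\ i=s$ are incompatible (the first needs $s=i+1$, the second needs $s=i$), so at most one term survives; but the surviving term, if any, also has the Kronecker data forcing a contradiction (e.g. $\delta_{s-1,r}\delta_{is}$ forces $i=s$ and $r=s-1=i-1\ne i$, killing the exponent juggling and making that term vanish too), so the total is $0=\epsilon(E_r)\langle\partial_{ij},t_{sk}\rangle$. The computation for $F_r$ with $\Delta(F_r)=F_r\otimes K_r^{-1}+1\otimes F_r$ is entirely parallel, using $F_r\cdot\partial_{i+1,j}=-\delta_{ir}q\,\partial_{ij}$, $K_r^{-1}\cdot t_{sk}$, and $F_r\cdot t_{sk}=\delta_{sr}t_{s+1,k}$; again the index constraints from the two summands are mutually exclusive, giving $0$. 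This establishes left $U_q(\mathfrak{gl}_N)$-invariance; right invariance follows by the same case analysis using the right-action formulas in the two lemmas, or more cleanly by invoking the relations $\partial_{ij}\cdot a=\iota(a^{\natural}\cdot\partial_{ji})$ and $t_{ij}\cdot a=\iota(a^{\natural}\cdot t_{ji})$ to reduce the right-invariance statement to the already-proven left-invariance one after noting $\langle\partial_{ij},t_{sk}\rangle$ is $\iota$-symmetric in the appropriate sense.

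The main obstacle is simply the bookkeeping: there is no conceptual difficulty, but one must be careful that the $K_r$-twist appearing in $\Delta(E_r)$ and $\Delta(F_r)$ is computed correctly as $K_{\epsilon_r}K_{\epsilon_{r+1}}^{-1}$ acting through Lemma~\ref{lemma:opposite relation} (which gives the $\partial$-action of $K_{\epsilon_r}$, not of $K_r$ directly), and that the overlapping Kronecker-delta conditions are correctly seen to be incompatible or self-annihilating. An alternative, slicker route I would mention is the observation that $\sum_{i,j}\mathbb{C}(q)\partial_{ij}\cong V^*\otimes W^*$ and $\sum_{i,j}\mathbb{C}(q)t_{ij}\cong V\otimes W$ as $U_q(\mathfrak{gl}_N)$-bimodules (Sections~\ref{section:mr} and the duality set-up of Section~\ref{section:vector}), and the form $\langle\partial_{ij},t_{sk}\rangle=\delta_{is}\delta_{jk}$ is exactly the tensor product of the canonical evaluation pairings $V^*\times V\to\mathbb{C}(q)$ and $W^*\times W\to\mathbb{C}(q)$; since each evaluation pairing between a module and its dual is invariant by construction (equations (\ref{firstaction}) and (\ref{secondaction}) are precisely set up so that $(S(a))^{\natural}$, resp.\ $(S^{-1}(a))^{\natural}$, is the transpose-inverse making the pairing invariant), their tensor product is bi-invariant, and this gives the result with essentially no computation.
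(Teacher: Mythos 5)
Your overall strategy (reduce bi-invariance to a check on the generators $K_{\epsilon_r}^{\pm1}, E_r, F_r$ and compute with the explicit actions of Lemmas \ref{lemma:actions} and \ref{lemma:opposite relation}) is exactly the paper's, and the Cartan case is fine. But the mechanism you give for the $E_r$ (and, by your ``entirely parallel'' remark, the $F_r$) check is wrong. First, the second summand is $\langle K_r\cdot\partial_{ij},\,E_r\cdot t_{sk}\rangle$ with $E_r\cdot t_{sk}=\delta_{s-1,r}t_{s-1,k}$, so the pairing produces $\delta_{i,s-1}\delta_{jk}$, not the $\delta_{is}\delta_{jk}$ you wrote (and the $K_r$-exponent is $-\delta_{ir}+\delta_{i,r+1}$, not $-\delta_{ir}+\delta_{i+1,r}$). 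With the correct deltas the two summands are \emph{not} mutually exclusive: both are supported on exactly the same configuration $i=r$, $s=i+1$, $j=k$, where they equal $-q^{-1}$ and $+q^{-1}$ respectively, and the invariance identity holds because they cancel, not because each vanishes. Your stated reasoning (``at most one term survives; the surviving term also vanishes'') cannot be repaired in place: the first summand $-q^{-1}\delta_{ir}\delta_{i+1,s}\delta_{jk}$ is genuinely nonzero when $i=r$, $s=i+1$, $j=k$, so if the second term really had the support you claim, the total would be $-q^{-1}\neq 0$ and the form would fail to be invariant. The same cancellation phenomenon ($-q\cdot q^{-1}+1=0$) is what makes the $F_r$ case work in the paper, so the ``index incompatibility'' picture fails there too.

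Your closing alternative, however, is sound and would salvage the proof by a genuinely different (and arguably cleaner) route: identifying $\sum\mathbb{C}(q)t_{ij}\cong V\otimes W$ and $\sum\mathbb{C}(q)\partial_{ij}\cong V^{*}\otimes W^{*}$, the form is the tensor product of the evaluation pairings, the left action only sees the $V^{*}\times V$ factor and the right action only the $W^{*}\times W$ factor, and invariance of each evaluation pairing follows from (\ref{firstaction}), (\ref{secondaction}) together with the identities $\sum S(a_{(1)})a_{(2)}=\epsilon(a)1$ and $\sum a_{(2)}S^{-1}(a_{(1)})=\epsilon(a)1$ (a two-line check using $\rho(b^{\natural})=\rho(b)^{t}$). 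The paper instead carries out the direct generator computation, whose whole content is the cancellation you missed; if you keep the computational route, redo the $E_r$ and $F_r$ cases with the corrected Kronecker deltas and exhibit that cancellation explicitly.
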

\begin{proof} It is sufficient to check the properties of left and right invariance with respect to the generators of 
 $U_q(\mathfrak{gl}_N)$.
For the left action, we have
\begin{align*}
\langle K_{\epsilon_r}\cdot\partial_{ij}, K_{\epsilon_r}\cdot t_{sk}\rangle &= q^{-\delta_{ir}}q^{\delta_{sr}}\langle \partial_{ij}, t_{sk}\rangle =  \delta_{is}\delta_{jk}
= \epsilon(K_{\epsilon_r})\langle \partial_{ij}, t_{sk}\rangle .
\end{align*} for all $r, i,j,s,k$.
Also, for all $r,i,j,s,k$ we have
\begin{align*}
\langle E_r\cdot\partial_{ij}, t_{sk}\rangle + \langle K_r\cdot\partial_{ij}, E_r\cdot t_{sk}\rangle &= -q^{-1}\delta_{ir}\langle  \partial_{i+1,j}, t_{sk}\rangle + q^{-\delta_{ir}+\delta_{i,r+1}}\delta_{r,s-1}\langle \partial_{ij} , t_{s-1,k}\rangle\cr
&= -q^{-1}\delta_{ir}\delta_{i+1,s}\delta_{jk} + q^{-\delta_{ir}}\delta_{r,s-1}\delta_{i,s-1}\delta_{jk}
\cr&=\delta_{ir}\delta_{i+1,s}\delta_{j,k}(-q^{-1}  + q^{-1})=0= \epsilon(E_r) \langle \partial_{ij},t_{sk}\rangle.
\end{align*}
Similarly, for all $r,i,j,s,k$ we have
\begin{align*}
\langle F_r\cdot \partial_{ij}, K_r^{-1}\cdot t_{sk}\rangle + \langle \partial_{ij}, F_r\cdot t_{sk}\rangle &= -qq^{-\delta_{sr}+\delta_{s,r+1}}\delta_{i-1,r}\langle\partial_{i-1,j}, 
t_{sk}\rangle +\delta_{sr} \langle \partial_{ij},t_{s+1,k}\rangle
\cr &=\delta_{i-1,s}\delta_{jk}(-qq^{-\delta_{sr}}\delta_{i-1,r}+\delta_{sr} )
\cr &=\delta_{i-1,s}\delta_{jk}\delta_{sr}(-qq^{-1} + 1 ) = 0 = \epsilon(F_r) \langle \partial_{ij},t_{sk}\rangle.
\end{align*} This establishes left invariance.  A similar computation yields right invariance.
\end{proof}

The bilinear form $\langle \cdot, \cdot \rangle$ can be translated into a linear map  $\mu:\sum_{i,j,k,l}\mathbb{C}(q)\partial_{ij}\otimes t_{kl}\rightarrow\mathbb{C}(q)$ defined  by 
$\mu(\partial_{ij}\otimes t_{kl}) = \langle \partial_{ij}, t_{kl}\rangle$ for all $i,j,k,l$.   An immediate consequence of the above lemma is that 
\begin{align}\label{u-invar}
\mu(u\cdot (\partial_{ij}\otimes t_{sk}) )= u\cdot \mu(\partial_{ij}\otimes t_{sk}) = \epsilon(u) \mu(\partial_{ij}\otimes t_{sk}).
\end{align}
for all all $u\in U_q(\mathfrak{gl}_n)$ and all $i,j,s,k$.  The analogous equality holds for the right action.  Note that (\ref{u-invar}) and its right hand version ensure that $\mu$ is a 
$U_q(\mathfrak{gl}_n)$ bi-invariant map from $\sum_{i,j,s,k} \mathbb{C}\partial_{ij}\otimes t_{sk}$ to $\sum_{i,j,s,k} \mathbb{C}t_{sk}\otimes \partial_{ij}\oplus \mathbb{C}(q).$


 For each choice of $\upsilon,\sigma\in\{0,1\}$ and each $a,b,e,f$, we write $W^{\upsilon,\sigma}_{a,b,e,f}$ for the relation associated to $a,b,e,f$ so that 
\begin{align*}
W^{\upsilon,\sigma}_{a,b,e,f} =  \partial_{ea}\otimes t_{fb}-\tau_{\upsilon,\sigma}(\partial_{ea}\otimes t_{fb}) = \partial_{ea}\otimes t_{fb}-\sum_{j,k,d,l} (R_{\sigma})^{dl}_{fe}(R_{\upsilon})^{jk}_{ba}t_{dj}\otimes \partial_{lk}\end{align*}for all $a,b,e,f$.  By Proposition \ref{prop:twisted}, the vector 
 space spanned by the $W^{\upsilon,\sigma}_{a,b,e,f}$ for $a,b,e,f\in \{1, \dots, n\}$ is 
 a $U_q(\mathfrak{gl}_n)$ sub-bimodule of $\sum_{i,j,k,l}\mathbb{C}(q)\partial_{ij} \otimes t_{kl} +\sum_{i,j,k,l}\mathbb{C}(q) t_{kl}\otimes \partial_{ij}$ with respect to the bimodule structure defined by Lemmas \ref{lemma:actions} and \ref{lemma:opposite relation}. 
   We see that the same holds when we add a scalar to each relation coming from the bilinear form.

\begin{proposition}\label{prop:Wdefn}
 For each $\upsilon,\sigma$, the vector space spanned by  
\begin{align}\label{Wdefn}
\{{W}^{\upsilon,\sigma}_{e,a,f,b} - \mu( \partial_{ea}\otimes t_{fb})|\ a,b,e,f\in \{1,\dots, n\}\}
\end{align}
 is a $U_q(\mathfrak{gl}_N)$-sub-bimodule of $\sum_{i,j,k,l}\mathbb{C}(q) \partial_{ij} \otimes t_{kl}+\sum_{i,j,k,l}\mathbb{C}(q) t_{kl}\otimes \partial_{ij} + \mathbb{C}(q)$.   
\end{proposition}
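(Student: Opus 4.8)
The plan is to reduce the statement to two facts that are already available: first, that the span of the graded relations $W^{\upsilon,\sigma}_{e,a,f,b}$ is a $U_q(\mathfrak{gl}_N)$-sub-bimodule of $\sum_{i,j,k,l}\mathbb{C}(q)\partial_{ij}\otimes t_{kl}+\sum_{i,j,k,l}\mathbb{C}(q)t_{kl}\otimes\partial_{ij}$ (this is Proposition \ref{prop:bimodule} together with Corollary \ref{corollary:invar}, which say precisely that the twisting map $\tau_{\upsilon,\sigma}$ is a bi-module map and hence that $\partial_{ea}\otimes t_{fb}\mapsto \partial_{ea}\otimes t_{fb}-\tau_{\upsilon,\sigma}(\partial_{ea}\otimes t_{fb})$ has image a sub-bimodule); and second, that $\mu$ is a $U_q(\mathfrak{gl}_N)$ bi-invariant map, i.e.\ it satisfies (\ref{u-invar}) and its right-handed analogue. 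The point is that adding the scalar correction $-\mu(\partial_{ea}\otimes t_{fb})$ to each graded relation is exactly applying to the generating set a map of the form $\mathrm{id}-\mu$ that intertwines the module actions in the appropriate sense.

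First I would fix $\upsilon,\sigma$ and let $N^{\mathrm{gr}}$ denote the span of the $W^{\upsilon,\sigma}_{e,a,f,b}$ and $N^{\mathrm{def}}$ the span of the elements in (\ref{Wdefn}). Consider the linear map $\Phi\colon \sum_{i,j,k,l}\mathbb{C}(q)\partial_{ij}\otimes t_{kl}\to \sum_{i,j,k,l}\mathbb{C}(q)\partial_{ij}\otimes t_{kl}+\sum_{i,j,k,l}\mathbb{C}(q)t_{kl}\otimes\partial_{ij}+\mathbb{C}(q)$ defined by $\Phi(\partial_{ea}\otimes t_{fb})=\partial_{ea}\otimes t_{fb}-\tau_{\upsilon,\sigma}(\partial_{ea}\otimes t_{fb})-\mu(\partial_{ea}\otimes t_{fb})$, so that $N^{\mathrm{def}}=\Phi\big(\sum\mathbb{C}(q)\partial_{ij}\otimes t_{kl}\big)$. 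The key step is to verify that $\Phi$ is a $U_q(\mathfrak{gl}_N)$ bi-module map: for any $u\in U_q(\mathfrak{gl}_N)$ acting on the left, $u\cdot\Phi(\partial_{ea}\otimes t_{fb})=u\cdot(\partial_{ea}\otimes t_{fb})-u\cdot\tau_{\upsilon,\sigma}(\partial_{ea}\otimes t_{fb})-\epsilon(u)\mu(\partial_{ea}\otimes t_{fb})$ by (\ref{u-invar}), and $u\cdot\tau_{\upsilon,\sigma}(\partial_{ea}\otimes t_{fb})=\tau_{\upsilon,\sigma}(u\cdot(\partial_{ea}\otimes t_{fb}))$ by Corollary \ref{corollary:invar}; writing $u\cdot(\partial_{ea}\otimes t_{fb})=\sum_m c_m\,\partial_{e_m a_m}\otimes t_{f_m b_m}$ with $\sum_m c_m\,\mu(\partial_{e_m a_m}\otimes t_{f_m b_m})=\epsilon(u)\mu(\partial_{ea}\otimes t_{fb})$ (this is exactly the bi-invariance of $\mu$ again, in the form that $\mu$ sends the sub-bimodule generated by $\partial_{ea}\otimes t_{fb}$ into $\mathbb{C}(q)$ compatibly with $\epsilon$), we get $u\cdot\Phi(\partial_{ea}\otimes t_{fb})=\Phi(u\cdot(\partial_{ea}\otimes t_{fb}))=\sum_m c_m\,\Phi(\partial_{e_m a_m}\otimes t_{f_m b_m})\in N^{\mathrm{def}}$. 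The argument for the right action is identical using the right-handed versions of Corollary \ref{corollary:invar} and (\ref{u-invar}). Since $N^{\mathrm{def}}$ is the image under the module map $\Phi$ of the $U_q(\mathfrak{gl}_N)$-sub-bimodule $\sum_{i,j,k,l}\mathbb{C}(q)\partial_{ij}\otimes t_{kl}$, it is itself a sub-bimodule, which is the claim.

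The only genuine subtlety, and the step I would be most careful about, is the bookkeeping that lets me apply (\ref{u-invar}) to an arbitrary element of the span rather than just to a single simple tensor: one needs that $\mu$, restricted to the sub-bimodule $\sum_{i,j,s,k}\mathbb{C}(q)\partial_{ij}\otimes t_{sk}$, is a bi-module map into $\mathbb{C}(q)$ with trivial module structure — i.e.\ $\mu(u\cdot z)=\epsilon(u)\mu(z)$ for all $z$ in that span, not merely for $z$ a generator. This follows because $U_q(\mathfrak{gl}_N)$ acts on each $\partial_{ea}\otimes t_{fb}$ by finitely many terms of the same shape and $\mu$ is linear, so (\ref{u-invar}) extends by linearity; but it is worth spelling out, since this is precisely the place where the bi-invariance of the bilinear form of Lemma \ref{lemma:bilinear-form} is doing the work. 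Once that is in hand, the proof is a short formal manipulation and requires no new computation with $R$-matrices.
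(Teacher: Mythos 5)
Your argument is correct and matches the paper's proof in substance: both rest on the bi-invariance of $\mu$ (equation (\ref{u-invar}) and its right-handed version) together with the fact that $\tau_{\upsilon,\sigma}$ is a bimodule map (Corollary \ref{corollary:invar}), extended by linearity. The only difference is cosmetic — the paper first uses the trivial intersection of $\sum\mathbb{C}(q)\partial_{ij}\otimes t_{kl}$ with $\sum\mathbb{C}(q)t_{kl}\otimes\partial_{ij}$ to reduce to the span of $\{\partial_{ea}\otimes t_{fb}-\mu(\partial_{ea}\otimes t_{fb})\}$, whereas you package $\mathrm{id}-\tau-\mu$ into a single bimodule map $\Phi$ and take its image.
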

\begin{proof}
Since the choice of $\upsilon, \sigma$ does not impact the proof, we drop the superscript from $W$ in 
the argument below. Note that for each $e,a,f,b$ we have 
\begin{align*}
W_{e,a,f,b}\in \partial_{ea}\otimes t_{fb} + \sum_{i,j,k,l}\mathbb{C}(q)\partial_{ij}\otimes t_{kl}.
\end{align*}
Since 
 $$\left(\sum_{i,j,k,l}\mathbb{C}(q)\partial_{ij}\otimes t_{kl}\right)\cap 
\left(\sum_{i,j,k,l}\mathbb{C}(q)t_{ij}\otimes \partial_{kl}\right) = 0,$$ it suffices to show that the vector space spanned by 
$\{\partial_{ea}\otimes t_{fb} - \mu (\partial_{ea}\otimes t_{fb})|\ a,b,e,f\in \{1,\dots, n\}\}$
is a $U_q(\mathfrak{gl}_n)$-bimodule. 
By (\ref{u-invar}), 
\begin{align*}
u\cdot (\partial_{ea}\otimes t_{fb} 
- \mu (\partial_{ea}\otimes t_{fb}) )= u\cdot  (\partial_{ea}\otimes t_{fb} ) -
 \mu( u\cdot (\partial_{ea}\otimes t_{fb}))
\end{align*} 
for all $u\in U_q(\mathfrak{gl}_n)$ and all $e,a,f,b$.  This proves invariance with respect to the left action of $U_q(\mathfrak{gl}_n)$.
The same argument works for the right action. 
\end{proof}


\subsection{A Criteria for PBW deformations} \label{section:PBWdef}

Recall the notion of PBW deformations introduced in \cite{BG} for quadratic algebras and generalized to other algebras
in \cite{WW2014}, and defined as follows.
\begin{definition} Let $D=\cup_{i\in \mathbb{N}}\mathcal{F}_i(D)$ be an filtered algebra and let $E$ be a $\mathbb{N}$-graded algebra.  The algebra $D$ is called a PBW deformation of $E$ if there is 
a filtered  map from $E$ to $D$ that defines 
an isomorphism of 
$E$ onto ${\rm gr}_{\mathcal{F}}D = \bigoplus_{i\geq 0}\mathcal{F}_i(D)/\mathcal{F}_{i-1}(D)$  as  $\mathbb{N}$-graded algebras.
\end{definition}

  We are interested in the following scenario. Assume that $A= T(Y)/\langle I\rangle$ and $B= T(Z)/\langle J\rangle$ where $I$ is a subspace of $Y\otimes Y$ and  $J$ is a subspace of $Z\otimes Z$. Note that both $A$ and $B$ are graded quadratic algebras. We further assume that both $A$ and $B$ are Koszul algebras (see \cite{BG}, Section 3, for a precise definition of Koszul).

  Let $\tau_{(1,1)}$ be a linear map  that sends $Z\otimes Y$ to $Y\otimes Z$.  Note that we can use $\tau_{(1,1)}$ to inductively define maps $\tau_{(m,s)}$  from $Z^{\otimes m}\otimes Y^{\otimes s}$
  by 
  \begin{align*} \tau_{(m,s)} = (Id \otimes \tau_{m-1,s-1}\otimes Id)(\tau_{m-1,1}\otimes \tau_{1, s-1})(Id^{\otimes m-1})\otimes \tau_{1,1}\otimes (Id^{\otimes s-1}).
  \end{align*} 
Now define a linear map   $\tau$  from $T(Z)\otimes T(Y) $ to $T(Y)\otimes T(Z)$ by insisting that $\tau(1\otimes a) = a\otimes 1$, $\tau(b\otimes 1) = 1\otimes b$ and 
   $\tau(c\otimes d) = \tau_{(m,s)}(c\otimes d)$ for all $c\in Z^{\otimes m}$ and $d\in Y^{\otimes s}$. We can define multiplication on $T(Z)\otimes T(Y)$  using the second property of twisting maps (see Section \ref{section:twisted}).  
   It is straightforward to check that  $\tau$ defines a twisting map from $T(Z)\otimes T(Y)$ to $T(Y)\otimes T(Z)$.
   
   Assume further that $\tau$  becomes a twisting map 
   from $B\otimes A$ to $A\otimes B$ when passing from $T(Z)\otimes T(Y)$ to $B\otimes A$ and denote this induced twisting map by $\tau$ as well. 
Set $E=A\otimes_{\tau}B$ and note that $E$ is also a Koszul algebra (\cite{WW2018}, Proposition 1.8).   Furthermore, we may identify $E$ with $T(Z\oplus Y)/\langle I + J + K\rangle$ 
where $K$ is the subspace of $Z\otimes Y + Y \otimes Z$ consisting of the elements $w-\tau(w)$ for $w\in Z\otimes Y$.

Let $\mu$ be a linear map from $I+J+K$ to $\mathbb{C}(q)$
and define the algebra $E_{\mu}$ by 
\begin{align}\label{defn:emu} E_{\mu} = T(Z\oplus Y)/\langle r-\mu(r), r\in I+J+K\rangle.
\end{align} By \cite{WW2018} Theorem 2.4 (c'), $E_{\mu}$ is a PBW deformation of $E$ if and only if $\mu\otimes Id= Id \otimes \mu$ on $$((I+J+K)\otimes(Z+Y))\cap ((Z+ Y)\otimes (I+J+K)).$$ (In the notation of \cite{WW2018} Theorem 2.4, $\mu$ plays the role of $\kappa^C$ while $\kappa^L=0$).

Now assume that $\mu$ is defined as follows.  Start with  
a linear map $\mu$  from $Z\otimes Y$ to $\mathbb{C}(q)$.  Extend $\mu$
to a linear map from $(Z+Y)\otimes (Z+Y)$ to $\mathbb{C}(q)$ by insisting that $\mu$ is identically $0$ on both $Z\otimes Z$ and $Y\otimes (Z+Y)$.  Note that  $\mu$ restricts to a linear map on
$I+J+K$, since the latter is  a subset of $(Z+Y)\otimes (Z+Y)$. Thus, we may take advantage of the above criteria for PBW deformations based on the construction of $E_{\mu}$  above.

 Since both $\mu\otimes Id$ and $Id \otimes \mu$  vanish on $Z\otimes Z\otimes Z+Y\otimes Y\otimes Y$,  the  above PBW deformation conditions becomes
$E_{\mu}$ is a PBW deformation of $E$ if and only if $\mu\otimes Id= Id \otimes \mu$ on 
\begin{align*}
((J\otimes Y + K\otimes Z)\cap (Z\otimes K + Y\otimes J) ) 
\end{align*}
and 
\begin{align*}((I\otimes Z + K\otimes Y)\cap (Z\otimes I + Y\otimes K))\end{align*}
The next result adapts  \cite{WW2018}, Theorem 2.4 (c'),  providing a particularly useful  criteria for when $E_{\mu}$ is a PBW deformation of $E$ in various settings of this paper.
  
  \begin{lemma}\label{lemma:PBWdef}   Let $\mu$ be a linear map from $(Y+Z)\otimes (Y+Z)$ to $\mathbb{C}(q)$ that is zero on $Y\otimes Y+ Y\otimes Z + Z\otimes Z$.   The algebra $E_{\mu}$ is a PBW deformation of $A\otimes_{\tau}B$ if   \begin{align}\label{cond1}\left(\mu\otimes Id+(Id\otimes \mu)(\tau\otimes Id)\right)(Z\otimes I)=0   \end{align}
  and 
  \begin{align}\label{cond2}\left(Id\otimes \mu+(\mu\otimes Id)(Id\otimes \tau)\right)(J\otimes Y)=0. \end{align}
    \end{lemma}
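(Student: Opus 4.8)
\textbf{Proof plan for Lemma \ref{lemma:PBWdef}.}

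The plan is to deduce this lemma directly from the criterion of \cite{WW2018}, Theorem 2.4 (c'), which was recalled above: $E_{\mu}$ is a PBW deformation of $E = A\otimes_{\tau}B$ if and only if $\mu\otimes Id = Id\otimes \mu$ on the two intersection spaces
\begin{align*}
\big((J\otimes Y + K\otimes Z)\cap (Z\otimes K + Y\otimes J)\big) \quad\text{and}\quad \big((I\otimes Z + K\otimes Y)\cap (Z\otimes I + Y\otimes K)\big),
\end{align*}
where $\mu$ is extended by zero on $Y\otimes Y + Y\otimes Z + Z\otimes Z$ and $K = \{w - \tau(w) : w\in Z\otimes Y\}\subseteq Z\otimes Y + Y\otimes Z$. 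So the task is to show that hypotheses \eqref{cond1} and \eqref{cond2} force $\mu\otimes Id$ and $Id\otimes \mu$ to agree on those two intersections. I would handle the two intersections by symmetric arguments, say treating the second one, $((I\otimes Z + K\otimes Y)\cap (Z\otimes I + Y\otimes K))$, in detail.

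The key computational step is to observe how $\mu\otimes Id$ and $Id\otimes\mu$ act on a generic element $\omega$ of that intersection. Since $\omega\in I\otimes Z + K\otimes Y$, write $\omega = \omega_1 + \omega_2$ with $\omega_1\in I\otimes Z$ and $\omega_2\in K\otimes Y$; then apply $\mu\otimes Id$. Because $\mu$ vanishes on $Z\otimes Z$ and on $Y\otimes(Y+Z)$, and because $I\subseteq Y\otimes Y$ (so $I\otimes Z\subseteq Y\otimes Y\otimes Z$, killed by $\mu\otimes Id$), only the $K\otimes Y$ part contributes. Writing each element of $K$ as $z - \tau(z)$ with $z\in Z\otimes Y$, we get $(\mu\otimes Id)(\omega) = (\mu\otimes Id)(z\otimes y) - (\mu\otimes Id)(\tau(z)\otimes y)$; here $\tau(z)\in Y\otimes Z$ is annihilated by $\mu\otimes Id$ (first tensor slot lands in $Y$), so the second term drops. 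Symmetrically, since $\omega\in Z\otimes I + Y\otimes K$ and $\mu$ vanishes on $Y\otimes(\cdots)$ and on $Z\otimes Y$-type pieces coming from $Z\otimes (Y\otimes Y)$ — here one uses $I\subseteq Y\otimes Y$ again so $Z\otimes I\subseteq Z\otimes Y\otimes Y$ which $Id\otimes\mu$ kills because $\mu$ is zero on $Y\otimes Y$ — one finds $(Id\otimes\mu)(\omega)$ is computed only from the $Y\otimes K$ part, and again the $\tau$-image contributes. Matching the two expressions reduces the required equality to an identity on the piece of $\omega$ lying in $Z\otimes I$ versus its image under $\tau$: precisely, $(\mu\otimes Id + (Id\otimes\mu)(\tau\otimes Id))$ applied to that $Z\otimes I$-component must vanish, which is hypothesis \eqref{cond1}. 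The argument for the first intersection is entirely parallel and uses \eqref{cond2}, with the roles of $I$ and $J$, and of "left" and "right" multiplications, interchanged; there $J\subseteq Z\otimes Z$ plays the analogous role.

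The main obstacle I anticipate is the bookkeeping: one must carefully track which summands of an element of the intersection survive under $\mu\otimes Id$ versus $Id\otimes\mu$, using repeatedly that $\mu$ is supported only on $Z\otimes Y$, together with the fact that $I$ lives in degree-two $Y$-tensors and $J$ in degree-two $Z$-tensors, and that $K$ straddles both $Z\otimes Y$ and $Y\otimes Z$. The subtle point is that an element $\omega$ of the intersection need not be written uniquely as a sum of a piece in $I\otimes Z$ and a piece in $K\otimes Y$ (nor uniquely as a piece in $Z\otimes I$ plus a piece in $Y\otimes K$), so one should phrase the argument in terms of the well-defined projections onto the relevant isotypic components of $T(Y+Z)^{\otimes 3}$ — e.g. the $Z\otimes Y\otimes Z$ and $Z\otimes Y\otimes Y$ components — so that the two decompositions are forced to be compatible. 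Once that projection language is set up, conditions \eqref{cond1} and \eqref{cond2} are visibly exactly what is needed, and the lemma follows; no genuinely hard estimate or new idea beyond the Walton--Witherspoon machinery is required.
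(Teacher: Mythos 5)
Your proposal is correct and follows essentially the same route as the paper: both rest on the Walton--Witherspoon criterion and then analyze an element of each intersection via its graded components in $Z\otimes Y\otimes Y$, $Y\otimes Z\otimes Y$, $Y\otimes Y\otimes Z$, forcing the middle component to equal $-(\tau\otimes Id)$ applied to the $Z\otimes I$ component, so that agreement of $\mu\otimes Id$ and $Id\otimes\mu$ reduces precisely to (\ref{cond1}) (and symmetrically to (\ref{cond2}) for the other intersection). The only blemishes are cosmetic (e.g.\ the stray mention of a $Z\otimes Y\otimes Z$ component and the phrase about the $\tau$-image contributing) and do not affect the validity of the reduction.
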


\begin{proof} By definition, 
$\mu\otimes Id$ vanishes on $\tau(Z\otimes I)$ since this space is a subspace of $Y\otimes Y\otimes Z$.  Similarly, $\mu\otimes Id$ vanishes 
on $(\tau\otimes Id)(Z\otimes I)$ because it is a subspace of $Y\otimes Z\otimes Y$.
Hence 
\begin{align*}
(\mu\otimes Id)(Z\otimes I) = ({\mu}\otimes Id)\circ (Id +\tau -\tau\otimes Id)(Z\otimes I).
\end{align*}
Similarly, 
\begin{align*}
(Id\otimes \mu)((\tau\otimes Id)(Z\otimes I) )= (Id\otimes \mu)\circ (-Id -\tau +\tau\otimes Id)(Z\otimes I).\end{align*}
Hence (\ref{cond1})   is equivalent to 
${\mu}\otimes Id - Id \otimes {\mu}$ vanishes on 
$(Id+ \tau -\tau\otimes Id)(Z\otimes I)$.  A similar analysis shows that (\ref{cond2})  
is equivalent to 
${\mu}\otimes Id - Id \otimes {\mu}$ vanishes on $(Id+ \tau -Id\otimes \tau)(J\otimes Y).$  

 Let $a\in(K\otimes Y+ I\otimes Z)\cap (Z\otimes I+Y\otimes K)$.  
 and write $a = a_1 + a_2 + a_3$ where $a_1\in Y\otimes Y\otimes Z, a_2 \in Y\otimes Z\otimes Y, $ and $a_3\in Z\otimes Y\otimes Y$. Since $a\in (K\otimes Y+ I\otimes Z)$,  we must have $a_3 + a_2 \in K\otimes Y$ with $a_2 = -(\tau\otimes Id)(a_3)$.  Similarly, $a\in (Z\otimes I+Y\otimes K)$
 ensures that $a_2 + a_1\in Y\otimes K$ with $a_1 = -(Id\otimes \tau)(a_2)$.  These two conditions together yield
$a_1= (Id\otimes \tau)(\tau\otimes Id)(a_3)=\tau(a_3)$.
Hence 
$a = (\tau -(\tau\otimes Id) + Id)a_3$.
In other words, $$(K\otimes Y+ I\otimes Z)\cap (Z\otimes I+Y\otimes K)\subseteq (Id +\tau-\tau\otimes Id)(Z\otimes I).$$ A similar argument shows that 
$$(J\otimes Y+K\otimes Z)\cap (Z\otimes K+ Y\otimes J) \subseteq (Id +\tau-Id\otimes \tau)(I\otimes Z).$$  The proof now follows by the discussion preceding the lemma.
\end{proof}

\subsection{PBW Deformations for Matrices}\label{section:PBWmatrices}
We now turn our attention to specific twisted tensor product algebras  introduced earlier.   In particular, we consider $A=\mathcal{O}_q(\Mat_N)$, $B=\mathcal{O}_q(\Mat_N)^{op}$, 
$E=\mathcal{A}_{\upsilon\upsilon} = A\otimes_{\tau}B$ where $\tau=\tau_{\upsilon,\upsilon}$ for $\upsilon \in\{0,1\}$ and $\mu$ is defined by the bilinear form of Lemma \ref{lemma:bilinear-form}.  As in Section \ref{section:FRT}, we have that $A$ is a quotient of the tensor algebra $T(Y)$ where $Y=V\otimes W$ and $B$ is a quotient of the tensor algebra $T(Z)$ where $Z=V^*\otimes W^*.$ 

Abusing notation somewhat, we will take $t_{ij}$ as a basis for $Y$ where we identify   $t_{ij} $ with $ v_i\otimes w_j$ for all $i,j$ and 
$\partial_{ij}$ for a basis of $Z$ where $\partial_{ij} $ is identified with $v_i^*\otimes w_j^*$ for all $i,j$.  Note that the set of defining relations $I$ in $Y\otimes Y$ can be read off of (i) and (ii) of Section \ref{section:as}.  Recall the bilinear map $\mu$ defined by the bilinear form $\langle \cdot, \cdot \rangle$ of Section \ref{section:inv-bi-form}.
The algebra $T(Y)$ is a $U_q(\mathfrak{gl}_n)$-bimodule via the action of Lemma \ref{lemma:actions}  and 
 $T(Z)$ is a $U_q(\mathfrak{gl}_n)$-bimodule via the action of Lemma \ref{lemma:opposite relation}.
  
\begin{theorem}\label{thm:deformation}
For both $\upsilon=0$ and $\upsilon=1$, the algebra $E_{\mu}$ is a PBW deformation of $E=\mathcal{A}_{\upsilon,\upsilon}= A\otimes_{\tau}B$ where $\tau=\tau_{\upsilon,\upsilon}$, $A=\mathcal{O}_q(\Mat_N)$ and $B=\mathcal{O}_q(\Mat_N)^{op}$.  Moreover, $E_{\mu}$ inherits a $U_q(\mathfrak{gl}_N)$-bimodule structure from $A$ and $B.$
\end{theorem}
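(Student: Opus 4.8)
The plan is to apply Lemma~\ref{lemma:PBWdef} with $A = \mathcal{O}_q(\Mat_N)$, $B = \mathcal{O}_q(\Mat_N)^{op}$, $\tau = \tau_{\upsilon,\upsilon}$, and $\mu$ the map induced by the bilinear form of Lemma~\ref{lemma:bilinear-form} (extended by zero on $Y\otimes Y + Y\otimes Z + Z\otimes Z$). First I would record that $A$ and $B$ are Koszul quadratic algebras — this is standard for the FRT algebras $\mathcal{O}_q(\Mat_N)$ — so that the framework of Section~\ref{section:PBWdef} applies; the twisting map $\tau_{\upsilon,\upsilon}$ of Proposition~\ref{prop:twisted} is exactly of the required form $\tau_{(1,1)}:Z\otimes Y\to Y\otimes Z$ lifting to $A\otimes_\tau B$. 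Then the theorem reduces to verifying the two identities \eqref{cond1} and \eqref{cond2}, namely
\begin{align*}
\left(\mu\otimes Id + (Id\otimes\mu)(\tau\otimes Id)\right)(Z\otimes I) &= 0,\\
\left(Id\otimes\mu + (\mu\otimes Id)(Id\otimes\tau)\right)(J\otimes Y) &= 0,
\end{align*}
where $I\subseteq Y\otimes Y$ and $J\subseteq Z\otimes Z$ are the spaces of FRT relations for $A$ and $B$ respectively.

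\textbf{Reduction via invariance.} The key conceptual point — which I would emphasize to avoid a brute-force computation — is that the left-hand sides of \eqref{cond1} and \eqref{cond2} are maps of $U_q(\mathfrak{gl}_N)$-bimodules. Indeed, $\mu$ is $U_q(\mathfrak{gl}_N)$-bi-invariant by Lemma~\ref{lemma:bilinear-form} and \eqref{u-invar}, the twisting map $\tau_{\upsilon,\upsilon}$ is a bimodule map by Corollary~\ref{corollary:invar}, and $I$, $J$ are sub-bimodules of $Y\otimes Y$ and $Z\otimes Z$ (this is implicit in Lemmas~\ref{lemma:actions} and~\ref{lemma:opposite relation}, since the FRT relations are the kernels of bimodule projections). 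Therefore $\left(\mu\otimes Id + (Id\otimes\mu)(\tau\otimes Id)\right)(Z\otimes I)$ is a $U_q(\mathfrak{gl}_N)$-sub-bimodule of $Z = V^*\otimes W^*$. Since $V^*\otimes W^*$ is a simple $U_q(\mathfrak{gl}_N)$-bimodule (it is $L(-\epsilon_N)$ on the left and the corresponding simple on the right, with no proper nonzero sub-bimodule), either the image is all of $Z$ or it is zero. To rule out the former, it suffices to evaluate the map on a single convenient element of $Z\otimes I$ — for instance using the highest-weight relation in $I$ together with $\partial_{11}$ (or whichever index pattern makes the $R$-matrix entries collapse to scalars) — and check the one scalar equation that results. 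The same argument handles \eqref{cond2} with the roles of $Y$ and $Z$ interchanged, using that $Y = V\otimes W$ is likewise a simple bimodule.

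\textbf{The computation and the restriction to $\upsilon=\sigma$.} After the reduction, what remains is to verify one scalar identity for \eqref{cond1} and one for \eqref{cond2}. Here I would plug in the explicit twist relations \eqref{twistdefn}, the explicit FRT relations (i),(ii) of Section~\ref{section:as} for $I$ and their opposite-algebra analogues for $J$, and the formula $\mu(\partial_{ij}\otimes t_{kl}) = \delta_{ik}\delta_{jl}$. The computation should produce, for each $\upsilon$, a relation of the shape ``$R_\upsilon^{t_2}$ combined against a single FRT quadratic relation, contracted through $\mu$, equals its image after pushing $\mu$ to the other tensor slot.'' The crucial structural fact that makes this work precisely when $\sigma=\upsilon$ (and fails otherwise) is that both slots of the twist relation \eqref{twistdefn} then involve the \emph{same} matrix $R_\upsilon$, so the two contributions in \eqref{cond1} (resp.\ \eqref{cond2}) are governed by the \emph{same} $R$-matrix and cancel by the Hecke-type / inversion identity $R_0 = R^{t_2}$, $R_1 = (R_{21}^{-1})^{t_2}$ together with the relation between $R$ and $R_{21}^{-1}$ used throughout Section~\ref{section:images}. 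I expect this scalar verification — identifying the right single test vector so the $R$-matrix entries reduce to explicit scalars, and then confirming the cancellation — to be the main obstacle, though it is a finite and self-contained check rather than a conceptual one. Once \eqref{cond1} and \eqref{cond2} hold, Lemma~\ref{lemma:PBWdef} gives that $E_\mu$ is a PBW deformation of $\mathcal{A}_{\upsilon,\upsilon}$. Finally, the $U_q(\mathfrak{gl}_N)$-bimodule structure on $E_\mu$ follows because the defining relations of $E_\mu$ are, by Proposition~\ref{prop:Wdefn}, exactly the elements $W^{\upsilon,\upsilon}_{e,a,f,b} - \mu(\partial_{ea}\otimes t_{fb})$ together with the relations of $A$ and $B$, and Proposition~\ref{prop:Wdefn} shows this spanning set is a sub-bimodule; hence the ideal it generates is a bimodule ideal and the quotient inherits the bimodule algebra structure from $A$ and $B$.
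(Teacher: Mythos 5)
Your overall architecture matches the paper's proof: invoke Lemma~\ref{lemma:PBWdef}, use bi-invariance of $\mu$ (Lemma~\ref{lemma:bilinear-form}, \eqref{u-invar}) and of $\tau_{\upsilon,\upsilon}$ (Corollary~\ref{corollary:invar}) to cut down the verification of \eqref{cond1}--\eqref{cond2}, finish with an explicit low-complexity computation, and get the bimodule structure on $E_\mu$ from Proposition~\ref{prop:Wdefn}. However, the central reduction step in your proposal contains a genuine gap. You argue that the image of $\mu\otimes Id+(Id\otimes\mu)(\tau\otimes Id)$ on $Z\otimes I$ is a sub-bimodule of a simple bimodule (incidentally, the image lies in $Y=V\otimes W$, spanned by the $t_{kl}$, not in $Z=V^*\otimes W^*$), hence is zero or everything, and that evaluating the map on a single element rules out the second alternative. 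That inference is invalid: vanishing at one element of $Z\otimes I$ says nothing about the image being nonzero on other elements, and $Z\otimes I$ is very far from being a cyclic (let alone simple) bimodule, since already $I$ decomposes into several irreducible constituents; so one evaluation, and one scalar equation, cannot force the whole map to vanish.

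The correct reduction, which is what the paper does, runs in the opposite direction: because the map is a bimodule map, because $I$ is a sub-bimodule stable under the $K_i^{\pm1}$, and because $\partial_{NN}$ generates all of $Z$ under the left action of the $F_j$ and the right action of the $E_j$, vanishing of the map on the entire slice $\partial_{NN}\otimes I$ (not on a single vector) propagates to all of $Z\otimes I$. One must then still verify vanishing on $\partial_{NN}\otimes I$; this is where the special index pattern pays off, since $\tau_{00}(\partial_{NN}\otimes t_{fb})=q^{\delta_{fN}+\delta_{bN}}t_{fb}\otimes\partial_{NN}$ and $\mu(\partial_{NN}\otimes t_{fb})=\delta_{fN}\delta_{bN}$, so only the $2(N-1)$ relations $t_{Ni}\otimes t_{NN}-qt_{NN}\otimes t_{Ni}$ and $t_{iN}\otimes t_{NN}-qt_{NN}\otimes t_{iN}$ can contribute, and for each the two contributions are $-qt_{Ni}$ and $+qt_{Ni}$ (identities in $Y$, not scalars), which cancel; the argument for \eqref{cond2} is symmetric with $t_{NN}$ in place of $\partial_{NN}$, and $\upsilon=1$ uses $\partial_{11}$, $t_{11}$ instead. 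Your proposal also defers this computation entirely ("I expect this scalar verification..."), so with the reduction repaired you would still need to carry out these finitely many checks to have a complete proof; as written, neither the reduction nor the verification is in place.
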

\begin{proof} By Lemmas \ref{lemma:actions} and \ref{lemma:opposite relation}, both $I$ and $J$ are $U_q(\mathfrak{gl}_N)$-sub-bimodules of $T(Y)\otimes T(Z)$. By Proposition \ref{prop:Wdefn}, the space $K$ defined by $\tau$ and $\mu$ is a $U_q(\mathfrak{gl}_N)$ sub-bimodule of $T(Y)\otimes T(Z)$.  This proves the last assertion of the theorem.  For the first assertion, we show that $E_{\mu}$ satisfies the criteria of Lemma \ref{lemma:PBWdef}.

 By Corollary \ref{corollary:invar} and 
(\ref{u-invar}), both  $\tau$ and $\mu$ are $U_q(\mathfrak{gl}_N)$ bi-invariant.  Hence so is the map $\mu\otimes Id +(Id\otimes \mu)(\tau\otimes Id)$.  Hence
$(\mu\otimes Id +(Id\otimes \mu)(\tau\otimes Id)) (z\otimes y) = 0$
implies the same is true for $z\otimes y$ replaced by $u\cdot (z\otimes y)$  and by $ (z\otimes y)\cdot u$ for all $u\in U_q(\mathfrak{gl}_N)$.

 For the remainder of the proof, we assume that $\upsilon=0$. The proof for $\upsilon=1$ is the same up to  easy modifications such as  swapping $\partial_{NN}$ with $\partial_{11}$. Now suppose that 
\begin{align}\label{main-objective}
\left(\mu\otimes Id +(Id\otimes \mu)(\tau\otimes Id)\right) (\partial_{NN}\otimes y) = 0
\end{align}
for all $y\in I$. Hence 
$\left(\mu\otimes Id +(Id\otimes \mu)(\tau\otimes Id)\right) (F_{N-1}\cdot (\partial_{NN}\otimes y) )= 0$
for all $y\in I$.
Note that 
\begin{align*} F_{n-1}\cdot (\partial_{NN}\otimes y)&= (F_{N-1}\cdot \partial_{NN}) \otimes K_i^{-1}\cdot y +\partial_{NN}\otimes F_{N-1}\cdot y\cr 
&=-q\partial_{N-1,N} \otimes K_i^{-1}\cdot y + \partial_{NN}\otimes F_{N-1}\cdot y.
\end{align*}
Since $K_i$ acts semisimply on 
$Y\otimes Y$ and $I$ is an $U_q(\mathfrak{gl}_N)$ invariant subspace, we have $I = K_i^{-1}\cdot I$.  Hence, it follows that 
$\left(\mu\otimes Id+(Id\otimes \mu)(\tau\otimes Id)\right) (\partial_{N-1,N}\otimes  y) = 0$
for all $y\in I$.  By induction, 
$\left(\mu\otimes Id+(Id\otimes \mu)(\tau\otimes Id)\right) (\partial_{jN}\otimes  y) = 0$
for all $1\leq j\leq N$ and all $y\in I$.  Using the right action with $F_j$ replaced by $E_j$ yields 
$\left(\mu\otimes Id +(Id\otimes \mu)(\tau\otimes Id)\right) (\partial_{jk}\otimes  y) = 0$
for all $j,k$ and all $y\in I$.  Thus it is sufficient to establish (\ref{main-objective}) in order to prove (\ref{cond1}) with $\mu=\mu$ and $\tau=\tau_{00}$.

Note that the space  $I$ is spanned by the elements of $\sum_{i,j,k,l}\mathbb{C}(q)t_{ij}\otimes t_{kl}$  corresponding to the relations (i) and (ii)  of $\mathcal{O}_q(\Mat_N)$  presented in Section \ref{section:as}. 
It follows from the formulas for the entries of $R$ in Section \ref{section:images} that  $(R^{t_2})^{jk}_{bN} = r^{jN}_{bk} = q^{\delta_{bN}}\delta_{jb}\delta_{Nk}$
and $(R^{t_2})^{dl}_{fN} =r^{dN}_{fl} = q^{\delta_{fN}}\delta_{Nl}\delta_{df}$.   Hence, since $\upsilon=0$,    (\ref{twistdefn}) becomes 
\begin{align*}
\tau_{00}(\partial_{NN}\otimes t_{fb}) =  q^{\delta_{fN}+\delta_{bN}}t_{fb}\otimes \partial_{NN}
\end{align*}
at $(e,a)= (N,N)$ for all $f$ and $b$ in the set $\{1, \dots, N\}$.  It follows that the only term of the form $\partial_{ij}$ that shows up in 
the expression for $(\tau\otimes Id) (\partial_{NN}\otimes y)$ is $\partial_{NN}$ for all $y\in I$.  Hence we need only show that $\mu\otimes Id -(Id\otimes \mu)(\tau\otimes Id)$ vanishes
on the subset of $Z\otimes I$ consisting of elements 
$\partial_{NN}\otimes y$ where $y$ is any element in the set 
\begin{align*} 
\{t_{Ni}\otimes t_{NN} - qt_{NN}\otimes t_{Ni} , t_{iN}\otimes t_{NN} - qt_{NN}\otimes t_{iN}|\ i=1, \dots, N-1\}.
\end{align*}
We have 
\begin{align*}
(\mu\otimes Id) (\partial_{NN}\otimes t_{N,i}\otimes t_{NN} - q\partial_{NN}\otimes t_{NN}\otimes t_{Ni} )=-q(1\otimes t_{Ni}) 
\end{align*}
This final term is simply $-qt_{N,i}$ viewed as an element in $T(Z)$.  Also
\begin{align*}
&(Id\otimes \mu)(\tau_{00}\otimes Id)(\partial_{NN}\otimes t_{Ni}\otimes t_{NN} - q\partial_{NN}\otimes t_{NN}\otimes t_{Ni} )
\cr &=(Id\otimes \mu)(qt_{N,i}\otimes \partial_{NN}\otimes t_{NN} - q^3 t_{NN}\otimes \partial_{NN}\otimes t_{Ni} ) = q(t_{Ni}\otimes 1) = qt_{Ni}
\end{align*}
Thus $(\mu\otimes Id) + (Id\otimes \mu)(\tau_{00}\otimes Id)$ vanishes on all elements of the form $\partial_{NN}\otimes t_{N,i}\otimes t_{NN} - q\partial_{NN}\otimes t_{NN}\otimes t_{N,i}$.
The same argument with $t_{N,i}$ replaced by $t_{i,N}$ terms shows the analogous result  is true for all elements of the form $\partial_{NN}\otimes t_{i,N}\otimes t_{NN} - q\partial_{NN}\otimes t_{NN}\otimes t_{i,N}$.
This proves (\ref{main-objective}) and hence (\ref{cond1}) for $\tau=\tau_{00}$.  Criteria (\ref{cond2}) for $\tau=\tau_{11}$ is established using a similar argument with $\partial_{NN}$ replaced by $t_{NN}$.  \end{proof}

A natural question is whether a version of Theorem \ref{thm:deformation} holds for the other two algebras $\mathcal{A}_{01}$ and $\mathcal{A}_{10}$.  Unfortunately, such a deformation does not work using the invariant bilinear form $\mu$. To get an idea of why this construction fails, consider $(\upsilon,\sigma) = (0,1)$.  Formula (\ref{twistdefn}) becomes
\begin{align*}
\tau_{01}(\partial_{1,N}\otimes t_{fb}) =  q^{-\delta_{f1}+\delta_{bN}}t_{fb}\otimes \partial_{1N}
\end{align*}
for $(e,f)= (1,N)$ and any choice of $f,b$.  We have 
\begin{align*}
(\mu\otimes Id) (\partial_{1N}\otimes t_{1i}\otimes t_{1N} - q\partial_{1N}\otimes t_{1N}\otimes t_{1i} )=-qt_{1i} 
\end{align*}
but 
\begin{align*}
&(Id\otimes \mu)(\tau_{01}\otimes Id)(\partial_{1N}\otimes t_{1i}\otimes t_{1N} - q\partial_{1N}\otimes t_{1N}\otimes t_{1i} )
\cr &=(Id\otimes \mu)(q^{-1}t_{1,i}\otimes \partial_{1N}\otimes t_{1N} - q t_{1N}\otimes \partial_{1N}\otimes t_{1i} ) =  q^{-1}t_{1i}
\end{align*}
and clearly $-qt_{1i} \neq q^{-1}t_{1i}$.  This means that when we replace  (\ref{twistdefn}) with the deformed version using $\mu$, the element $t_{1i}$ ends up in the image of the  ideal generated by the relations for the $t_{kl}$.  In other words,  $t_{1i}$ must be $0$ in this deformation of the twisted tensor product $\mathcal{A}_{01}$.  Clearly, the end result is not a PBW deformation.  Despite this failure, we will find the twisted tensor products $\mathcal{A}_{01}$ and $\mathcal{A}_{10}$ to be essential in the construction of quantum Weyl algebras for homogeneous spaces.

Write $\mathcal{W}_{00}$ for the deformation of $\mathcal{A}_{00}$ and $\mathcal{W}_{11}$ for the deformation of  $\mathcal{A}_{11}$ as described by  Theorem \ref{thm:deformation}.  Note that Proposition can be easily extended to show that $\mathcal{W}_{00}$ and $\mathcal{W}_{11}$ are isomorphic as algebras.  It turns out that $\mathcal{W}_{00}$ is isomorphic to the quantum Weyl algebra $\mathscr{PD}_q(\Mat_N)$ studied in \cite{B} that is a normalized version of the algebra  ${\rm Pol}(\Mat_N)_q$ introduced in \cite{VSS} in the context of quantum bounded symmetric domains.  The algebra $\mathscr{PD}_q(\Mat_N)$  is generated by $z_{ij},z^*_{ij}, 1\leq i,j\leq N$ such that 
the $z_{ij}$ generate a subalgebra isomorphic to $\mathcal{O}_q(\Mat_N)$,
the $z^*_{ij}$ generate a subalgebra isomorphic to $\mathcal{O}_q(\Mat_N)^{op}$, 
and 
\begin{align}\label{Weylrelns}
z^*_{ea}z_{fb} = \sum_{j,n,d,l} q^2R(e,f,l,d)R(a,b,k,j)z_{dj}z^*_{lk} + \delta_{ef}\delta_{ab}
\end{align}
for all $e,a,f,b$ where 
\begin{itemize}
\item $ R(i,i,i,i) = 1,\ 
R(i,j,i,j)= q^{-1}$
for all $i,j$ with $i\neq j$.
\item 
$ R(i,i,j,j)= -(q^{-2}-1)$
for all $i<j$.
\item 
$R(i,j,k,l) = 0$ for all other choices of $j,i,j',i'$.
\end{itemize}
Note that (\ref{Weylrelns}) is just  equality (6) of \cite{B} (up to a change in variable name) but with the scalar term missing a coefficient of $(1-q^2)$. As explained in \cite{B}, one can rescale the terms $\partial_{ea}$ and $t_{dj}$ so as to remove this coefficient. We have performed this rescaling  in (\ref{Weylrelns}). 

\begin{proposition}\label{prop:PBWdef} The   algebra  $\mathcal{W}_{00}$ is isomorphic to  the algebra $\mathscr{PD}_q(\Mat_N)$.
\end{proposition}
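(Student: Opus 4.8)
The claim is that $\mathcal{W}_{00}$, the PBW deformation of $\mathcal{A}_{00}$ constructed in Theorem \ref{thm:deformation}, is isomorphic as an algebra to $\mathscr{PD}_q(\Mat_N)$. The strategy is to write down an explicit map on generators and verify it respects the defining relations of both algebras, then invoke a PBW/dimension argument to see it is an isomorphism. Concretely, I would define a map $\Phi$ sending $z_{ij}\mapsto \lambda_{ij}\, t_{ij}$ and $z^*_{ij}\mapsto \lambda'_{ij}\,\partial_{ij}$ for suitable nonzero scalars $\lambda_{ij},\lambda'_{ij}\in\mathbb{C}(q)$ (a rescaling), where the $t_{ij}$ and $\partial_{ij}$ are the generators of the copies of $\mathcal{O}_q(\Mat_N)$ and $\mathcal{O}_q(\Mat_N)^{op}$ inside $\mathcal{W}_{00}$. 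Since both $\mathscr{PD}_q(\Mat_N)$ and $\mathcal{W}_{00}$ contain $\mathcal{O}_q(\Mat_N)$ as the subalgebra generated by the ``polynomial'' generators and $\mathcal{O}_q(\Mat_N)^{op}$ as the subalgebra generated by the ``differential'' generators (this is part of the construction of $\mathscr{PD}_q(\Mat_N)$ recalled just before the statement, and part of the twisted tensor product structure of $\mathcal{A}_{00}$ together with Theorem \ref{thm:deformation}), the map is automatically a well-defined algebra homomorphism on each of these two subalgebras once the scalars are chosen consistently with the respective FRT-type relations — and in fact one can take $\lambda_{ij}=\lambda'_{ij}=1$ on the $\mathcal{O}_q$ side since those relations are homogeneous and identical in both presentations.

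The heart of the matter is the cross relation: I must check that the relation (\ref{Weylrelns}) of $\mathscr{PD}_q(\Mat_N)$, namely
\begin{align*}
z^*_{ea}z_{fb} = \sum_{j,k,d,l} q^2R(e,f,l,d)R(a,b,k,j)z_{dj}z^*_{lk} + \delta_{ef}\delta_{ab},
\end{align*}
matches, up to the rescaling $\Phi$, the defining relation of $\mathcal{W}_{00}$, which is the deformed twisting relation $\partial_{ea}t_{fb}=\sum_{j,k,d,l}(R_0^{t_2})^{dl}_{fe}(R_0^{t_2})^{jk}_{ba}t_{dj}\partial_{lk}+\mu(\partial_{ea}\otimes t_{fb})$ with $\mu(\partial_{ea}\otimes t_{fb})=\langle\partial_{ea},t_{fb}\rangle=\delta_{ef}\delta_{ab}$ from Lemma \ref{lemma:bilinear-form}. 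So the plan is: first, identify the matrix $R(\cdot,\cdot,\cdot,\cdot)$ appearing in \cite{B} with the entries of $R_0^{t_2}=(R^{t_2})$ (or a suitable rescaled/transposed variant), reading off from the explicit formula (\ref{Rmatrix2}) for $R$ and the definition $R_0=R^{t_2}$; one should find $q^2 R(e,f,l,d)R(a,b,k,j) = c_{e,f,d}\,c_{a,b,k}\,(R^{t_2})^{dl}_{fe}(R^{t_2})^{jk}_{ba}$ for scalars $c$ built from the rescaling, after matching index conventions. Second, solve for the rescaling scalars $\lambda_{ij}$ so that applying $\Phi$ to both sides of (\ref{Weylrelns}) yields exactly the $\mathcal{W}_{00}$ relation; the constant term $\delta_{ef}\delta_{ab}$ must be preserved, which pins down the product $\lambda'_{ea}\lambda_{fb}$ on the relevant locus and forces a consistency check that the quadratic part rescales compatibly. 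The expanded relations (i)–(iv) for $\mathcal{A}_{00}$ listed at the end of Section \ref{section:four}, together with the comparison of (\ref{Weylrelns}) to ``equality (6) of \cite{B}'' already flagged in the text, make this essentially a bookkeeping verification.

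Once the relations are matched, $\Phi$ is a surjective algebra homomorphism $\mathscr{PD}_q(\Mat_N)\to\mathcal{W}_{00}$ (surjective because the images of the generators generate). To see injectivity, I would use the PBW bases on both sides: $\mathcal{W}_{00}$, being a PBW deformation of $\mathcal{A}_{00}=\mathcal{O}_q(\Mat_N)\otimes_{\tau_{00}}\mathcal{O}_q(\Mat_N)^{op}$ by Theorem \ref{thm:deformation}, has multiplication inducing a vector space isomorphism $\mathcal{O}_q(\Mat_N)\otimes\mathcal{O}_q(\Mat_N)^{op}\xrightarrow{\sim}\mathcal{W}_{00}$, so it has a PBW basis indexed by pairs of monomials in the $t_{ij}$ and the $\partial_{ij}$ (using the PBW basis (\ref{PBWbasis-ordinary-Weyl}) of $\mathcal{O}_q(\Mat_N)$). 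The algebra $\mathscr{PD}_q(\Mat_N)$ from \cite{B}, \cite{VSS} has the analogous PBW basis in the $z_{ij}$ and $z^*_{ij}$ by construction. Since $\Phi$ maps one PBW-spanning set onto the other and both are bases, $\Phi$ is a bijection, hence an isomorphism. The main obstacle I anticipate is purely the index/convention matching in the second paragraph: aligning the ``$R$'' of \cite{B} (written with four arguments in a particular order and normalized with $q^{-1}$ on the off-diagonal $r^{ij}_{ij}$) with $R^{t_2}$ as normalized here, keeping track of which transpose and which of $\mathcal{R}$ versus $\mathcal{R}_{21}^{-1}$ is in play, and checking the rescaling is globally consistent rather than forcing extra relations; everything else is formal.
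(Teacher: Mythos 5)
Your proposal is correct and follows essentially the same route as the paper: one compares the entries of $R$ from (\ref{Rmatrix2}) with Bershtein's $R(\cdot,\cdot,\cdot,\cdot)$ and finds $q^{-1}(R^{t_2})^{dl}_{fe}=R(e,f,l,d)$, so that the $q^2$ in (\ref{Weylrelns}) cancels the two factors of $q^{-1}$, the constant terms $\delta_{ef}\delta_{ab}=\mu(\partial_{ea}\otimes t_{fb})$ match, and all your rescaling scalars can be taken to be $1$ (the rescaling is already built into (\ref{Weylrelns})); the identification of the generator-to-generator map as an isomorphism then rests, as in your PBW argument, on the vector space isomorphism $\mathcal{O}_q(\Mat_N)\otimes\mathcal{O}_q(\Mat_N)^{op}\cong\mathscr{PD}_q(\Mat_N)$ from \cite{VSS}, which the paper invokes in the discussion following the proposition.
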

  \begin{proof}  A comparison of the entries of the matrix $R$ as stated in Section \ref{section:images} and the elements $R(a,b,c,d)$ defined above yields $q^{-1}(R^{t_2})^{dl}_{fe}  = R(e,f,l,d)$
  for all $e,f,l,d$. Hence   (\ref{Weylrelns}) agrees with the relations for $\mathcal{W}_{00}$ obtained from (\ref{twistdefn}) with the deformation using $\mu$.   It follows that the map $\mathcal{W}_{00}\rightarrow \mathcal{W}$ sending $t_{ij}, \partial_{ij}$ to $z_{ij}, z^*_{ij}$  is an algebra isomorphism. 
  \end{proof}
It should be noted that the $U_q(\mathfrak{gl}_N)$ left module structure studied in this paper differs from that of \cite{VSS} and \cite{B} and, moreover, these references do not include a right module structure. Now by \cite{VSS} (see Proposition 2.1),
 the map $\mathcal{O}_q(\Mat_N)\otimes \mathcal{O}_q(\Mat_N)^{op}\rightarrow \mathscr{PD}_q(\Mat_N)$ defined by multiplication is  an isomorphism of vector spaces.   This fact directly implies that $\mathscr{PD}_q(\Mat_N) =\mathcal{W}_{00}$ is a PBW deformation of $\mathcal{A}_{00}$.  So Theorem \ref{thm:deformation} is really a new proof for a  result   in \cite{VSS}.  However,  the proof here has a number of advantages. It is less computational, it can be used to easily show that the other algebras $\mathcal{A}_{\upsilon,\sigma}, \upsilon\neq \sigma$ do not admit such a PBW deformation (as explained above),  and, perhaps most importantly, it  extends in a straightforward manner to the setting of quantum homogeneous spaces. 

\subsection{Invariant bilinear forms for homogeneous spaces}
 Here we introduce invariant bilinear forms so as to deform the algebras $\mathcal{A}^{\theta}_{\alpha,\upsilon}$ to the noncommutative setting (when $\alpha=\upsilon$) in analogy to how we handled the 
algebras $\mathcal{A}_{\alpha,\upsilon}$. For the regular quantum Weyl algebra, invariant meant with respect to the bimodule action of $U_q(\mathfrak{gl}_N)$.  For the three families of homogeneous spaces, by invariant, we mean with respect to the left action of $U_q(\mathfrak{g})$.   In the diagonal setting, we can use  Corollary  \ref{cor:relnsgradedWeyl} and the discussion following its proof to convert
 $\mathcal{A}^{\theta}_{\alpha,\upsilon}$ into $\mathcal{A}_{\alpha,\upsilon}$ as $U_q(\mathfrak{gl}_n)$-bimodules by moving the action of one of the copies of $U_q(\mathfrak{gl}_n)$ from the left to the right.  Hence, for most of the proofs below, the diagonal type follows immediately from the regular one.  
 
 Recall the quantum Weyl algebra $\mathcal{W}_{00}$, which is a deformation of  the graded quantum Weyl algebra $\mathcal{A}_{00}$, as introduced in Section \ref{section:PBWmatrices}. By construction, both $\mathscr{P}$ and $\mathscr{D}$ are $U_q(\mathfrak{g})$-sub-bimodule algebras of $\mathcal{W}_{00}$.  Hence, both $\mathscr{P}_{\theta}$ and $\mathscr{D}_{\theta}$ are left $U_q(\mathfrak{g})$  submodule algebras of $\mathcal{W}_{00}$.
 
 Let $\mathcal{L}$ denote the left ideal of $\mathcal{W}_{00}$ generated by the $\partial_{ij}$.   Since $\sum_{i,j}\mathbb{C}(q)\partial_{ij}$ is a $U_q(\mathfrak{g})$-bimodule, so is $\mathcal{L}$.  Note that  $\mathcal{W}_{00} = \mathcal{L}\oplus \mathscr{P}$.  Moreover, 
if $y\in \mathscr{D}$ and $w\in \mathscr{P}$ are homogeneous terms in the $\partial_{ij}$ and $t_{ij}$ respectively 
and of the same degree, then $yw \in \mathcal{L} + \mathbb{C}(q)$.
Thus, we can define a bilinear form 
\begin{align}\label{bilinear}
\left(\sum_{i,j}\mathbb{C}(q)d_{ij} \right)\times \left(\sum_{i,j}\mathbb{C}(q)x_{ij} \right)\rightarrow \mathbb{C}(q)
\end{align}
by the formula
\begin{align}\label{invformula}
d_{ij}x_{uv} -  \langle d_{ij}, x_{uv}\rangle \in\mathcal{L}
\end{align}
where $ \langle d_{ij}, x_{uv}\rangle\in \mathbb{C}(q)$ for each $i,j,u,v$.   
\begin{lemma} The biliinear form defined by (\ref{invformula}) is left $U_q(\mathfrak{g})$ and right $\mathcal{B}_{\theta}$ invariant.
\end{lemma}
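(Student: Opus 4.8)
The plan is to reduce the invariance of the new bilinear form $\langle\cdot,\cdot\rangle$ on $\left(\sum_{i,j}\mathbb{C}(q)d_{ij}\right)\times\left(\sum_{i,j}\mathbb{C}(q)x_{ij}\right)$ to the already–established bi-invariance of the form $\mu$ (equivalently $\langle\cdot,\cdot\rangle$ of Lemma~\ref{lemma:bilinear-form}) on $\left(\sum_{i,j}\mathbb{C}(q)\partial_{ij}\right)\times\left(\sum_{i,j}\mathbb{C}(q)t_{ij}\right)$ inside $\mathcal{W}_{00}$. The key structural fact is that $\mathcal{W}_{00}=\mathcal{L}\oplus\mathscr{P}$ as a $U_q(\mathfrak{g})$-bimodule, where $\mathcal{L}$ is the left ideal generated by the $\partial_{ij}$, and that the projection $\pi:\mathcal{W}_{00}\to\mathbb{C}(q)$ along $\mathcal{L}\oplus\mathscr{P}_{>0}$ (constant term in the PBW presentation, where $\mathscr{P}_{>0}$ is the augmentation ideal of $\mathscr{P}$) is a $U_q(\mathfrak{g})$-bimodule map: indeed $\mathcal{L}$, $\mathscr{P}_{>0}$, and $\mathbb{C}(q)$ are each $U_q(\mathfrak{g})$-subbimodules, so the projection onto any summand intertwines the action, and $u\cdot c=\epsilon(u)c$ for a scalar $c$. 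With this, $\langle d_{ij},x_{uv}\rangle = \pi(d_{ij}x_{uv})$, and more generally $\langle\cdot,\cdot\rangle$ restricted to the spans of the $d_{ij}$ and $x_{uv}$ is exactly $w\otimes w'\mapsto \pi(ww')$.

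First I would record that $\sum_{i,j}\mathbb{C}(q)x_{ij}$ is a left $U_q(\mathfrak{g})$-submodule of $\mathscr{P}_\theta\subseteq\mathscr{P}\subseteq\mathcal{W}_{00}$ (Lemma~\ref{lemma:left_action}) and right $\mathcal{B}_\theta$-invariant (the generating span of $\mathscr{P}_\theta$ is $\mathcal{B}_\theta$-invariant by Lemma~\ref{lemma:inv-typeAIAII} with our choice $a=(1,\dots,1)$), and similarly $\sum_{i,j}\mathbb{C}(q)d_{ij}$ is left $U_q(\mathfrak{g})$-stable (Lemma~\ref{lemma:left_action}) and right $\mathcal{B}_\theta$-invariant (Lemma~\ref{lemma:diffmodule}). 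Next, for left $U_q(\mathfrak{g})$-invariance: given $g\in U_q(\mathfrak{g})$, $d\in\sum\mathbb{C}(q)d_{ij}$, $x\in\sum\mathbb{C}(q)x_{uv}$, compute
\begin{align*}
\sum\langle g_{(1)}\cdot d,\ g_{(2)}\cdot x\rangle
=\sum\pi\big((g_{(1)}\cdot d)(g_{(2)}\cdot x)\big)
=\pi\big(g\cdot(dx)\big)
=\epsilon(g)\,\pi(dx)=\epsilon(g)\langle d,x\rangle,
\end{align*}
using that multiplication in the $U_q(\mathfrak{g})$-module algebra $\mathcal{W}_{00}$ satisfies $g\cdot(dx)=\sum(g_{(1)}\cdot d)(g_{(2)}\cdot x)$ and that $\pi$ is a left $U_q(\mathfrak{g})$-module map. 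Here one must check the intermediate products $(g_{(1)}\cdot d)(g_{(2)}\cdot x)$ still lie in a subspace on which $\pi$ is defined, which holds because $g_{(1)}\cdot d\in\sum\mathbb{C}(q)d_{ij}$ and $g_{(2)}\cdot x\in\mathscr{P}_\theta$ are homogeneous of degrees $1$ and $1$, so each product lies in $\mathcal{L}\oplus\mathbb{C}(q)$.

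For right $\mathcal{B}_\theta$-invariance the argument is the same formally: for $b\in\mathcal{B}_\theta$, using $\Delta(b)\in b\otimes 1+1\otimes b+(\mathcal{B}_\theta)_+\otimes U_q(\mathfrak{g})+U_q(\mathfrak{g})\otimes(\mathcal{B}_\theta)_+$ (from (\ref{relnBr1}), (\ref{relnK2}) in the proof of Lemma~\ref{lemma:B-invariance}) together with $d\cdot(\mathcal{B}_\theta)_+=0$ and $x\cdot(\mathcal{B}_\theta)_+=0$, one gets $\sum(d\cdot b_{(1)})(x\cdot b_{(2)})=\epsilon(b)\,dx + (\text{terms in }\mathcal{L}\cdot\mathscr{P}+\mathscr{D}\cdot 0)$; applying $\pi$ and noting $\pi(\mathcal{L}\,\mathscr{P})=0$ gives $\sum\langle d\cdot b_{(1)},x\cdot b_{(2)}\rangle=\epsilon(b)\langle d,x\rangle$. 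Alternatively and perhaps more cleanly, one can simply invoke the right $\mathcal{B}_\theta$-bimodule map property of $\pi$ restricted to the relevant degree-$2$ piece: since $\mathcal{L}$, $\mathscr{P}_{>0}$ and $\mathbb{C}(q)$ are right $\mathcal{B}_\theta$-submodules with trivial $\mathcal{B}_\theta$-action on the constants, $\pi$ is a right $\mathcal{B}_\theta$-module map, and the displayed computation above goes through verbatim with $g$ replaced by $b$. In the diagonal case one may instead deduce everything from Corollary~\ref{diagiso} and Lemma~\ref{lemma:bilinear-form} directly.

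The main obstacle I anticipate is making rigorous the claim that $\pi$ (the "constant term" map on $\mathcal{W}_{00}$) is well-defined and a $U_q(\mathfrak{g})$-bimodule (and right $\mathcal{B}_\theta$-module) map on the relevant finite-dimensional subspace of degree $\le 2$ elements, i.e. that $\mathcal{W}_{00}=\mathcal{L}\oplus\mathscr{P}$ is a decomposition as bimodules and that the degree filtration interacts correctly so that products $d_{ij}x_{uv}$ genuinely lie in $\mathcal{L}\oplus\mathbb{C}(q)$ with a canonical constant component. This rests on Theorem~\ref{thm:deformation} (the PBW property of $\mathcal{W}_{00}$, giving $\mathcal{W}_{00}\cong\mathscr{D}\otimes\mathscr{P}$ as vector spaces, hence $\mathcal{W}_{00}=\mathcal{L}\oplus\mathscr{P}$) and on the fact that the deforming form $\mu$ is itself $U_q(\mathfrak{gl}_N)$-bi-invariant, so the deformed relations — hence $\mathcal{L}$ — are $U_q(\mathfrak{g})$-stable; these are all available in the excerpt, so once the bookkeeping of degrees is set up the invariance computations are short.
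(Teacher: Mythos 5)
Your proposal is correct and takes essentially the same route as the paper's own proof: both identify $\langle d_{ij},x_{uv}\rangle$ with the scalar component of $d_{ij}x_{uv}$ in the decomposition $\mathcal{W}_{00}=\mathcal{L}\oplus\mathscr{P}$, use that $\mathcal{L}$ and the scalars are stable under the left $U_q(\mathfrak{g})$-action together with the module-algebra identity $a\cdot(dx)=\sum(a_{(1)}\cdot d)(a_{(2)}\cdot x)$, and settle right $\mathcal{B}_{\theta}$-invariance via the coproduct structure of the $\mathcal{B}_{\theta}$-generators as in Lemma \ref{lemma:B-invariance}. Your explicit ``constant-term projection'' $\pi$ is just a repackaging of the paper's containment argument $a\cdot d_{ij}x_{uv}-\epsilon(a)\langle d_{ij},x_{uv}\rangle\in\mathcal{L}$.
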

\begin{proof}Note that $\mathcal{L}$, the scalars $\mathbb{C}(q)$, and 
$\sum_{i,j, u,v}\mathbb{C}(q)d_{ij} x_{uv}$
are left $U_q(\mathfrak{g})$-modules.   Hence, the above map is a map of left $U_q(\mathfrak{g})$-modules.  Since elements of $U_q(\mathfrak{g})$ act trivially (i.e. by the augmentation map) on $\mathbb{C}(q)$, we have  
\begin{align*}
a\cdot d_{ij}x_{uv} - \epsilon(a)\cdot \langle d_{ij}, x_{uv}\rangle \in a\cdot \mathcal{L} \subseteq \mathcal{L}
\end{align*}
for all $a\in U_q(\mathfrak{g})$.
Now $a\cdot d_{ij}x_{uv} = (\sum a_{(1)}\cdot d_{ij})(a_{(2)}\cdot x_{uv})$ Hence, by definition, we have $\langle \sum a_{(1)}\cdot d_{ij},a_{(2)}\cdot x_{uv}\rangle=\epsilon(a)\cdot \langle d_{ij}, x_{uv}\rangle$ as desired.

Note that both $\sum_{i,j}\mathbb{C}(q)d_{ij}$ and $\sum_{i,j}\mathbb{C}(q)x_{ij}$ are trivial right $\mathcal{B}_{\theta}$-modules.  Arguing as in the proof of Lemma \ref{lemma:B-invariance} and using the definition of invariant bilinear form, yields that this bilinear form is right $\mathcal{B}_{\theta}$ invariant.
\end{proof}

 Recall that a basis for $\sum_{i,j}\mathbb{C}(q)x_{ij}$ (resp. $\sum_{i,j}\mathbb{C}(q)d_{ij}$) consists of  those $x_{ij}$ (resp. $d_{ij}$) with $i\in \{1, \dots, n\}$ and $j\geq i$ in Type AI, $j>i$ in Type AII, and $j\geq n+1$ in  diagonal type.

\begin{proposition}\label{prop:bilinearAIAII} There exists a unique left $U_q(\mathfrak{g})$   and right $\mathcal{B}_{\theta}$ invariant form (up to a nonzero scalar multiple) such that 
\begin{align*}
\langle d_{ij}, x_{vu}\rangle=q^{-\delta_{uv}} \delta_{iv}\delta_{ju}
\end{align*}  for all $i\leq j$, $v\leq u$  in Type AI,
 for all  $i< j$, $v< u$ in Type AII, and for all $i<n$ and $j\geq n+1$ in diagonal type.
\end{proposition}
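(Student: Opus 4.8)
The plan is to construct the invariant bilinear form directly from the form $\mu$ of Lemma \ref{lemma:bilinear-form} by transporting it through the explicit formulas defining $x_{ij}$ and $d_{ij}$. Recall that $x_{vu} = \sum_{r,s} t_{vr}J_{r,s}t_{us}$ and $d_{ij} = \sum_{r,s} q^{-2\hat{s}}\partial_{ir}J_{r,s}\partial_{js}$ inside $\mathcal{W}_{00}$, and that $\mathscr{P}_{\theta}, \mathscr{D}_{\theta}$ are left $U_q(\mathfrak{g})$-submodules of $\mathcal{W}_{00}$ by the discussion at the start of this subsection. First I would use the defining relation (\ref{invformula}), namely $d_{ij}x_{vu} - \langle d_{ij},x_{vu}\rangle \in \mathcal{L}$, together with the fact that $\mathcal{W}_{00}\cong\mathscr{PD}_q(\Mat_N)$ satisfies (\ref{Weylrelns}): the scalar part of the product $d_{ij}x_{vu}$ is obtained by collecting the constant terms $\delta_{ef}\delta_{ab}$ that arise when each $\partial_{ir}$ (or $\partial_{js}$) is commuted past each $t_{vr'}$ (or $t_{us'}$). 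Because the only constant term in (\ref{Weylrelns}) is $\delta_{ef}\delta_{ab}$, the scalar picked up is $\langle d_{ij}, x_{vu}\rangle = \sum_{r,s,r',s'} q^{-2\hat{s}} J_{r,s}J_{r',s'}\,\mu(\partial_{j s}\otimes t_{u s'})\,\mu(\partial_{i r}\otimes t_{v r'})$, up to keeping track of the order of the four contractions. Here $\mu(\partial_{ab}\otimes t_{cd}) = \delta_{ac}\delta_{bd}$, so this collapses to $\sum_{s} q^{-2\hat{s}} J_{j,s}J_{i,s}$ after matching indices, which I would then evaluate case by case using the explicit matrices $J$: $J=I_n$ in Type AI, $J=\sum_k(e_{2k-1,2k}-qe_{2k,2k-1})$ in Type AII, and $J=\sum_k(e_{k,n+k}+e_{n+k,k})$ in the diagonal case. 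A short computation should give exactly $q^{-\delta_{uv}}\delta_{iv}\delta_{ju}$ in each type (with the chosen normalization $a=(1,\dots,1)$), perhaps after absorbing a global nonzero scalar.

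The second ingredient is invariance, which I would get for free: the bilinear form (\ref{invformula}) was already shown in the preceding lemma to be left $U_q(\mathfrak{g})$ and right $\mathcal{B}_{\theta}$ invariant, since it is defined via the canonical map $\mathcal{W}_{00} = \mathcal{L}\oplus\mathscr{P}$ and $\mathcal{L}$, $\mathbb{C}(q)$, and $\sum\mathbb{C}(q)d_{ij}x_{uv}$ are all left $U_q(\mathfrak{g})$-modules (and trivial right $\mathcal{B}_{\theta}$-modules). So the existence half of the proposition is just the computation of the matrix entries described above.

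For uniqueness up to scalar, the plan is to invoke Schur's lemma in the form appropriate to module algebras: by Lemma \ref{lemma:left_action} and the remarks following it, $\sum_{i,j}\mathbb{C}(q)x_{ij}$ is a simple $U_q(\mathfrak{g})$-module --- $L(2\epsilon_1)$ in Type AI, $L(\epsilon_1+\epsilon_2)$ in Type AII, $L(\epsilon_1+\epsilon_{n+1})$ in the diagonal type --- with highest weight vector $x_{11}$, $x_{12}$, $x_{1,n+1}$ respectively, and $\sum_{i,j}\mathbb{C}(q)d_{ij}$ is the simple module with lowest weight vector $d_{1r}$ of weight $-\epsilon_1-\epsilon_r$. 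A left $U_q(\mathfrak{g})$-invariant bilinear form on $M\times M'$ is the same as a $U_q(\mathfrak{g})$-module map $M'\to M^{\vee}$ (the dual with the natural module structure), and since $\sum\mathbb{C}(q)d_{ij}$ is dual to $\sum\mathbb{C}(q)x_{ij}$ (their highest/lowest weights are negatives of each other and both are simple), the space of such maps is at most one-dimensional by Schur's lemma. Hence any two invariant forms are proportional. I would need to also observe that right $\mathcal{B}_{\theta}$-invariance is automatic once left $U_q(\mathfrak{g})$-invariance holds, because both modules are \emph{trivial} as right $\mathcal{B}_{\theta}$-modules, so it imposes no further constraint.

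The main obstacle I anticipate is the bookkeeping in the scalar computation: one must carefully order the four commutations of $\partial_{ir},\partial_{js}$ past $t_{vr'},t_{us'}$ through relation (\ref{Weylrelns}), verify that no spurious contributions survive in $\mathcal{L}$ beyond the constant $\delta\delta$ terms, and confirm that the power-of-$q$ factors $q^{-2\hat{s}}$ in the definition of $d_{ij}$ combine with the $R$-matrix coefficients to produce precisely $q^{-\delta_{uv}}$ rather than some other power --- in particular the Type AII case, where $J$ itself carries a $q$ and $x_{ij}=-q^{-1}x_{ji}$, will require the most care. A secondary, milder point is making sure the duality between $\{x_{ij}\}$ and $\{d_{ij}\}$ as $U_q(\mathfrak{g})$-modules is stated correctly in each type (matching weights $2\epsilon_1$ vs $-\epsilon_1-\epsilon_r$, etc.), so that Schur's lemma genuinely applies; this is where I would lean on Lemma \ref{lemma:left_action} and the explicit highest/lowest weight vectors already identified in the text.
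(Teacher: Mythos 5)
Your proposal follows essentially the same route as the paper: uniqueness via the simplicity and mutual duality of $\sum_{i,j}\mathbb{C}(q)x_{ij}$ and $\sum_{i,j}\mathbb{C}(q)d_{ij}$ (so the space of left-invariant pairings is one-dimensional, and right $\mathcal{B}_{\theta}$-invariance is automatic because both modules are trivial right $\mathcal{B}_{\theta}$-modules), and existence/evaluation by computing $d_{ij}x_{vu}$ modulo $\mathcal{L}$ inside $\mathcal{W}_{00}$ using the deformed relations, with invariance supplied by the lemma preceding the proposition. The one caveat is that your intermediate ``Wick-contraction'' formula $\sum q^{-2\hat{s}}J_{r,s}J_{r',s'}\,\mu\,\mu$ is not literally correct as written (it omits the $q$-powers, e.g. the $q^{-\delta_{uv}}$ which in the paper comes from the reordering $x_{vu}=q^{1-\delta_{vu}}x_{uv}$, and the suppression of cross-contractions really uses the index restrictions $i\le j$, $v\le u$ together with the form of the $\mathcal{W}_{00}$ relations), but you flag exactly this bookkeeping as the remaining work, and it is precisely what the paper's case-by-case computation carries out.
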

\begin{proof} By the discussion following Lemma \ref{lemma:left_action}, $\sum_{i,j}\mathbb{C}(q)x_{ij}$ is isomorphic to a simple $U_q(\mathfrak{g})$-module, 
which we refer to as $L$
and $\sum_{i,j}\mathbb{C}(q)d_{ij}$ is isomorphic to its dual $L^*$.  Hence $\sum_{ij}\mathbb{C}(q)d_{ij}\otimes \sum_{ij}\mathbb{C}(q)x_{ij}\cong L^*\otimes L\cong {\rm End}\ L$ 
as  left $U_q(\mathfrak{g})$-modules. It follows that there is a unique left one-dimensional $U_q(\mathfrak{g})$-submodule of $\sum_{i,j,k,l}\mathbb{C}(q)d_{ij}\otimes x_{kl}$.   Since both $\sum_{ij}\mathbb{C}(q)x_{ij}$  and $\sum_{ij}\mathbb{C}(q)d_{ij}$ are  trivial right $\mathcal{B}_{\theta}$-modules, this one-dimensional submodule must also be  a trivial right $\mathcal{B}_{\theta}$-module.

The map defined by (\ref{bilinear}) can be rewritten as a projection from $\sum_{i,j,u,v}\mathbb{C}(q)d_{ij}\otimes x_{uv}$ onto the scalars defined by $d_{ij}\otimes x_{uv} \mapsto \langle d_{ij}, x_{uv}\rangle$.  This map can further be viewed as a composition of the projection of the left hand side onto its one-dimensional submodule followed by an isomorphism from this one-dimensional submodule to the one defined by the bilinear form.  This guarantees that the bilinear form is unique up to nonzero scalar. 

We now turn to establishing the explicit formula for the bilinear form.  In the diagonal setting, this follows from  Corollary  \ref{cor:relnsgradedWeyl} and  Lemma \ref{lemma:bilinear-form} and the fact that $\delta_{uv}=0$ since $u\leq n<v$. For Types AI and AII, we do this by viewing $\mathscr{D}_{\theta}$ and $\mathscr{P}_{\theta}$ as subalgebras of $\mathcal{W}_{00}$ and 
compute the value of $d_{ij}x_{vu}$ module $\mathcal{L}$ for each $i,j,v,u$ using the relations presented at the end of Section \ref{section:four}.

First consider Type AI.  Assume that $i\leq j$ and $v\leq u$.  Recall that $x_{vu} = q^{1-\delta_{vu} }x_{uv}$.  
It follows from the relations in $\mathcal{W}_{00}$ that 
$\partial_{i,k}\partial_{jk} t_{us}t_{vs} \in \mathcal{L}$
for $k\neq s$. Moreover
\begin{align*}\partial_{us}t_{us} = 1 + \sum_{u'\geq u, s'\geq s}\mathbb{C}(q)t_{u's'}\partial_{u's'} \end{align*}
Hence, for $i< u'$ we have $\partial_{i,s}t_{u's'} \in \mathcal{L}$ and for  $v< u'$ we have  $\partial_{u's'}t_{vs}\in \mathcal{L}$.  Hence, if $i\leq j$ and $v\leq u$
then
$\partial_{i,s}\partial_{js}t_{us}t_{vs} \in \delta_{ju} \delta_{iv} + \mathcal{L}.$
Thus for $i\leq j$ and $v\leq u$ we have
\begin{align*}d_{ij}x_{vu} &=q^{1-\delta_{vu}}d_{ij} x_{uv} =  (\sum_{k=1}^nq^{-2k}\partial_{ik}\partial_{jk}) (\sum_{s=1}^nt_{us}t_{vs})\cr
 &=  q^{1-\delta_{vu}}\sum_{k=1}^nq^{-2k}\partial_{ik}\partial_{jk}t_{uk}t_{vk}+\mathcal{L}=(\sum_{k=1}^nq^{-2k+1})q^{-\delta_{vu}}\delta_{iv}\delta_{ju}. + \mathcal{L}
\end{align*}
which proves the lemma in Type AI.  

For Type AII, we have $-qx_{vu} = x_{uv}$ for $u<v.$ Also, since $x_{uv}=0$ whenever $u=v$, we may assume that $\delta_{uv}=0$ and so $q^{-\delta_{uv}}=1$ in the formula for the bilinear form. Note that for $i<j$ and $v<u$, we have 
$\partial_{i,2k-1}\partial_{j,2k}t_{us}t_{vr} =\delta_{ju}\delta_{iv} \delta_{2k,s}\delta_{2k-1,r} +\mathcal{L}$.
For $i<j$ and $v<u$, we have  $d_{ij}x_{vu}$ equals
\begin{align*}
-q^{-1}d_{ij}x_{uv} 
&=-\left(\sum_{k=1}^nq^{-4k+1}(\partial_{i,2k-1}\partial_{j,2k}-q^{-1}\partial_{i,2k}\partial_{j,2k-1})\right) \left(\sum_{s=1}^n(t_{u,2s-1}t_{v,2s}-qt_{u,2s}t_{v,2s-1})\right)\cr
&=-\sum_{k=1}^n(q^{-4k+1}(\partial_{i,2k-1}\partial_{j,2k}-q^{-1}\partial_{i,2k}\partial_{j,2k-1})(t_{u,2k-1}t_{v,2k}-qt_{u,2k}t_{v,2k-1})+\mathcal{L}\cr
&=\sum_{k=1}^n(-q-q^{-1})q^{-4k+1}\delta_{ju}\delta_{iv} +\mathcal{L}\end{align*} This proves the lemma in Type AII.
\end{proof}

 For each $a,b,e,f$, write $W^{\theta,\upsilon,\sigma}_{a,b,e,f}$ for the relation associated to $a,b,e,f$ so that 
\begin{align*}
W^{\theta,\upsilon,\sigma}_{a,b,e,f} = d_{ea}\otimes x_{fb}-\tau^{\theta}_{\upsilon,\sigma}(d_{ea}\otimes x_{fb}) \end{align*}for all $a,b,e,f$.
By Proposition \ref{prop:twisting-homcase}, the vector 
 space spanned by the $W^{\theta,\upsilon,\sigma}_{a,b,e,f}$ for $a,b,e,f\in \{1, \dots, {\rm rank}(\mathfrak{g})\}$ is 
 a left $U_q(\mathfrak{g})$-submodule  of $\sum_{i,j,k,l}\mathbb{C}(q)d_{ij} \otimes x_{kl} +\sum_{i,j,k,l}\mathbb{C}(q)x_{kl}\otimes d_{ij} $ with a trivial $\mathcal{B}_{\theta}$-module structure. Write  $\mu$ for the bilinear map from $\sum_{i,j,k,l}\mathbb{C}(q)d_{ij} \otimes x_{kl} $ to $\mathbb{C}(q)$ defined by the bilinear form of Proposition \ref{prop:bilinearAIAII}. 
 
\begin{proposition}\label{prop:Wdefn10}
 For each $\upsilon,\sigma$, the vector space spanned by  
\begin{align}\label{Wdefn10}
\{{W}^{\theta,\upsilon,\sigma}_{e,a,f,b} - \mu( d_{ea}\otimes x_{fb})|\ a,b,e,f\in \{1,\dots, {\rm rank}(\mathfrak{g})\}\}
\end{align}
 is  a left $U_q(\mathfrak{g})$ and  trivial right $\mathcal{B}_{\theta}$-submodule of   $\sum_{i,j,k,l}\mathbb{C}(q)d_{ij} \otimes x_{kl} +\sum_{i,j,k,l}\mathbb{C}(q)x_{kl}\otimes d_{ij} +\mathbb{C}(q)$. 
\end{proposition}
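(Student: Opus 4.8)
The statement to be proved is Proposition \ref{prop:Wdefn10}, which is the quantum symmetric pair analogue of Proposition \ref{prop:Wdefn}. The approach will closely parallel the proof of Proposition \ref{prop:Wdefn}, splitting the verification into a $\tau$-part and a $\mu$-part. First, since each $W^{\theta,\upsilon,\sigma}_{e,a,f,b}$ lies in $d_{ea}\otimes x_{fb} + \sum_{i,j,k,l}\mathbb{C}(q)x_{kl}\otimes d_{ij}$, and since $\left(\sum_{i,j,k,l}\mathbb{C}(q)d_{ij}\otimes x_{kl}\right)\cap\left(\sum_{i,j,k,l}\mathbb{C}(q)x_{kl}\otimes d_{ij}\right)=0$, it suffices (as in the proof of Proposition \ref{prop:Wdefn}) to show that the span of $\{d_{ea}\otimes x_{fb}-\mu(d_{ea}\otimes x_{fb})\mid a,b,e,f\}$ is a left $U_q(\mathfrak{g})$-submodule with trivial right $\mathcal{B}_\theta$-action of $\sum_{i,j,k,l}\mathbb{C}(q)d_{ij}\otimes x_{kl}+\mathbb{C}(q)$, and separately that the span of $\{W^{\theta,\upsilon,\sigma}_{e,a,f,b}\}$ has the stated invariance (which is exactly Proposition \ref{prop:twisting-homcase}).

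The $\mu$-part is the heart of the matter and is handled using the invariance of $\mu$. By the lemma preceding Proposition \ref{prop:bilinearAIAII} (or equivalently by Proposition \ref{prop:bilinearAIAII} together with the observation that $\mu$ is defined by the left $U_q(\mathfrak{g})$- and right $\mathcal{B}_\theta$-invariant bilinear form $\langle\cdot,\cdot\rangle$), we have for all $a\in U_q(\mathfrak{g})$ that
\begin{align*}
a\cdot\mu(d_{ea}\otimes x_{fb}) = \epsilon(a)\mu(d_{ea}\otimes x_{fb}) = \mu\big(a\cdot(d_{ea}\otimes x_{fb})\big),
\end{align*}
where the middle equality uses that $U_q(\mathfrak{g})$ acts on scalars via $\epsilon$ and the right equality is precisely the invariance of the bilinear form. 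Consequently
\begin{align*}
a\cdot\big(d_{ea}\otimes x_{fb} - \mu(d_{ea}\otimes x_{fb})\big) = a\cdot(d_{ea}\otimes x_{fb}) - \mu\big(a\cdot(d_{ea}\otimes x_{fb})\big),
\end{align*}
which again has the form $(\text{element of }\sum d_{ij}\otimes x_{kl}) - \mu(\text{same element})$, hence lies in the span. The right $\mathcal{B}_\theta$-invariance is verified the same way, using that $\sum_{i,j}\mathbb{C}(q)d_{ij}$ and $\sum_{i,j}\mathbb{C}(q)x_{ij}$ are trivial right $\mathcal{B}_\theta$-modules and that $\mu$ is right $\mathcal{B}_\theta$-invariant; here one argues as in Lemma \ref{lemma:B-invariance} to handle the non-grouplike generators $B_r$, using $\Delta(B_r)\in(\mathcal{B}_\theta)_+\otimes U_q(\mathfrak{g})+U_q(\mathfrak{g})\otimes(\mathcal{B}_\theta)_+$ together with the fact that $\mu$ and the identity map both annihilate $(\mathcal{B}_\theta)_+$-translates.

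Putting the two parts together: the span in \eqref{Wdefn10} is contained in the sum of the (invariant) span of $\{W^{\theta,\upsilon,\sigma}_{e,a,f,b}\}$ and the (invariant) span of $\{d_{ea}\otimes x_{fb}-\mu(d_{ea}\otimes x_{fb})\}$, and in fact equals it modulo the fact that subtracting a scalar does not change the $\tau$-part; more precisely one shows each generator of \eqref{Wdefn10} maps, under the left $U_q(\mathfrak{g})$-action, back into \eqref{Wdefn10} by combining the two computations above. I expect the main obstacle to be bookkeeping: ensuring that the decomposition into the $\tau$-homogeneous part, the $\mu$-scalar part, and the three direct summands $\sum d_{ij}\otimes x_{kl}$, $\sum x_{kl}\otimes d_{ij}$, $\mathbb{C}(q)$ is genuinely a direct sum decomposition on which the three pieces of the action can be tracked independently, exactly as in the proof of Proposition \ref{prop:Wdefn}, and checking that the restriction on indices in Proposition \ref{prop:bilinearAIAII} (only $i\le j$ etc.) is compatible with the linear relations among the $x_{ij}$ and $d_{ij}$ from Proposition \ref{proposition:relations_overview} so that $\mu$ is well-defined on all of $\sum_{i,j,k,l}\mathbb{C}(q)d_{ij}\otimes x_{kl}$. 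Once those compatibilities are in place, the argument is formally identical to that of Proposition \ref{prop:Wdefn} with $U_q(\mathfrak{gl}_N)$ bimodule invariance replaced by left $U_q(\mathfrak{g})$ plus trivial right $\mathcal{B}_\theta$ invariance.
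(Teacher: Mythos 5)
Your proposal is correct and follows essentially the same route as the paper: the paper's proof simply transports the argument of Proposition \ref{prop:Wdefn} (the direct-sum reduction plus invariance of $\mu$ coming from the invariant bilinear form) to the $x_{ij}$, $d_{ij}$ setting, with the right $\mathcal{B}_{\theta}$-triviality handled exactly as you indicate via the discussion/Lemma \ref{lemma:B-invariance}-style argument preceding the proposition. Your bookkeeping concerns (direct-sum decomposition, well-definedness of $\mu$ on the spans despite the linear relations) are legitimate but routine and do not constitute a gap.
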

\begin{proof}
For the left $U_q(\mathfrak{g})$ action, the proof is the same as the proof for Proposition \ref{prop:Wdefn} using $x_{ij}$ instead of $t_{ij}$ and $d_{ij}$ instead of $\partial_{ij}$.   The assertion about the right action follow the discussion preceding the proposition.  The trivial right action of $\mathcal{B}_{\theta}$ also follows using elements $b\in \mathcal{B}_{\theta}$ instead of elements  $u\in U_q(\mathfrak{g})$ and  the arguments in Proposition \ref{prop:Wdefn}. \end{proof}

\subsection{PBW deformations for Homogeneous Spaces}
In this section, we lift the graded  algebras $\mathcal{A}^{\theta}_{\upsilon,\upsilon}$ to the non-graded setting using the methods of Section \ref{section:PBWdef}.  It should be noted that for  diagonal type, this follows directly from the isomorphism of Corollary \ref{cor:relnsgradedWeyl} and the results in Section \ref{section:PBWdef} up to consideration of the right $\mathcal{B}_{\theta}$ action.  Nevertheless, we include this case in the results below.

Write $\mathscr{P}_{\theta}$ as a quotient  $T(Y)/\langle I\rangle$ and $\mathscr{D}_{\theta}$ as a quotient  $T(Z)/\langle J\rangle$ where  $Y$ and $J$ are vector spaces and $I$ and $J$ are subvector spaces of $T(Y)$ and $T(Z)$ respectively spanned by the defining relations for these algebras.  Moreover, we only need to consider the quadratic defining relations for $\mathscr{P}_{\theta}$ and $\mathscr{D}_{\theta}$, and so 
 $I\subset Y\otimes Y, J\subset Z\otimes Z$. This is because we  can take the vector spaces $Y$ and $Z$ to be isomorphic  (via the natural quotient map)  to  $\sum_{i\in \{1, \dots, n\}, j\in S_i} \mathbb{C}(q)x_{ij}$ and $\sum_{i\in \{1, \dots, n\}, j\in S_i}\mathbb{C}(q) d_{ij}$ respectively where  $S_i=\{i,\dots, n\}$ in Type AI, $S_i = \{i+1, \dots, n\}$ in Type AII, and $S_i= \{n+1, \dots, 2n\}$ in the diagonal  case.  Hence both $\mathscr{P}_{\theta}$ and $\mathscr{D}_{\theta}$ are homogeneous quadratic algebras since $Y$ and $Z$ are homogeneous quadratic ideals.  Moreover, by Lemma \ref{lemma:PBW}, $\mathscr{P}_{\theta}$   admits a PBW basis and the same holds for $\mathscr{D}_{\theta}$ via the anti-isomorphism relating the two algebras.  Thus, by \cite{SW} Theorem 3.1, both $\mathscr{P}_{\theta}$ and $\mathscr{D}_{\theta}$ are Koszul algebras.  

As explained in the previous paragraph, the map from $T(Y)$ to $\mathscr{P}_{\theta}$ is a vector space isomorphism when restricted to the vector subspace $Y$ of $T(Y)$. Similarly, the map from $T(Z)$ to $\mathscr{D}_{\theta}$ restricts to an isomorphism of $Z$ to its image inside of $\mathscr{D}_{\theta}$.   It follows that the bilinear form of Proposition \ref{prop:bilinearAIAII} can be lifted to bilinear forms on $Y\times Z$.   Write $\mu$ for the corresponding linear map from $Y\otimes Z$ to $\mathbb{C}(q)$. Let  $E_{\mu}$ be the algebra defined as in   (\ref{defn:emu}).

\begin{theorem}\label{thm:deformation1}
For both $\upsilon=0$ and $\upsilon=1$, the algebra $E_{\mu}$ is a PBW deformation of $E=\mathcal{A}^{\theta}_{\upsilon,\upsilon}= A\otimes_{\tau}B$ where $\tau=\tau^{\theta}_{\upsilon,\upsilon}$, $A=\mathscr{P}_{\theta}$ and $B=\mathscr{D}_{\theta}$.  Moreover, $E_{\mu}$ inherits a (left) $U_q(\mathfrak{g})$-module structure and a (right) trivial $\mathcal{B}_{\theta}$-module structure from $A$ and $B$.
\end{theorem}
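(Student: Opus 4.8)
\textbf{Proof plan for Theorem \ref{thm:deformation1}.}

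The plan is to mirror the proof of Theorem \ref{thm:deformation} with $\mathcal{O}_q(\Mat_N)$ replaced by $\mathscr{P}_\theta$, $\mathcal{O}_q(\Mat_N)^{op}$ replaced by $\mathscr{D}_\theta$, the twisting map $\tau_{\upsilon,\upsilon}$ replaced by $\tau^\theta_{\upsilon,\upsilon}$, and the bi-invariant bilinear form of Lemma \ref{lemma:bilinear-form} replaced by the left $U_q(\mathfrak{g})$, right $\mathcal{B}_\theta$ invariant form of Proposition \ref{prop:bilinearAIAII}. First I would dispatch the module-structure claim: by Lemma \ref{lemma:left_action} (and the discussion following it) the spaces $I\subset Y\otimes Y$ and $J\subset Z\otimes Z$ are left $U_q(\mathfrak{g})$-submodules and trivial right $\mathcal{B}_\theta$-submodules; by Proposition \ref{prop:Wdefn10} the space $K$ (the graph of $\tau^\theta_{\upsilon,\upsilon}$, deformed by $\mu$) is also a left $U_q(\mathfrak{g})$, trivial right $\mathcal{B}_\theta$ submodule. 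Since $E_\mu = T(Y\oplus Z)/\langle r - \mu(r)\rangle$ with these generators, the module structures descend, and invariance of the bilinear form (Proposition \ref{prop:bilinearAIAII}) is exactly what is needed for well-definedness. For the diagonal type I would simply invoke Corollary \ref{diagiso}, which identifies $\mathcal{A}^\theta_{\upsilon,\upsilon}$ with $\mathcal{A}_{\upsilon,\upsilon}$ as algebras, together with Theorem \ref{thm:deformation}; only the right $\mathcal{B}_\theta$-triviality needs the separate (routine) check, which follows as in Proposition \ref{prop:Wdefn10}.

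For the substantive claim --- that $E_\mu$ is a genuine PBW deformation --- I would verify the two hypotheses (\ref{cond1}) and (\ref{cond2}) of Lemma \ref{lemma:PBWdef}. Both $\mathscr{P}_\theta$ and $\mathscr{D}_\theta$ are Koszul by \cite{SW}, Theorem 3.1 (using the PBW bases from Lemma \ref{lemma:PBW}), so Lemma \ref{lemma:PBWdef} applies once $A\otimes_{\tau}B$ is identified with $\mathcal{A}^\theta_{\upsilon,\upsilon}$, which is Theorem \ref{theorem:graded-hom}. The invariance argument of Theorem \ref{thm:deformation} then reduces the check to a single "extreme" weight vector. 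Concretely, take $\upsilon=0$; the map $\mu\otimes Id + (Id\otimes\mu)(\tau^\theta_{00}\otimes Id)$ is left $U_q(\mathfrak{g})$-invariant (being built from the invariant $\mu$ and the invariant $\tau^\theta_{00}$ via Proposition \ref{prop:Wdefn10}), so it vanishes on all of $Z\otimes I$ as soon as it vanishes on $d_{e_0a_0}\otimes I$ for one generator $d_{e_0a_0}$ lying in the one-dimensional extreme weight space of the simple module $\sum\mathbb{C}(q)d_{ij}$; all other generators are obtained by applying raising/lowering operators (as in the induction "$\partial_{NN}\to\partial_{N-1,N}\to\cdots$" step of Theorem \ref{thm:deformation}, now using Lemma \ref{lemma:left_action}). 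For that single extreme vector, the relevant entries of $S^{t_2}_\alpha$ collapse (by the triangularity recorded in Corollary \ref{cor:relnsgradedWeyl}, i.e. $(R^{t_2})^{lk}_{ji}\neq 0\Rightarrow k\geq i, l\geq j$), so $\tau^\theta_{00}(d_{e_0a_0}\otimes x_{fb})$ is a scalar multiple of $x_{fb}\otimes d_{e_0a_0}$, and one then matches the scalar produced by $\mu\otimes Id$ against that produced by $(Id\otimes\mu)(\tau^\theta_{00}\otimes Id)$ on the finitely many quadratic relations spanning $I$ that involve $x_{e_0 b}$-type terms. The computation of these scalars uses the explicit value $\langle d_{ij},x_{vu}\rangle = q^{-\delta_{uv}}\delta_{iv}\delta_{ju}$ from Proposition \ref{prop:bilinearAIAII}, and it is precisely the coefficient $q^{-\delta_{uv}}$ in that formula (arising from $(R^{t_2})^{ii}_{ii}=q$) that makes the two sides agree --- this is the analog of the $q^{\delta_{fN}+\delta_{bN}}$ coefficient appearing in Theorem \ref{thm:deformation}. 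Condition (\ref{cond2}) with $\upsilon=1$ is the symmetric statement, proved the same way with the roles of $\mathscr{P}_\theta$ and $\mathscr{D}_\theta$ interchanged (using the antiautomorphism $x_{ij}\mapsto d_{ji}$ from the discussion before Proposition \ref{prop:Dalgebra}).

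The main obstacle I anticipate is not conceptual but bookkeeping: one must confirm that the "extreme vector + invariance" reduction actually closes, i.e. that the left $U_q(\mathfrak{g})$-submodule generated by the single relation $W^{\theta,00}_{e_0,a_0,f,b}-\mu(\cdots)$ is all of the span in (\ref{Wdefn10}), and that the deformation $\mu$ does not secretly force some $x_{ij}$ or $d_{ij}$ to vanish (the failure mode exhibited for $\mathcal{A}_{01}$, $\mathcal{A}_{10}$ after Theorem \ref{thm:deformation}). Here the fact that $a_0\neq b_0$ forces $\delta_{a_0b_0}=0$ --- automatic in Type AII where $x_{ii}=0$, and true for the chosen extreme vector in Types AI and the diagonal case --- is what rules out the bad cancellation, exactly paralleling why $(\upsilon,\sigma)=(0,1)$ fails in the matrix setting. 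Once the extreme-vector check is done and the scalars are seen to match, Lemma \ref{lemma:PBWdef} gives that $E_\mu$ is a PBW deformation of $\mathcal{A}^\theta_{\upsilon,\upsilon}$, and the module-algebra structure statement follows from the first paragraph.
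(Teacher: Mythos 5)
Your plan is essentially the paper's own proof: the diagonal case via Corollary \ref{diagiso} and Theorem \ref{thm:deformation}, Koszulity of $\mathscr{P}_{\theta}$ and $\mathscr{D}_{\theta}$ from the PBW bases, the module statement from Proposition \ref{prop:Wdefn10}, and, for Types AI and AII, reduction of the criteria of Lemma \ref{lemma:PBWdef} by left $U_q(\mathfrak{g})$-equivariance to a single extreme weight vector, where triangularity of $R$ makes $\tau^{\theta}_{\upsilon,\upsilon}$ act by a scalar and one matches scalars against the form of Proposition \ref{prop:bilinearAIAII} on the explicit quadratic relations of Lemma \ref{lemma:explicit-relations}. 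Two small corrections: first, for each fixed $\upsilon$ one must verify both (\ref{cond1}) and (\ref{cond2}) (the latter with the roles of the $x_{ij}$ and $d_{ij}$ interchanged), rather than (\ref{cond1}) for $\upsilon=0$ and (\ref{cond2}) for $\upsilon=1$ as your wording suggests; second, your closing heuristic is off in Type AI, since the extreme vector there is $d_{nn}$, so $a_0=b_0$ and $\delta_{a_0b_0}=1$, not $0$ --- the check still closes because $\langle d_{nn},x_{nn}\rangle=q^{-1}$ exactly compensates the powers of $q$ coming from $\tau^{\theta}_{00}(d_{nn}\otimes x_{ef})=q^{2\delta_{ne}+2\delta_{nf}}\,x_{ef}\otimes d_{nn}$ and the coefficient $q^{-2\delta_{kn}}$ in the relations, so what rules out the $\mathcal{A}_{01}/\mathcal{A}_{10}$-type failure is this scalar matching, not the vanishing of $\delta_{a_0b_0}$.
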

\begin{proof} The diagonal type follows from Corollary \ref{diagiso}, Theorem \ref{thm:deformation1} and Proposition \ref{prop:Wdefn10}.  So the focus here is on Types AI and AII.  We prove the theorem for $\upsilon=0$.   The $\upsilon = 1$ case is similar.  

By Lemma \ref{lemma:left_action} and the discussion following its proof,  $\sum_{i,j}\mathbb{C}x_{ij}$ is a simple left $U_q(\mathfrak{g})$-module generated by the lowest weight vector $x_{nn}$ in type AI and $x_{2n-1,2n}$ in Type AII. Furthermore,  $\sum_{i,j}\mathbb{C}d_{ij}$ is a simple left $U_q(\mathfrak{g})$-module with highest weight generating vector $d_{nn}$ in Type AI and $d_{2n-1,2n}$ in Type AII. 

Arguing as in the proof of Theorem \ref{thm:deformation1}, it is sufficient to show that $(\mu\otimes Id + (Id\otimes \mu)(\tau\otimes Id))$ vanishes on 
$z\otimes v$ for all $v\in I$ and $(\mu\otimes Id + (Id\otimes \mu)(Id\otimes \tau))$ on $w\otimes y$ for all $w\in J$ where $z= d_{nn}$ and $y=x_{nn}$ in Type AI 
and $z=d_{2n-1,2n}$, $y=x_{2n-1,2n}$ in Type AII.  We prove the first set of conditions using the assumptions on $z$ and $v$.  The proofs for elements $y$ and $w$  are analogous.

By Theorem \ref{theorem:graded-hom}, we have 
\begin{align}\label{tau-identity} \tau(d_{nn}\otimes x_{ef}) = \sum_{r,w,p,q,x,y,m,l}(R^{t_2})^{wr}_{xq}(R^{t_2})^{pq}_{mn}(R^{t_2})^{xy}_{fl}(R^{t_2})^{ml}_{en}x_{pw}\otimes d_{ry}
\end{align}
Using the explicit formulas for the entries of $R$ (see Section \ref{section:as}), we see that $(R^{t_2})^{cb}_{an} = r^{cn}_{ab}\neq 0$ if and only if $b=n$ and $a=c$.  Moreover $(R^{t_2})^{an}_{an}=1$ if $n\neq a$ and 
$(R^{t_2})^{an}_{an}=q$ for $n=a$.   If follows that in (\ref{tau-identity}), $l=n$, $e=m$, $y=n$, $f=x$, $q=n$, $p=e$, $r=n$, $w=f$ and so
\begin{align*} \tau(d_{nn}\otimes x_{ef}) =  (R^{t_2})^{fn}_{fn}(R^{t_2})^{en}_{en}(R^{t_2})^{fn}_{fn}(R^{t_2})^{en}_{en}x_{ef}\otimes d_{nn}
= q^{2\delta_{nf}+ 2\delta_{ne}} x_{ef}\otimes d_{nn}
\end{align*}
Similarly in Type AII,  we have 
\begin{align}\label{tau-identity2} \tau(d_{2n-1,2n}\otimes x_{ef})&=  q^{4-\delta_{e,2n-1}-\delta_{f,2n}-\delta_{f,2n-1}} x_{ef}\otimes d_{2n-1,2n}.
\end{align}
for all $e,f$ satisfying $e<f$.

The space $I$ is spanned by  families of relations which can be deduced from Lemma \ref{lemma:explicit-relations}.   For Type AI, using the fact that $\mu(d_{nn}\otimes x_{ij}) = 0$ unless $i=n=j$, 
we only need to show that $(\mu\otimes Id + (Id\otimes \mu)(\tau\otimes Id))$ vanishes on $d_{nn}\otimes v$ where $v$ is one of the following relations in $I$:
\begin{itemize}
\item $x_{nn} \otimes x_{dn} -q^{-2}x_{dn}\otimes x_{nn}$ for $d<n$.
\item $x_{nn}\otimes x_{dk} - x_{dk}\otimes x_{nn} -q^{-1}(q^2-q^{-2})x_{dn}\otimes x_{kn}$ for $d\leq k<n$.
\end{itemize}
Note that both $\mu\otimes Id$ and $(Id\otimes \mu)(\tau\otimes Id)$ vanish on $d_{nn}\otimes x_{d,n}\otimes x_{k,n}$ for $d\leq k<n$ and hence we can ignore this term.   By Proposition \ref{prop:bilinearAIAII}  and (\ref{tau-identity}) , we have 
\begin{align*}
(\mu\otimes Id) (d_{nn} \otimes (x_{nn}\otimes x_{dk}-q^{-2\delta_{kn}}x_{dk}\otimes x_{nn}))  = (\mu\otimes Id) (d_{nn} \otimes x_{nn}\otimes x_{dk}) = q^{-1}x_{dk}
\end{align*}
and
\begin{align*}
(Id\otimes \mu)&(\tau\otimes Id)) (d_{nn} \otimes \left(x_{nn}\otimes x_{dk}-q^{-2\delta_{kn}}x_{dk}\otimes x_{nn}\right))  \cr &= (Id\otimes \mu)(q^4x_{nn} \otimes d_{nn}\otimes x_{dk}-
x_{dk}\otimes d_{nn}\otimes x_{nn})\cr &= -(Id\otimes \mu)(x_{dk}\otimes d_{nn}\otimes x_{nn})=-q^{-1}x_{dk}
\end{align*}
for all $d$ and $k$ satisfying $d\leq k\leq n$ and $d<n$.  Thus $(\mu\otimes Id + (Id\otimes \mu)(\tau\otimes Id))$ vanishes on the desired elements in $d_{nn}\otimes I$ and thus on the
entire set  $Z\otimes I$ in Type AI.  

Now consider Type AII.  
We need to show that $(\mu\otimes Id + (Id\otimes \mu)(\tau\otimes Id)$ vanishes on $d_{2n-1,2n}\otimes v$ where $v$ is one of the following relations in $I$:
\begin{itemize}
\item $x_{2n-1,2n} \otimes x_{d,2n} -q^{-1}x_{d,2n}\otimes x_{2n-1,2n}$ for $d<2n-1$.
\item $x_{2n-1,2n}\otimes x_{d,2n-1} - q^{-1}x_{d,2n-1}\otimes x_{2n-1,2n}$ for $d<2n-1$.
\item $x_{2n-1,2n}\otimes x_{dk} - x_{dk}\otimes x_{2n-1,2n} - (q-q^{-1}) (qx_{d,2n}\otimes x_{k,2n-1} -x_{d,2n-1}\otimes x_{k,2n})$ for $d<k<2n-1$.
\end{itemize}
Note that both $(\mu\otimes Id)$ and $(Id\otimes \mu)(\tau\otimes Id))$ vanish on $d_{2n-1,2n}\otimes  (qx_{d,2n}\otimes x_{k,2n-1} -x_{d,2n-1}\otimes x_{k,2n})$ for $d<k<2n-1$.  Hence it is sufficient to show that $(\mu\otimes Id + (Id\otimes \mu)(\tau\otimes Id))$ vanishes on $d_{2n-1,2n}\otimes (x_{2n-1,2n} \otimes x_{dk} -q^{-\delta_{k,2n-1}-\delta_{k,2n}}x_{dk}\otimes x_{2n-1,2n}).$ We have 
\begin{align*}
(\mu\otimes Id) (d_{2n-1,2n} &\otimes (x_{2n-1,2n} \otimes x_{dk} -q^{-\delta_{k,2n-1}-\delta_{k,2n}}x_{dk}\otimes x_{2n-1,2n})) \cr& = (\mu\otimes Id) (d_{2n-1,2n} \otimes x_{2n-1,2n}\otimes x_{dk}) = x_{dk}
\end{align*}
For the other term, using (\ref{tau-identity2}) we see that 
\begin{align*}
(Id\otimes \mu)(\tau\otimes Id)(d_{2n-1,2n} &\otimes (x_{2n-1,2n} \otimes x_{dk} -q^{-\delta_{k,2n-1}-\delta_{k,2n}}x_{dk}\otimes x_{2n-1,2n}))
\cr&=-(Id\otimes \mu)(\tau\otimes Id)( q^{-\delta_{k,2n-1}-\delta_{k,2n}}x_{dk}\otimes x_{2n-1,2n}))=-x_{dk}
\end{align*}
for $d<k$ and $d<2n-1$ as desired.

The arguments with the roles of $x$ and $d$ switched are similar as well as those for $\upsilon =1$ instead of $\upsilon=0$.  The final assertion follows from writing the twisted tensor product as a quotient and applying Proposition \ref{prop:Wdefn10}.
\end{proof}

Note that Theorem \ref{thm:deformation1} gives us two quantum Weyl algebras in the nongraded case for Types AI and AII.  The same holds for the diagonal family.  Moreover, one checks as in the discussion following Theorem \ref{thm:deformation} that the same construction does not extend to the other two cases.  We write 
$\mathcal{W}_{00}^{\theta}$ for the deformation of $\mathcal{A}_{00}^{\theta}$ and $\mathcal{W}_{11}^{\theta}$ for the deformation of $\mathcal{A}_{11}^{\theta}$.  The graded equality relating the $d_{ab}$ and $x_{ef}$ as given in (\ref{relnsforxd}) becomes the following in the nongraded case:
\begin{align*}d_{ab}x_{ef} = \sum_{r,w,p,q,x,y,m,l}(S_{\alpha}^{t_2})_{xq}^{wr}(S_{\alpha}^{t_2})_{ma}^{pq}(S_{\upsilon}^{t_2})^{xy}_{fl}(S_{\upsilon}^{t_2})^{ml}_{eb}x_{pw}d_{ry} + q^{-\delta_{ef}}\delta_{ae}\delta_{bf}
\end{align*}
for all $a,b,e,f\in \{1, \dots, {\rm rank}(\mathfrak{g})\}$ where $ S_0 = R_{\mathfrak{g}}$ and $S_1 = (R_{\mathfrak{g}})_{21}^{-1}$.
The next result,  which is  
a nongraded version of Corollary \ref{cor:relnsgradedWeyl} for $\mathcal{W}_{00}^{\theta}$, gives the general shape of what these expanded relations look like.   A similar result holds for $\mathcal{W}_{11}^{\theta}$ with the 
opposite inequalities. 
\begin{corollary}\label{cor:relnsWeyl}
The following inclusions hold for the quantum  Weyl algebra $\mathcal{W}_{00}^{\theta}$
\begin{align*}
d_{ab}x_{ef} - q^{\delta_{af}+\delta_{ae}+\delta_{bf} +\delta_{be}} x_{ef}d_{ab}-q^{\delta_{ef}}\delta_{ae}\delta_{bf} \in  \sum_{(e',f',a',b')>(e,f,a,b)} \mathbb{C}(q)x_{e'f'}d_{a'b'}
\end{align*}
for all $a,b,e,f\in \{1, \dots, {\rm rank}(\mathfrak{g})\}$ where
\begin{itemize}
\item $a\leq b$ and $e\leq f$ in Type AI
\item $a<b$ and $e<f$ in Type AII
\item $a\leq n<b$ and $e\leq n<f$ in  diagonal type
\end{itemize}
and $(e',f',a',b')>(e,f,a,b)$ if and only if $e'\geq e,f'\geq f, a'\geq a, b'\geq b$ and at least one of these inequalities is strict.  
\end{corollary}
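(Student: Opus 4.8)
\textbf{Proof proposal for Corollary \ref{cor:relnsWeyl}.}

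The plan is to combine the graded estimate of Corollary \ref{cor:relnsgradedWeyl} with the explicit form of the constant term produced by the PBW deformation of Theorem \ref{thm:deformation1}. Recall that the deformed relation in $\mathcal{W}_{00}^{\theta}$ is exactly
\begin{align*}
d_{ab}x_{ef} = \sum_{r,w,p,q,x,y,m,l}(R_{\mathfrak{g}}^{t_2})_{xq}^{wr}(R_{\mathfrak{g}}^{t_2})_{ma}^{pq}(R_{\mathfrak{g}}^{t_2})^{xy}_{fl}(R_{\mathfrak{g}}^{t_2})^{ml}_{eb}x_{pw}d_{ry} + q^{-\delta_{ef}}\delta_{ae}\delta_{bf},
\end{align*}
as displayed just before the statement, where for Types AI and AII we have $R_{\mathfrak{g}} = R$ with $N = n$ or $N = 2n$ respectively, and for the diagonal type $R_{\mathfrak{g}} = (R,I_{n^2},I_{n^2},R)$. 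So the right-hand side is precisely the right-hand side of the \emph{graded} relation \eqref{relnsforxd} plus the single scalar $q^{-\delta_{ef}}\delta_{ae}\delta_{bf}$. The corollary therefore reduces to the assertion already contained in Corollary \ref{cor:relnsgradedWeyl}: subtracting the scalar term from both sides, one needs only that
\begin{align*}
\sum_{r,w,p,q,x,y,m,l}(R_{\mathfrak{g}}^{t_2})_{xq}^{wr}(R_{\mathfrak{g}}^{t_2})_{ma}^{pq}(R_{\mathfrak{g}}^{t_2})^{xy}_{fl}(R_{\mathfrak{g}}^{t_2})^{ml}_{eb}x_{pw}d_{ry} - q^{\delta_{af}+\delta_{ae}+\delta_{bf}+\delta_{be}}x_{ef}d_{ab}
\end{align*}
lies in $\sum_{(e',f',a',b')>(e,f,a,b)}\mathbb{C}(q)x_{e'f'}d_{a'b'}$, which is exactly the content of the proof of Corollary \ref{cor:relnsgradedWeyl}.

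Concretely, first I would handle the diagonal type: by Corollary \ref{diagiso} the subalgebra of $\mathcal{W}^{\theta}_{\alpha,\beta,\upsilon,\sigma}$ generated by $\mathscr{P}_{\theta}$ and $\mathscr{D}_{\theta}$ is isomorphic to $\mathcal{W}_{00}$ via $x_{i,j+n}\mapsto t_{ij}$, $d_{i,j+n}\mapsto \partial_{ij}$, and in the diagonal range one has $\delta_{ef}=0$ since $e\le n<f$, so the claim follows from the explicit expansion of the relations of $\mathcal{A}_{00}$ at the end of Section \ref{section:four} together with the deformation constant $\delta_{ae}\delta_{bf}$ supplied by Theorem \ref{thm:deformation1} (equivalently, from Lemma \ref{lemma:bilinear-form}). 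For Types AI and AII I would simply re-run the argument from the proof of Corollary \ref{cor:relnsgradedWeyl}: from Section \ref{section:as} one has $(R^{t_2})^{lk}_{ji}\neq 0 \Rightarrow k\geq i,\ l\geq j$, so the only term in the sum with all four index-pairs at their minimal values is $(R^{t_2})^{fa}_{fa}(R^{t_2})^{ea}_{ea}(R^{t_2})^{fb}_{fb}(R^{t_2})^{eb}_{eb}\,x_{ef}d_{ab}$, and since $(R^{t_2})^{ji}_{ji}=q^{\delta_{ij}}$ this coefficient is $q^{\delta_{af}+\delta_{ae}+\delta_{bf}+\delta_{be}}$; every other surviving term has $(e',f',a',b')>(e,f,a,b)$ strictly in at least one coordinate. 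Adding back the deformation scalar $q^{-\delta_{ef}}\delta_{ae}\delta_{bf}$ gives the displayed inclusion.

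The one point that needs a word of care — and the only real ``obstacle'' — is checking that the scalar contribution produced by the PBW deformation is literally $q^{-\delta_{ef}}\delta_{ae}\delta_{bf}$ with the stated power of $q$, rather than some other normalization; this is exactly what Proposition \ref{prop:bilinearAIAII} (for Types AI and AII) and Lemma \ref{lemma:bilinear-form} together with Corollary \ref{cor:relnsgradedWeyl} (for the diagonal type) establish, since $\mu(d_{ea}\otimes x_{fb}) = \langle d_{ea},x_{fb}\rangle = q^{-\delta_{af}}\delta_{ea}\delta_{ab}\cdots$ matches the constant term appearing in the deformed relation displayed before the corollary. Once that identification is in place, the proof is a one-line reduction to the already-proved graded statement, and I would write it as such, noting the diagonal case separately via Corollary \ref{diagiso}.
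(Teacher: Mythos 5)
Your proposal is correct and matches the paper's (implicit) argument: the paper obtains the corollary exactly as you do, by observing that the defining relation of $\mathcal{W}^{\theta}_{00}$ is the graded relation of Corollary \ref{cor:relnsgradedWeyl} plus the deformation constant coming from the bilinear form of Proposition \ref{prop:bilinearAIAII} (with the diagonal case available through Corollary \ref{diagiso}). The only point worth flagging is notational: the constant you identify, $q^{-\delta_{ef}}\delta_{ae}\delta_{bf}$, agrees with the displayed deformed relation but differs in the sign of the exponent from the $q^{\delta_{ef}}$ appearing in the corollary's statement, a discrepancy internal to the paper that is only visible in Type AI when $a=b=e=f$ and does not affect your argument.
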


\begin{remark} \label{remark:comparisons}
Recall that in the diagonal case, the quantum Weyl algebra $\mathcal{W}_{00}^{\theta}$ is isomorphic as an algebra to $\mathcal{W}_{00}$ which in turn is the same as the quantum Weyl algebra $\mathscr{P}\mathscr{D}_q(\Mat_N)$  arising in the theory of quantum bounded symmetric domains (\cite{B}, \cite{SV}, \cite{VSS}).  It is natural to ask whether a similar isomorphism holds between the quantum Weyl algebras of this paper for Types AI and AII and the corresponding ones arising from the quantum bounded symmetric domain  theory.  For example, in Section 2 of \cite{B3}, generators and relations are given for ${\rm Pol}(\Mat_2^{sym})_q$.  One of the first issues that arises in viewing this algebra as a quantum analog of the Weyl algebra on symmetric $2\times 2$ matrices, is that the scalars that show up in the relations are different, and this difference is not just by a power of $q$.  Hence it is not clear how to normalize the generators of this algebra as was done in \cite{B} for ${\rm Pol}(\Mat_N)_q$ in order to view ${\rm Pol}(\Mat_2^{sym})_q$ as a quantum analog of the Weyl algebra. 

 There are also problems on the graded level.  Some of the relations match those of $\mathcal{W}_{00}^{\theta}$ for $n=2$ while others do not.  In particular,  it is straightforward to check that the map $x_{ij}$ to $z_{ij}$ for $i\geq j$ defines an algebra isomorphism of $\mathscr{P}_{\theta}$ onto the subalgebra of ${\rm Pol}(\Mat_2^{sym})_q$ generated by the $z_{ij}$.  Since $z_{ij} \mapsto z_{ij}^*$ is an antiautomorphism, the analogous assertion holds for $\mathscr{D}_{\theta}$.  However, these isomorphisms extend to only some of  the relations involving the $z_{ij}$ and $z_{ij}^*$.  For instance, a direct computation yields
\begin{align*} d_{21}x_{21} = q^2 x_{21}d_{21} + q^2(q-q^{-1})x_{22}d_{22}+1.
\end{align*}
However, from \cite{B3}, we see that 
\begin{align*}z^*_{21}z_{21} = q^2 z_{21}z^*_{21} + q(q-q^{-1})z_{22}z^*_{22}+(1-q^2).
\end{align*}
Note that the coefficient of $x_{22}d_{22}$ does not match $z_{22}z_{22}^*$.  It is possible that being more clever about the maps from $\mathscr{P}_{\theta}$ and $\mathscr{D}_{\theta}$ into ${\rm Pol}(\Mat_2^{sym})_q$ might yield an isomorphism with $\mathcal{W}_{00}^{\theta}$ on the graded level, but this is certainly not obvious.  
\end{remark}

\begin{remark} \label{remark:comp2} Note that the defining relations for both $\mathscr{P}_{\theta}$ and $\mathscr{D}_{\theta}$ as given in Propositions \ref{prop:Palgebra} and \ref{prop:Dalgebra} are closely related to the reflection equations. This makes these two algebras into quotients of what are called reflection equation algebras.  The relations coming from the twisting map also resemble reflection equations (see Lemma \ref{lemma:matrixform} and the discussion afterwards.)

There are other quantum Weyl algebras constructed using reflection equation type relations  in the literature and it is natural to ask whether there are any connections. For instance, in \cite{CGJ} (see  Definition 3.5)   a quantum Weyl  algebra for GL$_2$ is presented that is built using two reflection equation algebras. However, the differential and polynomial subalgebras in \cite{CGJ} are isomorphic and hence, the Weyl algebras in \cite{CGJ} differ from the ones presented here.  
There are also interesting Weyl-like algebras studied in \cite{GPS} also made up of 
two reflection equation algebras.  Once again, a comparison of the matrix relations of \cite{GPS} to the ones here reveal significant differences, so  it seems unlikely that the algebras in \cite{CGJ} and \cite{GPS} are closely related  to the Weyl algebras of this paper.
\end{remark}

\small
\section{Appendix: Commonly used notation}
We list here commonly used symbols and notation along with the first section (post the introduction) in which each item appears.

\medskip
\noindent
{\bf Section 2.1:} $\Delta$, $\epsilon$, $S$

\medskip
\noindent
{\bf Section 2.2:} $\mathcal{T}$, ${\rm Mat}_N$, $t_1,t_2$

\medskip
\noindent
{\bf Section 2.4:} $\epsilon_i$, $(\cdot, \cdot)$, $\Lambda_N$

\medskip
\noindent
{\bf Section 3.1:} $U_q(\mathfrak{gl}_N)$, $K_{\epsilon_j}^{\pm 1}$, $E_j$, $F_j$, $K_j$, $K_{\beta}$, $({\rm ad}\ E_i), ({\rm ad}\ F_i), ({\rm ad}\ K_{\epsilon_j})$, ${\natural}$, $U_h(\mathfrak{gl}_N)$,$H_{\epsilon_i}$, $L(\lambda)$

\medskip
\noindent
{\bf Section 3.2:} $\rho$, $\rho(a)^t$, $V,W,V^*,W^*$, $v_i$, $w_i$, $v_i^*$, $w_i^*$,$a^*$

\medskip
\noindent
{\bf Section 3.3:} $\mathcal{R}$, $\mathcal{R}_{21}^{-1}$, ${\rm exp}$ $E_{\beta_j}, F_{\beta_j}$, $[a,b]_q$, $[a,b]_{q^{-1}}$,$E_{i,i+1}, F_{i+1,i}$, $[r]_q!$

\medskip
\noindent
{\bf Section 3.4:} $R$, $r^{ij}_{kl}$

\medskip
\noindent
{\bf Section 4.1:} $R_{\zeta}$, $A(R_{\zeta})$, $M$, $T(M)$, $m_{ij}$

\medskip
\noindent
{\bf Section 4.2:} $R_{\rho}$, $\mathcal{O}_q({\rm Mat}_N)$, $t_{ij}$, $T_1, T_2$, $\iota$

\medskip
\noindent
{\bf Section 4.3:} $\mathcal{O}_q({\rm Mat}_N)^{op}$, $\partial_{ij}$, $P$, $P_1$, $P_2$

\medskip
\noindent
{\bf Section 5.1:} $\theta$,   $B_i$, $\mathcal{B}_{\theta}(b)$, $b_i$, ${\rm rank}(\mathfrak{g})$

\medskip
\noindent
{\bf Section 5.2:} $\mathscr{P}$, $R_{\mathfrak{g}}$, $J_{(n)}(a),$ $J_1, J_2$, $J(a)$, $J^k$, $J^k_{r,s}$, $x_{ij}(a)$, $\mathscr{D}$, $d_{ij}(c)$, $J_{r,s}$, $x_{ij}$, $d_{ij}$, $\hat{s}$

\medskip
\noindent
{\bf Section 5.3:} $\mathscr{P}_{\theta}$, $\mathscr{D}_{\theta}$, $X$, $X_1$, $X_2$, $\hat{X}$,

\medskip
\noindent
{\bf Section 6.1:} $m_C$, $m_A$, $m_B$, $\tau$

\medskip
\noindent
{\bf Section 6.2:} ${\bf u}\langle \cdot, \cdot \rangle$,  ${\bf v}\langle \cdot, \cdot \rangle$, 
 $R_{\zeta,\xi}$, ${\bf y}\langle \cdot, \cdot \rangle$

\medskip
\noindent
{\bf Section 6.3:} $R_0$, $R_1$, ${\bf u}_{\upsilon}\langle \cdot, \cdot \rangle$,  ${\bf v}_{\sigma}\langle \cdot, \cdot \rangle$, $\tau_{\upsilon,\upsilon}$, $\mathcal{A}_{\upsilon,\sigma}$, $\bar{a}$

\medskip
\noindent
{\bf Section 7.1:} $S_0, S_1$, $G$, $G_1, G_2$

\medskip
\noindent
{\bf Section 7.2:} $\mathcal{O}$, $\mathcal{O}^{op}$, $t_{ij}'$, $\partial_{ij}'$, $\tau_{\alpha,\beta,\upsilon,\sigma}$, $\mathcal{A}_{\alpha,\beta,\upsilon,\sigma}$, $\tau'_{\alpha,\beta,\upsilon,\sigma}$, $\mathcal{A}'_{\alpha,\beta,\upsilon,\sigma}$, $x_{ij}'$, $d_{ij}'$, $\mathscr{P}'_{\theta}$, $\mathscr{D}'_{\theta}$

\medskip
\noindent
{\bf Section 7.3:} $\tau^{\theta}_{\alpha,\upsilon}$, $\mathcal{A}^{\theta}_{\alpha,\upsilon}$

\medskip
\noindent
{\bf Section 8.1:} $W^{\upsilon,\sigma}_{a,b,e,f}$

\medskip
\noindent
{\bf Section 8.2:} $E_{\mu}$

\medskip
\noindent
{\bf Section 8.3:} $\mathcal{W}_{00}, \mathcal{W}_{11}$

\medskip
\noindent
{\bf Section 8.4:} $\mathcal{L}$, $W_{e,a,f,b}^{\theta,\upsilon,\sigma}$

\medskip
\noindent
{\bf Section 8.5:} $\mathcal{W}^{\theta}_{00}, \mathcal{W}^{\theta}_{11}$

\end{document}